\newtheorem*{theorem*}{Theorem}
\newtheorem{maintheorem}{Theorem}[section]
\newtheorem{theorem}{Theorem}[section]
\newtheorem{lemma}[theorem]{Lemma}
\newtheorem{proposition}[theorem]{Proposition}
\newtheorem{corollary}[theorem]{Corollary} 
\theoremstyle{definition}
\newtheorem{definition}[theorem]{Definition}
\newtheorem{example}[theorem]{Example}
\newtheoremstyle{myitemstyle}						% flexible theorem style
	{}			%Space above
	{}			%Space below
	{}			%Body font
	{}			%indent amount
	{}			%Thm head font
	{.}			%Punkte nach thm head
	{ }			%Abstand nach thm head
	{}			%Thm head spec
\theoremstyle{myitemstyle}
\newtheorem{myitemthm}{}
\newcommand{\R}{\mathbb{R}}
\newcommand{\Z}{\mathbb{Z}}
\newcommand{\ZZ}{\mathbb{Z}}
\newcommand{\Q}{\mathbb{Q}}
\newcommand{\PP}{\mathbb{P}}
\newcommand{\RR}{\mathbb{R}}
\newcommand{\G}{\mathbb{G}}
\newcommand{\TT}{\mathbb{T}}
\newcommand{\T}{\mathbb{T}}
\newcommand{\Mcheck}{{\check{M}}}
\newcommand{\Lambdacheck}{\check{\Lambda}}
\newcommand{\lambdacheck}{\check{\lambda}}
\newcommand{\mucheck}{\check{\mu}}
\newcommand{\Rcheck}{\check{R}}
\newcommand{\Phicheck}{\check{\Phi}}
\newcommand{\alphacheck}{\check{\alpha}}
\newcommand{\calG}{\mathcal{G}}
\newcommand{\calH}{\mathcal{H}}
\newcommand{\calM}{\mathcal{M}}
\newcommand{\bfG}{\mathbf{G}}
\newcommand{\bfT}{\mathbf{T}}
\newcommand{\bfL}{\mathbf{L}}
\newcommand{\bfP}{\mathbf{P}}
\newcommand{\bfB}{\mathbf{B}}
\newcommand{\calU}{\mathcal{U}}
\newcommand{\bfD}{\mathbf{D}}
\newcommand{\bGL}{\mathbf{GL}}
\newcommand{\bPGL}{\mathbf{PGL}}
\newcommand{\mc}[1]{{\mathcal{#1}}}
\newcommand{\wmc}[1]{{\widetilde{\mathcal{#1}}}}
\DeclareMathOperator{\Pic}{Pic}
\DeclareMathOperator{\Hom}{Hom}
\DeclareMathOperator{\Aut}{Aut}
\DeclareMathOperator{\val}{val}
\DeclareMathOperator{\Trop}{Trop}
\DeclareMathOperator{\id}{id}
\DeclareMathOperator{\PGL}{PGL}
\DeclareMathOperator{\Bun}{Bun}
\DeclareMathOperator{\rk}{rk}
\DeclareMathOperator{\sgn}{sgn}
\newcommand{\mPic}{\mathcal Pic}
\newcommand{\SL}{{\mathrm{SL}}}
\newcommand{\GL}{{\mathrm{GL}}}
\newcommand{\Sp}{{\mathrm{Sp}}}
\newcommand{\SO}{{\mathrm{SO}}}
\let\O\relax
\DeclareMathOperator{\O}{O}
\newcommand{\an}{\mathrm{an}}
\newcommand{\st}{\mathrm{st}}
\newcommand{\sat}{\mathrm{sat}}
\newcommand{\semist}{\mathrm{ss}}
\newcommand{\simc}{\mathrm{sc}}
\newcommand{\ind}{\mathrm{ind}}
\newcommand{\trop}{{\mathrm{trop}}}
\newcommand{\ad}{{\mathrm{ad}}}
\newcommand{\Ga}{\Gamma}
\DeclareMathOperator{\Mat}{Mat}
\DeclareMathOperator{\Bij}{Bij}
\newcommand{\Fratila}{Fr{\u{a}}{\c{t}}il{\u{a}}}
\title{Tropical reductive groups and principal bundles on metric graphs} 
\date{}
\author{Andreas Gross}
\address{Institut f\"ur Mathematik, Goethe--Universit\"at Frankfurt,
%Robert-Mayer-Str. 6--8,
60325 Frankfurt am Main, Germany}
\email{gross@math.uni-frankfurt.de}
\author{Arne Kuhrs}
\address{Institut f\"ur Mathematik, Universit\"at Paderborn,
33098 Paderborn, Germany}
\email{kuhrs@math.uni-paderborn.de}
\author{Martin Ulirsch}
\address{Institut f\"ur Mathematik, Universit\"at Paderborn,
33098 Paderborn, Germany}
\email{ulirsch@math.uni-paderborn.de}
\author{Dmitry Zakharov}
\address{Department of Mathematics, Central Michigan University, Mount Pleasant, MI 48859, USA}
\email{dvzakharov@gmail.com}
\begin{document}

\begin{abstract} We propose an elementary tropical analogue of a reductive group that combines the datum of a Weyl group and the tropicalization of a fixed maximal torus. For the classical groups, as well as $G_2$, these tropical reductive groups admit descriptions as tropical matrix groups that resemble their classical counterparts. Employing this perspective, we introduce tropical principal bundles on metric graphs and study their explicit presentations as pushforwards of line bundles along covers with symmetries and extra data. Our main result identifies the essential skeleton of the moduli space of semistable principal bundles on a Tate curve with its tropical analogue.
\end{abstract}

\maketitle

\setcounter{tocdepth}{1}
\tableofcontents

\section*{Introduction}

Denote by $\mathbb{T}=(\R\sqcup\{\infty\}, \min, +)$ the semifield of tropical numbers. It is an elementary fact (see e.g. \cite[Lemma 1.4]{TropChern}) that the group $\GL_n(\mathbb{T})$ of invertible $n\times n$ matrices over $\TT$ is the group of generalized tropical permutation matrices. In other words, it is isomorphic to the semidirect product $\mathbb{R}^n\rtimes S_n$. In \cite{2022GrossUlirschZakharov} this observation was used to build a theory of tropical vector bundles on metric graphs, expanding on \cite{TropChern}, that is to say, principal bundles with structure group $\GL_n(\T)$. 

We observe that the two terms in the semidirect product $\GL_n(\mathbb{T})\simeq \mathbb{R}^n\rtimes S_n$ have the following interpretation: $S_n$ is the Weyl group of the reductive algebraic group $\mathbf{GL}_n$, while $\R^n$ is the tropicalization of the diagonal torus $\G_m^n\subseteq \mathbf{GL}_n$. In this article, we expand on this observation and introduce an elementary theory of tropical reductive groups in other Dynkin--Lie types, and an associated theory of principal bundles on metric graphs.

\subsection*{Tropical reductive groups}

Let $\bfG$ be a reductive algebraic group over an algebraically closed field $k$ with a maximal torus $\bfT \subseteq \bf G$. Then $\bfG$ is uniquely determined (up to isomorphism) by its root datum 
\begin{equation*}
\Phi = (M,R,\Mcheck,\Rcheck),
\end{equation*}
where $M$ and $\Mcheck$ are the character and cocharacter lattices of $\mathbf{T}$, and $R \subseteq M$ and $\Rcheck \subseteq \Mcheck$ are the sets of roots and coroots, respectively (see Definition \ref{def:rootdatum}). The Weyl group $W_{\Phi}$ is the group of automorphisms of $M$ generated by the reflections corresponding to the roots in $R$ and naturally acts on the dual space $\Mcheck_{\RR} = \Mcheck \otimes_\Z \R$. We define the \emph{tropical reductive group} associated to the root datum $\Phi$ as
\begin{equation*}
    \bfG^{\trop} = \Mcheck_\R\rtimes W_\Phi.
\end{equation*}
This construction depends only on the root datum $\Phi$ and, in particular, does not depend on the ground field $k$.

For $\bfG=\mathbf{GL}_n$, this construction recovers the matrix group $\GL_n(\mathbb{T})\simeq \mathbb{R}^n\rtimes S_n$  described above. In Section \ref{sec:reductivegroups}, we expand on this observation and describe tropical analogues of other classical groups in terms of tropical linear algebra, closely mirroring their classical counterparts.  

\begin{maintheorem}[Propositions~\ref{prop:GLSLPGL},~\ref{prop:SP},~\ref{prop:tropicalSO}, and~\ref{prop:tropicalG2}]
    The tropical reductive groups associated to the root data of $\SL_n, \PGL_n, \Sp_{2n}, \SO_{2n+1}, \SO_{2n}$ and $\mathrm{G}_2$ admit natural descriptions as matrix groups over $\T$ that are analogous to the matrix descriptions of the corresponding algebraic reductive groups. 
\end{maintheorem}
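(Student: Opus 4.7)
The plan is to handle each case by first writing down the cocharacter lattice $\Mcheck$ and Weyl group $W_\Phi$ of a chosen maximal torus in the standard presentation of $\bfG$, then forming the abstract semidirect product $\Mcheck_\R \rtimes W_\Phi$, and finally identifying it with an explicit subgroup, quotient, or subquotient of $\GL_N(\T)$, where $N$ is the dimension of the defining representation of $\bfG$. The guiding principle, already visible for $\mathbf{GL}_n$, is that $\GL_n(\T)\cong \R^n\rtimes S_n$ is the group of tropical generalized permutation matrices; for the other classical groups (and $\mathrm{G}_2$) one expects the tropical analogue to sit inside $\GL_N(\T)$ as generalized signed permutation matrices compatible with a suitable tropical form.

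For $\SL_n$ the cocharacter lattice is $\{x\in\Z^n : \sum x_i = 0\}$ with Weyl group $S_n$, so $\SL_n^{\trop}$ realizes inside $\GL_n(\T)$ as the kernel of the tropical determinant. For $\PGL_n$ the cocharacter lattice is $\Z^n/\Z(1,\dots,1)$, giving $\PGL_n^{\trop} = \GL_n(\T)/\R\cdot\id$. For $\Sp_{2n}$ and $\SO_{2n+1}$ the chosen maximal torus yields $\Mcheck\cong\Z^n$, and the Weyl group is the hyperoctahedral group $S_n\ltimes(\Z/2)^n$ of signed permutations; I would identify $\Mcheck_\R\rtimes W_\Phi$ with the group of $(2n)\times(2n)$, respectively $(2n+1)\times(2n+1)$, signed tropical permutation matrices, indexing coordinates by $\{\pm 1,\dots,\pm n\}$ (with an extra fixed coordinate in type $B_n$). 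For $\SO_{2n}$ the same construction applies, but one restricts to signed permutations with an even number of sign changes. For $\mathrm{G}_2$, the cocharacter lattice has rank $2$ with $W_\Phi$ the dihedral group of order $12$; via the $7$-dimensional standard representation one presents $\mathrm{G}_2^{\trop}$ as the subgroup of $\GL_7(\T)$ generated by the tropical reflections corresponding to the long and short simple roots.

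The main obstacle is, for types $B_n$, $C_n$, $D_n$ and $\mathrm{G}_2$, pinning down the correct tropical analogue of the bilinear (or, for $\mathrm{G}_2$, the relevant algebraic) structure whose isometry group is claimed to be the matrix group in question, and verifying that its tropical isometries give no more than generalized signed permutations arising from $W_\Phi$. Once the right tropical form and the candidate embedding are chosen, the remaining verification is purely combinatorial: one checks that a tropical generalized signed permutation matrix preserves the form precisely when its underlying permutation lies in $W_\Phi$, and that matrix multiplication in $\GL_N(\T)$ restricted to these matrices reproduces the semidirect product law on $\Mcheck_\R\rtimes W_\Phi$. Both reductions amount to short computations in the semifield $\T$ that mirror the familiar ones for the algebraic groups.
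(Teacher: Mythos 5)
Your strategy is the same as the paper's: compute $\Mcheck$ and $W_\Phi$ from the standard root datum, and realize $\Mcheck_\R\rtimes W_\Phi$ inside $\GL_N(\T)$ as the generalized signed permutation matrices preserving a suitable tropical structure. For types $A$, $B$, $C$, $D$ your proposed realizations agree with the paper's, including the correct observation that $\SO_{2n}(\T)$ is cut out of $\O_{2n}(\T)$ by evenness of the number of sign changes; the paper packages this as the tropical Dickson invariant $D(y_i,-y_i)\odot P_\sigma\mapsto \sgn(\sigma)$, which is needed because the tropical determinant is identically $0$ on orthogonal matrices and so cannot distinguish $\SO$ from $\O$. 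The "main obstacle" you defer is, however, where most of the content lives, and resolving it is not automatic: since $\T$ has no subtraction and no nontrivial roots of unity, the symplectic form matrix $J$ must be replaced by the permutation matrix of the sign involution (both off-diagonal blocks equal to $I_n$), the split quadratic form must be written without polarization, and the converse verifications --- that \emph{every} invertible tropical matrix preserving the form is forced to be a signed permutation of the prescribed type with the prescribed diagonal constraints --- are carried out in the paper by evaluating the form on test vectors all but one or two of whose coordinates are $\infty$. Your proposal asserts these are "short computations" without indicating the mechanism.

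The one concrete gap is $\mathrm{G}_2$. Describing $\mathrm{G}_2^{\trop}$ as the subgroup of $\GL_7(\T)$ \emph{generated} by the tropical reflections attached to the simple roots would, as literally stated, produce only the finite group $D_6$ (one must also include the two-parameter family of diagonal matrices coming from $\Mcheck_\R$), and in any case a presentation by generators is not a matrix description "analogous to the classical one", which is what the theorem claims. The paper instead tropicalizes the generic alternating $3$-form on $k^7$ to a tropical \emph{cubic} form $c$ on $\T^7$ (the passage from trilinear to cubic being forced by the absence of subtraction) and proves, via the combinatorial fact that $\sigma\in S_7$ lies in $D_6$ if and only if it permutes the five triples indexing the monomials of $c$, that the full isotropy group of $c$ is exactly $\R^2\rtimes D_6$. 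That identification of the isotropy group, not the construction of the subgroup, is the substance of Proposition~\ref{prop:tropicalG2}.
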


It would be interesting to determine matrix-theoretic descriptions of the tropical reductive groups associated to the remaining exceptional root systems. 

\subsection*{Principal $\bfG^{\trop}$-bundles on metric graphs}  Let $\Gamma$ be a compact metric graph and denote by $\calH_\Gamma$ the sheaf of continuous real-valued harmonic functions with integer slopes on $\Gamma$. We define a \emph{tropical principal $\bfG^{\trop}$-bundle} as a $(\Mcheck \otimes_\Z {\calH_\Gamma})\rtimes W_\Phi$-torsor on $\Gamma$. In~\cite{2022GrossUlirschZakharov} the authors described an equivalence of the category of tropical principal $\GL_n(\T)$-bundles on $\Gamma$ to the category of free covers $\Ga'\to \Ga$ together with a tropical line bundle on $\Ga'$. In Section \ref{subsec:Gbundlesviacovers}, we provide an explicit description of tropical principal bundles in the classical Lie types as line bundles on covers of $\Gamma$ with suitable Weyl group symmetries together with extra data, generalizing the $\GL_n(\TT)$-case described in~\cite{2022GrossUlirschZakharov}.

\begin{maintheorem}[Corollaries \ref{cor:exact sequence},~\ref{cor:torsors as multi-line bundles with rho-structure}, ~\ref{cor:exact sequence on lattices}, and Example ~\ref{exa:bundlescovers}]
Let $G=\Mcheck_\R\rtimes W_\Phi$ be a tropical reductive group associated to a root datum $\Phi$. Then the category of tropical principal $G$-bundles on $\Gamma$ is equivalent to the category of data consisting of
\begin{itemize}
\item[(i)] a free cover $\Gamma'\to \Gamma$ determined by the associated $W_\Phi$-torsor, and
\item[(ii)] a tropical line bundle on $\Gamma'$ equipped with additional structure reflecting the action of $W_\Phi$.
\end{itemize}
Specifically, for the classical Lie types this equivalence specializes to the following explicit descriptions:
\begin{itemize}
\item[$\GL_n$:] a multi-line bundle $(\Gamma'\!\to\Gamma,L)$, consisting of a free degree $n$ cover $\Gamma'\to\Gamma$ and a tropical line bundle $L$ on $\Ga'$ (\cite[Prop 3.2]{2022GrossUlirschZakharov});
\item[$\SL_n$:] as for $\GL_n$, with a trivialization of the determinant line bundle $\det(L)$;
\item[$\Sp_{2n}$:] multi-line bundles $(\Gamma'\!\to\Gamma,L)$, where $\Gamma'\to\Gamma$ is a degree $2n$ cover and $L$ a tropical line bundle on $\Ga'$ together with a fixed-point-free involution $\iota$ on the cover and a trivialization of  $(L \otimes \iota^{-1}L)/\iota$; 
\item[$\SO_{2n+1}$:] as for $\Sp_{2n}$,
\item[$\SO_{2n}$:] as for $\Sp_{2n}$, with a trivialization of the orientation double cover;
\item[$\mathrm{G}_2$:] a degree $6$ cover $\Gamma'\to \Gamma$ with a locally trivial identification of each fiber with the vertices of the Star of David, a tropical line bundle $L$ on $\Ga'$, and trivializations of $(L \otimes \iota^{-1}L)/\iota$ where the involution $\iota:\Ga'\to \Ga'$ exchanges the opposite vertices in each star, and a $\iota$-invariant trivialization of the line bundle on the domain of the associated $\Sp_2(\TT)$-cover whose fibers correspond to the two triangles.
\end{itemize}

\end{maintheorem}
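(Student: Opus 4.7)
The plan is to decompose the classification in two steps via the split short exact sequence of sheaves of groups on $\Gamma$,
\begin{equation*}
1 \to \check{M}\otimes_\Z \calH_\Gamma \to (\check{M}\otimes_\Z\calH_\Gamma)\rtimes W_\Phi \to \underline{W_\Phi} \to 1.
\end{equation*}
Pushing a $\bfG^{\trop}$-torsor forward along the projection produces a $W_\Phi$-torsor on $\Gamma$, which in turn corresponds to a free topological cover $\Gamma'_{\mathrm{tot}}\to\Gamma$ of degree $|W_\Phi|$ carrying a free $W_\Phi$-action; this gives ingredient (i). For a fixed such cover, the remaining data of the $\bfG^{\trop}$-torsor is a $(\check M\otimes\calH_\Gamma)$-torsor whose semidirect twist recovers the given $W_\Phi$-torsor, and by descent along $\Gamma'_{\mathrm{tot}}\to\Gamma$ this is equivalent to a $(\check M\otimes \calH_{\Gamma'_{\mathrm{tot}}})$-torsor on $\Gamma'_{\mathrm{tot}}$ carrying a $W_\Phi$-equivariant structure. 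Corollary~\ref{cor:exact sequence on lattices} then lets one translate lattice-level short exact sequences into the corresponding sheaf-level ones functorially, controlling how changes of presentation of $\check M$ alter the equivariant torsor picture, which proves the general equivalence.

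The specialization to each classical Lie type proceeds by exploiting the explicit $W_\Phi$-module description of $\check M$ worked out in Section~\ref{sec:reductivegroups}. In each case $\check M$ contains a distinguished $W_\Phi$-orbit $B\subseteq\check M$ which, up to explicit relations, forms a permutation basis; the $W_\Phi$-equivariant torsor on $\Gamma'_{\mathrm{tot}}$ then descends to a single tropical line bundle $L$ on the smaller cover $\Gamma':=\Gamma'_{\mathrm{tot}}/\mathrm{Stab}(b_0)$ of degree $|B|$, and the relations among orbit elements translate into the stated extra trivializations. Concretely, for $\GL_n$ the orbit $B=\{e_1,\dots,e_n\}$ is free, yielding a line bundle on a degree $n$ cover with no further data; for $\SL_n$ the constraint $\sum_i e_i = 0$ on the cocharacter lattice forces the trivialization of $\det L$; for $\Sp_{2n}$ the orbit $B=\{\pm e_1,\dots,\pm e_n\}$ is permuted by $W = S_n\ltimes(\Z/2)^n$, the central element $-\id\in W$ produces the fixed-point-free involution $\iota$, and the relation $e_i+(-e_i)=0$ furnishes the trivialization of $(L\otimes\iota^{-1}L)/\iota$; the same analysis applies verbatim to $\SO_{2n+1}$ since the relevant coroot data coincide; for $\SO_{2n}$ the Weyl group is the index-$2$ subgroup of even sign changes, and the added trivialization is exactly that of the associated orientation double cover.

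The genuinely subtle case is $G_2$, where neither $W$-orbit of coroots is a permutation basis. Here the strategy is to take $B$ to be the $6$-element orbit of short coroots, which together with its negatives arrange as the vertices of the Star of David. The degree $6$ cover $\Gamma'\to\Gamma$ is $\Gamma'_{\mathrm{tot}}/\mathrm{Stab}(b_0)$, the involution $\iota$ comes from $-\id\in W(G_2)$, and the construction of $L$ together with the trivialization of $(L\otimes\iota^{-1}L)/\iota$ parallels the symplectic case. The additional $\Sp_2(\TT)$-bundle datum is extracted from the surjection $W(G_2)\twoheadrightarrow W(A_1)=S_2$ onto the group that exchanges the two triangles of the hexagram, and the $\iota$-invariance of its trivialization encodes the remaining part of the $W(G_2)$-equivariance not captured by the hexagonal cover alone. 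The hard part will be precisely this last verification: matching the lattice-theoretic $W(G_2)$-equivariance to the combined geometric data on $\Gamma'$ together with the auxiliary $\Sp_2$-cover, since the coroot lattice of $G_2$ does not split as a direct sum of permutation modules and so one cannot reduce the $G_2$ case to a formal consequence of the classical types.
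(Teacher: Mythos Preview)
Your approach differs substantially from the paper's. The paper never passes through the full Galois cover $\Gamma'_{\mathrm{tot}}$ of degree $|W_\Phi|$; instead it proves a single gerbe-theoretic result (Proposition~\ref{prop:fiber product of neutral gerbes}) about a sequence $G_1\to G_2\to G_3$ of tropical linear groups, by computing the automorphism sheaf of the trivial object in the target category and checking local triviality. Corollaries~\ref{cor:exact sequence},~\ref{cor:torsors as multi-line bundles with rho-structure},~\ref{cor:exact sequence on lattices} are then immediate specializations, and each classical type in Example~\ref{exa:bundlescovers} is handled by writing down an explicit short sequence of tropical linear groups (e.g.\ $\Sp_{2n}(\TT)\to \RR^{[n]\sqcup -[n]}\rtimes S_n^B\to \GL_n(\TT)$) and reading off the extra structure from the cokernel. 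Your route via equivariant descent to $\Gamma'_{\mathrm{tot}}$ and then quotienting by a point stabilizer is essentially what the paper uses later, in Lemma~\ref{lem:description of torsors as equivariant torsors on cover}, but only to study \emph{moduli spaces}, not to establish the categorical equivalences of Section~2. The gerbe approach buys modularity and avoids the large intermediate cover; yours is more geometric but requires the twisted-equivariance bookkeeping up front.

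There is one genuine gap in your sketch. You produce the fixed-point-free involution $\iota$ on the degree-$2n$ cover from ``the central element $-\id\in W$''. This works for $\Sp_{2n}$ and $\SO_{2n+1}$, where $-\id\in S_n^B$, but fails for $\SO_{2n}$ with $n$ odd: then $-\id\notin S_n^D$, yet the statement still asserts an involution $\iota$ on the cover. In the paper's framework $\iota$ is not an element of $W$ acting on the torsor at all; it is part of the $\rho$-cover structure, i.e.\ the datum $\xi_x\in\Bij([\pm n],\Gamma'_x)/W$ transports the sign involution on the index set $[\pm n]$ to each fiber, independently of whether $-\id$ lies in $W$. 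Your ``orbit $B$ forming a permutation basis up to relations'' should likewise be rephrased as an explicit $W$-equivariant embedding $\check M\hookrightarrow \ZZ^B$ into a permutation lattice; the involution and the extra trivializations then come from the cokernel of this embedding, exactly as in the paper's sequences, rather than from a central element of $W$.
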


Let $G=\Mcheck_\R\rtimes W$ be a tropical reductive group associated to a root datum $\Phi$, where $W = W_\Phi$ is the Weyl group. The moduli space $\calM_G(\Ga)$ of isomorphism classes of $G$-bundles on $\Ga$ decomposes as a finite disjoint union 
\[
\calM_G(\Ga)=\coprod_{\tau\in \calM_{W}(\Ga)}\calM_{G,\tau}(\Ga)
\]
indexed by the isomorphism type of the associated $W$-torsor. For a $W$-torsor $\tau$ on $\Gamma$ we show that $\mc M_{G,\tau}$ is the quotient of a disjoint union of torsors under tropical abelian varieties by the finite group $\Aut(\tau)$ (see Theorem \ref{thm:description of component of tropical moduli space} below). In the case where $\tau = W_{\Ga}$ is the trivial $W$-torsor on $\Ga$, we obtain $\mc M_{G,W_\Gamma} \cong \big(\Pic(\Gamma)\otimes_\Z \Mcheck\big)/W$ (see Proposition \ref{prop:fiber over trivial W-torsor} below), which allows classifying $G$-bundles on metric graphs of genus zero in analogy with the classical theorems of Grothendieck \cite{Grothendieck_vectorbundlesonP1} (for vector bundles) and Harder \cite{1968Harder} (in general). We refer the reader to Example \ref{exa:bundlestrees} below for details. 

\subsection*{Tropicalization of $\bfG$-bundles} In~\cite{2022GrossUlirschZakharov} we observed that $\GL_n(\TT)$ should be viewed as an incomplete tropicalization of $\mathbf{GL}_n$, since, for example, the former has dimension $n$ while the latter has dimension $n^2$. For this reason, we cannot expect the moduli space of principal $\GL_n(\TT)$-bundles on the skeleton $\Gamma_X$ of an algebraic curve $X$ to be the tropicalization of the moduli space of $\mathbf{GL}_n$-bundles on $X$. The same problem exists for almost all other reductive groups. Nonetheless, it turns out that the tropicalization map is defined for semistable bundles on an elliptic curve.

Let $X$ be a Tate elliptic curve over an algebraically closed and complete non-Archimedean field $K$ of equicharacteristic $0$. \Fratila~  \cite{fratila2016stack,Fratila_revisitingelliptic} provides an explicit description of the moduli spaces $\mc M_\mathbf{G}^{\lambdacheck_\mathbf{G},\mathrm{st}}(X)$ and $\mc M_\mathbf{G}^{\lambdacheck_\mathbf{G},\mathrm{ss}}(X)$ of stable and semistable $\mathbf{G}$-bundles of degree $\lambdacheck_\bfG \in \pi_1(\bfG)$ on $X$, respectively. We recall this description, which generalizes work of Atiyah \cite{Atiyah_ellipticvectorbundles}, Tu \cite{Tu_semistablebundles}, and Laszlo \cite{Laszlo_elliptic}, in Section \ref{sec:Degree and stability in the algebraic setting}. We prove an analogous tropical statement describing the moduli spaces of semistable and stable $\bfG^\trop$-bundles on a metric circle, see Section \ref{sec:moduliofbundles}. Finally, in Section~\ref{sec_tropicalization}, we tropicalize stable $\bfG$-bundles on $X$ by reducing them to $N_\bfG(\bfT)$-bundles, where $N_\bfG(\bfT) \subseteq \bfG$ is the normalizer of a fixed maximal torus $\bfT$ of $\bfG$, and semistable bundles by passing to a Levi subgroup. Our main result can be summarized as follows:

\begin{maintheorem}[Theorem \ref{thm:tropical semistable bundles skeleton of semistable bundles}]
\label{mainthm_skeleton=tropicalmodulispace}
    Let $X$ be a Tate elliptic curve over an algebraically closed and complete non-Archimedean field $K$ of equicharacteristic $0$, so that we have a non-Archimedean uniformization $X^{\an}=\G_m^{\an}/q^\Z$ and the minimal skeleton of $X^{\an}$ is given by the metrized circle $\Gamma_X=\R/\val(q)\Z$. Moreover, let $\mathbf{G}$ be a reductive algebraic group over $K$ and denote by $\mc M_{\mathbf{G}}^{\lambdacheck,\semist}(X)$ the moduli space of semistable principal $\mathbf{G}$-bundles of degree $\lambdacheck\in\pi_1(\mathbf{G})$.

    There is a natural continuous tropicalization map $\Trop: (\mc M_{\mathbf{G}}^{\lambdacheck,\semist}(X))^\an \rightarrow \mc M_{\bfG^\trop,\ind}^{\lambdacheck}(\Ga_X)$ together with a homeomorphism between the moduli space $\mc M_{\bfG^\trop,\ind}^{\lambdacheck}(\Ga_X)$ of indecomposable principal $\bfG^{\trop}$-bundles on $\Ga_X$ of degree $\lambdacheck$ and the essential skeleton $\Sigma(\mc M_{\mathbf{G}}^{\lambdacheck,\semist}(X))$ of $(\mc M_{\mathbf{G}}^{\lambdacheck,\semist}(X))^\an$ that makes the diagram
\begin{equation*}
    \begin{tikzcd}
    & 
        \Sigma(\mc M_{\mathbf{G}}^{\lambdacheck,\semist}(X))\arrow[dd,"\cong"] \\
    (\mc M_{\mathbf{G}}^{\lambdacheck,\semist}(X))^\an \arrow[ur,"\rho"]\arrow[dr,"\Trop"]&
        & \\
    &
        \mc M_{\bfG^\trop,\ind}^{\lambdacheck}(\Ga_X)\\
    \end{tikzcd}
\end{equation*}
    commute.
\end{maintheorem}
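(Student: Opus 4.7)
The plan is to exploit \Fratila's explicit description of $\mc M_\bfG^{\lambdacheck,\semist}(X)$ together with the reduction scheme outlined in the paper: a semistable $\bfG$-bundle reduces to a Levi subgroup $\bfL\subseteq \bfG$, and an indecomposable such bundle reduces further to an $N_\bfL(\bfT)$-bundle. By the previous main theorem, $N_\bfL(\bfT)$-bundles on $X$ are classified by pairs $(\tau,L)$, where $\tau$ is a $W_\bfL$-torsor (equivalently, an \'etale cover $X_\tau\to X$) and $L$ is a line bundle on $X_\tau$ together with extra structure reflecting the action of $W_\bfL$. Consequently each component of $\mc M_\bfG^{\lambdacheck,\semist}(X)$ is realized as a finite quotient of a torsor under a product of copies of $\Pic^0(X_\tau)$ by the group $\Aut(\tau)$.

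I would then construct $\Trop$ componentwise. The base case $\bfG=\bfT$ reduces to coordinatewise torus tropicalization applied to the Tate-uniformized moduli $X\otimes_\Z \Mcheck = (\G_m^\an\otimes \Mcheck)/(q^\Z\otimes \Mcheck)$, yielding a continuous surjection onto $(\R\otimes \Mcheck)/(\val(q)\Z\otimes \Mcheck)$, which is at the same time the essential skeleton of the source and the tropical moduli space of $\bfT^\trop$-bundles of degree $\lambdacheck$ on $\Ga_X$. For general $\bfG$, a bundle coming from a pair $(\tau,L)$ is tropicalized by passing to the skeleton $\Ga_{X_\tau}\to\Ga_X$ of $X_\tau^\an$ (a $W_\bfL$-cover of $\Ga_X$), applying the torus construction to $L$, and invoking the previous main theorem to package the result as a $\bfG^\trop$-bundle on $\Ga_X$. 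This assignment descends to the $\Aut(\tau)$-quotients and glues across \Fratila's components into a continuous map on $(\mc M_\bfG^{\lambdacheck,\semist}(X))^\an$.

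To identify this map with the essential skeleton retraction, I would apply the same componentwise analysis. Each component of the semistable moduli is a finite quotient of a torsor under a product of Tate abelian varieties, whose essential skeleton is computed as the corresponding finite quotient of products of tropical abelian varieties; by Section~\ref{sec:moduliofbundles} this is precisely the matching component of $\mc M_{\bfG^\trop,\ind}^{\lambdacheck}(\Ga_X)$. Commutativity with $\rho$ is then automatic, since tropicalization of a Tate abelian variety factors through its skeleton retraction, and this property is preserved under products and finite quotients.

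The main obstacle lies in globalizing the componentwise construction. Two points demand care: first, that the essential skeleton is compatible with the quotients by $\Aut(\tau)$, which reduces to the standard statement that essential skeleta of Tate abelian varieties are compatible with finite \'etale group quotients in equicharacteristic zero; and second, that $\Trop$ collapses $S$-equivalence to isomorphism on the indecomposable tropical side in a continuous fashion across \Fratila's stratification. The latter reduces to the fact that any two bundles in the same $S$-equivalence class admit the same combinatorial reduction to $N_\bfL(\bfT)$, and hence tropicalize to the same $\bfG^\trop$-bundle, as one verifies by unwinding \Fratila's parametrization and combining it with the classification of indecomposable tropical bundles from Section~\ref{sec:moduliofbundles}.
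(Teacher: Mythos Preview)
Your overall strategy is in the right spirit---reduce to the Levi $\bfL$ via \Fratila, then to $N_\bfL(\bfT)$-bundles, tropicalize via the torus case on covers, and finally pass to a finite quotient---and this matches the architecture of the paper's argument. However, there is a genuine gap, and some residual confusion.

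The gap is in your handling of the finite quotient. You claim that compatibility of the essential skeleton with the relevant quotient ``reduces to the standard statement that essential skeleta of Tate abelian varieties are compatible with finite \'etale group quotients.'' But the $W_{\bfL,\bfG}=N_\bfG(\bfL)/\bfL$-action on $\mc M_\bfL^{\lambdacheck_\bfL,\st}(X)$ is only \emph{generically} free (this is the meaning of ``generically Galois'' in \Fratila's theorem), so the quotient map is ramified and not \'etale. For instance, already for $\bfG=\bGL_2$ and degree zero, $S_2$ swaps the two factors of $\Pic^0(X)^2$ and fixes the diagonal. The paper addresses this by observing that $\mc M_\bfL^{\lambdacheck_\bfL,\st}(X)$, being a product of elliptic curves, has trivial canonical bundle, so one can produce $W_{\bfL,\bfG}$-invariant pluricanonical forms and invoke the Brown--Mazzon result on essential skeleta of finite quotients; this is the substantive input you are missing.

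Separately, your references to ``\Fratila's components,'' a ``stratification,'' and the need to ``collapse $S$-equivalence'' are misplaced: for fixed $\lambdacheck$, the moduli space $\mc M_\bfG^{\lambdacheck,\semist}(X)$ is a single irreducible variety (the $W_{\bfL,\bfG}$-quotient of a product of elliptic curves), not a union of strata over different $W$-torsors, and no additional identification along $S$-equivalence is needed on top of \Fratila's description. The relevant $W_\bfL$-torsor is uniquely the indecomposable one $\ind$. Finally, note a structural difference: rather than working with the cover $\Gamma_{X_\tau}$ and its Picard directly, the paper first reduces the \emph{stable} case to the torus $Z^c(\bfL)$ via the determinant, which is an isomorphism on both the algebraic and the tropical side (Theorems~\ref{thm:Fratilla determinant is isomorphism} and~\ref{thm:determinant is isomorphism for tropical moduli of stable bundles}); only then does it take the $W_{\bfL,\bfG}$-quotient to reach the semistable case.
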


Theorem \ref{mainthm_skeleton=tropicalmodulispace} fits into a sequence of results establishing relationships between tropical moduli spaces and non-Archimedean skeletons/tropicalizations of their algebraic counterparts that started with ~\cite{BakerRabinoff} in the case of the Jacobian of an algebraic curve and ~\cite{ACP} for the moduli space of curves. It generalizes \cite[Theorem D]{2022GrossUlirschZakharov}, which covers the case of vector bundles, i.e. the case $\bfG=\bGL_n$.

\subsection*{Further discussion and remarks}

We expect that, in order to generalize Theorem \ref{mainthm_skeleton=tropicalmodulispace} to moduli spaces of semistable bundles on Mumford curves of higher genus, we will need a more refined theory of tropical principal bundles than the one proposed in this article. The underlying deeper reason for this seems to be that the tropical reductive groups proposed here are relatively sparse matrix groups, so that there is no good way to directly tropicalize a reductive algebraic group $\bfG$ onto its tropical counterpart $\bfG^{\trop}$. For example, the dimension of $\bfG^{\trop}$ is usually strictly less than that of $\bfG$, and the same holds for the dimensions of the corresponding moduli spaces. On a Tate curve, strong classification results for algebraic principal bundles on elliptic curves allow us to circumvent this problem.

In \cite{GKUZ}, the authors expand on the elementary framework of tropical vector bundles developed in \cite{2022GrossUlirschZakharov} and show in \cite[Theorem B]{GKUZ} how the essential skeletons of the moduli spaces of (semi-)homogeneous bundles (in the sense of \cite{Mukai}) on abelian varieties with maximally degenerate reduction can be identified with suitable moduli spaces of tropical semi-homogeneous vector bundles on the tropicalized abelian variety. We believe that a common generalization of Theorem \ref{mainthm_skeleton=tropicalmodulispace} and \cite[Theorem B]{GKUZ} to homogeneous principal bundles on abelian varieties is, in principle, possible. Thanks to the comparative lack of moduli-theoretic classification results on the classical side, this could, however, turn out to be technically quite demanding. 

In two parallel articles \cite{KhanMaclagan_tropicaltoricvectorbundles} and \cite{KavehManon_tropicaltoricvectorbundles}  Khan and Maclagan as well as Kaveh and Manon propose a seemingly quite different approach to the tropical geometry of vector bundles. Their central idea is a definition that abstracts the combinatorial data coming from Klyachko's classification of toric vector bundles. By its very nature, this approach leads to rather satisfying results when studying the tropicalization of toric vector bundles on toric varieties (and restrictions thereof to subvarieties of toric varieties). At this point, the framework proposed in \cite{KavehManon_tropicaltoricvectorbundles} and \cite{KhanMaclagan_tropicaltoricvectorbundles} does not seem to be able to encode monodromy phenomena on tropical varieties with nontrivial fundamental group and, hence, seems to lead to results different from ours in the case of the Tate curve. The generalization of Klyachko's classification to the setting of torus-equivariant principal bundles on toric varieties (see e.g. \cite{KavehManon_toricprincipalbundles}) could be the starting point of a satisfying theory of tropical toric principal bundles that generalizes 
\cite{KhanMaclagan_tropicaltoricvectorbundles} and \cite{KavehManon_tropicaltoricvectorbundles}. 

An alternative approach to understand the nature of tropical (vector) bundles might arise from the work of Kennedy-Hunt and Ranganathan \cite{KennedyHuntRanganathan_logQuot} on the construction of logarithmic Quot schemes, where the authors build upon ideas introduced in \cite{MolchoWise} for the logarithmic Picard variety. The central objects in \cite{KennedyHuntRanganathan_logQuot} are coherent sheaves on suitable logarithmic modifications of a given logarithmic curve. The combinatorial shadow of a generalization of the chip-firing equivalence of line bundles (see \cite[Sect. 2 and 3]{BakerJensen}) would be another contender for an object that could be named "tropical vector bundle". 

Essential skeletons of non-Archimedean analytic spaces were introduced and studied in \cite{MustataNicaise,NicaiseXu, NicaiseXuYue} in order to make precise ideas of Kontsevich and Soibelman \cite{KontsevichSoibelman} for a non-Archimedean approach to the SYZ-fibration in mirror symmetry. Our Theorem \ref{mainthm_skeleton=tropicalmodulispace} may be seen as an explicit example of a non-Archimedean SYZ-fibration. Our approach is indebted to the results in \cite{BrownMazzon}, which allow us to study the behaviour of essential skeletons of finite group quotients.

\subsection*{Acknowledgments} We thank Luca Battistella, Kiumars Kaveh, Bivas Khan, Oliver Lorscheid, Chris Manon, Diane Maclagan, and Dhruv Ranganathan for helpful conversations and interactions during the creation of this article. 

\subsection*{Funding} This project has received funding from the Deutsche Forschungsgemeinschaft (DFG, German Research Foundation) TRR 326 \emph{Geometry and Arithmetic of Uniformized Structures}, project number 444845124, and TRR 358 \emph{Integral Structures in Geometry and Representation Theory}, project number 491392403, as well as from the Deutsche Forschungsgemeinschaft (DFG, German Research Foundation) Sachbeihilfe \emph{From Riemann surfaces to tropical curves (and back again)}, project number 456557832, the DFG Sachbeihilfe \emph{Rethinking tropical linear algebra: Buildings, bimatroids, and applications}, project number 539867663, within the  
SPP 2458 \emph{Combinatorial Synergies}, and the Marie-Sk\l{}odowska-Curie-Stipendium Hessen (as part of the HESSEN HORIZON initiative).

\section{Tropical reductive groups} \label{sec:reductivegroups}

In this section, we describe tropical versions of the classical reductive groups by means of canonical real extension of the corresponding Weyl groups. We show that these groups have natural descriptions as matrix groups over the tropical semifield. The theory developed in this section may be seen as a generalization of an analogy proposed by Tits \cite{Tits_analogy}, namely that Weyl groups should be seen as analogues of the classical groups over the field $\mathbb{F}_1$ with one element. In a certain sense, we obtain the corresponding tropical reductive groups by a base change to $\TT$. While we do not explicitly make use of any of the various approaches to $\mathbb{F}_1$-geometry, our treatment is informed and inspired by the work of Lorscheid in \cite{Lorscheid_blueprintsII}, which provides a theoretical foundation for Tits' analogy using the perspective of blueprints, as introduced in \cite{Lorscheid_blueprintsI}. For more background on tropical matrix groups we also refer the reader to \cite{IJK_tropicalmatrixgroups}.

\subsection{Root systems and tropical reductive groups} 
According to a classical result of Chevalley, split reductive algebraic groups over a fixed field are classified by their root data (for example, see Theorems~9.6.2 and~10.1.1 in~\cite{Springer_linalggroups}). We recall the definition.

\begin{definition}
\label{def:rootdatum}
A \emph{root datum} is a quadruple $\Phi=(M,R,\Mcheck,\Rcheck)$ consisting of 
\begin{itemize}
    \item free abelian groups $M$ and $\Mcheck$ of finite rank with a duality pairing $\langle\cdot,\cdot\rangle:M\times \Mcheck\to \ZZ$, and
    \item finite subsets of \emph{roots} $R\subseteq M$ and \emph{coroots} $\Rcheck\subseteq \Mcheck$ together with a bijection $\check{(\cdot)}\colon R\rightarrow \Rcheck$
\end{itemize}
subject to the following two axioms:
\begin{enumerate}[(i)]
    \item For all $\alpha\in R$ we have $\langle \alpha,\alphacheck\rangle=2$.
    \item The reflection homomorphisms $s_\alpha\colon M\rightarrow M$ and $s_{\check\alpha}\colon \Mcheck\rightarrow \Mcheck$ given by 
    \begin{equation*}
        u\longmapsto u-\langle u,\alphacheck\rangle \alpha \qquad \textrm{and} \qquad v\longmapsto v-\langle \alpha,v\rangle \alphacheck
    \end{equation*}
    satisfy
    \begin{equation*}
        s_\alpha(R)=R \qquad \textrm{and} \qquad s_{\check\alpha}(\Rcheck)=\Rcheck
    \end{equation*}
    for all $\alpha\in R$.
\end{enumerate}

A root datum $\Phi=(M,R,\Mcheck,\Rcheck)$ is said to be \emph{reduced}, if for all $\alpha\in R$ we have $2\alpha\notin R$. From now on, all of our root data will be assumed to be reduced and the term \emph{root datum} will mean a \emph{reduced root datum}.
\end{definition}

The \emph{Weyl group} $W_\Phi$ of the root datum $\Phi$ is the (necessarily finite) automorphism group of $M$ generated by the reflections $s_\alpha$ for all $\alpha\in R$. The action of $W_\Phi$ on the lattice $M$ defines a dual action on the dual lattice $\Mcheck$, which extends to the vector space $\Mcheck_{\RR}=\Mcheck\otimes_{\ZZ}\RR$.

\begin{definition} The \emph{tropical reductive group} associated to the root datum $\Phi=(M,R,\Mcheck,\Rcheck)$ is the semidirect product $G_{\Phi}=\Mcheck_\R\rtimes W_{\Phi}$. 
\label{def:tropicalreductivegroup}
\end{definition}

We emphasize that the Weyl group $W_{\Phi}$ of a root datum $\Phi$ is not defined as an abstract group, but is explicitly presented via its action on the lattice $M$. We use this presentation to construct our tropical reductive group, hence we do not simply associate a tropical object to an abstract group.

We now define homomorphisms of tropical reductive groups.

\begin{definition} Let $G_{\Phi_1}=\Mcheck_{1,\RR}\rtimes W_{\Phi_1}$ and $G_{\Phi_2}=\Mcheck_{2,\RR}\rtimes W_{\Phi_2}$ be tropical reductive groups associated to root data $\Phi_1=(M_1,R_1,\Mcheck_1,\Rcheck_1)$ and $\Phi_2=(M_2,R_2,\Mcheck_2,\Rcheck_2)$, respectively. Let $f:\Mcheck_1\to \Mcheck_2$ be a $\ZZ$-linear homomorphism and let $\phi:W_{\Phi_1}\to W_{\Phi_2}$ be a group homomorphism such that for any $m \in \Mcheck_1$ and any $g\in W_{\Phi_1}$ we have $\phi(g)(f(m))=f(g(m))$. The pair $(f,\phi)$ defines a \emph{homomorphism of tropical reductive groups}
\begin{equation*}\begin{split}
    F:\Mcheck_{1,\RR}\rtimes W_{\Phi_1}&\longrightarrow \Mcheck_{2,\RR}\rtimes W_{\Phi_2}\\
    (m,g)&\longmapsto(f(m),\phi(g)).
\end{split}\end{equation*}
\label{def:homomorphism}
\end{definition}

\subsection{Type $A_n$: the tropical general, special, and projective linear groups} We now calculate the tropical reductive groups associated to the classical root data and show that they admit natural descriptions as matrix groups over the tropical semifield $\TT$. We start with type $A_n$.

Recall that $\TT=\RR\cup \{\infty\}$ with operations
\[
x\oplus y=\min(x,y)\quad \text{and}\quad x\odot y=x+y.
\]
The additive and multiplicative identities are $\infty$ and $0$, respectively. We note that $\TT$ contains no nontrivial roots of unity,
\[
\mu_n(\TT)=\{x\in \TT: x^{\odot n}=0\}=\{x\in \TT: nx=0\}=\{0\},
\]
hence the element $0$ plays the role of both $+1$ and $-1$. The semifield operations on $\TT$ extend to a matrix product on the set $\Mat(n\times n,\TT)$ of $(n\times n)$-matrices with entries in $\TT$: 
\[
(A\odot B)_{ij}=\bigoplus_{k=1}^n a_{ik}\odot b_{kj}.
\]
The multiplicative identity in $\Mat(n\times n,\TT)$ is the matrix
\[
I_n=\left(\begin{array}{cccc}
    0 & \infty & \cdots & \infty  \\
    \infty & 0 & \cdots & \infty  \\
    \cdots & \cdots & \cdots & \cdots  \\
    \infty & \infty & \cdots & 0  
\end{array}
\right).
\]

We first describe the \emph{tropical general linear group}, which is the group of invertible elements in $\Mat(n\times n,\TT)$. Allermann shows (see Lemma 1.4 in~\cite{TropChern}) that these elements are the products of diagonal and permutation matrices:
\[
\begin{split}
\GL_n(\TT)&=\{A\in \Mat(n\times n,\TT): A\odot A^{-1}=A^{-1}\odot A=I_n\mbox{ for some }A^{-1}\in \Mat(n\times n,\TT)\}\\
&=\{ D(y_1,\ldots,y_n)\odot P_{\sigma}:y_1,\ldots,y_n\in \RR,\sigma\in S_n\}\\
&=\RR^n\rtimes S_n.
\end{split}
\]
Here $D(y_1,\ldots,y_n)$ is the tropical diagonal matrix with finite entries $y_1,\ldots,y_n\in \RR$ on the diagonal and $\infty$ everywhere else, and $P_{\sigma}$ for $\sigma\in S_n$ is the tropical permutation matrix 
\[
(P_{\sigma})_{ij}=\begin{cases}
    0, & \mbox{if }i=\sigma(j),\\
    \infty, & \mbox{otherwise.}
\end{cases}
\]

To define $\SL_n(\TT)$, we recall that the \emph{tropical determinant}~\cite{maclagan2015introduction} of a matrix $A\in \Mat(n\times n,\TT)$ is 
\[
\det A=\bigoplus_{\sigma\in S_n} A_{1\sigma(1)}\odot\cdots\odot A_{n\sigma(n)}.
\]

We note that the tropical determinant is the same as the tropical permanent, because both $+1$ and $-1$ tropicalize to $0$ in $\TT$. The determinant of an invertible matrix is finite (the converse is not true in general), is equal to the sum of the finite entries, and restricts to a homomorphism $\det:\GL_n(\TT)\to \RR=\TT^\ast$ given by
\[
\det (D(y_1,\ldots,y_n)\odot P_{\sigma})=y_1+\cdots+y_n.
\]

We now define the \emph{tropical special linear group} as
\[\begin{split}
\SL_n(\TT)&=\{A\in \GL_n(\TT): \det A=0\}\\
&=\{D(y_1,\ldots,y_n)\odot P_{\sigma}:y_i\in \RR,y_1+\cdots+y_n=0,\sigma\in S_n\}\\ &=\RR^n_0\rtimes S_n.
\end{split}\]
Finally, we define the \emph{tropical projective linear group} as the quotient of $\GL_n(\TT)$ by its center, which is the subgroup of scalar matrices:
\[
\PGL_n(\TT)=\GL_n(\TT)/\TT^*=(\RR^n/\RR)\rtimes S_n. 
\]

We now recall the root data of $\GL_n$, $\SL_n$, and $\PGL_n$. 
The root datum $(M,R,\Mcheck,\Rcheck)$ of $\GL_n$ has lattices $M=\Mcheck=\ZZ^n$ with the standard pairing, and the roots and coroots are $R=\Rcheck=\{e_i-e_j:i\neq j\}$, where the $e_i$ are the standard basis vectors. The root datum of $\SL_n$ has the same roots and coroots, but the lattices are $M=\ZZ^n/(e_1+\cdots+e_n)\ZZ$ and $\Mcheck=\ZZ^n_0$, where $\ZZ^n_0\subseteq \ZZ^n$ is the set of vectors whose coordinates sum to zero. Finally, the root datum of $\PGL_n$ is the same as for $\SL_n$, but with the lattices exchanged. Reflection through $e_i-e_j$ exchanges the $i$th and $j$th coordinates and fixes the rest, so the Weyl group in all three cases is the symmetric group $S_n$ acting by permutation matrices. 

We therefore obtain the following result.

\begin{proposition} The tropical reductive groups associated to the root data of $\GL_n$, $\SL_n$, and $\PGL_n$ are respectively $\GL_n(\TT)$, $\SL_n(\TT)$, and $\PGL_n(\TT)$.
\label{prop:GLSLPGL}
\end{proposition}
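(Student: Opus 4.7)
The proof is essentially a matching exercise between two presentations of the same semidirect product, carried out in each of the three cases. The plan is as follows.

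First I would fix notation and isolate the key computational identity on the matrix side. For $y=(y_1,\ldots,y_n)\in\RR^n$ and $\sigma\in S_n$, a direct computation with the definitions of $D(y_1,\ldots,y_n)$ and $P_\sigma$ shows that
\begin{equation*}
P_\sigma\odot D(y_1,\ldots,y_n)\odot P_\sigma^{-1}=D(y_{\sigma^{-1}(1)},\ldots,y_{\sigma^{-1}(n)}),
\end{equation*}
so that conjugation by $P_\sigma$ realizes the standard permutation action of $S_n$ on $\RR^n$. Combined with Allermann's description $\GL_n(\TT)=\{D(y)\odot P_\sigma:y\in\RR^n,\sigma\in S_n\}$ recalled in the text, this identifies $\GL_n(\TT)$ with the external semidirect product $\RR^n\rtimes S_n$ where $S_n$ acts by permuting coordinates, via the map $(y,\sigma)\mapsto D(y)\odot P_\sigma$.

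Next I would match this with the tropical reductive group side, case by case. For $\GL_n$: the cocharacter lattice is $\Mcheck=\ZZ^n$, so $\Mcheck_\RR=\RR^n$, and reflection $s_{e_i-e_j}$ on $\Mcheck$ swaps the $i$th and $j$th standard basis vectors and fixes the rest, so $W_\Phi=S_n$ acting by coordinate permutation. Hence $G_\Phi=\RR^n\rtimes S_n$ with exactly the same action as above, and the identification with $\GL_n(\TT)$ is immediate. For $\SL_n$: the cocharacter lattice is $\Mcheck=\ZZ^n_0$, and tensoring with $\RR$ gives the hyperplane $\RR^n_0$ of coordinate-sum zero, which is $S_n$-stable. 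The action of $W_\Phi=S_n$ is again coordinate permutation, and under the identification $(y,\sigma)\mapsto D(y)\odot P_\sigma$ the subgroup $\RR^n_0\rtimes S_n$ is exactly the kernel of $\det:\GL_n(\TT)\to\RR$, giving $G_\Phi\cong\SL_n(\TT)$. For $\PGL_n$: the cocharacter lattice is $\Mcheck=\ZZ^n/(e_1+\cdots+e_n)\ZZ$, so $\Mcheck_\RR=\RR^n/\RR$, with the induced permutation action of $W_\Phi=S_n$, giving $G_\Phi=(\RR^n/\RR)\rtimes S_n=\PGL_n(\TT)$; the quotient map $\GL_n(\TT)\to\PGL_n(\TT)$ corresponds precisely to quotienting $\RR^n\rtimes S_n$ by the central subgroup of scalar diagonal matrices.

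The main potential obstacle is really just a bookkeeping one: making sure the semidirect product conventions match on both sides, in particular whether elements are written as $(y,\sigma)$ with multiplication $(y,\sigma)(y',\sigma')=(y+\sigma(y'),\sigma\sigma')$ or the opposite order. This is settled by the matrix conjugation identity above; once that identity is in place, the three statements reduce to reading off $\Mcheck$ and confirming that the Weyl group acts by the standard permutation representation. No further nontrivial work is needed.
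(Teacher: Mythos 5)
Your proposal is correct and follows essentially the same route as the paper: recall the root data of $\GL_n$, $\SL_n$, $\PGL_n$, observe that the Weyl group is $S_n$ acting by coordinate permutation on $\Mcheck_\RR$, and match the resulting semidirect products with the matrix descriptions $\GL_n(\TT)=\RR^n\rtimes S_n$, $\SL_n(\TT)=\RR^n_0\rtimes S_n$, $\PGL_n(\TT)=(\RR^n/\RR)\rtimes S_n$ established earlier via Allermann's lemma. Your explicit conjugation identity $P_\sigma\odot D(y)\odot P_\sigma^{-1}=D(y_{\sigma^{-1}(1)},\ldots,y_{\sigma^{-1}(n)})$ is a welcome bookkeeping check that the paper leaves implicit, but it does not change the argument.
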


This proposition explains why we define the tropical reductive group of a root datum $\Phi=(M,R,\Mcheck,\Rcheck)$ as $\Mcheck_{\RR}\rtimes W_{\Phi}$ and not $M_{\RR}\rtimes W_{\Phi}$: exchanging $M$ and $\Mcheck$ would exchange $\SL_n(\TT)$ and $\PGL_n(\TT)$. We now consider two basic examples of homomorphisms of tropical reductive groups. 

\begin{example} Let $f:\ZZ^n\to \ZZ$ and $\phi:S_n\to S_1$ be respectively the sum map and the trivial map. Then the induced homomorphism of tropical reductive groups $F=(f,\phi):\GL_n(\TT)\to \GL_1(\TT)=\RR$ is the tropical determinant.
\end{example}

\begin{example} Let $\ZZ^n_{0}$, $\ZZ^n$, and $\ZZ^n/\ZZ(1,\ldots,1)$ be the cocharacter lattices of $\SL_n$, $\GL_n$, and $\PGL_n$, respectively. The canonical maps $\ZZ^n_{0}\to\ZZ^n\to\ZZ^n/\ZZ(1,\ldots,1)$ and the trivial maps on $S_n$ induce homomorphisms
\[
\SL_n(\TT)\to \GL_n(\TT)\to \PGL_n(\TT)
\]
of tropical reductive groups. We note that the composed map $\SL_n(\TT)\to \PGL_n(\TT)$ is an isomorphism of abstract groups but not an isomorphism of tropical reductive groups, since the lattice map $\ZZ^n_{0}\to\ZZ^n/\ZZ(1,\ldots,1)$ is not surjective. This may be seen as a characteristic one shadow of the fact that for an algebraically closed field $k$ of characteristic $p$, the map $\SL_p(k)\to \PGL_p(k)$ is a bijection on the $k$-points, but not an isomorphism of schemes.
\end{example}

\subsection{Type $C_n$: the tropical symplectic group} We now define the tropical symplectic group in complete analogy with the algebraic setting, which we now recall (see Section~5.3.3 in~\cite{Lorscheid_blueprintsII}). Let $J$ be the $2n\times 2n$ block matrix with off-diagonal blocks $I_n$ and $-I_n$ and zero diagonal blocks. For any ring $R$, the set $\Sp_{2n}(R)$ of $R$-rational points of the symplectic group is the set of $2n\times 2n$-matrices $A$ with entries in $R$ satisfying $A^tJA=J$.

In the semifield $\TT$, $0$ plays the role of both $1$ and $-1$, hence we replace $J$ with the matrix 
\[
J=\left(\begin{array}{cc}
    \infty & I_n \\
     I_n & \infty
\end{array}\right)
\]
and make the following definition.

\begin{definition} The \emph{tropical symplectic group} is
\[
\Sp_{2n}(\TT) = \{A \in \GL_{2n}(\TT):  A^t \odot J \odot A=J  \}.
\]
    
\end{definition}

We now give an explicit description of $\Sp_{2n}(\TT)$. We label the columns of a $2n\times 2n$-matrix using the index set $[\pm n]=\{1,\ldots,n,-1,\ldots,-n\}$, which carries the fixed-point-free sign involution
\begin{equation*}
\iota:[\pm n]\to [\pm n] \quad \text{given by} \quad \iota(k)=-k.
\end{equation*}
In terms of this identification, the matrix $J=P_{\iota}$ is the tropical permutation matrix associated to $\iota$. We recall that the \emph{signed permutation group} $S^B_n=S^C_n$ is the set of permutations of $[\pm n]$ commuting with $\iota$: 
\[
S^B_n = \big\{\sigma\in S_{2n}: \sigma(-k) = - \sigma(k)\mbox{ for all }k\in [\pm n ] \big\} \subseteq S_{2n}.
\]
An element of $S_n^B$ permutes the set of pairs $\{1,-1\},\ldots,\{n,-n\}$ and acts inside each pair, hence $S_n^B$ is an extension of $S_n$ by $(\ZZ/2\ZZ)^n$. 

\begin{proposition} The tropical symplectic group is the semidirect product
\[
\Sp_{2n}(\T) = \big\{D(y_1,\ldots,y_n,-y_1,\ldots,-y_n)\odot P_{\sigma}
:\sigma \in S_n^B, y_1,\ldots,y_n \in \RR\big\}=\R^n\rtimes S_n^B.
\]
\end{proposition}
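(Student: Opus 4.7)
The plan is to carry out a direct calculation using the general description $\GL_{2n}(\TT)=\R^{2n}\rtimes S_{2n}$ from the preceding subsection, together with the identification $J=P_\iota$ for the sign involution $\iota\in S_{2n}$. Since any $A\in\Sp_{2n}(\TT)$ is in particular invertible, I may write $A=D(y)\odot P_\sigma$ for some $y\in\R^{2n}$ (indexed by $[\pm n]$) and some $\sigma\in S_{2n}$. The goal is to show that the symplectic condition $A^t\odot J\odot A=J$ translates precisely into $\sigma\in S_n^B$ and $y_{-k}=-y_k$ for all $k\in[n]$, which is the asserted description.

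First, I would record the two semidirect product formulas that drive the computation: the product rule
\[
\bigl(D(y)\odot P_\sigma\bigr)\odot\bigl(D(z)\odot P_\tau\bigr)=D(y+\sigma\cdot z)\odot P_{\sigma\tau},
\]
where $(\sigma\cdot z)_i=z_{\sigma^{-1}(i)}$, and the transpose formula
\[
(D(y)\odot P_\sigma)^t=P_{\sigma^{-1}}\odot D(y)=D(\sigma^{-1}\cdot y)\odot P_{\sigma^{-1}},
\]
which both follow directly from the definition of tropical matrix multiplication. Using these rules, the product $A^t\odot J\odot A$ becomes $D\bigl(\sigma^{-1}\cdot y+(\sigma^{-1}\iota)\cdot y\bigr)\odot P_{\sigma^{-1}\iota\sigma}$.

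Comparing with $J=D(0)\odot P_\iota$ and using that the decomposition $\GL_{2n}(\TT)=\R^{2n}\rtimes S_{2n}$ is a genuine semidirect product (so the diagonal and permutation parts must match separately), I obtain exactly two equations:
\[
\sigma^{-1}\iota\sigma=\iota\qquad\text{and}\qquad y+\iota\cdot y=0.
\]
The first condition says that $\sigma$ commutes with $\iota$, which is the defining condition of the signed permutation group $S_n^B$. The second, written out coordinate-wise via $(\iota\cdot y)_k=y_{-k}$, gives $y_k+y_{-k}=0$, i.e.\ $y=(y_1,\dots,y_n,-y_1,\dots,-y_n)$. Conversely, any such pair $(y,\sigma)$ clearly satisfies $A^t\odot J\odot A=J$, and the product rule shows that this set is closed under multiplication and forms the claimed semidirect product $\R^n\rtimes S_n^B$.

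The computation is essentially mechanical, and the only real obstacle is notational bookkeeping, especially keeping straight the convention for the action of permutations on coordinate vectors (left action by permuting indices vs.\ reindexing) and carefully relating $\sigma^{-1}\cdot y$ to the diagonal appearing after conjugating $D(y)$ past $P_\sigma$. Once the two auxiliary identities above are established cleanly, everything else follows by matching the $D$- and $P$-parts of a semidirect product expression.
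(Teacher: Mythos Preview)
Your proposal is correct and follows essentially the same approach as the paper: both proofs write $A=D(y)\odot P_\sigma$, expand $A^t\odot J\odot A$ using the semidirect product rules in $\GL_{2n}(\TT)$, and read off the conditions $\sigma\iota=\iota\sigma$ and $y_k+y_{-k}=0$ by matching diagonal and permutation parts. The only cosmetic difference is that the paper left-multiplies by $P_\sigma$ midway to simplify, whereas you compute the full normal form $D(\sigma^{-1}\cdot y+(\sigma^{-1}\iota)\cdot y)\odot P_{\sigma^{-1}\iota\sigma}$ directly; your passage from $\sigma^{-1}\cdot y+(\sigma^{-1}\iota)\cdot y=0$ to $y+\iota\cdot y=0$ implicitly uses that $\sigma^{-1}$ acts by permuting coordinates (hence is injective), which is immediate.
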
 

We note that $\det A=0$ for $A\in \Sp_{2n}(\TT)$, as one would expect.

\begin{proof}
    Let  $A =  D(y_1,\ldots,y_n,y_{-1},\ldots,y_{-n}) \odot P_{\sigma} \in \GL_{2n}(\T)$ be an invertible matrix with $\sigma \in S_{2n}$ and $y_1,\ldots,y_n,y_{-1},\ldots,y_{-n} \in \RR$. Plugging this into $A^t \odot J \odot A=J$, we obtain
    \[
     P_{\sigma^{-1}} \odot D(y_1,\ldots,y_n,y_{-1},\ldots,y_{-n})\odot P_\iota \odot  D(y_1,\ldots,y_n,y_{-1},\ldots,y_{-n}) \odot P_{\sigma} = P_{\iota},
    \]
    which is equivalent to
\[
       D(y_1,\ldots,y_n,y_{-1},\ldots,y_{-n})\odot D(y_{-1},\ldots,y_{-n},y_{1},\ldots,y_{n}) \odot P_{\iota} \odot P_{\sigma} = P_{\sigma} \odot P_{\iota}.
    \]
This is satisfied if and only if $y_{-i}=-y_i$ for all $i=1,\ldots,n$ and furthermore $\iota\sigma=\sigma \iota$, so that $\sigma\in S_n^B$. \end{proof}

We now compute the tropical reductive group associated to the root datum $(M,R,\Mcheck,\Rcheck)$ of $\Sp_{2n}$. The lattices of this root datum are $M=\Mcheck=\ZZ^n$ with the standard pairing. The roots $R\subseteq M$ are the vectors $\pm 2e_i$ and $\pm e_i\pm e_j$ for $i\neq j$, while the coroots $\Rcheck\subseteq \Mcheck$ are $\pm e_i$ and $\pm e_i\pm e_j$ for $i\neq j$. Reflection in $e_i-e_j$ exchanges the $i$th and $j$th coordinates, while reflection in $e_i$ changes the sign of the $i$th coordinate, so the Weyl group of this root datum is the signed permutation group $S^B_n$. Hence we have the following result.

\begin{proposition}
\label{prop:SP}
The tropical reductive group associated to the root datum of $\Sp_{2n}$ is $\Sp_{2n}(\TT)$.
    
\end{proposition}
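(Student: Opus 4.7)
The plan is to match the tropical reductive group $G_\Phi = \Mcheck_\R \rtimes W_\Phi$ associated to the root datum of $\Sp_{2n}$ with the explicit description of $\Sp_{2n}(\TT)$ obtained in the preceding proposition. Since both groups are given as semidirect products of $\R^n$ with a finite group acting by signed permutations, the proof consists of identifying the two sides term by term.

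First, I would read off from the root datum that $\Mcheck = \Z^n$, so $\Mcheck_\R = \R^n$, which agrees with the continuous part $\R^n$ of $\Sp_{2n}(\TT)$ coming from diagonal matrices $D(y_1, \ldots, y_n, -y_1, \ldots, -y_n)$. Second, I would identify the Weyl group. The coroots are $\pm e_i$ and $\pm e_i \pm e_j$ with $i \neq j$, and the corresponding reflections $s_{\check{\alpha}}\colon \Mcheck_\R \to \Mcheck_\R$ act as follows: $s_{e_i}$ negates the $i$th coordinate, while $s_{e_i - e_j}$ exchanges the $i$th and $j$th coordinates. Together these generate precisely the signed permutation group $S_n^B$ acting on $\R^n$ in the standard way, which matches the group $S_n^B$ appearing in $\Sp_{2n}(\TT) = \R^n \rtimes S_n^B$.

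Third, I would check that the semidirect product structures agree. This amounts to verifying that the conjugation action of $P_\sigma$ on the diagonal part of $\Sp_{2n}(\TT)$, namely
\[
P_\sigma \odot D(y_1,\ldots,y_n,-y_1,\ldots,-y_n) \odot P_\sigma^{-1} = D(y_{\sigma^{-1}(1)},\ldots,y_{\sigma^{-1}(-n)}),
\]
is precisely the signed permutation action of $\sigma \in S_n^B$ on $\R^n$ coming from the Weyl group. This is a routine computation using the definitions of tropical diagonal and permutation matrices.

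There is no real obstacle here; the statement is essentially a bookkeeping comparison once the root datum of $\Sp_{2n}$ and the structure of $\Sp_{2n}(\TT)$ are both known. The one point worth highlighting is that the explicit form $D(y_1, \ldots, y_n, -y_1, \ldots, -y_n)$ with the antisymmetry condition $y_{-i} = -y_i$ (rather than an arbitrary diagonal with $2n$ free entries) is exactly what cuts the continuous factor down from $\R^{2n}$ to $\R^n = \Mcheck_\R$, matching the cocharacter lattice of $\Sp_{2n}$ rather than of $\mathbf{GL}_{2n}$.
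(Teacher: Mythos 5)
Your proposal is correct and follows essentially the same route as the paper: read off $\Mcheck=\Z^n$ from the root datum of $\Sp_{2n}$, observe that the reflections generate the signed permutation group $S_n^B$, and compare with the matrix-group description $\Sp_{2n}(\TT)=\R^n\rtimes S_n^B$ established in the preceding proposition. Your extra verification that the conjugation action of $P_\sigma$ on the diagonal part matches the Weyl group action on $\Mcheck_\R$ is left implicit in the paper but is a worthwhile sanity check.
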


We note that the embedding $\Sp_{2n}(\TT)\to \GL_{2n}(\TT)$ described above is a homomorphism $F=(f,\phi)$ of tropical reductive groups, given by the $\Z$-linear homomorphism 
\[f:\Z^n \to \Z^{2n}, \quad f(x_1,\ldots,x_n) = (x_1,\ldots,x_n,-x_1,\ldots,-x_n)\]
on the lattices that is compatible with the embedding $\phi:S_n^B \hookrightarrow S_{2n}$.

\subsection{Types $B_n$ and $D_n$: the tropical orthogonal and special orthogonal groups} Our description of the tropical orthogonal groups is likewise inspired by Lorscheid's integral models (see Section~4.3.4 in~\cite{Lorscheid_blueprintsII}). In the algebraic setting, given a ring $R$, the orthogonal group $\O_m(R)$ is defined as the group of $m\times m$ invertible matrices over $R$ preserving the standard split quadratic form, a notion that we can tropicalize directly. Defining the special orthogonal group in a characteristic-independent manner requires additional work. Namely, if $m$ is odd, then the subgroup $\SO_m(R)\subseteq \O_m(R)$ is defined as the kernel of the determinant map. If $m$ is even, however, then $\SO_m(R)\subseteq \O_m(R)$ is instead defined to be the kernel of the Dickson homomorphism $D_m:\O_m(R)\to \ZZ/2\ZZ$, which counts the number (mod 2) of terms in any factorization of an orthogonal matrix as a product of reflection matrices.

Let $\TT^m$ be a semimodule over $\TT$ of dimension $m$ with coordinates $x=(x_1,\ldots,x_n,x_{-1},\ldots,x_{-n})$ when $m=2n$ and $x=(x_0,\ldots,x_n,x_{-1},\ldots,x_{-n})$ when $m=2n+1$. We define the \emph{standard split tropical quadratic form} $q_m:\TT^m\to \TT$ by the formulas
\[
q_{2n}(x)=\bigoplus_{k=1}^{n} x_k \odot x_{-k}\quad \text{and}\quad q_{2n+1}(x)= x_{0}^{\odot 2} \oplus \bigoplus_{k=1}^{n} x_k \odot x_{-k}.
\]

\begin{definition} The \emph{tropical orthogonal group} $\O_m(\TT)$ is
\[
\O_m(\TT)=\{A\in \GL_m(\TT):q_m(A\odot x)=q_m(x)\mbox{ for all }x\in \TT^m\}.
\]
\end{definition}

We defined the signed permutation group $S_n^B\subseteq S_{2n}$ as the group of permutations of the set $[\pm n]$ preserving the fixed-point-free sign involution. We also view $S_n^B$ as a subgroup of $S_{2n+1}$, consisting of those permutations of the set $[\pm n]\cup \{0\}$ that preserve the sign involution (which now has the unique fixed point $0$).

\begin{proposition}  For $m=2n+1$, the tropical orthogonal group is the semidirect product
        \[\begin{split}
        \O_{2n+1}(\TT)&=\{ D(0,y_1,\ldots,y_n,-y_1,\ldots,-y_n)\odot P_{\sigma}:y_1,\ldots,y_n \in \RR,\sigma \in S_n^B\subseteq S_{2n+1}\}\\
        &=\R^n\rtimes S_n^B.
        \end{split}
        \]
For $m=2n$, the tropical orthogonal group is the semidirect product
        \[\begin{split}
        \O_{2n}(\TT)&=\{D(y_1,\ldots,y_n,-y_1,\ldots,-y_n)\odot P_{\sigma}:y_1,\ldots,y_n \in \RR, \sigma \in S_n^B\subseteq S_{2n},\}\\
        &=\R^n\rtimes S_n^B.
        \end{split}
        \]
\end{proposition}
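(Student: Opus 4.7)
The plan is to mimic the strategy used in the proof of $\Sp_{2n}(\TT) = \R^n \rtimes S_n^B$ above. By the description of $\GL_m(\TT)$ recalled earlier, every $A \in \GL_m(\TT)$ admits a factorization $A = D \odot P_\sigma$ for a tropical diagonal matrix $D$ with finite entries and some $\sigma \in S_m$. I would index the coordinates by $[\pm n]$ in the case $m = 2n$ and by $[\pm n] \cup \{0\}$ in the case $m = 2n+1$, and write the diagonal entries of $D$ as $y_i$. Using $(A \odot x)_i = y_i \odot x_{\sigma^{-1}(i)}$ and substituting into the definition of $q_m$ yields
\[
q_{2n}(A \odot x) = \bigoplus_{k=1}^n (y_k \odot y_{-k}) \odot x_{\sigma^{-1}(k)} \odot x_{\sigma^{-1}(-k)},
\]
with an additional term $y_0^{\odot 2} \odot x_{\sigma^{-1}(0)}^{\odot 2}$ in the odd case. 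The task reduces to determining which $\sigma$ and $y_i$ make this equal to $q_m(x)$ as a function on $\TT^m$.

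To extract the combinatorial constraints, I would test the invariance identity on the indicator vectors $x^{(a,b)} \in \TT^m$ whose only finite entries equal $0$ and sit at positions $a, b$, together with, in the odd case, the singleton-supported indicators $x^{(a)}$. Because $\infty$ is absorbing for $\odot$, only monomials whose variables lie entirely in the chosen support contribute to the minimum. In the odd case, testing at $x^{(a)}$ for the unique $a \neq 0$ with $\sigma(a) = 0$ rules out $\sigma(0) \neq 0$, for otherwise the $x_0^{\odot 2}$ term would produce the finite value $2y_0$ while $q_m(x^{(a)}) = \infty$; the subsequent test at $x^{(0)}$ then forces $y_0 = 0$. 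In both the even and odd cases, comparing $q_m(A \odot x^{(a,b)})$ with $q_m(x^{(a,b)})$ over all unordered pairs $\{a,b\}$ shows that $\sigma$ must send each pair $\{k,-k\}$ to a pair $\{j,-j\}$, i.e.\ $\sigma \in S_n^B$, and that the coefficient of each matched monomial $x_j \odot x_{-j}$ must vanish, yielding $y_{-k} = -y_k$ for every $k = 1, \ldots, n$.

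The converse direction is a direct check: when $\sigma \in S_n^B$ and the $y_i$ satisfy the listed constraints, $\sigma$ permutes the monomials of $q_m$ bijectively, and each rearranged monomial carries coefficient $0$, so $q_m(A \odot x) = q_m(x)$ for every $x \in \TT^m$. Reading off the parametrization by $(y_1, \ldots, y_n) \in \R^n$ and $\sigma \in S_n^B$ then identifies the resulting group with $\R^n \rtimes S_n^B$, exactly as in the symplectic case. The main technical point is the forward direction, namely arguing carefully that equality of the tropical min-plus polynomials as functions on $\TT^m$ is enough to pin down both the combinatorial skeleton $\sigma \in S_n^B$ and the individual coefficients $y_k \odot y_{-k}$; the indicator vectors described above supply enough separating points to make this rigorous.
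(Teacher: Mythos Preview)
Your proposal is correct and follows essentially the same approach as the paper: write $A = D \odot P_\sigma$, expand $q_m(A \odot x)$, and test the invariance identity on vectors with at most two finite coordinates to pin down $\sigma \in S_n^B$ and $y_{-k} = -y_k$. The only cosmetic differences are that the paper tests directly at the vector supported at $\{0\}$ (obtaining $\sigma(0)=0$ and $y_0=0$ in one step rather than two) and restricts to pairs $\{j,-j\}$ rather than all pairs $\{a,b\}$, but these do not affect the substance of the argument.
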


\begin{proof}
    We consider the case $m=2n+1$, the case of even $m$ being similar. Let
\[
x=(x_0,\ldots,x_n,x_{-1},\ldots,x_{-n})\in \TT^m\quad\text{and}\quad A= D(y_0,\ldots,y_n,y_{-1},\ldots,y_{-n}) \odot P_{\sigma}  \in \GL_{m}(\T),
\]
then we have
    \begin{align*}
        q_m(A \odot x) = (y_{0} \odot 
        x_{\sigma^{-1}(0)})^{\odot 2} \oplus \bigoplus_{k=1}^{n} 
        y_{k} \odot x_{\sigma^{-1}(k)} \odot
        y_{-k} \odot x_{\sigma^{-1}(-k)}.
    \end{align*}
    Suppose that $\sigma \in S_n^B$, so that $\sigma(-k) =  -  \sigma(k)$ and also $\sigma^{-1}(-k) =  -  \sigma^{-1}(k)$ for all $k=0,\ldots,n$, and that $y_{-k}=-y_k$ for all $k=0,\ldots,n$. In particular, $\sigma(0) = 0$ and $y_{0} = 0$. It follows that $q_m(A \odot x) = q_m(x)$ for all $x \in \T^m$, i.e.~  $A \in O_{m}(\T)$. 
    Conversely, assume that $A \in O_{m}(\T)$. Choose $x \in \T^m$ such that $x_{0} \in \R$ and $x_i = \infty $ for all $i \neq 0$. From $q_m(A \odot x) = q_m(x)$ we obtain $\sigma(0) = 0$ and $y_{0} = 0$. Now let $j \neq 0$ and choose $x \in \T^m$ such that $x_j, x_{-j} \in \R$ and $x_i = \infty $ for all $i \in [\pm n] \cup \{0\} \setminus \{j, -j\}$. Since $q_m(A \odot x) = q_m(x) = x_j \odot x_{-j}$  it follows that $y_{i} \odot x_{\sigma^{-1}(i)} \odot y_{-i} \odot x_{\sigma^{-1}(-i)} = x_j \odot x_{-j}$ for $i$ such that $\sigma^{-1}(i) = j$. Hence, $\sigma^{-1}(-i) = - \sigma^{-1}(i)$ and $y_{-i}  = - y_{i}.$ Since $j \neq 0$ was arbitrary, it follows that $\sigma^{-1} \in S_n^B$ and thus $\sigma \in S_n^B$ and $y_{-i} = -y_{i}$ for all $i \in [\pm n] \cup \{0\}$.
\end{proof}

We now define the tropical special orthogonal groups, informed by the characteristic-independent algebraic definitions. First, we note that the determinant of a tropical orthogonal matrix is zero, reflecting the fact that $\TT$ has no nontrivial roots of unity. For $m=2n+1$ odd, we define $\SO_{2n+1}(\T)$ to be the kernel of the determinant on $\O_{2n+1}(\TT)$, which is all of $\O_{2n+1}(\TT)$. For $m=2n$ even, we define $\SO_{2n}(\TT)$ as the kernel of the \emph{tropical Dickson invariant,}
\begin{equation*}
    \O_{2n}(\T) \longrightarrow \{\pm 1\}\quad \text{given by} \quad D(y_i,-y_i)\odot P_\sigma\longmapsto\sgn(\sigma),
\end{equation*}
which, in our setting, is simply the parity of the permutation.

\begin{definition} For $m=2n+1$, the \emph{tropical special orthogonal group} is
\[
\SO_{2n+1}(\TT)=\O_{2n+1}(\TT)=\RR^n\rtimes S_n^B.
\]
For $m=2n$, the \emph{tropical special orthogonal group} is
\[
\SO_{2n}(\TT)=\{D(y_i,-y_i)\odot P_\sigma\in \O_{2n}(\T):\sigma\in S_n^D=S_n^B\cap A_{2n}\}=\RR^n\rtimes S_n^D,
\]
where $S_n^D = S_n^B \cap A_{2n} \subseteq S_{2n}$ is the even signed permutation group.
    
\end{definition}

We now compute the tropical reductive groups associated to the root data of $\SO_{2n+1}$ and $\SO_{2n}$, respectively. The root datum $(M,R,\Mcheck,\Rcheck)$ of $\SO_{2n+1}$ is dual to that of $\Sp_{2n}$: the lattices are $M = \Mcheck = \Z^n$ with the standard pairing, the roots $R \subseteq M$ are the vectors $\pm e_i$ and $\pm e_i \pm e_j$ for $i \neq j$, and the Weyl group is the signed permutation group $S_n^B$. The root datum $(M,R,\Mcheck,\Rcheck)$ of $\SO_{2n}$ has lattices $M = \Mcheck = \Z^n$ with the standard pairing and roots $R = \Rcheck = \{\pm e_i \pm e_j| i \neq j\}$. Reflection in $e_i-e_j$ exchanges the $i$th and $j$th coordinates, while reflection in $e_i + e_j$ switches $e_i$ to $-e_j$ and $e_j$ to $-e_i$. Hence the Weyl group consists of all permutations of $n$ elements that switch an even number of their signs, hence it is isomorphic to the even signed permutation group $S^D_n$. Therefore, we have the following result.

\begin{proposition} The tropical reductive groups associated to the root systems of $\SO_{2n+1}$ and $\SO_{2n}$ are $\SO_{2n+1}(\TT)$ and $\SO_{2n}(\TT)$, respectively.
\label{prop:tropicalSO}
\end{proposition}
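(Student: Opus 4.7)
The plan is to unpack the definition of the tropical reductive group in both cases and match it, term by term, with the explicit matrix descriptions of $\SO_{2n+1}(\TT)$ and $\SO_{2n}(\TT)$ established in the two preceding propositions.

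First, I would observe that in both root data we have $\Mcheck=\Z^n$, so that $\Mcheck_\R=\R^n$; this already identifies the normal $\R^n$-factor in $G_\Phi=\Mcheck_\R\rtimes W_\Phi$ with the $\R^n$ appearing in the semidirect product decompositions
\[
\SO_{2n+1}(\TT)=\R^n\rtimes S_n^B \qquad \text{and} \qquad \SO_{2n}(\TT)=\R^n\rtimes S_n^D.
\]
So it remains to identify the Weyl group and to check that its action on $\R^n$ matches the one arising from conjugation of diagonal matrices $D(y_1,\dots,y_n,-y_1,\dots,-y_n)$ by signed permutation matrices $P_\sigma$.

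Next, I would compute the Weyl groups. In the $B_n$-case, the roots $\pm e_i \in M=\Z^n$ give reflections that negate the $i$th coordinate, and the roots $\pm e_i\pm e_j$ give reflections that swap (with possibly a sign change) the $i$th and $j$th coordinates; these reflections are precisely the generators of the signed permutation group $S_n^B$ acting on $\Z^n$ by signed permutation of basis vectors. In the $D_n$-case, only the roots $\pm e_i\pm e_j$ are present, so each generator flips an even number of signs; the group they generate is the even signed permutation group $S_n^D$. Dualizing (which is canonical, since $M=\Mcheck$ via the standard pairing) gives the induced actions on $\Mcheck_\R=\R^n$.

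Finally, I would verify that these induced actions on $\Mcheck_\R$ coincide with the conjugation actions of $S_n^B$ (respectively $S_n^D$) on the $\R^n$ of diagonal entries $y_1,\dots,y_n$ appearing in $D(y_1,\dots,y_n,-y_1,\dots,-y_n)$. Concretely, for $\sigma\in S_n^B$, a direct computation with tropical permutation matrices shows
\[
P_\sigma\odot D(y_1,\dots,y_n,-y_1,\dots,-y_n)\odot P_{\sigma^{-1}} = D(y'_1,\dots,y'_n,-y'_1,\dots,-y'_n),
\]
where $y'_k = \sgn\!\big(\sigma^{-1}(k)\big)\cdot y_{|\sigma^{-1}(k)|}$; this is exactly the signed-permutation action of $\sigma$ on $\R^n$ arising from the Weyl group action on $\Mcheck_\R$. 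The analogous check for $S_n^D\subseteq S_n^B$ is immediate. Combining these identifications yields the two isomorphisms of tropical reductive groups with the matrix groups $\SO_{2n+1}(\TT)$ and $\SO_{2n}(\TT)$.

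The only point requiring care is the sign-keeping bookkeeping in the semidirect product action, especially the convention $\Mcheck\hookrightarrow \Z^{2n}$ by $(x_1,\dots,x_n)\mapsto(x_1,\dots,x_n,-x_1,\dots,-x_n)$ that is implicit in the matrix group descriptions; but once this embedding is fixed (as in the remark following Proposition~\ref{prop:SP}), the verification is purely mechanical and parallels the symplectic case.
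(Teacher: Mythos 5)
Your proof is correct and takes essentially the same route as the paper: one reads off the Weyl groups $S_n^B$ and $S_n^D$ from the reflections attached to the roots $\pm e_i$ and $\pm e_i\pm e_j$ (respectively only $\pm e_i\pm e_j$) in the root data with $M=\Mcheck=\Z^n$, and compares with the semidirect-product descriptions $\SO_{2n+1}(\TT)=\R^n\rtimes S_n^B$ and $\SO_{2n}(\TT)=\R^n\rtimes S_n^D$ from the preceding proposition. Your explicit check that the conjugation action $P_\sigma\odot D(y_1,\dots,y_n,-y_1,\dots,-y_n)\odot P_{\sigma^{-1}}$ realizes the signed-permutation action on $\Mcheck_\R$ is a detail the paper leaves implicit, and it is correctly carried out.
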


\subsection{Tropical $G_2$} \label{subsec:G2} As our last example, we define the tropical analogue of the group $G_2$. We recall (see~\cite{1988CohenHelminck} and references therein) that the reductive group $G_2(F)$ over an algebraically closed field $F$ (of characteristic $\neq 2,3$) can be constructed as the isotropy group of a generic alternating trilinear form on a seven-dimensional vector space. Specifically, let $V=F^7$ with standard basis $e_1,\ldots,e_7$. The group $\bGL(V)$ acts on the vector space $\bigwedge^3(F^*)$ with a unique open orbit, which contains the 3-form
\[
\omega=e_1^*\wedge e_3^*\wedge e_5^*+e_2^*\wedge e_4^*\wedge e_6^*+
e_1^*\wedge e_4^*\wedge e_7^*+e_2^*\wedge e_5^*\wedge e_7^*+
e_3^*\wedge e_6^*\wedge e_7^*.
\]
We then define
\[
G_2(F)=\{A\in \bGL(V):\omega(Av_1,Av_2,Av_3)=\omega(v_1,v_2,v_3)\mbox{ for all }v_1,v_2,v_3\in V\}.
\]
We now translate this definition into the tropical setting. Since $\TT$ has no subtraction, we replace $\omega$ with a cubic form using the same formula, in the same manner that a symmetric bilinear form may be replaced with the associated quadratic form:

\begin{definition} Define the tropical cubic form $c:\TT^7\to \TT$ by the formula
\[
c(x_1,\ldots,x_7)=x_1\odot x_3\odot x_5\oplus x_2\odot x_4\odot x_6\oplus x_1\odot x_4\odot x_7\oplus x_2\odot x_5\odot x_7\oplus x_3\odot x_6\odot x_7.
\]
We define
\[
G_2(\TT)=\{A\in \GL_7(\TT):c(Ax)=c(x)\mbox{ for all }x\in \TT^7\}.
\]
    
\end{definition}

We first describe $G_2(\TT)$ explicitly. Let $D_6\subseteq S_6$ be the group of symmetries of the regular hexagon, whose vertices are labeled 1 through 6 in order. The action of $D_6$ on $\RR^6$ by permutation of coordinates preserves the two-dimensional subspace
\[
U=\{(y_1,\ldots,y_6)\in \RR^6:y_1+y_3+y_5=y_2+y_4+y_6=y_1+y_4=y_2+y_5=y_3+y_6=0 \}\subseteq \RR^6,
\]
where we note that either of the two relations $y_1+y_3+y_5=0$ and $y_2+y_4+y_6=0$ is redundant. We extend the embedding $D_6\subseteq S_6$ to $D_6\subseteq S_7$ by acting trivially on the $7$. 

\begin{proposition} The group $G_2(\TT)$ is isomorphic to
\[
\begin{split}
G_2(\TT)&=\{D(y_1,\ldots,y_7)\odot P_{\sigma}\in \GL_7(\TT):(y_1,\ldots,y_6)\in U,y_7=0,\sigma\in D_6\subseteq S_7\}\\
&=\RR^2 \rtimes D_6.
\end{split}
\]
\end{proposition}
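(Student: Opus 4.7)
The plan is as follows. For $A = D(y_1,\ldots,y_7) \odot P_\sigma$ and $x \in \TT^7$, a direct computation gives $(A \odot x)_i = y_i + x_{\sigma^{-1}(i)}$, so
\[
c(A \odot x) = \min_{S \in \mathcal{M}}\Bigl(\sum_{i\in S} y_i + \sum_{j \in \sigma^{-1}(S)} x_j\Bigr),
\]
where $\mathcal{M} = \{\{1,3,5\}, \{2,4,6\}, \{1,4,7\}, \{2,5,7\}, \{3,6,7\}\}$ is the collection of support sets of the five monomials of $c$. I will convert the equality $c(A \odot x) = c(x)$ of functions on $\TT^7$ into combinatorial constraints on the pair $(y,\sigma)$.

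For the forward direction, I would test the identity on the ``indicator'' vectors $x^S \in \TT^7$ defined by $x^S_i = 0$ for $i \in S$ and $x^S_i = \infty$ otherwise, as $S$ ranges over $\mathcal{M}$. Then $c(x^S) = 0$, while $c(A \odot x^S)$ receives a finite contribution only from those monomials $T \in \mathcal{M}$ with $\sigma^{-1}(T) \subseteq S$, i.e.~ $T \subseteq \sigma(S)$; since $|T| = |\sigma(S)| = 3$ this forces $T = \sigma(S)$, and hence $\sigma(S) \in \mathcal{M}$ with $\sum_{i \in \sigma(S)} y_i = 0$. Varying $S$, I conclude that $\sigma$ permutes $\mathcal{M}$ and that $\sum_{i \in T} y_i = 0$ for every $T \in \mathcal{M}$.

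The next step is to unpack these two constraints. Among $\{1,\ldots,7\}$, only $7$ belongs to three members of $\mathcal{M}$, while each of $1,\ldots,6$ belongs to exactly two. Since $\sigma$ permutes $\mathcal{M}$, it must fix the unique element of maximal incidence, so $\sigma(7)=7$. Then $\sigma|_{\{1,\ldots,6\}}$ must preserve both the bipartition $\{\{1,3,5\},\{2,4,6\}\}$ (coming from the two triangles in $\mathcal{M}$) and the matching $\{\{1,4\},\{2,5\},\{3,6\}\}$ (obtained by removing $7$ from the other three members of $\mathcal{M}$). Any permutation of $\{1,\ldots,6\}$ satisfying both conditions is determined by its induced permutation of the three pairs (an element of $S_3$) and a choice of whether to swap the bipartition (an element of $\mathbb{Z}/2$), yielding at most $6 \cdot 2 = 12$ such permutations; every rotation and reflection of the hexagon with cyclic vertex order $1,2,3,4,5,6$ manifestly preserves both structures, and $|D_6|=12$, so $\sigma|_{\{1,\ldots,6\}} \in D_6$. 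For the linear constraints on $y$, summing the three relations containing $y_7$ gives $(y_1+\cdots+y_6) + 3y_7 = 0$, while $y_1+\cdots+y_6 = (y_1+y_3+y_5)+(y_2+y_4+y_6) = 0$, hence $y_7 = 0$; the remaining relations are precisely those defining $U$.

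The converse is immediate: if $\sigma \in D_6 \subseteq S_7$ fixes $7$, $(y_1,\ldots,y_6) \in U$, and $y_7 = 0$, then $\sigma^{-1}$ permutes $\mathcal{M}$ and each coefficient $\sum_{i \in S} y_i$ vanishes, so $c(A \odot x) = c(x)$ as formal tropical polynomials. The main technical step is the combinatorial identification of the stabilizer of the Star-of-David incidence structure on $\{1,\ldots,7\}$ with $D_6$; while this is a finite check, it demands careful accounting to rule out spurious symmetries and to confirm that both the bipartition and the matching are needed to cut down $S_6$ to exactly $D_6$.
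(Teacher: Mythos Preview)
Your proof is correct and follows essentially the same strategy as the paper: both arguments test the identity $c(A\odot x)=c(x)$ on indicator vectors $x^S$ supported on the monomial sets $S\in\mathcal{M}$, forcing $\sigma$ to permute $\mathcal{M}$ and the coefficients $\sum_{i\in S}y_i$ to vanish.

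The differences are organizational. The paper asserts without proof that the stabilizer of $\mathcal{M}$ in $S_7$ equals $D_6$ (``elementary to verify''), whereas you supply an argument via the incidence structure: the element $7$ is distinguished by its valence, and the induced permutation of $\{1,\ldots,6\}$ must preserve both the triangle bipartition and the antipodal matching, yielding the count $\leq 12$ and hence equality with $D_6$. The paper also separates the two halves of the argument, first pinning down $\sigma$, then multiplying by $P_{\sigma^{-1}}$ to reduce to a diagonal matrix, and finally using bespoke test vectors to force $y_7=0$ and each linear relation in $U$ individually. Your route is more uniform: the same family of test vectors $\{x^S\}_{S\in\mathcal{M}}$ yields all five linear relations $\sum_{i\in S}y_i=0$ at once, from which $y_7=0$ and $(y_1,\ldots,y_6)\in U$ drop out algebraically. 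Both approaches are equally valid; yours is slightly more self-contained.
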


\begin{proof} Let $A=D(y_1,\ldots,y_6,0)\odot P_{\sigma}$ with $(y_1,\ldots,y_6)\in U$ and $\sigma \in D_6$. The verification that $c(Ax)=c(x)$ for any $x\in \TT^7$ is straightforward and left to the avid reader. For the converse implication, denote by $T$ the set of three-element subsets of $\{1,\ldots,7\}$, and let $T_0\subseteq T$ be the five-element subset indexing the monomials in $c$:
\[
T_0=\big\{\{1,3,5\},\{2,4,6\},\{1,4,7\},\{2,5,7\},\{3,6,7\}\big\}.
\]
It is elementary to verify that a permutation $\sigma\in S_7$ lies in $D_6$ if and only if $\sigma(I)\in T_0$ for all $I\in T_0$.

Now let $A=D(y_i)\odot P_{\sigma}\in G_2(\TT)$. If $\sigma\notin D_6$, then there exists $I\in T_0$ such that $\sigma^{-1} (I)\notin T_0$. Define $x=(x_1,\ldots,x_7)$ by $x_i=0$ if $i\in I$ and $\infty$ otherwise, then $x_a\odot x_b\odot x_c=0$ if $\{a,b,c\}=I$ and $\infty$ otherwise. Hence $c(x)=0$ but $c(Ax)=\infty\neq c(x)$, since each monomial in $c(Ax)$ has at least one infinite coordinate. Therefore $\sigma\in D_6$.

We similarly verify that $(y_1,\ldots,y_6)\in U$ and $y_7=0$. Since we already know that $P_{\sigma^{-1}}\in G_2(\TT)$, we may replace $A$ with $D(y_1,\ldots,y_6,y_7)$. If $y_7\neq 0$, then setting $x_1=\cdots=x_6=0$ and $x_7=2|y_7|$ we get $c(x)=2|y_7|$ and $c(Ax)=2|y_7|+y_7\neq c(x)$. Similarly, the five linear expressions in the $y_i$ defining $U$ correspond to the five monomials in $c$. If any of these expressions are nonzero, we can pick $x$ such that $c(x)$ is minimized at the corresponding monomial and such that $c(x)\neq c(Ax)$. This concludes the proof.

\end{proof}

We now recall the root datum $(M,R,\Mcheck,\Rcheck)$ of $G_2$. The lattices $M$ and $\Mcheck$ are the hexagonal lattices embedded in $\RR^2$ with the standard Euclidean product, the roots are the 12 lattice points closest to the origin, and the Weyl group is $D_6$, acting by symmetries of the lattice. Comparing with the description of $G_2(\TT)$ given above, we obtain the following result.

\begin{proposition} The tropical reductive group associated to the root system of $G_2$ is $G_2(\TT)$. \label{prop:tropicalG2}
\end{proposition}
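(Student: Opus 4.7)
The plan is to combine the explicit description $G_2(\TT) = \RR^2 \rtimes D_6$ from the preceding proposition with the definition of the tropical reductive group associated to the root datum $\Phi = (M,R,\Mcheck,\Rcheck)$ of $G_2$. By Definition \ref{def:tropicalreductivegroup}, the associated tropical reductive group is $\Mcheck_\RR \rtimes W_\Phi$. Since $\Mcheck$ is the hexagonal lattice in $\RR^2$ and $W_\Phi = D_6$ acts by symmetries of this lattice, this group is $\RR^2 \rtimes D_6$, where $D_6$ acts via its standard 2-dimensional real representation on $\Mcheck_\RR$. So the task reduces to producing a $D_6$-equivariant linear isomorphism between the subspace $U \subseteq \RR^6$ (with the $D_6 \subseteq S_6$-permutation action) and $\Mcheck_\RR$ (with the Weyl group action).

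First, I would observe that $U$ carries a faithful 2-dimensional real representation of $D_6$: the subgroup $D_6 \subseteq S_7$ from the previous proposition acts trivially on the 7th coordinate and permutes the first six coordinates as the symmetries of the labeled hexagon $1,\ldots,6$, which preserves $U$ and acts faithfully on it. Since $D_6$ has, up to isomorphism, a unique faithful 2-dimensional real representation — the standard hexagonal one — this already identifies $U \cong \Mcheck_\RR$ as $D_6$-representations abstractly.

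To make the isomorphism concrete (and hence natural enough to be called an isomorphism of tropical reductive groups in the sense of Definition~\ref{def:homomorphism}), I would exhibit it via an explicit hexagon. The orthogonal projections $u_1,\ldots,u_6$ of the standard basis vectors $e_1,\ldots,e_6$ of $\RR^6$ onto $U$ are arranged as the vertices of a regular hexagon in $U$ and satisfy the relations $u_i + u_{i+3} = 0$ and $u_1 + u_3 + u_5 = u_2 + u_4 + u_6 = 0$ that cut out $U$. The six short coroots $\alphacheck_1,\ldots,\alphacheck_6 \in \Mcheck_\RR$ of $G_2$ form a regular hexagon satisfying exactly the same relations. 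The linear map $U \to \Mcheck_\RR$ sending $u_i \mapsto \alphacheck_i$ is therefore well-defined, bijective, and $D_6$-equivariant by construction. Combined with the identity on $D_6$, this yields an isomorphism of tropical reductive groups $G_2(\TT) \cong \Mcheck_\RR \rtimes W_\Phi$.

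The main obstacle is bookkeeping rather than mathematical depth: one must verify that the embedding $D_6 \hookrightarrow S_7$ that arose from preserving the cubic form $c$ truly matches the abstract Weyl group $W_\Phi$ of $G_2$ under the identification of the six hexagonal coordinates with the six short coroots. Once the labelings are aligned (so that the cyclic rotation $(1\,2\,3\,4\,5\,6)$ on coordinates corresponds to the $60^\circ$ rotation of the coroot hexagon), equivariance is immediate.
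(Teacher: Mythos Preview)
Your proposal is correct and follows the same approach as the paper, which simply observes that both $G_2(\TT)$ and the tropical reductive group of the $G_2$ root datum are $\RR^2\rtimes D_6$ with the standard hexagonal action. You supply more detail than the paper does---in particular, the uniqueness of the faithful $2$-dimensional real $D_6$-representation and the explicit hexagon matching $u_i\leftrightarrow\alphacheck_i$---which usefully justifies what the paper leaves implicit.
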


This concludes our study of tropical reductive matrix groups.

\section{Tropical principal bundles}

We now define the tropical analogue of a principal bundle on an algebraic curve. A \emph{metric graph} $\Ga$ is a metric space obtained by identifying the edges of a finite graph, called a \emph{model} of $\Ga$, with real intervals of given positive lengths. A \emph{free cover} $\Ga'\to \Ga$ of metric graphs is a covering space in the topological sense that preserves the metric structure; equivalently, a free cover is a harmonic morphism having local degree one at all points of $\Ga'$. Free covers are the only maps between metric graphs that we will consider (our restricted framework does not allow us to consider dilated harmonic morphisms).

\subsection{Tropical $G$-covers and torsors over the Weyl group}\label{subsect:principalbundles} Let $\Ga$ be a metric graph and let \( \mathcal{G} \) be a sheaf of (possibly non-abelian) groups on \( \Gamma \). We recall that a \textit{\( \mathcal{G} \)-torsor} on \( \Gamma \) is a sheaf of \( \mathcal{G} \)-sets $F$ such that \( \Gamma \) can be covered by open subsets \( U \) for which \( F|_U \) and \( \calG_U \cong \mathcal{G}|_U \) are isomorphic as sheaves of \( \calG_U \)-sets. Note that $\mathcal{G}$-torsors are classified up to isomorphism by the non-abelian cohomology set $H^1(\Ga,\mathcal{G})$, which is a pointed set with a distinguished element given by the trivial torsor on $\Gamma$.

We now define principal bundles on $\Gamma$ whose structure group $G=(\Mcheck \otimes_{\ZZ} \RR)\rtimes W_{\Phi}$ is the tropical reductive group associated to a root datum $\Phi=(M, R,\Mcheck,\Rcheck)$. We recall that a metric graph $\Ga$ comes equipped with a sheaf of \emph{harmonic functions} $\calH_{\Ga}$; these are the continuous real-valued piecewise linear functions with integer slopes whose outgoing (or incoming) slopes at every point add up to zero. Taking the tensor product, we obtain the sheaf $G_{\Gamma}=(\Mcheck \otimes_{\ZZ}\calH_{\Gamma})\rtimes W_{\Phi}$ of $G$-valued harmonic functions on $\Ga$.

\begin{definition} A \textit{tropical $G$-bundle} on $\Gamma$ is a torsor over the sheaf $G_{\Gamma}=(\Mcheck \otimes_{\ZZ}\calH_{\Gamma})\rtimes W_{\Phi}$.

\end{definition}

In Section~\ref{sec:moduliofbundles} below, we investigate the set $H^1(\Ga,G_{\Ga})$ of isomorphism classes of $G$-bundles on $\Ga$, which we interpret as the \emph{moduli space of $G$-bundles on $\Gamma$}. The purpose of this section is to describe $G$-bundles on $\Gamma$ in terms of line bundles on certain covers of $\Gamma$ that are determined by their associated $W_{\Phi}$-torsors. 

We first recall from~\cite{2022GrossUlirschZakharov} this description for the vector bundle case $G=\GL_n(\TT)=\RR^n\rtimes S_n$. Let $E$ be a $\GL_n(\TT)$-bundle on a metric graph $\Gamma$. Projecting onto the second component defines an $S_n$-torsor on $\Gamma$, which in turn defines a free cover $f:\Gamma'\to \Gamma$ of degree $n$. In~\cite{2022GrossUlirschZakharov}, it was shown that the $\RR^n$-part of the torsor $E$ is canonically determined by a tropical line bundle $L$ on $\Gamma'$, so that $E$ is the direct image of $L$ along $f$. We now extend this description to other tropical reductive groups.

First, we explain how to construct the covers. Let $G=\Mcheck_{\RR}\rtimes W$ be a tropical reductive group corresponding to the root datum $\Phi=(M,R,\Mcheck,\Rcheck)$, where $W=W_{\Phi}$ is the Weyl group. We choose a finite set $T$ with $n$ elements and an injective homomorphism $\rho:W\to S_T$, where $S_T$ is the permutation group of $T$. For every metric graph $\Ga$, we have an induced functor from the category of $W$-torsors on $\Ga$ to the category of $S_T$-torsors on $\Ga$, which, in turn, is equivalent to the category of degree $n$ covers of $\Ga$. We now discuss what additional structure is necessary to put on a degree $n$ cover to recover from it a $W$-torsor.

\begin{definition} A \textit{$\rho$-cover} of $\Gamma$ is a free degree $n$ cover $\Gamma'\to \Gamma$ together with an element
    \[
    \xi_x\in \Bij(T,\Gamma'_x)/W
    \]
    for each $x\in \Gamma$, where $\Gamma'_x$ is the fiber over $x\in \Gamma$ and $W$ acts on the set $\Bij(T,\Gamma'_x)$ of bijections between $T$ and $\Gamma'_x$ via $\rho$, such that each $x\in \Gamma$ has an open neighborhood $U$ on which there is a trivialization
    \[
    \phi\colon U\times T\xrightarrow{\cong} \Gamma'_U
    \]
    for which $\phi_y$ represents $\xi_y$ for every $y\in U$.

    A morphism of two $\rho$-covers $(\Gamma'\to \Gamma, (\xi_x)_x)$ and $(\widetilde \Gamma\to \Gamma, (\widetilde\xi_x)_x)$ is a morphism $f\colon \Gamma'\to \widetilde \Gamma$ of covers such that $f_x\circ \xi_x=\widetilde \xi_x$ for all $x\in \Gamma$.
\end{definition}    

\begin{proposition}
    \label{prop:category of rho-covers}
    The category of $\rho$-covers of $\Gamma$ is equivalent to the category of $W$-torsors on $\Gamma$.
\end{proposition}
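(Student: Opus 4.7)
The plan is to exhibit explicit quasi-inverse functors $F$ and $G$ between the two categories; the underlying idea is that a $W$-torsor is the same data as an $S_T$-torsor (equivalently, a degree-$n$ cover of $\Gamma$) together with a reduction of structure group along the injective homomorphism $\rho$, and the family $(\xi_x)_x$ records exactly such a reduction fiberwise.

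For the functor $F\colon W\text{-torsors}\to \rho\text{-covers}$, given a $W$-torsor $P$ on $\Gamma$, I set $\Gamma':=P\times^{W} T$, the quotient of $P\times T$ by the diagonal $W$-action in which $W$ acts on $T$ via $\rho$. Local trivializations of $P$ immediately yield local trivializations of $\Gamma'\to\Gamma$, so this is a free degree-$n$ cover. For any $x\in\Gamma$ and $p\in P_x$, the map $\phi_p\colon T\to\Gamma'_x$, $t\mapsto [p,t]$, is a bijection, and replacing $p$ by $pw$ changes $\phi_p$ to $\phi_p\circ\rho(w)$. Hence $\xi_x:=[\phi_p]\in\Bij(T,\Gamma'_x)/W$ is independent of the choice of $p$, and local triviality of the family $(\xi_x)_x$ is inherited from that of $P$.

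For the functor $G\colon \rho\text{-covers}\to W\text{-torsors}$, given $(\Gamma'\to\Gamma,(\xi_x)_x)$ I define a sheaf $P$ whose sections over an open $U\subseteq \Gamma$ are the trivializations $\phi\colon U\times T\xrightarrow{\cong}\Gamma'|_U$ of the cover that satisfy $[\phi_y]=\xi_y$ for every $y\in U$. The group $W$ acts on $P$ by $(\phi\cdot w)(y,t):=\phi(y,\rho(w)(t))$; injectivity of $\rho$ makes this action free, and the local trivializations required in the definition of a $\rho$-cover provide local sections, so $P$ is a $W$-torsor. Morphisms of $W$-torsors (respectively of $\rho$-covers) are sent to the obvious induced morphisms in both directions.

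The verifications $F\circ G\cong \id$ and $G\circ F\cong \id$ are then essentially formal. Starting from a $\rho$-cover, the evaluation map $P\times^{W} T\to \Gamma'$, $[\phi,t]\mapsto \phi(t)$, is a morphism of $\rho$-covers and a fiberwise bijection, hence an isomorphism. Starting from a $W$-torsor $P$, the assignment $p\mapsto \phi_p$ is a $W$-equivariant isomorphism $P\xrightarrow{\cong}G(F(P))$. Full faithfulness on morphisms follows directly from the universal properties of the contracted product and of the frame sheaf. I do not anticipate a genuine obstacle here; the only subtlety is consistent bookkeeping between the $W$-action on $T$ via $\rho$ (appearing on the $F$-side) and the $W$-action on $\Bij(T,\Gamma'_x)$ by precomposition with $\rho$ (appearing on the $G$-side), and the freeness of the latter action, which is exactly the injectivity of $\rho$, is what allows $\xi_x$ to uniquely determine a $W$-orbit of frames.
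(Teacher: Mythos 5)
Your argument is correct, but it takes a different route from the paper's. The paper argues gerbe-theoretically: since every $\rho$-cover is locally isomorphic to the trivial one $I=\Gamma\times T$ with $\xi_x=\id_T$, the category of $\rho$-covers is equivalent to the category of torsors under the sheaf $\underline{\Aut}(I)$, and the entire proof collapses to the single computation that $\underline W\to\underline{\Aut}(I)$, $w\mapsto \id\times\rho(w)$, is an isomorphism of sheaves of groups, i.e.\ that a permutation $\sigma\in S_T$ respects the $\rho$-structure if and only if $\sigma\in\rho(W)$. You instead construct explicit quasi-inverse functors: the contracted product $P\times^{W}T$ in one direction and the frame sheaf of $\xi$-compatible trivializations in the other. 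The two proofs share the same kernel of content: your verification that the frame sheaf is a $W$-torsor is exactly the paper's computation of $\underline{\Aut}(I)$, relocated. What your version buys is concreteness --- the functors you write down are precisely the constructions used implicitly in Example~\ref{exa:phocovers} and in the pushforward arguments later --- at the cost of checking the two natural isomorphisms and functoriality by hand; the paper's version buys brevity at the cost of invoking the neutral-gerbe formalism (which it relegates to a footnote). The one point where you are terser than you should be is \emph{simple transitivity} of the $W$-action on the frame sheaf: you verify freeness (injectivity of $\rho$) and local nonemptiness, but transitivity also needs the observation that two $\xi$-compatible trivializations over a connected open differ by a \emph{constant} element of $S_T$, which then lies in $\rho(W)$ because both trivializations represent the same class $\xi_y$ at each point. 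That one-line argument is exactly the computation the paper makes explicit, and with it supplied your proof is complete.
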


\begin{proof}
    The trivial cover $I=\Gamma\times T$ becomes a $\rho$-cover by choosing $\xi_x=\id_{T}$ for all $x\in \Gamma$. Since $\rho$-covers are locally isomorphic to $I$, the category of $\rho$-covers is equivalent to the category of torsors over the sheaf of automorphisms of $I$. Hence it suffices to prove that the morphism 
    \[
    \underline W\longrightarrow \underline{\Aut}(I)\quad \text{given by} \quad w\longmapsto \id\times \rho(w) \ , 
    \]
    of sheaves on $\Gamma$ is an isomorphism\footnote{The two categories in question are neutral gerbes with global objects $I$ and $\underline W$, respectively}. The automorphism group of the cover $I_U$ (ordinary cover, not $\rho$-cover for now) of a connected open subset $U$ of $\Gamma$ is precisely $S_T$. So it suffices to show that $\sigma\in S_T$ defines a morphism of $\rho$-covers if and only if $\sigma\in \rho(W)$. To see this, we observe that $\sigma$ defines a morphism of $\rho$-covers if and only if $\sigma$ is in the same $W$-orbit as the identity in $\Bij(T,T)$, which happens if and only if there exists a $w\in W$ with
    \[
    \sigma=\id_{T}\circ \rho(w)=\rho(w) \ .
    \]
\end{proof}

We now work out this correspondence for the tropical reductive groups that we considered in Section~\ref{sec:reductivegroups}.

\begin{example}
\label{exa:phocovers}
    \begin{itemize}
        \item []
        
        \item [$\SL_n$:] Here we choose $T=[n]$ and an isomorphism $\rho\colon W\to S_n$, so that $\Bij([n],\Ga'_x)/W$ is a singleton for every fiber $\Ga'_x$. Therefore, the category of $W$-torsors on $\Gamma$ is equivalent to the category of degree $n$ covers on $\Gamma$.
        
        \item[$\PGL_n$:] As for $\SL_n$, the Weyl group $W$ is isomorphic to $S_n$, and $W$-torsors are equivalent to degree $n$ covers.
        
        \item [$\Sp_{2n}:$] Here $T=[\pm n]$ is the $2n$-element set with a fixed-point-free sign involution $\iota:[\pm n]\to [\pm n]$. The Weyl group $W$ of $\Sp_{2n}$ is the signed permutation group $S_n^B$, which comes with a natural embedding $\rho:S_n^B\hookrightarrow S_T$. Let $\mathrm{Inv}(S)$ denote the set of fixed-point-free involutions of a set $S$ of size $2n$. Then there is a natural isomorphism 
        \[
        \Bij([\pm n], S)/W \xlongrightarrow{\sim} \mathrm{Inv}(S) \quad \text{given by} \quad \overline\xi \longmapsto \xi\circ \iota\circ\xi^{-1} \ .
        \]
        It follows that the category of $W$-torsors is equivalent to the category of degree $2n$ covers together with a fixed-point-free involution. 
        
        \item [$\SO_
        {2n}$:] Here $T=[\pm n]$ as above, the Weyl group $W$ is the even signed permutation group $S_n^D$, and the image of the embedding $\rho:S^D_n\hookrightarrow S_T$ lies in the alternating group $A_T$. Because we have
        \[
        S^D_{n}=S^B_{n}\cap A_{2n} \ ,
        \]
        we obtain, for every $2n$-element set $S$, a natural bijection
        \[
        \Bij(T,S)/W \xlongrightarrow{\cong} \mathrm{Inv}(S)\times \Bij(T,S)/A_T \ .
        \]
        We recall that, given a degree $m$ free cover $\Ga'\to \Ga$ defined by an $S_m$-torsor, the \emph{orientation cover} $O(\Ga')\to \Ga$ is the degree 2 free cover defined by taking the quotient by $A_m$. In other words, if $\Gamma'_x$ is the fiber of a cover $\Gamma'\to \Gamma$ over a point $x\in \Gamma$, then $\Bij([\pm n],\Gamma'_x)/A_{2n}$ is the fiber over $x$ of the associated orientation cover. We thus obtain an equivalence of categories between the category of $W$-torsors and the category of degree $2n$ covers together with a fixed-point-free involution and a trivialization of the orientation cover.

        \item [$\SO_{2n+1}$:] In this case, $T=\{-n,\ldots,n\}$ and the sign involution acts with a fixed point. The Weyl group $W$ of $\SO_{2n+1}$ is $S_n^B$, the same as for $\Sp_{2n}(\TT)$ but now viewed as lying in the larger group $S_T$. The category of $W$-torsors on $\Ga$ is equivalent to the category of degree $2n+1$ covers of $\Ga$ together with an involution having a unique fixed point in every fiber. Removing the fixed points (which form a copy of $\Ga$), we obtain a degree $2n$ cover together with a fixed-point-free involution, as for $\Sp_{2n}$.
        
        \item[$\mathrm{G}_2$:] Here $T=\{1,\ldots,6\}$ and the image of $\rho\colon W\to S_6$ is the dihedral group $D_6$. Therefore, for every $6$-element set $S$, the set $\Bij(T,S)/W$ is identified with the possible arrangements of six distinct keys on a keychain, in other words the set of labelings of the vertices of a regular hexagon by elements of $T$, modulo rotations and reflections. Hence a $W$-torsor on $\Gamma$ is a degree $6$ free cover $\Gamma'\to \Gamma$ together with a locally trivial identification of the points of each fiber with the keys on a fixed keychain.  
    \end{itemize}
\end{example}

\subsection{Tropical $G$-bundles via line bundles on covers} \label{subsec:pushforward} We now upgrade the injective homomorphism $\rho:W\to S_n$ to a \emph{representation of tropical reductive groups}
\[
F=(f,\rho):G\longrightarrow\GL_n(\TT) \ ,
\]
where $G=\Mcheck_{\RR}\rtimes W$ is our tropical reductive group, $\GL_n(\TT)=\RR^n\rtimes S_n$, and the lattice map $f:\Mcheck \to \ZZ^n$ is injective. We obtain, for any metric graph $\Gamma$, a morphism from the category of $(\Mcheck\otimes \mc H_\Gamma)\rtimes W$-torsors on $\Gamma$ to the category of $(\ZZ^n\otimes\mc H_\Gamma)\rtimes S_n$-torsors on $\Gamma$; that is to say, tropical vector bundles on $\Gamma$. As described in~\cite{2022GrossUlirschZakharov}, the category of tropical vector bundles on $\Gamma$ is equivalent to the category of free covers $\Ga'\to \Ga$ together with a tropical line bundle on $\Ga'$; we refer to such a pair as a \emph{multi-line bundle} on $\Ga$. In particular, a $(\Mcheck\otimes \mc H_\Gamma)\rtimes W$-torsor on $\Ga$ induces a cover $\Ga'\to \Ga$ and a line bundle on $\Ga'$. We now describe the extra structure needed on the multi-line bundle to recover the category of $(\Mcheck\otimes \mc H_\Gamma)\rtimes W$-torsors.

First, we temporarily consider a broader category of tropical groups. Let $W$ be a finite group acting on a lattice $\Mcheck$. We call the semidirect product $\Mcheck_{\RR}\rtimes W$ a \emph{tropical linear group}. Similarly, a $\ZZ$-linear homomorphism $f:\Mcheck_1\to \Mcheck_2$ and a group homomorphism $\phi:W_1\to W_2$ satisfying $\phi(g)(f(m))=f(g(m))$ define a \emph{homomorphism of tropical linear groups}
\[
F=(f,\phi):\Mcheck_{1,\RR}\rtimes W_1 \to \Mcheck_{2,\RR}\rtimes W_2\quad \text{given by} \quad F(m,g)=(f(m),\phi(g)) \ .
\]
Given a metric graph $\Ga$ and a tropical linear group $G=\Mcheck_{\RR}\rtimes W$, we consider torsors over the sheaf $G_{\Ga}=(\Mcheck\otimes_{\ZZ}\calH_{\Ga})\rtimes W$ as in Section~\ref{subsect:principalbundles}.

Given a morphism $F\colon G\to H$ of tropical linear groups and a $G_\Ga$-torsor $E$ on $\Gamma$, we define the induced $H_\Ga$-torsor, denoted by $F_*(E)$ or $E_H$ if $F$ is clear from the context, by 
\[
    F_*(E)=E_H=(E\times H_{\Gamma})/G_{\Gamma} \ ,
\]
where $G_{\Gamma}$ acts by the rule $g.(e,h)=(ge,hF(g)^{-1})$.

\begin{proposition}
\label{prop:fiber product of neutral gerbes}
    Let $G_i=\Mcheck_{i,\R}\rtimes W_i$ for $i=1,2,3$ be tropical linear groups and let 
    \[  
        G_1 \xlongrightarrow{F}
            G_2 \xlongrightarrow{H}
                G_3  
    \]
    be morphisms of tropical linear groups. Assume the following:
    \begin{enumerate}
        \item $F$ is injective.
        \item There is a sublattice $\check{L}\subseteq \Mcheck_3$ and a subgroup $Y\subseteq W_3$ such that $Y\check L\subseteq \check L$, the image of the map $\Mcheck_1\rtimes W_1\to \Mcheck_2\rtimes W_2$ is the preimage of $\check{L}\rtimes Y$, and such that $\check{L}\rtimes Y$ and the image of $\Mcheck_2\rtimes W_2$  generate $\Mcheck_3\rtimes W_3$.

    \end{enumerate}
       Moreover, let $\Gamma$ be a metric graph and let $K=\check L_{\RR}\rtimes Y\subseteq G_3$ be the tropical linear group determined by $\check{L}$ and $Y$. Then there is an equivalence of categories between the category of $G_{1,\Gamma}$-torsors on $\Gamma$ and the category of triples $(T, T', \phi)$ consisting of an $G_{2,\Gamma}$-torsor $T$, a $K_\Gamma$-torsor $T'$ and an isomorphism $(T')_{G_{3,\Gamma}}\xrightarrow{\phi} T_{G_{3,\Gamma}}$, where $(T')_{G_{3,\Gamma}}$ and $T_{G_{3,\Gamma}}$ are the $G_{3,\Gamma}$-torsors induced by the homomorphisms $K\hookrightarrow G_3$ and $H:G_2\to G_3$, respectively.
\end{proposition}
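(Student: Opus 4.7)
The strategy is to interpret conditions (1) and (2) as asserting that $G_1$ is canonically the fibered product $G_2\times_{G_3}K$ in the category of tropical linear groups, and then to promote this description to an equivalence of torsor categories.

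I would first verify that the pair $(F,H\circ F)$ induces an isomorphism $G_1\xrightarrow{\sim}G_2\times_{G_3}K$: injectivity comes from (1), while the equality of $\mathrm{image}(F)$ with $H^{-1}(\check L\rtimes Y)$ from (2) yields surjectivity onto the fibered product. This furnishes a canonical second projection $F'\colon G_1\to K$ with $\iota\circ F'=H\circ F$, where $\iota\colon K\hookrightarrow G_3$ denotes the inclusion. The forward functor $\Psi$ then sends a $G_{1,\Gamma}$-torsor $E$ to the triple $(F_*E,\,F'_*E,\,\phi_E)$, with $\phi_E$ the canonical identification of the induced $G_{3,\Gamma}$-torsors coming from the relation $\iota\circ F'=H\circ F$. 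The inverse functor $\Phi$ assigns to a triple $(T,T',\phi)$ the subsheaf of $T\times T'$ consisting of those local pairs whose images in $T_{G_{3,\Gamma}}$ and $T'_{G_{3,\Gamma}}$ are identified under $\phi$; the componentwise $(G_{2,\Gamma}\times K_\Gamma)$-action restricts to an action of $G_{2,\Gamma}\times_{G_{3,\Gamma}}K_\Gamma\cong G_{1,\Gamma}$, making $\Phi(T,T',\phi)$ at least a $G_{1,\Gamma}$-pseudo-torsor.

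The principal obstacle will be verifying that $\Phi(T,T',\phi)$ is genuinely locally trivial as a $G_{1,\Gamma}$-torsor. On a contractible open $U$ carrying simultaneous trivializations of $T$ and $T'$, the isomorphism $\phi$ is encoded by a section $s\in G_{3,\Gamma}(U)$, and producing a local section of $\Phi$ amounts to finding a factorization $s=H(t)\cdot\iota(t')$ with $t,t'$ local sections of $G_{2,\Gamma}$ and $K_\Gamma$ respectively. Here the generating condition in (2) is essential: after shrinking $U$ so that the Weyl component of $s$ is constant, the lattice-level generation handles the continuous (harmonic) part of $s$ by an additive splitting, while the Weyl-level generation furnishes the decomposition of its discrete part. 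Once local triviality is in hand, the verification that $\Psi$ and $\Phi$ are quasi-inverse reduces to the tautological fibered-product identity for the groups themselves, and both round trips produce canonical natural isomorphisms, completing the equivalence.
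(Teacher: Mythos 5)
Your proposal is correct and follows essentially the same route as the paper: the paper phrases the argument in gerbe language (the automorphism sheaf of the trivial triple $(G_2,K,\id)$ is $G_2\times_{G_3}K\cong H^{-1}K\cong G_1$, and every triple is locally isomorphic to the trivial one), whereas you unpack this into explicit quasi-inverse functors via the fibered product of torsors. The two decisive steps --- identifying $G_1$ with $G_2\times_{G_3}K$ using (1) and the preimage condition, and using the generation hypothesis to factor the local comparison section of $G_{3,\Gamma}$ as a product of a section coming from $G_2$ and one coming from $K$ --- are exactly the ones the paper uses.
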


\begin{proof}
    Let $I=(G_2, K,  K_{G_3}\cong G_3\xrightarrow{\id}G_3\cong (G_2)_{G_3})$ be the trivial element. By definition of the category, we have
    \[
    \underline \Aut(I) = \underline \Aut(G_2)\times_{\underline \Aut(G_3)}\underline \Aut(K) \ .
    \]
    As we have $\underline\Aut(L_i)=L_i$ and $\underline\Aut(K)=K$ it follows that 
    \[
    \underline\Aut(I)= G_2\times_{G_3} K \cong H^{-1}K \cong G_1 \ .
    \]
    It thus suffices to prove that every object $(T,T',\phi)$ is locally isomorphic to $I$. Working locally, we may assume that we are given trivializations
    \begin{equation*}
     G_2 \xlongrightarrow{\psi} T \quad \text{and}
     \quad K \xlongrightarrow{\chi} T' \ .
    \end{equation*}
    These induce trivializations
    \begin{equation*}
        G_3\cong (G_2)_{G_3}\xlongrightarrow{\psi_{G_3}}T_{G_3}  \quad \text{and} \quad
        G_3\cong K_{G_3}\xrightarrow{\chi_{G_3}}T'_{G_3} \ .    
    \end{equation*}
    Let $ \delta= \psi_{G_3}^{-1} \circ \phi \circ \chi_{G_3}$. This is a section of $\underline \Aut(G_3)=G_3$, and by assumption we can locally decompose it as $\delta=   H(\alpha) \cdot \beta^{-1}$ for some $\alpha\in G_2$ and $\beta\in K$. Denote by $r_{\alpha}:G_2\to G_2$, $r_{H(\alpha)}:G_3\to G_3$, and $r_{\beta}:K\to K$ right multiplication by $\alpha$, $H(\alpha)$, and $\beta$, respectively. We claim that $(\psi\circ r_{\alpha} ,\chi\circ r_{\beta})$ defines an isomorphism $I\to (T,T',\phi)$. And indeed, we have
    \[
    \phi\circ (\chi\circ r_{\beta})_{G_3}=
    \phi \circ \chi_{G_3} \circ (r_\beta)_{G_3}= 
    \psi_{G_3} \circ \delta \circ (r_\beta)_{G_3}=
    \psi_{G_3}\circ r_{H(\alpha)}=
     (\psi\circ r_\alpha)_{G_3} \circ \id_{G_3}\ .
    \]
\end{proof}

Let $G_i=(\Mcheck_i,W_i)$ for $i=1,2,3$ be tropical linear groups. We say that a sequence of morphisms $F=(f,\phi):G_1\to G_2$ and $H=(H,\psi):G_2\to G_3$ is a \emph{short exact sequence of tropical linear groups} if $0\to \Mcheck_1\xrightarrow{f}\Mcheck_2\xrightarrow{h}\Mcheck_3\to 0$ and $1\to W_1\xrightarrow{\phi}W_2\xrightarrow{\psi}W_3\to 1$ are short exact sequences. 

\begin{corollary}
\label{cor:exact sequence}
    Let 
     \[
    1\longrightarrow
        G_1 \longrightarrow
            G_2 \longrightarrow
                G_3 \longrightarrow
                    1
     \]
     be a short exact sequence of tropical linear groups and let $\Gamma$ be a metric graph. Then there is an equivalence of categories between the category of $G_{1,\Ga}$-torsors on $\Gamma$ and the category of pairs $(T,\phi)$ consisting of an $G_{2,\Ga}$-torsor $T$ on $\Gamma$ and a trivialization $G_3\xrightarrow{\phi} T_{G_3}$.
\end{corollary}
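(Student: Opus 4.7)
The plan is to derive Corollary~\ref{cor:exact sequence} as a degenerate special case of Proposition~\ref{prop:fiber product of neutral gerbes}. Specifically, I would apply the proposition to the given sequence $G_1 \to G_2 \to G_3$ with the choice $\check{L} = 0 \subseteq \Mcheck_3$ and $Y = \{1\} \subseteq W_3$, so that the auxiliary tropical linear group $K = \check{L}_\RR \rtimes Y$ is the trivial group.

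Next, I would verify that the two hypotheses of Proposition~\ref{prop:fiber product of neutral gerbes} are satisfied under this choice. The injectivity of $F\colon G_1 \to G_2$ in hypothesis~(1) follows immediately from the definition of a short exact sequence of tropical linear groups, which requires the lattice map $f\colon \Mcheck_1 \to \Mcheck_2$ and the group homomorphism $\phi\colon W_1 \to W_2$ to be injective. For hypothesis~(2), the invariance $Y\check{L} \subseteq \check{L}$ is trivial; the preimage of $\check{L} \rtimes Y = \{e\}$ under the surjection $\Mcheck_2 \rtimes W_2 \to \Mcheck_3 \rtimes W_3$ is its kernel, which by exactness agrees with the image of $\Mcheck_1 \rtimes W_1 \to \Mcheck_2 \rtimes W_2$; and the generation condition is automatic because $H\colon G_2 \to G_3$ is already surjective.

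With the hypotheses in hand, Proposition~\ref{prop:fiber product of neutral gerbes} yields an equivalence between the category of $G_{1,\Gamma}$-torsors on $\Gamma$ and the category of triples $(T, T', \phi)$. Since $K$ is the trivial group, the sheaf $K_\Gamma$ is also trivial, so the category of $K_\Gamma$-torsors on $\Gamma$ has a unique object up to unique isomorphism, namely the trivial torsor. Moreover, for the trivial $K_\Gamma$-torsor $T'$, the induced $G_{3,\Gamma}$-torsor $(T')_{G_{3,\Gamma}}$ is canonically identified with $G_{3,\Gamma}$ itself. Hence the datum of a triple $(T, T', \phi)$ reduces to the datum of a pair $(T, \phi)$ consisting of a $G_{2,\Gamma}$-torsor $T$ together with a trivialization $\phi\colon G_{3,\Gamma} \xrightarrow{\sim} T_{G_3}$, which is precisely the description asserted in the corollary.

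No genuine obstacle arises here: the entire content lies in Proposition~\ref{prop:fiber product of neutral gerbes}, and the corollary is the clean specialization obtained by collapsing the auxiliary $K$-torsor data to a single trivialization.
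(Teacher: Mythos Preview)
Your proposal is correct and matches the paper's own proof exactly: the paper simply states that the corollary follows directly from Proposition~\ref{prop:fiber product of neutral gerbes} with $K=1$. Your additional verification of the hypotheses and the reduction from triples to pairs is accurate and fills in details the paper leaves implicit.
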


\begin{proof}
    This follows directly from Proposition \ref{prop:fiber product of neutral gerbes} with $K=1$.
\end{proof}

We now describe $G$-covers on a metric graph $\Ga$ in terms of line bundles, in the case when the lattice map $f:\Mcheck \to \ZZ^n$ associated to the chosen representation $F:G\to \GL_n(\TT)$ is the identity map.

\begin{corollary}
\label{cor:torsors as multi-line bundles with rho-structure}
    Let $\rho:W\to S_n$ be an injective homomorphism and let $\Gamma$ be a metric graph. Then  there is an equivalence of categories of $ \mc H_\Gamma^n \rtimes W$-torsors on $\Gamma$ and degree $n$ multi-line bundles on $\Gamma$ together with the structure of a $\rho$-cover on the underlying cover.
\end{corollary}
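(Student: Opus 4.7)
The plan is to obtain the equivalence as a direct application of Proposition \ref{prop:fiber product of neutral gerbes} to the chain of tropical linear groups
\[
G_1 = \Z^n\rtimes W \xlongrightarrow{F} G_2 = \Z^n\rtimes S_n \xlongrightarrow{H} G_3 = S_n,
\]
combined with the dictionary from Proposition \ref{prop:category of rho-covers} and the $\GL_n(\T)$-case from \cite{2022GrossUlirschZakharov}. Here $F$ is induced by $\rho$ and the identity on $\Z^n$, while $H$ is the projection killing the lattice factor. To choose the auxiliary data in Proposition \ref{prop:fiber product of neutral gerbes}, I take $\check L = 0 \subseteq \Mcheck_3 = 0$ and $Y = \rho(W) \subseteq S_n = W_3$, so that $K = 0_{\R}\rtimes W = W$ as a (locally constant) tropical linear group.

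Next I verify the hypotheses of Proposition \ref{prop:fiber product of neutral gerbes}. Injectivity of $F$ follows from injectivity of $\rho$ and the identity map on $\Z^n$. The condition $Y\check L\subseteq \check L$ is trivial since $\check L=0$. The preimage of $\check L\rtimes Y = W$ under $H$ is precisely $\Z^n \rtimes W$, which coincides with the image of $F$. Finally, $\check L\rtimes Y=W$ together with $H(G_2) = S_n$ obviously generates $G_3=S_n$. Hence Proposition \ref{prop:fiber product of neutral gerbes} gives an equivalence between the category of $G_{1,\Ga}$-torsors on $\Ga$ and the category of triples $(T,T',\phi)$ with $T$ a $G_{2,\Ga}$-torsor, $T'$ a $K_\Ga$-torsor, and $\phi\colon T'_{G_{3,\Ga}}\xrightarrow{\sim} T_{G_{3,\Ga}}$ an isomorphism of $G_{3,\Ga}$-torsors.

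Now I translate each piece of this triple into the claimed data on the tropical side. Since $G_{2,\Ga} = \mc H_\Ga^n \rtimes S_n$, the torsor $T$ is a tropical $\GL_n(\T)$-bundle, which by \cite[Prop.~3.2]{2022GrossUlirschZakharov} is equivalent to a degree $n$ multi-line bundle on $\Ga$; its underlying $S_n$-torsor $T_{G_{3,\Ga}}$ is exactly the cover $\Ga'\to\Ga$. Because $\check L=0$, the sheaf $K_\Ga$ is the constant sheaf $W$, so $T'$ is a $W$-torsor, which by Proposition \ref{prop:category of rho-covers} is the same as a $\rho$-cover of $\Ga$; its pushforward $T'_{G_{3,\Ga}}$ along $W\hookrightarrow S_n$ is the underlying degree $n$ cover of that $\rho$-cover. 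The datum $\phi$ is thus an isomorphism between the cover underlying the multi-line bundle and the cover underlying the $\rho$-cover. Packaging this together, the triple $(T,T',\phi)$ is precisely a degree $n$ multi-line bundle together with a $\rho$-cover structure on its underlying cover.

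The main point, rather than an obstacle, is simply the correct bookkeeping: ensuring that the sublattice/subgroup pair $(\check L, Y)$ chosen above does yield $K=W$ and that the identification of $T_{G_{3,\Ga}}$ with the cover of the multi-line bundle (which is implicit in the equivalence of \cite{2022GrossUlirschZakharov}) is compatible with the pushforward $T'_{G_{3,\Ga}}$ appearing in Proposition \ref{prop:fiber product of neutral gerbes}. Once these identifications are made, the corollary is immediate; no further geometric input is needed.
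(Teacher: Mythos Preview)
Your proof is correct and follows exactly the same approach as the paper: apply Proposition \ref{prop:fiber product of neutral gerbes} to the sequence $\R^n\rtimes W \to \R^n\rtimes S_n \to S_n$ with $K=W$, then invoke Proposition \ref{prop:category of rho-covers} and the $\GL_n(\T)$ dictionary from \cite{2022GrossUlirschZakharov}. Your write-up is simply more explicit in verifying the hypotheses and unpacking the resulting triple $(T,T',\phi)$.
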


\begin{proof}
    This follows directly from Proposition \ref{prop:category of rho-covers} combined with Proposition \ref{prop:fiber product of neutral gerbes} applied to the sequence
    \[
     \R^n \rtimes W \to 
        \R^n \rtimes S_n\to 
            S_n 
    \]
    with $K=W$.
\end{proof}

We also consider the more general setting where the map $f:\Mcheck\to \ZZ^n$ associated to $F:G\to \GL_n(\TT)$ is injective. Let $\Gamma$ be a metric graph and let $G=\Mcheck_{\RR}\rtimes W$ be a tropical reductive group. Since the inclusion $W\to G$ splits canonically, every $G_\Gamma$-torsor $T$ has an associated $W$-torsor $T_W$, which in turn has an associated $G_\Ga$-torsor $(T_W)_{G_\Gamma}$ that we denote by $T^0$.

\begin{corollary}
\label{cor:exact sequence on lattices}
    Let
    \[ 
        \Mcheck_{1,\RR}\rtimes W \xrightarrow{F=(f,\phi)}
            \Mcheck_{2,\RR}\rtimes W\xrightarrow{H=(h,\psi)}
                \Mcheck_{3,\RR}\rtimes W' 
    \]
    be a sequence of tropical reductive groups with the following properties:
    \begin{enumerate}
        \item The kernel of $\psi$ contains the image of $\phi$. 
        \item The sequence $0\to \Mcheck_1\xrightarrow{f} \Mcheck_2\xrightarrow{h} \Mcheck_3\to 0$ is exact.
    \end{enumerate}
For any metric graph $\Gamma$, there is an equivalence of categories between the category of $(\Mcheck_1\otimes_{\ZZ} \mc H_\Gamma)\rtimes W$-torsors on $\Gamma$ and the category of pairs $(T,\xi)$ consisting of an $(\Mcheck_2\otimes_{\ZZ} \mc H_\Gamma) \rtimes W$-torsor $T$ and an isomorphism $T_{(\Mcheck_3\otimes \mc H_\Gamma)\rtimes W'}^0\xrightarrow{\xi }T_{(\Mcheck_3\otimes \mc H_\Gamma)\rtimes W'}$.
\end{corollary}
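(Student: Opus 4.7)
The plan is to derive the statement from Proposition \ref{prop:fiber product of neutral gerbes} by an appropriate choice of intermediate subgroup, followed by a translation of the resulting triples into the compact form of pairs appearing in the corollary. Write $G_i$ for the $i$-th tropical linear group in the sequence, and take $K=\{0\}\rtimes W'\subseteq G_3$, so that $\check L=0$ and $Y=W'$ in the notation of Proposition \ref{prop:fiber product of neutral gerbes}. The hypotheses of that proposition are verified as follows: $F$ is injective because $f$ is (by condition~(2)) and $\phi$ is forced to be so by the compatibility $\phi(g)f(m)=f(gm)$ together with the faithfulness of the $W$-action on $\Mcheck_1$; the preimage of $K$ under $H$ equals $f(\Mcheck_1)\rtimes W$ by condition~(2), which coincides with the image $f(\Mcheck_1)\rtimes\phi(W)$ of $F$ under the natural interpretation of condition~(1) as the equality $\phi(W)=\ker\psi$ (as holds in all intended applications, e.g.\ with $\phi=\id_W$); and $K$ together with the image of $H$ manifestly generates $G_3$ since $h$ is surjective.

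Invoking Proposition \ref{prop:fiber product of neutral gerbes} then yields an equivalence between the category of $(\Mcheck_1\otimes\mc H_\Gamma)\rtimes W$-torsors on $\Gamma$ and the category of triples $(T,T',\phi)$, where $T$ is a $(\Mcheck_2\otimes\mc H_\Gamma)\rtimes W$-torsor, $T'$ is a $W'$-torsor (viewed as a $K_\Gamma$-torsor via the identification $K\cong W'$), and $\phi\colon (T')_{(\Mcheck_3\otimes\mc H_\Gamma)\rtimes W'}\xrightarrow{\sim} T_{(\Mcheck_3\otimes\mc H_\Gamma)\rtimes W'}$ is an isomorphism of induced torsors. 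The second step is to identify this category with the category of pairs $(T,\xi)$ in the statement. The crucial observation is the canonical identification
\[
T^{0}_{(\Mcheck_3\otimes\mc H_\Gamma)\rtimes W'} \;=\; (T_{W'})_{(\Mcheck_3\otimes\mc H_\Gamma)\rtimes W'},
\]
which follows from the definition $T^0=(T_W)_{(\Mcheck_2\otimes\mc H_\Gamma)\rtimes W}$ and the factorization of the composite $W\hookrightarrow G_2\xrightarrow{H} G_3$ as $W\xrightarrow{\psi} W'\hookrightarrow G_3$. Moreover, by functoriality any isomorphism $\phi$ of $(\Mcheck_3\otimes\mc H_\Gamma)\rtimes W'$-torsors restricts to a canonical isomorphism $T'\xrightarrow{\sim}T_{W'}$ of the associated $W'$-torsors.

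Using these two ingredients, one sends a triple $(T,T',\phi)$ to the pair $(T,\xi)$ where $\xi$ is the composite $T^0_{(\Mcheck_3\otimes\mc H_\Gamma)\rtimes W'}=(T_{W'})_{(\Mcheck_3\otimes\mc H_\Gamma)\rtimes W'}\xleftarrow{\sim}(T')_{(\Mcheck_3\otimes\mc H_\Gamma)\rtimes W'}\xrightarrow{\phi}T_{(\Mcheck_3\otimes\mc H_\Gamma)\rtimes W'}$, and a pair $(T,\xi)$ back to the triple $(T,T_{W'},\xi)$; these assignments are mutually quasi-inverse. The most delicate remaining point is the morphism-level identification: a morphism of triples is a pair $(\alpha,\beta)$, but the $W'$-component $\beta$ is forced to coincide with the canonical map $\alpha_{W'}\colon T_{W'}\to \tilde T_{W'}$ induced by $\alpha$, so the forgetful functor $(\alpha,\beta)\mapsto\alpha$ is an equivalence. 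I expect this last bookkeeping step to be the main obstacle, as it requires a careful unwinding of the universal property implicit in Proposition \ref{prop:fiber product of neutral gerbes}, but once it is in hand the corollary follows.
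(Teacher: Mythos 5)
Your proof follows the same route as the paper's, which consists of the single sentence that the corollary is Proposition \ref{prop:fiber product of neutral gerbes} applied with $K=W'$ (i.e.\ $\check L=0$, $Y=W'$); your careful translation of the resulting triples $(T,T',\phi)$ into the pairs $(T,\xi)$ of the statement --- via the identification $T^0_{(\Mcheck_3\otimes\mc H_\Gamma)\rtimes W'}=((T_W)_{W'})_{(\Mcheck_3\otimes\mc H_\Gamma)\rtimes W'}$ and the recovery of $T'$ as $(T_W)_{W'}$ from $\phi$ --- is correct and supplies a step the paper leaves entirely implicit.

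One inaccuracy in your hypothesis check: with $\check L=0$ and $Y=W'$ the preimage of $K$ under $H$ is $\ker(h)\rtimes W$ (the condition on the Weyl-group component is vacuous since $Y$ is all of $W'$), so what hypothesis (2) of Proposition \ref{prop:fiber product of neutral gerbes} actually requires is $\phi(W)=W$, not $\phi(W)=\ker\psi$ as you write; indeed your own comparison of $f(\Mcheck_1)\rtimes W$ with $f(\Mcheck_1)\rtimes\phi(W)$ already shows that surjectivity of $\phi$ is the relevant condition. Your fallback --- that $\phi=\id_W$ in all intended applications --- is the right fix, and it is worth noting that the paper's stated condition (1) is equally not what is used (it even fails in the $\Sp_{2n}$ application, where $\mathrm{im}(\phi)=S_n^B\not\subseteq\ker(\psi)$), so the imprecision originates in the statement of the corollary rather than in your argument.
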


\begin{proof}
    This follows directly from Proposition \ref{prop:fiber product of neutral gerbes} when taking $K=W'$.
\end{proof}

\label{subsec:Gbundlesviacovers}

We now explicitly describe $G$-bundles on a metric graph $\Ga$ in terms of line bundles on covers of $\Ga'$, in the case when $G$ is one of the tropical reductive groups described in Section \ref{sec:reductivegroups}.

\begin{example}
    \label{exa:bundlescovers}
    \begin{itemize}
    \item[]
        \item [$\SL_n$:] The standard representation $\SL_n(\TT)\to \GL_n(\TT)$, where the map $S_n\to S_n$ on the Weyl groups is the identity but the lattice map is $\ZZ^n_0\to \ZZ^n$, fits into the short exact sequence of tropical reductive groups
        \[
        1 \longrightarrow
            \SL_n(\TT) \longrightarrow 
                \GL_n(\TT) \longrightarrow
                    \RR \longrightarrow
                     0
        \]
        where the second map $\GL_n(\TT)\to \RR=\GL_1(\TT)$ is the tropical determinant. Denote by $\det(T)$ the tropical line bundle on $\Ga$ associated to a $\GL_n(\TT)$-bundle $T$.  By Corollary \ref{cor:exact sequence}, there is an equivalence of categories between $\SL_n(\TT)$-bundles on $\Ga$ and pairs $(T, \phi)$ consisting of a $\GL_n(\TT)$-torsor $T$ on $\Ga$ and a trivialization $\phi:\mc H_\Gamma\to \det(T)$. The $\GL_n(\TT)$-bundle $T$ may be represented by a multi-line bundle on a degree $n$ cover $\Ga'\to \Ga$, whose fiber over $x\in \Gamma$ is the disjoint union of $\R$-torsors $L_1,\ldots, L_n$. Under this identification, the fiber of $\det(T)$ over $x$ can be naturally identified with $L_1\otimes\cdots \otimes L_n$.

        \item[$\Sp_{2n}$:] The description of $\Sp_{2n}(\TT)$ as a matrix group over $\TT$ gives a representation $(f,\rho):\Sp_{2n}(\TT)\to \GL_{[n]\sqcup -[n]}(\TT)$, which determines a sequence of tropical linear groups
        \[
            \Sp_{2n}(\TT) \longrightarrow
                \R^{[n]\sqcup -[n]}\rtimes S_n^B \longrightarrow 
                \R^n \rtimes S_n=\GL_n(\TT)\ .
        \]
        Here the first morphism is the identity map $S_n^B\to S_n^B$ on the groups and the diagonal embedding $f:\ZZ^n\to \ZZ^{[n]\sqcup -[n]}$, $e_i\mapsto e_i-e_{-i}$ on the lattices, while the second morphism is the natural quotient map $S_n^B\to S_n$ on the groups and the map $(x_i)_{i\in [n]\sqcup -[n]}\mapsto (x_i+x_{-i})_{i\in [n]}$ on the lattices. By Corollary \ref{cor:exact sequence on lattices} the category of $\Sp_{2n}(\T)$-bundles on $\Ga$ is equivalent to the category of pairs $(T,\phi)$ consisting of a $\R^{[n]\sqcup -[n]}\rtimes S_n^B$-bundle $T$ on $\Ga$ and an isomorphism $\phi:T_{\GL_n}^0\to T$. By Corollary \ref{cor:torsors as multi-line bundles with rho-structure}, a $\R^{[n]\sqcup -[n]}\rtimes S_n^B$-bundle $T$ on $\Ga$ is equivalent to a line bundle on a $\rho$-cover $\Ga'\to \Ga$. We have seen in Example \ref{exa:phocovers} that $\rho$-covers are in turn equivalent to pairs $(\Gamma'\to \Gamma ,\iota)$ consisting of a degree $2n$ cover and a fixed-point-free involution $\iota$ of the cover. Hence the $\R^{[n]\sqcup -[n]}\rtimes S_n^B$-bundle is equivalent to a multi-line bundle $(\Gamma'\to \Gamma, L)$ with a fixed-point-free involution $\iota$ of $\Gamma'\to \Gamma$. The associated $\GL_n(\TT)$-torsor on $\Ga$ corresponds to the degree $n$ cover $\Gamma'/\iota\to \Gamma$ equipped with the line bundle $(L\otimes \iota^{-1} L)/\iota$, whose fiber over $\overline x\in \Gamma'/\iota$ is $L_x\otimes L_{\iota(x)}$. 
        In summary, the category of $\Sp_{2n}(\TT)$-bundles on $\Ga$ is equivalent to the category of quadruples $(f:\Gamma'\to \Gamma, \iota, L, \phi)$ consisting of a degree $2n$ free cover $f$, a fixed-point-free involution $\iota$ of $f$, a tropical line bundle $L$ on $\Gamma'$, and a trivialization $\phi:\mc H_{\Gamma'/\iota}\to \ (L\otimes \iota^{-1}L)/\iota$. Figure~\ref{fig:sp4cover} below illustrates this in the case $n=2$: an  $\Sp_4(\TT)$-bundle on a tropical elliptic curve $\Ga$, represented by a degree $4$ cover $\Ga' \to \Ga$ with a fixed-point-free involution $\iota:\Gamma' \to \Gamma'$ of the cover and a compatible tropical line bundle $L$ on $\Ga'$. Concretely, the line bundle $L$ can be represented by a divisor $D$ on $\Gamma'$ such that $(D + \iota^{-1}D)/\iota \sim 0$.

\begin{figure}[ht]
  \centering
  {
  \begin{tikzpicture}[scale=1.8,declare function={f(\x)=0.2*sin(\x)+\x/1000;},
rubout/.style={/utils/exec=\tikzset{rubout/.cd,#1},
decoration={show path construction,
      curveto code={
       \draw [white,line width=\pgfkeysvalueof{/tikz/rubout/line width}+2*\pgfkeysvalueof{/tikz/rubout/halo}]
        (\tikzinputsegmentfirst) .. controls
        (\tikzinputsegmentsupporta) and (\tikzinputsegmentsupportb)  ..(\tikzinputsegmentlast);
       \draw [line width=\pgfkeysvalueof{/tikz/rubout/line width},shorten <=-0.1pt,shorten >=-0.1pt]
        (\tikzinputsegmentfirst) .. controls
        (\tikzinputsegmentsupporta) and (\tikzinputsegmentsupportb) ..(\tikzinputsegmentlast);  
      }}},rubout/.cd,line width/.initial=0.4pt,halo/.initial=0.2pt]

% COVERING CURVE (4-fold)
% \draw[rubout={line width=0.4pt,halo=0.2pt},decorate]
%    plot[variable=\x,domain=-50:1300,samples=160,smooth] ({cos(\x)},{f(\x)}) to[out=0,in=195] cycle;

% COVERING CURVE (4-fold)
\draw[line width=1.0pt,white]   plot[variable=\x,domain=-50:1300,samples=160,smooth] ({cos(\x)},{f(\x)}) to[out=0,in=195] cycle;
\draw[line width=0.4pt,black]   plot[variable=\x,domain=-50:1300,samples=160,smooth] ({cos(\x)},{f(\x)}) to[out=0,in=195] cycle;

% BASE ELLIPSE
\draw (0,-2) arc(-90:270:1cm and 0.2cm);

% PROJECTION ARROW
\draw[-stealth]  (0,-0.4) -- (0,-1.4);

% DOT on the top sheet
\coordinate (P) at ({cos(1300)},{f(1300)});
\fill (P) circle (1pt) node[above right,xshift=0.3pt,yshift=0.2pt,scale=0.8] {$-3$};

% DOT two layers below
\coordinate (Q) at ({cos(580)},{f(580)});
\fill (Q) circle (1pt) node[above right,xshift=0.3pt,yshift=0.2pt,scale=0.8] {$3$};

% DOT on the lowest sheet of the cover
\coordinate (R) at ({cos(300)},{f(300)});
\fill (R) circle (1pt) node[below right,xshift=0.3pt,yshift=0.7pt,scale=0.8] {$-7$};

\coordinate (S) at ({cos(1020)},{f(1020)});
\fill (S) circle (1pt) node[below right,xshift=0.6pt,yshift=0.5pt,scale=0.8] {$7$};

% Arrow indicating involution
\draw[->]
  (1.2,{f(300)+0.27})
  .. controls (1.4,{0.5*(f(300)+f(1020))+0.27})
  .. (1.2,{f(1020)+0.27}) 
  node[below right,xshift=3.5pt,yshift=-3pt,scale=1] {$\iota$};

\end{tikzpicture}
 
  \caption{An $\Sp_4(\TT)$-bundle on a tropical elliptic curve.}
  \label{fig:sp4cover}
  }
\end{figure}
        \item[$\SO_{2n+1}$:] 
        Since $\SO_{2n+1}(\T)$ and $\Sp_{2n}(\T)$ are isomorphic as tropical linear groups, we obtain the same description as for $\Sp_{2n}(\T)$-bundles. 
        \item[$\SO_{2n}$:] We have a short exact sequence of tropical linear groups
        \[
        1 \longrightarrow 
            \SO_{2n}(\TT) \longrightarrow 
                \Sp_{2n}(\TT) \longrightarrow   
                    S_2 \longrightarrow 
                        1 \ .
        \]
        By Corollary \ref{cor:exact sequence}, the category of $\SO_{2n}(\TT)$-bundles on a metric graph $\Gamma$ is equivalent to the category of quintuples $(f:\Gamma'\to \Gamma, \iota, L, \phi, \psi)$, where $(f:\Gamma'\to \Gamma, \iota, L, \phi)$ is as in the $\Sp_{2n}(\TT)$-case and $\psi$ is a trivialization of the orientation double cover $O(\Ga')\to \Ga$ associated to $f$ (compare with the $\mathrm{SO}_{2n}$-case in Example \ref{exa:phocovers}).

        \item[$\mathrm{G}_2$:]
        In Subsection~\ref{subsec:G2} we gave an explicit presentation of $G_2(\TT)$ inside $\GL_7(\TT)$, which restricts to a homomorphism $F=(f,\rho):G_2(\TT)\to \GL_6(\TT)$. Here $\rho\colon W=D_6\to S_6$ is the standard embedding and the image of $f:\ZZ^2\to \ZZ^6$ is the sublattice $M$ given by the four relations
        \[
        x_1+x_4=0 \ ,\quad x_2+x_5=0 \ , \quad x_3+x_6=0 \ , \quad x_1+x_3+x_5=0 \ .
        \]
        Let $V$ be the lattice between $M$ and $\Z^6$ given by the first three of these relations. Then there is a sequence of tropical linear groups
        \[
        G_2(\TT) \longrightarrow V_{\RR}\rtimes D_6 \longrightarrow \Sp_2 (\TT) \ ,
        \]
        where the second map sends $((x_i)_i,\sigma)$ to $(x_1+x_3+x_5,\sgn(\sigma))$. We note that an element $\sigma\in D_6$ is odd (as a permutation in $S_6$) if and only if it exchanges the sets $\{1,3,5\}$ and $\{2,4,6\}$, hence this map is a group homomorphism. Applying Corollary \ref{cor:exact sequence on lattices}, we see that the category of $G_2(\TT)$-bundles on a metric graph $\Ga$ is equivalent to the category of $V_{\RR}\rtimes D_6$-bundles on $\Ga$ together with an involution-invariant trivialization  of the line bundle of the associated $\Sp_2(\T)$-cover.
        
        To describe $V_{\RR}\rtimes D_6$-bundles in terms of covers, we note that the image of $D_6$ in $S_6$ is contained in $S_3^B$, the signed permutation group that preserves the involution corresponding to reflecting the hexagon through the origin. Hence we consider the sequence
        \[
        V_{\RR}\rtimes D_6 \to \Sp_6(\TT) \to S^B_3
        \]
        of tropical linear groups, where the first map on the lattices is the identity. We apply Proposition \ref{prop:fiber product of neutral gerbes} with $\check{L}$ trivial and $Y=D_6$ and obtain that the category of $V_{\RR}\rtimes D_6$-torsors is equivalent to the category of $\Sp_6(\TT)$-covers, together with an order $6$ cycle graph structure on the fibers such that opposite vertices in the cycle graph are interchanged by the involution of the $\Sp_6(\TT)$-cover. The associated $\Sp_2(\TT)$-cover has one branch for each of the distinguished triangles, which are interchanged by the involution, and the fibers of the line bundle on the domain of the $\Sp_2(\TT)$-cover are the tensor products of the line bundles on the branches in each triangle.

        In summary, the category of $G_2(\TT)$-bundles on a metric graph $\Ga$ is equivalent to the category of quadruples $(\Gamma'\to \Gamma, L, \phi,\psi )$, where $\Gamma'\to \Gamma$ is a degree $6$ free cover with a locally trivial identification of each fiber with the Star of David, $L$ is a line bundle on $\Gamma'$, whereas $\phi$ is a trivialization of $(L\otimes \iota^{-1}L)/\iota$ on $\Gamma'/\iota$, where the involution $\iota:\Ga'\to \Ga'$ exchanges the opposite vertices in each star, and $\psi$ is an $\iota$-invariant trivialization of the line bundle on the domain of the associated $\Sp_2(\TT)$-cover whose fibers correspond to the two triangles.
    \end{itemize}
    
\end{example}

\section{Degree and stability in the algebraic setting}
\label{sec:Degree and stability in the algebraic setting}

Let $\bfG$ be a reductive linear algebraic group over an algebraically closed field $k$, and let $X$ be a smooth projective curve over $k$. In this section, we recall the notions of degree and stability for $\bfG$-bundles on $X$ and the associated stratification on the moduli space $\Bun_\bfG(X)$ of $\bfG$-bundles on $X$, following  \cite{schieder2015harder}. We also review the explicit description of the moduli spaces of stable and semistable $\bfG$-bundles on an elliptic curve, following~\cite{fratila2016stack} and~\cite{Fratila_revisitingelliptic}.

\subsection{Degree}

We fix $\bfT\subseteq \mathbf{B}\subseteq \bfG$ a maximal torus and a Borel subgroup. We denote by $M = \mathbb{X}^*(\bfT)$ and $\Mcheck = \mathbb{X}_{*}(\bfT)$ the character and cocharacter lattices of $\bfT$ and denote by $R\subseteq M$ and $\check{R}\subseteq \Mcheck$ the roots and coroots. The algebraic fundamental group of $\mathbf G$ is defined as the quotient of the lattice of cocharacters by the lattice generated by the coroots: $\pi_1(\bfG) = \Mcheck / \langle \alphacheck: \alphacheck \in \Rcheck \rangle$. For $\lambdacheck\in \Mcheck$, we denote the corresponding element of the fundamental group by $\lambdacheck_\bfG\in \pi_1(\bfG)$. With these definitions, we have:
\begin{equation*}\begin{split}
\pi_1(\bGL_n) &= \Z \ , \\ 
\pi_1(\mathbf{SL}_n) &= 1 \ , \\ 
\pi_1(\bPGL_n) &= \Z / n\Z \ , \\
\pi_1(\mathbf{Sp}_{2n}) &= 1 \ , \\
\pi_1(\mathbf{SO}_{2n}) &= \begin{cases} \Z / 4\Z \ , & \text{if }n\mbox{ odd}\ , \\
 (\Z / 2\Z)^2 \ , & \text{if }n\mbox{ even} \ .
 \end{cases}
 \end{split}
\end{equation*}

The choice of $\mathbf{B}$ determines a partition $R=R^+\sqcup R^-$ into positive and negative roots, as well as a set of simple positive roots $\{\alpha_i:i\in D\}\subseteq R^+$, where $D$ is the set of vertices of the Dynkin diagram. Hence $\mathbf B$ gives us a partial order on the cocharacter lattice $\Mcheck$: we say that $\lambdacheck \leq \mucheck$ if $\mucheck - \lambdacheck$ is a nonnegative linear combination of positive coroots. This order extends naturally to real coefficients $\Mcheck_\RR = \Mcheck \otimes_\Z \R.$ 

For a parabolic subgroup $\mathbf{P} \subseteq \bfG$, we denote by $\bfL = \bfP/U(\bfP)$ its Levi quotient, where $U(\bfP)$ is the unipotent radical.
A parabolic subgroup $\bfP$ of $\bfG$ containing $\bfB$ corresponds to a subset $D_\bfP\subseteq D$ of the simple roots (the Dynkin diagram of $\bfL$), in particular $\bfG$ itself corresponds to $D$. We denote the algebraic fundamental group of $\bfP$ by $\pi_1(\bfP) \coloneqq \pi_1(\bfL) = \Mcheck/\langle \check{\alpha}_i:i\in D_\bfP\rangle$, and for $\lambdacheck\in \Mcheck$ we denote by $\lambdacheck_\bfP$ the corresponding element of $\pi_1(\bfP)$.

We denote by $\Bun_\bfG(X)$ the moduli stack of $\bfG$-bundles on $X$, that is to say, \'etale $\bfG$-torsors on $X$. It is well-known that the connected components of $\Bun_\bfG(X)$ are in bijection with $\pi_1(\bfG)$ (see \cite[Theorem 5.8]{Hoffmann} for a proof). We call elements of $\pi_1(\bfG)$ \emph{degrees}, and for $\lambdacheck_\bfG\in \pi_1(\bfG)$ we denote the corresponding connected component by $\Bun^{\lambdacheck_\bfG}_\bfG(X)$. For a parabolic subgroup $\bfP \subseteq \bfG$ with Levi quotient $\bfL$, the moduli spaces $\Bun_\bfP(X)$ and $\Bun_\bfL(X)$ have the same connected components and are in bijection with $\pi_1(\bfP)=\pi_1(\bfL)$. 

\subsection{The slope map and semistability}

We now define the \emph{slope map} $\phi_\bfP: \pi_1(\bfP) \to \Mcheck_\R$ for a parabolic subgroup $\bfB \subseteq \bfP \subseteq \bfG$, for details we refer to \cite{schieder2015harder}. We recall that the center of a reductive group $\bfL$ with maximal torus $\bfT$ is the intersection
\[
    Z(\bfL) = \bigcap_{\alpha \text{ root of }\bfL} \ker(\alpha) \subseteq \bfT \ .
\] The natural map $Z(\bfL) \to \bfT$ induces a map on the cocharacters. Taking the quotient by the coroots, we obtain a map
\[
    \mathbb{X}_{*}(Z(\bfL)) \to \mathbb{X}_{*}(\bfT) = \Mcheck \to \Mcheck / \langle \alphacheck \text{ coroot of } \bfL \rangle = \pi_1(\bfL)=\pi_1(\bfP) \ ,
\]
which becomes an isomorphism $\mathbb{X}_{*}(Z(\bfL))_\RR \simeq \pi_1(\bfP)_\RR$ after tensoring with $\RR$.

\begin{definition}
    For a parabolic subgroup $\bfB \subseteq \bfP \subseteq \bfG$ with Levi subgroup $\bfL$ we define the \emph{slope map} $\phi_\bfP: \pi_1(\bfP) \to \Mcheck_\RR$ as 
     \[
     \pi_1(\bfP) \to \pi_1(\bfP)_\RR \cong \mathbb{X}_{*}(Z(\bfL))_\RR \to \Mcheck_\RR \ .
    \]
The \emph{slope} of a $\bfP$-bundle $F_\bfP \in \Bun_\bfP^{\lambdacheck_\bfP}(X)$ is the element $\phi_\bfP(\lambdacheck_\bfP)$. We say that a $\bfG$-bundle $F \in \Bun^{\lambdacheck_\bfG}_\bfG(X)$ on $X$ is \emph{(semi)stable} if for any proper parabolic subgroup $\bfP \subseteq \bfG$ and for any reduction $F_\bfP$ of $F$ to $\bfP$ of degree $\lambdacheck_\bfP$ we have
\[    
    \phi_\bfP(\lambdacheck_\bfP) \,\,\ \substack{<\\(\leq)} \,\,\  \phi_\bfG(\lambdacheck_\bfG) \ .
\]
\end{definition}
For $\bfG= \bGL_n$, these notions reduce to the standard notions of slope and stability for vector bundles on $X$ (see~\cite[Section 2.2.4]{schieder2015harder}). 

We will use the following result of \Fratila~(see Lemma 2.12 in~\cite{fratila2016stack}). First, we note that for a parabolic subgroup $\bfB\subseteq \bfP\subseteq \bfG$, taking the quotient by the remaining roots defines a natural quotient map $p:\pi_1(\bfP)\to \pi_1(\bfG)$.

\begin{theorem}[{\cite[Lemma 2.12]{fratila2016stack}}]
\label{thm:Fratila existence of minimal parabolic}
Let $\bfG$ be a reductive group and let $\lambdacheck_\bfG\in\pi_1(\bfG)$ be a degree. Then there exists a parabolic subgroup $\bfP\subseteq \bfG$ and a degree $\lambdacheck_\bfP\in \pi_1(\bfP)$ such that $\phi_\bfP(\lambdacheck_\bfP)=\phi_\bfG(\lambdacheck_\bfG)$ and $p(\lambdacheck_\bfP)=\lambdacheck_\bfG$, and which is minimal with this property. The parabolic subgroup $\bfP$ is unique up to conjugation.

\end{theorem}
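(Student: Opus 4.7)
The plan is to translate the statement into a lattice-theoretic condition on $\Mcheck$ and the coroot lattice $\langle \Rcheck \rangle$, and then exploit a ``closure under intersection'' property of admissible parabolics. First, I would reduce to standard parabolics: since every parabolic is $\bfG$-conjugate to a unique standard parabolic $\bfP_I$ containing a fixed Borel $\bfB \supseteq \bfT$, indexed by a subset $I \subseteq \Delta$ of simple roots, it suffices to identify a unique smallest such $I$. Using the direct sum decomposition $\Mcheck_{\R} = \mathbb{X}_*(Z(\bfL_I))_\R \oplus \langle \Rcheck_I \rangle_\R$ (valid because the root--coroot pairing is non-degenerate on the subsystem $R_{\bfL_I}$), the slope map $\phi_{\bfP_I}$ factors as the composition $\pi_1(\bfP_I) \twoheadrightarrow \pi_1(\bfP_I)_\R \cong \mathbb{X}_*(Z(\bfL_I))_\R \hookrightarrow \Mcheck_\R$. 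Consequently, a degree $\lambdacheck_{\bfP_I} \in \pi_1(\bfP_I)$ with $p(\lambdacheck_{\bfP_I}) = \lambdacheck_\bfG$ and $\phi_{\bfP_I}(\lambdacheck_{\bfP_I}) = v := \phi_\bfG(\lambdacheck_\bfG)$ exists if and only if there is $\mu \in \Mcheck$ representing $\lambdacheck_\bfG$ in $\pi_1(\bfG)$ and satisfying $\mu - v \in \langle \Rcheck_I \rangle_\R$; I will call such an $I$ \emph{admissible}. Existence for at least one $I$ is then immediate, since $I = \Delta$ is always admissible by the very definition of $v$.

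The central step will be to show that the collection of admissible $I$'s is closed under intersection. Given $I_1, I_2 \subseteq \Delta$ both admissible with witnesses $\mu_1, \mu_2 \in \Mcheck$, I would write $\mu_i - v = \sum_{\alpha \in I_i} c_{\alpha, i} \alphacheck$ with $c_{\alpha, i} \in \R$ (the simple coroots $\{\alphacheck : \alpha \in I_i\}$ form a basis of $\langle \Rcheck_{I_i} \rangle_\R$) and then examine the difference
\[
\mu_1 - \mu_2 \;=\; \sum_{\alpha \in I_1 \cap I_2} (c_{\alpha,1} - c_{\alpha,2}) \alphacheck + \sum_{\alpha \in I_1 \setminus I_2} c_{\alpha,1}\alphacheck - \sum_{\alpha \in I_2 \setminus I_1} c_{\alpha,2}\alphacheck,
\]
which lies in the integral coroot lattice $\langle \Rcheck \rangle = \bigoplus_{\alpha \in \Delta} \Z \alphacheck$. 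By the $\Z$-linear independence of the simple coroots, each displayed coefficient must be integral; in particular $c_{\alpha,1} \in \Z$ for all $\alpha \in I_1 \setminus I_2$. Setting $\mu := \mu_1 - \sum_{\alpha \in I_1 \setminus I_2} c_{\alpha,1}\alphacheck$ then produces a new element of $\Mcheck$ that still lifts $\lambdacheck_\bfG$ and satisfies $\mu - v = \sum_{\alpha \in I_1 \cap I_2} c_{\alpha,1}\alphacheck \in \langle \Rcheck_{I_1 \cap I_2} \rangle_\R$, so $I_1 \cap I_2$ is admissible.

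From this intersection-closure, the non-empty finite family of admissible subsets of $\Delta$ admits a unique minimum $I^* := \bigcap_{I \text{ admissible}} I$, and the standard parabolic $\bfP^* := \bfP_{I^*}$ is the desired minimal parabolic, unique up to $\bfG$-conjugation. The hardest part of the proof will be the intersection-closure argument, and specifically the delicate integrality statement for the coefficients $c_{\alpha,i}$; once this is established, existence, minimality, and uniqueness all follow formally from the finiteness of $\Delta$.
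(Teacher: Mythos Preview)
Your argument is correct. The paper itself does not give a proof of this statement; it is quoted as \cite[Lemma 2.12]{fratila2016stack}, and the only thing the paper adds is the remark that the minimal parabolic is explicitly described by the set of simple roots
\[
D_\bfP=\{i\in D:\langle\omega_i,\phi_\bfG(\lambdacheck_\bfG)-\lambdacheck\rangle\notin\Z\},
\]
for any lift $\lambdacheck\in\Mcheck$ of $\lambdacheck_\bfG$. Your proof not only establishes the theorem but in fact recovers this explicit formula: writing $\lambdacheck-v=\sum_{\alpha\in\Delta}c_\alpha\alphacheck$ with $v=\phi_\bfG(\lambdacheck_\bfG)$, your admissibility criterion for $I$ becomes precisely ``$c_\alpha\in\Z$ for all $\alpha\notin I$'', so the minimal admissible set is $I^*=\{\alpha:c_\alpha\notin\Z\}$; since $c_\alpha=\langle\omega_\alpha,\lambdacheck-v\rangle$, this is exactly the set in the paper's remark. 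In this sense your intersection-closure argument is the natural proof behind that formula, and the integrality step you flagged as delicate is indeed the heart of the matter.
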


We note that $\bfP$ is explicitly given by the set of roots $\{i \in D : \langle \omega_i ,\phi_\bfG(\lambdacheck_\bfG) - \lambdacheck \rangle \notin \Z \} $, where $\lambdacheck \in \Mcheck$ is a lift of $\lambdacheck_\bfG$ and the $\omega_i$ are the fundamental weights.

\subsection{Stable and semistable $\bfG$-bundles over an elliptic curve} 
\label{subsec:stability for bundles on elliptic curves}

We now recall \Fratila's description of the moduli space of semistable $\bfG$-bundles on an elliptic curve $X$ (see~\cite{fratila2016stack} and~\cite{Fratila_revisitingelliptic}). For $\lambdacheck_\bfG\in \pi_1(\bfG)$ we denote by $\mc M_\bfG^{\lambdacheck_\bfG, \semist}(X)$ and $\mc M_\bfG^{\lambdacheck_\bfG,\st}(X)$ the moduli spaces of semistable and stable $\bfG$-bundles on $X$ of degree $\lambdacheck_\bfG$, respectively, and we usually suppress the $X$ from the notation.

We recall that the derived subgroup $\bfG^{\mathrm{der}}=[\bfG,\bfG]$ of $\bfG$ is semisimple with the same Weyl group $W$. The intersection $\bfT\cap \bfG^{\mathrm{der}}$ is a maximal torus in $\bfG^{\mathrm{der}}$ and its character and cocharacter lattices are given by $M /\Rcheck^\perp$ and $\langle \Rcheck\rangle^\sat$, respectively (see \cite[Corollary 8.1.9]{Springer_linalggroups}). The \emph{cocenter} $Z^c(\bfG)$ of $\bfG$ is the quotient $\bfG/\bfG^{\mathrm{der}}$, which is a torus with character and cocharacter lattices $R^\perp$ and $\Mcheck/\langle \Rcheck\rangle^\sat$. The quotient map $\bfG\to Z^c(\bfG)$ is called the \emph{determinant} (for $\bfG=\bGL_n$, we have $Z^c(\bfG)=\G_m$ and this map is the usual matrix determinant). 

First, we have the following explicit description of the moduli space of stable $\bfG$-bundles on $X$:

\begin{theorem}[{\cite[Corollary 4.3]{fratila2016stack}}, {\cite[Theorem 1.4]{Fratila_revisitingelliptic}}] 
\label{thm:Fratilla determinant is isomorphism}
Let $\bfG$ be a reductive group and let $\lambdacheck_\bfG\in \pi_1(\bfG)$. 

\begin{enumerate}
    \item The moduli space $\mc M_\bfG^{\lambdacheck_\bfG,\st}$ is nonempty only if $\bfG$ is of type $\prod_i A_{n_i-1}$.

    \item \label{item:stable} Suppose that $\bfG$ is of type $\prod_i A_{n_i-1}$, so that
    \[
    \bfG^{\ad}=\bfG/Z(\bfG)=\prod_i \bPGL_{n_i} \ , \quad \pi_1(\bfG^{\ad})=\prod_i {\Z/n_i\Z} \ .
    \]
    Then $\mc M_\bfG^{\lambdacheck_\bfG,\st}$ is nonempty if and only if the image of $\lambdacheck_\bfG$ in $\pi_1(\bfG^{\ad})$ is of the form $(d_i)_i$, where $\gcd(d_i,n_i)=1$ for all $i$. Furthermore, in this case the determinant map
\[
    \mc M_\bfG^{\lambdacheck_\bfG,\st} \to \mc M_{Z^c(\bfG)}^{\det(\lambdacheck_\bfG),\semist}
\]
is an isomorphism. 

\end{enumerate}
\end{theorem}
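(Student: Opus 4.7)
The plan is to reconstruct Fratila's argument, which generalizes Atiyah's classification of vector bundles on elliptic curves to arbitrary reductive groups. As a preliminary reduction, I would decompose $\bfG$ along the isogeny $\bfG^{\mathrm{der}}\times Z(\bfG)^\circ\to \bfG$ and then $\bfG^{\mathrm{der}}$ into almost-simple factors; a stable $\bfG$-bundle restricts to bundles on each factor that are stable modulo the action of the center, so both parts of the statement reduce to the case of an almost-simple $\bfG$.

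For part (1), my plan is to use the uniformization theorem of Looijenga, Friedman--Morgan--Witten, and Laszlo, identifying $\mc M_\bfG^{\lambdacheck_\bfG,\semist}(X)$ with a quotient of the form $(X\otimes_\Z \Mcheck)^{\lambdacheck_\bfG}/W$, where the Weyl group $W$ acts via its natural action on the cocharacter lattice. Under this identification, a semistable bundle is stable precisely when the corresponding $W$-orbit is free. A case-by-case analysis reveals that root systems outside type $A$ admit canonical involutions (such as $-1$ in types $B,C,D_{2n},E_7,E_8,F_4,G_2$, or a suitable diagram involution in the remaining cases) whose fixed loci force every $W$-orbit in $X\otimes_\Z \Mcheck$ to be non-free. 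For a root system of type $\prod_i A_{n_i-1}$, the Weyl group $\prod_i S_{n_i}$ acts by permutations on the relevant components of $X\otimes_\Z \Mcheck$, and free orbits on the component corresponding to $(d_i)_i\in \pi_1(\bfG^{\ad})=\prod_i \Z/n_i\Z$ exist exactly when $\gcd(d_i,n_i)=1$ for each $i$; this can be extracted directly from Atiyah's classification applied factor by factor.

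For part (2), the determinant morphism is well-defined by functoriality along $\bfG\to Z^c(\bfG)=\bfG/\bfG^{\mathrm{der}}$. To prove it is an isomorphism, I would first treat the building block case $\bfG=\bGL_n$: Atiyah's construction supplies canonical stable bundles $F_{n,d}$ for $\gcd(n,d)=1$, uniquely parametrized up to isomorphism by their determinant in $\Pic^d(X)\cong X$, so the map $\mc M_{\bGL_n}^{d,\st}(X)\to \Pic^d(X)$ is a bijection. For general $\bfG$ of type $\prod_i A_{n_i-1}$, one bootstraps from the $\bGL_{n_i}$-case using the isogeny decomposition of the first step, exploiting that the fiber of $\det$ over the identity is a point (stable bundles with trivial determinant being unique up to isomorphism) and that $Z^c(\bfG)$ is a torus of the appropriate dimension so that the map is étale, hence bijective once surjective.

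The hardest step will be the case-by-case elimination of non-type-$A$ factors in part (1): while the existence of the obstructing Weyl group involutions is classical Lie theory, translating their presence into a concrete canonical reduction of every semistable $\bfG$-bundle to a proper Levi subgroup (hence forbidding stability) requires a careful tracking of degrees and Harder--Narasimhan reductions through the Looijenga uniformization, and constitutes the main technical content of \cite{fratila2016stack}. Once this is established, part (2) follows largely mechanically from the $\bGL_n$-case via the factor-wise decomposition.
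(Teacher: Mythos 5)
The paper does not prove this theorem: it is imported verbatim from \Fratila's work (\cite[Corollary 4.3]{fratila2016stack}, \cite[Theorem 1.4]{Fratila_revisitingelliptic}) and used as a black box, so there is no internal proof to compare yours against. What follows is therefore an assessment of your reconstruction on its own terms.

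Your plan for part (2) has essentially the right shape (Atiyah's bundles $F_{n,d}$ for $\bGL_n$, then bootstrapping through isogenies), but part (1) contains a genuine error. The claim that, under the Looijenga--Friedman--Morgan--Witten identification of the coarse moduli space with $(X\otimes_\Z\Mcheck)/W$, a semistable bundle is stable precisely when its $W$-orbit is free, is false. Take $\bfG=\mathbf{SL}_2$ in the trivial degree: the coarse space is $X/(\pm 1)$ and the generic orbit is free, yet no stable $\mathbf{SL}_2$-bundle exists on an elliptic curve --- a point of the coarse space is the S-equivalence class of $L\oplus L^{-1}$, which is polystable but admits a degree-$0$ reduction to the Borel. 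The coarse space only records S-equivalence classes, so orbit-freeness cannot detect stability. For the same reason your proposed mechanism for excluding non-type-$A$ groups (involutions such as $-1\in W$ forcing all orbits to be non-free) does not work: the presence of $-1$ in $W$ does not preclude free orbits (already for $B_2$ the generic orbit of the order-$8$ dihedral Weyl group is free). The argument in \cite{fratila2016stack} instead runs through the minimal-parabolic analysis recalled as Theorem \ref{thm:Fratila existence of minimal parabolic} in this paper: for every degree $\lambdacheck_\bfG$ there is a canonical Levi subgroup $\bfL_{\lambdacheck_\bfG}$, always of type $\prod_i A_{n_i-1}$, such that every semistable $\bfG$-bundle of that degree is induced from a stable $\bfL_{\lambdacheck_\bfG}$-bundle; a $\bfG$-bundle can itself be stable only if $\bfL_{\lambdacheck_\bfG}=\bfG$, which forces both the type-$A$ condition and the coprimality $\gcd(d_i,n_i)=1$. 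You would need to replace the orbit-freeness criterion with this reduction-to-the-minimal-Levi argument for part (1) to go through.
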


\begin{definition}\label{def:stable_degree}
A degree $\lambdacheck_\bfG\in \pi_1(\bfG)$ satisfying condition~\eqref{item:stable} above is called \emph{stable}.
\end{definition}

This description generalizes to semistable bundles.

\begin{theorem}[{\cite[Theorem 1.2]{Fratila_revisitingelliptic}}]
\label{thm:Fratilas results}
Let $\lambdacheck_\bfG\in \pi_1(\bfG)$. Let  $\bfL = \bfL_{\lambdacheck_\bfG}\subseteq \bfG$ be the Levi subgroup corresponding to the parabolic subgroup $\bfP\subseteq \bfG$ given by Theorem~\ref{thm:Fratila existence of minimal parabolic}, and let $\lambdacheck_\bfL=\lambdacheck_\bfP\in \pi_1(\bfL)=\pi_1(\bfP)$ be the corresponding degree. Then
\begin{enumerate}
    \item the inclusion $\bfL\subseteq \bfG$ induces a map $\pi\colon\mc M_\bfL^{\lambdacheck_\bfL,\semist}\to \mc M_\bfG^{\lambdacheck_\bfG,\semist}$, and all semistable $\bfL$-bundles in $\mc M_\bfL^{\lambdacheck_\bfL,\semist}$ are stable. 
    \item $\pi$ is finite and generically Galois with Galois group $W_{\bfL,\bfG}=N_\bfG(\bfL)/\bfL$, where $N_\bfG(\bfL)$ is the normalizer of $\bfL$ in $\bfG$. 
    \item the quotient map
    \[
        \mc M_\bfL^{\lambdacheck_\bfL,\st}/W_{\bfL,\bfG}\to \mc M_{\bfG}^{\lambdacheck_\bfG,\semist}
    \]
    is an isomorphism. 
   \end{enumerate}
\end{theorem}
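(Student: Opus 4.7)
The plan is to prove the three assertions in order, with the central input throughout being the minimality of $(\bfP, \lambdacheck_\bfP)$ from Theorem \ref{thm:Fratila existence of minimal parabolic} combined with a canonical Levi-reduction theorem for semistable bundles on elliptic curves, generalizing Atiyah's classification of vector bundles. For part (1), the induction map $\pi$ is well-defined on degrees since $p(\lambdacheck_\bfP) = \lambdacheck_\bfG$ by construction, and semistability of the induced $\bfG$-bundle $F_\bfG = F_\bfL \times^\bfL \bfG$ follows from the slope identity $\phi_\bfL(\lambdacheck_\bfL) = \phi_\bfG(\lambdacheck_\bfG)$: any destabilizing reduction of $F_\bfG$ to a parabolic of $\bfG$ would, after intersection with $\bfL$, yield a destabilizing reduction of $F_\bfL$. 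That every semistable $\bfL$-bundle of degree $\lambdacheck_\bfL$ is actually stable I would prove by contradiction. A proper reduction to $\bfQ \subsetneq \bfL$ of degree $\lambdacheck_\bfQ$ with matching slope and $p_\bfL(\lambdacheck_\bfQ) = \lambdacheck_\bfL$ yields via $\bfQ' = \bfQ \cdot U(\bfP) \subsetneq \bfP$ a parabolic of $\bfG$ satisfying the same slope and degree conditions as $\bfP$ but strictly smaller, contradicting the minimality in Theorem \ref{thm:Fratila existence of minimal parabolic}.

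For part (2), the conjugation action of $N_\bfG(\bfL)$ on $\bfL$ descends to an action of $W_{\bfL,\bfG} = N_\bfG(\bfL)/\bfL$ on $\mc M_\bfL^{\lambdacheck_\bfL, \semist}$; I would verify directly that the degree component is preserved, using the specific form of $\lambdacheck_\bfL = \lambdacheck_\bfP$ dictated by Theorem \ref{thm:Fratila existence of minimal parabolic}. The map $\pi$ is invariant under this action since $n \cdot F_\bfL$ and $F_\bfL$ induce isomorphic $\bfG$-bundles for $n \in N_\bfG(\bfL)$. Finiteness of $\pi$ and the generic Galois assertion then both reduce to the statement that a generic semistable $\bfG$-bundle in the image admits a reduction to $\bfL$ unique up to $W_{\bfL,\bfG}$—which is the crucial geometric input discussed below.

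For part (3), surjectivity of the quotient map is precisely the existence of such canonical reductions, and injectivity is the statement that two stable $\bfL$-bundles inducing the same $\bfG$-bundle must be $W_{\bfL,\bfG}$-conjugate; both are consequences of the canonical reduction theorem on an elliptic curve. The main obstacle throughout is this canonical reduction theorem itself: for $\bfG = \bGL_n$ this is Atiyah's classical result, but for a general reductive $\bfG$ one needs to use the tannakian description of semistable bundles of slope zero on an elliptic curve, which form a tannakian category whose associated Tannaka group is a commutative affine group scheme. From the character theory of this group one can extract a canonical Levi subgroup, namely $\bfL$, to which every semistable $\bfG$-bundle of the given degree reduces uniquely up to the $W_{\bfL,\bfG}$-action. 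The elliptic-curve hypothesis is essential here and cannot be weakened to an arbitrary smooth projective curve.
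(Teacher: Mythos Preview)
The paper does not give its own proof of this statement: Theorem~\ref{thm:Fratilas results} is quoted verbatim from \Fratila's work \cite[Theorem 1.2]{Fratila_revisitingelliptic} as background input, and no argument is supplied in the paper. So there is nothing to compare your proposal against here; the authors simply import the result and use it in Section~\ref{sec_tropicalization}.

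That said, a brief remark on your sketch. Your outline for part~(1) is broadly correct in spirit---the minimality of $\bfP$ is indeed what forces stability of the $\bfL$-bundles---but the step ``any destabilizing reduction of $F_\bfG$ to a parabolic of $\bfG$ would, after intersection with $\bfL$, yield a destabilizing reduction of $F_\bfL$'' is not automatic and in \Fratila's paper requires a careful argument comparing the Harder--Narasimhan filtrations. For parts~(2) and~(3), your tannakian description of degree-zero semistable bundles as a tannakian category with commutative Tannaka group is not the route \Fratila\ takes; his argument proceeds instead through an explicit analysis of the moduli stack, using that $\bfL$ is of type $\prod_i A_{n_i-1}$ (a fact you do not mention but which is essential, see \cite[Corollary 4.2]{fratila2016stack}) together with the Atiyah--Tu classification on each factor. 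Your tannakian approach, if it can be made to work, would be interesting, but as written it is only a plan: you would need to explain how the Tannaka group actually singles out $\bfL$ and why the ambiguity in the reduction is exactly $W_{\bfL,\bfG}$ and not something larger.
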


\section{Moduli spaces of tropical principal bundles}

\label{sec:moduliofbundles}

Let $\Ga$ be a metric graph and let $G$ be a tropical reductive group. In this section, we define tropical notions of degree and stability for $G$-bundles on $\Ga$ and describe the connected components of the moduli space of tropical $G$-bundles on $\Ga$. On a tropical elliptic curve (that is to say, a metric circle) we describe the main components of the moduli spaces of stable and semistable bundles, establishing tropical analogues of Theorems~\ref{thm:Fratilla determinant is isomorphism} and~\ref{thm:Fratilas results}.

\subsection{Degree and stability in the tropical setting}

Let $\Phi=(M,R,\Mcheck, \Rcheck)$ be a root datum and let $G =  (\Mcheck\otimes_{\ZZ}\RR) \rtimes W_{\Phi}$ be the corresponding tropical reductive group. In analogy with the algebraic setting, we define the \emph{fundamental group} of $\Phi$ as
\[
\pi_1(\Phi)=\Mcheck/\langle \Rcheck\rangle \ . 
\]
In a slight abuse of notation we refer to $\pi_1(\Phi)$ as the \emph{fundamental group of $G$} and denote it by $\pi_1(G)$. We denote the image of a cocharacter $\lambdacheck\in \Mcheck$ in the fundamental group by $\lambdacheck_G\in \pi_1(G)$.

To define the degree of a tropical $G$-bundle, we first show that there is a natural projection from $\Mcheck \rtimes W_\Phi$ to the fundamental group of $G$.

\begin{lemma}
    \label{lem:subgroup is normal}
    The group $\langle \Rcheck\rangle \rtimes W_\Phi$ is normal in $\Mcheck \rtimes W_\Phi$. Hence there exists a well-defined surjective homomorphism 
    \[
    \Mcheck \rtimes W_\Phi \to \Mcheck / \langle \Rcheck \rangle =\pi_1(G)  \quad\text{given by} \quad (m ,w)\mapsto \overline{m}\ .
    \]

\end{lemma}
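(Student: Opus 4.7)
The strategy is to exhibit the desired surjection directly and read off normality of the subgroup as the kernel. Define
\[
p \colon \Mcheck \rtimes W_\Phi \longrightarrow \Mcheck / \langle \Rcheck \rangle, \qquad (m, w) \longmapsto \overline{m}.
\]
The kernel of $p$ (if it is a homomorphism) is precisely $\langle \Rcheck \rangle \rtimes W_\Phi$, so both assertions reduce to checking that $p$ is a group homomorphism, and for this the only nontrivial point is that $p((m_1,w_1)(m_2,w_2)) = \overline{m_1 + w_1(m_2)}$ must equal $\overline{m_1} + \overline{m_2}$. Equivalently, I must show that $W_\Phi$ acts trivially on the quotient $\Mcheck/\langle \Rcheck \rangle$, i.e., that $v - w(v) \in \langle \Rcheck \rangle$ for all $v \in \Mcheck$ and $w \in W_\Phi$.

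For a reflection generator $s_{\check\alpha}$ of $W_\Phi$, the definition in Definition~\ref{def:rootdatum} gives
\[
v - s_{\check\alpha}(v) = \langle \alpha, v \rangle\,\alphacheck \in \langle \Rcheck\rangle.
\]
For an arbitrary $w = s_1 \cdots s_k \in W_\Phi$ written as a product of reflections, set $v_{k+1} = v$ and $v_i = s_i(v_{i+1})$ for $i=k,\ldots,1$, so $v_1 = w(v)$. Each difference $v_{i+1} - v_i = v_{i+1} - s_i(v_{i+1})$ lies in $\langle \Rcheck\rangle$ by the reflection computation, and the telescoping sum
\[
v - w(v) = v_{k+1} - v_1 = \sum_{i=1}^{k} (v_{i+1} - v_i)
\]
lies in $\langle \Rcheck\rangle$ as well. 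This settles the key claim.

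It remains to observe that $p$ is indeed a homomorphism with the claimed kernel. Using the semidirect product multiplication and the claim just established,
\[
p\bigl((m_1,w_1)(m_2,w_2)\bigr) = \overline{m_1 + w_1(m_2)} = \overline{m_1} + \overline{m_2} = p(m_1,w_1) + p(m_2,w_2),
\]
and surjectivity is clear. The kernel is $\{(m,w) : m \in \langle \Rcheck\rangle\} = \langle \Rcheck\rangle \rtimes W_\Phi$, which is therefore normal, as required.

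There is no real obstacle in this lemma: once one recognizes that the $W_\Phi$-action descends trivially to $\pi_1(G)$, everything else is a formal computation. The only content is the telescoping reduction from arbitrary Weyl group elements to simple reflections, which uses only axiom~(i) of Definition~\ref{def:rootdatum} implicitly via the reflection formula.
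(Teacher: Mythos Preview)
Your proof is correct and takes essentially the same approach as the paper: both reduce to the key fact that $v - w(v) \in \langle \Rcheck\rangle$ for all $v \in \Mcheck$ and $w \in W_\Phi$, proved by induction (equivalently, telescoping) on the word length in reflection generators. The only cosmetic difference is logical order: the paper verifies normality by direct conjugation and then deduces the quotient homomorphism, whereas you define the map first and read off normality from its kernel.
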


\begin{proof}
    Let $(r,w) \in \langle \Rcheck\rangle \rtimes W_\Phi$ with $r \in \langle \Rcheck \rangle$ and $w \in W_\Phi$. Then for $(m,v) \in \Mcheck \rtimes W_\Phi$ we have $(m,v)\cdot(r,w)\cdot(m,v)^{-1} = (m+v.r-vwv^{-1}.m, vwv^{-1})$. To show that $m+v.r-vwv^{-1}.m \in \langle \Rcheck \rangle$ we observe that $\Rcheck$ is invariant under the $W_\Phi$-action, i.e., $v.r \in \langle \Rcheck \rangle$ and that $w.m -m  \in \langle \Rcheck \rangle$ for every $w \in W_\Phi$ and $m \in \Mcheck$. The latter fact we show by induction on the length of $w$. Indeed, if $w = s_\alpha$ for a root $\alpha$, then $w.m-m = \langle \alpha,m \rangle \alphacheck \in \langle \Rcheck \rangle$. Let $l(w) > 1$, and write $w = s_\alpha w'$ with $l(w') = l(w)-1$. Then $w.m-m = s_\alpha(w'.m-m) + s_\alpha.m-m \in \langle \Rcheck \rangle.$ This shows that $\langle \Rcheck\rangle \rtimes W_\Phi$ is normal in $\Mcheck \rtimes W_\Phi$.
    Hence there is a homomorphism 
    \[
    \Mcheck \rtimes W_\Phi \to (\Mcheck \rtimes W_\Phi) / (\langle \Rcheck \rangle \rtimes W_\Phi) \cong   \Mcheck / \langle \Rcheck \rangle =\pi_1(G)\ . \qedhere
    \]
\end{proof}

We recall that the set of isomorphism classes of $G$-bundles on $\Ga$ is the non-abelian cohomology set $H^1(\Ga,G_{\Ga})$, where $G_{\Ga}$ is the sheaf $(\Mcheck\otimes_{\ZZ}\calH) \rtimes W_{\Phi}$ of $G$-valued harmonic functions on $\Ga$. We define a degree map 
\[
H^1(\Ga,G_\Ga)\to \pi_1(G)
\]
as follows. Recall that the sheaf $\Omega$ of \emph{harmonic 1-forms} on $\Ga$ is the cokernel of the map $\RR\to \calH$, where $\RR$ is the constant sheaf. The quotient map $\calH\to \Omega$ (which sends a harmonic function to its derivative) and the homomorphism of Lemma~\ref{lem:subgroup is normal} induce maps of sheaves
\[
G_{\Ga}=(\Mcheck\otimes_{\ZZ}\calH) \rtimes W_{\Phi}\longrightarrow  (\Mcheck\otimes_{\ZZ}\Omega) \rtimes W_{\Phi}\longrightarrow  \Omega\otimes_{\ZZ} \pi_1(G) \ ,
\]
which in turn induce a map of pointed sets $H^1(\Ga,G_{\Ga})\to H^1(\Ga,\Omega\otimes_{\ZZ} \pi_1(G))$ in cohomology. Since $\Omega\otimes_{\ZZ}\pi_1(G)$ is a sheaf of abelian groups, its $H^1$ is a group that we can compute using the universal coefficient theorem. On a graph, all cohomology groups vanish in dimensions 2 and above. By Lemma 1.4 in~\cite{2022GrossUlirschZakharov} there is a natural isomorphism $H^1(\Ga,\Omega)\cong \ZZ$, hence we obtain
\[
H^1(\Gamma,\Omega\otimes_{\ZZ} \pi_1(G))\cong H^1(\Gamma,\Omega)\otimes_{\ZZ}\pi_1(G)\cong \ZZ\otimes_{\ZZ}\pi_1(G)\cong \pi_1(G) \ .
\]
The degree map is obtained by composing all of the above maps.

\begin{definition}
\label{def:degree}
Let $F\in H^1(\Gamma, G_{\Ga})$ be a $G$-bundle on a metric graph $\Gamma$. The \emph{degree} $\check{\lambda}_G\in \pi_1(G)$ of $F$ is the image of $F$ in $\pi_1(G)$.
\end{definition}

The degree is closely related to the \emph{determinant map}, which is the quotient morphism $\det\colon G\to \Mcheck_\R/\langle \Rcheck\rangle_\R$, which is well-defined by Lemma \ref{lem:subgroup is normal}. The induced morphism 
\[
\Mcheck/\langle \Rcheck\rangle =\pi_1(G)\xrightarrow{\det_*} \pi_1(\Mcheck_\R/\langle\Rcheck\rangle_\R)= \Mcheck/\langle \Rcheck\rangle^\sat 
\]
is the quotient map that divides out the torsion of $\pi_1(G)$. It follows that if $F$ is a $G$-bundle, then the degree of $\det(F)$ is the torsion-free part of $\deg(F)$. In particular, if $\pi_1(G)$ is torsion-free, as for example in the case $G = \GL_n(\TT)$, the degree of $F$ coincides with the degree of $\det(F)$.  Consequently, Definition~\ref{def:degree} agrees with the notion of degree for tropical vector bundles introduced in Section 2.4 of~\cite{2022GrossUlirschZakharov} in the case $G=\GL_n(\TT)$.

To define stability, we first define parabolic subgroups in the tropical setting. Fix a splitting $R=R^+\sqcup R^-$ and let $\{\alpha_i\in R^+:i\in D\}$ be the set of simple roots. Let $D_P\subseteq D$ be a subset of the simple roots and let $W_{\Phi,P}\subseteq W_{\Phi}$ be the subgroup generated by the reflections in $\{\alpha_i:i\in D_P\}$.

\begin{definition} The \emph{standard parabolic subgroup} corresponding to $D_P\subseteq D$ is the tropical reductive group
\[
P= (\Mcheck\otimes_{\ZZ}\RR) \rtimes W_{\Phi,P} \subseteq G =  (\Mcheck\otimes_{\ZZ}\RR) \rtimes W_{\Phi} \ .
\]
\end{definition}

We note that there is no notion of unipotent groups in the tropical setting, hence a parabolic subgroup is the same as its associated Levi subgroup. In particular, parabolic subgroups are reductive, contrary to the algebraic situation.

We now define slope in analogy with the algebraic setting. First, we compute the center of a tropical reductive group.

\begin{lemma}
\label{lem:center}
The center of a tropical reductive group $G=\Mcheck_{\R}\rtimes W_{\Phi}$ is
\[
Z(G)=R^\perp=\{(m,1)\in \Mcheck_\R\rtimes W_{\Phi} : \langle R,m\rangle=0\} \ .
\]
In particular, we have
\[
Z(G)\cong\Mcheck_{\R}/\langle \Rcheck\rangle_{\RR}=\pi_1(G)_{\RR} \ .
\]
\end{lemma}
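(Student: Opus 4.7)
The plan is to unpack centrality directly using the semidirect product formula $(m_1,w_1)(m_2,w_2)=(m_1+w_1\cdot m_2,\,w_1w_2)$. Suppose $(m,w)\in Z(G)$. Comparing
\[
(m,w)(m',w')=(m+w\cdot m',\,ww')\quad\text{and}\quad (m',w')(m,w)=(m'+w'\cdot m,\,w'w)
\]
for all $(m',w')\in \Mcheck_\R\rtimes W_\Phi$ and specializing first to $w'=1$ shows that $w\cdot m'=m'$ for every $m'\in \Mcheck_\R$. Since $W_\Phi$ acts faithfully on $M$ by definition, it also acts faithfully on the dual space $\Mcheck_\R$, and hence $w=1$. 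With $w=1$, centrality reduces to $m=w'\cdot m$ for every $w'\in W_\Phi$, i.e.~$m$ is $W_\Phi$-invariant.

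Next I identify $W_\Phi$-invariants in $\Mcheck_\R$ with $R^\perp$. Because $W_\Phi$ is generated by the reflections $s_\alpha$ and $s_\alpha(m)=m-\langle\alpha,m\rangle\alphacheck$ with $\alphacheck\neq 0$, the element $m$ is fixed by $s_\alpha$ if and only if $\langle\alpha,m\rangle=0$. Thus $W_\Phi$-invariance is equivalent to $\langle R,m\rangle=0$, and the first claim $Z(G)=R^\perp$ follows.

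For the isomorphism $Z(G)\cong\Mcheck_\R/\langle\Rcheck\rangle_\R=\pi_1(G)_\R$, I consider the composition
\[
R^\perp\hookrightarrow \Mcheck_\R\twoheadrightarrow \Mcheck_\R/\langle\Rcheck\rangle_\R.
\]
The key input is that after choosing a set of simple roots $\{\alpha_i\}\subseteq R$ (with corresponding simple coroots $\{\alphacheck_i\}$ spanning $\langle\Rcheck\rangle_\R$), the matrix $(\langle\alpha_i,\alphacheck_j\rangle)$ is the Cartan matrix, which is non-singular. This non-degeneracy shows $R^\perp\cap\langle\Rcheck\rangle_\R=0$, so the above composition is injective. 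A dimension count then finishes the argument: $\dim R^\perp=\rk\Mcheck-\rk\langle R\rangle=\rk\Mcheck-\rk\langle\Rcheck\rangle=\dim\Mcheck_\R/\langle\Rcheck\rangle_\R$, so the composition is an isomorphism.

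The only mildly delicate step is the dimension count in the reductive (as opposed to semisimple) case, where $\langle\Rcheck\rangle_\R$ need not span $\Mcheck_\R$; here it is important to separate the non-degeneracy statement (pairing on $\langle R\rangle\times\langle\Rcheck\rangle$) from the question of whether the coroots fill the whole cocharacter space.
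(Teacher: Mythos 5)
Your proof is correct and follows essentially the same route as the paper: a direct computation in the semidirect product shows the Weyl component must be trivial (by faithfulness of the $W_\Phi$-action on $\Mcheck$) and that the lattice component must be $W_\Phi$-invariant, which is then identified with $R^\perp$ via the reflection formula. The only difference is that you spell out the decomposition $\Mcheck_\R=R^\perp\oplus\langle\Rcheck\rangle_\R$ (via non-degeneracy of the Cartan matrix and a dimension count), which the paper invokes without proof; this is a correct and welcome elaboration.
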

\begin{proof}
    Let $(k,w) \in Z(\Mcheck_\R \rtimes W_\Phi)$. Then for all $m \in \Mcheck_\R$ we have $(k,w) \cdot (m,1) = (m,1) \cdot (k,w)$ which is equivalent to $(k+w.m,w) = (m+k,w)$. Now, $w.m = m$ for all $m \in \Mcheck$ implies $w = 1$ since the action of $W$ on $\Mcheck$ is free.
    To see that $(k,1) \in Z(\Mcheck_\R \rtimes W_\Phi)$ if and only if $k \in R^\perp=\bigcap_{\alpha\in R} \ker(\alpha)$, let $\alpha \in R$ and $m \in \Mcheck$. Then $(k,1) \cdot (m,s_\alpha) = (m,s_\alpha) \cdot (k,1)$ is equivalent to $k+m = m + s_\alpha.k$. The latter is equivalent to $k = s_\alpha.k = k - \langle \alpha, k \rangle\alphacheck$ which is equivalent to $k \in \ker(\langle  \alpha, \cdot \rangle) = \ker \alpha$.

    The ``in particular'' statement follows from the decomposition $\Mcheck_\R=R^\perp\oplus\langle \Rcheck\rangle_\R$.
\end{proof}

We note that by definition, the fundamental group of a standard parabolic subgroup is
\[
\pi_1(P)=\Mcheck/\langle \check{\alpha_i}: i \in D_P \rangle \ .
\]

\begin{definition} Let $P$ be a standard parabolic subgroup corresponding to $D_P\subseteq D$, and let $F_P$ be a $P$-bundle on $\Gamma$ of degree $\check{\lambda}_P \in \pi_1(P)$. The \emph{slope} of $F_P$ is the image of $\lambdacheck_P$ under the map 
\begin{equation*}
     \phi_P\colon \pi_1(P) = \Mcheck/\langle \check{\alpha_i}: i \in D_P \rangle\longrightarrow 
     \Mcheck_\R/\langle \check{\alpha_i}: i \in D_P \rangle_\RR \cong
     Z(\Mcheck_\R \rtimes W_{\Phi,P})\longrightarrow 
     \Mcheck_\R \ ,
\end{equation*}
where the second map is the isomorphism given in Lemma~\ref{lem:center}. 
\end{definition}

Let $F$ be a $G$-bundle on $\Ga$ and let $P\subseteq G$ be a parabolic subgroup. We say that $F$ \emph{admits a reduction to $P$} if there exists a $P$-bundle $F_P$ on $\Gamma$ such that $i_*(F_P)=F$, where $i:P\to G$ is the inclusion.

\begin{definition} \label{def:slopestability}
    Let $G =  (\Mcheck\otimes_{\ZZ}\RR) \rtimes W_{\Phi}$ be a tropical reductive group and let $F$ be a $G$-bundle on $\Gamma$ of degree $\check{\lambda}_G\in \pi_1(G)$. We say that $F$ is \textit{(semi)-stable} if for every subset $D_P \subseteq D$ and for every element $\check{\lambda}_P \in \Mcheck_P$ such that $F$ admits a reduction $F_P$ to $P$ of degree $\check{\lambda}_P\in \pi_1(P)$ we have 
    \[
    \phi_P(\check{\lambda}_P) \,\,\ \substack{<\\(\leq)} \,\,\  \phi_G(\check{\lambda}_G) \ .
    \]
\end{definition}

For $G = \GL_n(\TT)$, this definition agrees with the notion of slope semi-stability for tropical vector bundles introduced in~\cite{2022GrossUlirschZakharov}. Recall that the slope of a tropical vector bundle $E$ on a metric graph $\Gamma$ of degree $\lambdacheck_G \in \pi_1(G)\cong\Z$ is defined to be the quotient $\mu(E) = \frac{\lambdacheck_G}{\rk(E)}$. The slope of $E$ is given by the formula
\[
\phi_G(\lambdacheck_G) = (\mu(E),\ldots,\mu(E)) \in \R^n
\]
where $\Mcheck_\R$ is canonically identified with $\R^n$ (see Section 2.2.4 in  \cite{schieder2015harder}). More generally, let $F_P$ be a $P$-bundle of degree $\lambdacheck_P \in \pi_1(P)$ for some parabolic $P \subseteq \GL_n(\T)$. Let $E_1,\ldots, E_m$ be the summands of $E$.
Then the slope $\phi_P(\lambdacheck_P)$ is given as
\[
\phi_P(\lambdacheck_P) = (\mu(E_1),\ldots,\mu(E_1),\ldots,\mu(E_m),\ldots,\mu(E_m)) \in \R^n \ ,
\]
where each $\mu(E_i)$ is repeated $\rk(E_i)$ times. Using the same arguments as in the algebraic setting, it is then elementary to show that Definition~\ref{def:slopestability} is equivalent to Definition 5.1 of~\cite{2022GrossUlirschZakharov}.

\subsection{Moduli of tropical $G$-bundles} We now interpret the set $H^1(\Gamma,G_{\Gamma})$ of isomorphism classes of $G$-bundles on $\Gamma$ as a moduli space. 

\label{subsec:moduli}

\begin{definition} Let $\Ga$ be a metric graph and let $G$ be a tropical linear group. The \emph{moduli space of $G$-bundles on $\Gamma$} is the set $\calM_G(\Ga)=H^1(\Gamma,G_{\Gamma})$ of isomorphism classes of $G$-bundles on $\Ga$. If $G$ is reductive, then for a degree $\lambdacheck_G\in \pi_1(G)$, we denote by $\calM^{\lambdacheck_G}_G(\Ga)$ the set of isomorphism classes of $G$-bundles having degree $\lambdacheck_G$. As in the algebraic setting, we normally suppress $\Ga$ from the notation. 
\end{definition}

We now describe $\mc M_G$ as the non-abelian \v{C}ech cohomology set $\check{H}^1(\calU(\Ga),G_{\Ga})$, computed using a canonical (except when $\Ga$ is a circle) acyclic cover $\calU(\Ga)$ of $\Ga$ (in Theorem~\ref{thm:description of component of tropical moduli space}, we give a more explicit description of $\mc M_G$ as a disjoint union of finite quotients of torsors under tropical abelian varieties). We fix an oriented simple loopless model for $\Gamma$, also denoted by $\Gamma$ by abuse of notation, by placing a vertex at the midpoint of each loop, and similarly splitting all multiedges. For an edge $e\in E(\Ga)$ denote by $U_e\subseteq \Gamma$ the corresponding open subset (not containing the root vertices of $e$). Similarly, for a vertex $v\in V(\Ga)$, denote by $U_v$ the \emph{star} around $v$, which is the union of $v$ and the $U_e$ for all edges $e$ incident to $v$. We call $\calU(\Gamma) = \{U_v\}_{v \in V(\Gamma)}$ the \emph{star cover} of $\Gamma$. 

First, we explicitly describe the sections of $G_{\Gamma}$. For an oriented edge $e\in E(\Ga)$, identify $U_e$ with the interval $(0,\ell(e))$. A section $g_e\in G_{\Gamma}(U_e)$ is an affine linear function with integer slopes valued in $G$:
\[
g_e:U_e=(0,\ell(e))\longrightarrow G \quad \text{given by}\quad g_e(t)=A_et+B_e\ ,
\]
where $A_e\in \Mcheck \rtimes W_{\Phi}$ and $B_e\in \Mcheck_{\RR}\rtimes W_{\Phi}$.
Similarly, let $v\in V(\Gamma)$ be a vertex with incident edges $e_1,\ldots,e_k$ oriented outwards. Identifying each $U_{e_j}$ with $(0,\ell(e_j))$, a section $f_v\in G_{\Gamma}(U_v)$ is a $k$-tuple of functions
\[
f_{v,e_i}(t)=A_{v,e_i}t+B_v\quad\text{for}\quad i=1,\ldots,k \ ,
\]
where $A_{v,e_i}\in \Mcheck\rtimes W_{\Phi}$ and $B_v\in \Mcheck_{\RR}\rtimes W_{\Phi}$, 
such that the $A_{v,e_1}+\cdots+A_{v,e_k}=0$. 

We now explicitly describe the set $\check{H}^1(\Gamma,G_{\Gamma})$ using the star cover. For $v,w\in V(\Gamma)$, the intersection $U_v\cap U_w$ is $U_{vw}$ if there is an edge $vw\in E(\Gamma)$, and empty otherwise. Furthermore, all triple intersections are empty. Since each $U_{vw}$ is contractible, the star cover is acyclic for the sheaf $G_{\Gamma}$. Therefore
\[
    \check H^1(\Ga,G_\Ga)=\check H^1(\mc U(\Gamma), G_{\Gamma}) = \big\{(g_{vw}) \in \Pi_{vw\in E(\Gamma)} G_{\Gamma}(U_{vw})\big\} / \sim \ ,
\]
where $(g_{vw}) \sim (g'_{vw})$ if there exists a tuple $(f_v) \in \Pi_{v \in V(G)} G_{\Gamma}(U_v)$ such that $g_{vw} = f_v g'_{vw} f_w^{-1}$. We note that all triple intersections are empty, hence the cocycle condition is trivially satisfied.

\begin{example} 
[$G$-bundles on a metric circle] 
\label{ex:explicit description of torsors on circle}
Let $j >0$ be a real number, and let $\Gamma = \R / j\Z$ be a circle of length $j$. Let $(G,l)$  be an oriented model with two vertices $v_1,v_2 \in V(G)$ and consider the associated cover $\calU(G) = \{U_{v_1}, U_{v_2}\}$. Note that here $\calU(G)$ is not the star cover since the intersection $U_{v_1} \cap U_{v_2} = e_1 \sqcup e_2$ is the disjoint union of the two open edges $e_1,e_2$. An element in $\calH(e_i)$ is an affine linear function with integer slope valued in $G$, hence $G_\Gamma(e_i) =  (\Mcheck \times \Mcheck_\R) \rtimes W_\Phi$, where the Weyl group acts diagonally. Thus,
    \[
    G_{\Gamma}(U_{v_1} \cap U_{v_2}) = G_{\Gamma}(e_1) \times G_{\Gamma}(e_2) = ((\Mcheck \times M_\R) \rtimes W_\Phi) \times ((\Mcheck \times M_\R) \rtimes W_\Phi) \ ,
    \]
    and hence $\check H^1(\calU(G),G_{\Ga})$ is the set of tuples
    \[
    (a,b) \in ((\Mcheck \times M_\R) \rtimes W_\Phi) \times ((\Mcheck \times M_\R) \rtimes W_\Phi)
    \]
    modulo the relation $(a,b) \sim \big(f_1|_{e_1}af_2^{-1}|_{e_1}, f_1|_{e_2}bf_2^{-1}|_{e_2}\big)$ for $f_i \in G_{\Gamma}(U_{v_i})$. Let
    \[f_1 = (k,\beta,w) \in G_{\Gamma}(U_{v_1}) = (\Mcheck \times M_\R) \rtimes W_\Phi
    \]
    and denote the translated function by $f_1' = (k,\beta + kj,w)$. By setting $f_2 = f'_1 b$, the map $(a,b) \mapsto ab^{-1}$ yields an isomorphism of pointed sets
    \[
    H^1(\Gamma,G_{\Gamma}) = G_{\Gamma}(e_1) / \sim \ ,
    \]
    where $c \sim f_1|_{e_1}cf'^{-1}_1|_{e_1}$ for $c \in G_{\Gamma}(e_1) =(\Mcheck \times M_\R) \rtimes W_\Phi, f_1 \in G_{\Gamma}(U_{v_1})$.
    
    Explicitly, let $(m,\alpha,w), (k,\beta,w) \in  (\Mcheck \times M_\R) \rtimes W_\Phi$. Then 
    \[
    \check H^1(\Gamma,G_{\Gamma}) = ((\Mcheck \times M_\R) \rtimes W_\Phi) / \sim 
    \]
    where
    \[
    (m,\alpha,w) \sim (k,\beta,v)(m,\alpha,w)(k,\beta+jk,v)^{-1}\sim (k+v.m-vwv^{-1}.k, \beta+v.\alpha-vwv^{-1}.(\beta+jk),vwv^{-1}) \ .
    \]

The isomorphism class of a $W$-torsor $\tau$ on a metric circle $\Gamma$ corresponds to the conjugacy class of $w\in W$. The automorphism group $\Aut(\tau)$ can then be identified with the centralizer $C_W(w)$.

\end{example}

We recall that in Section~\ref{subsec:pushforward} we defined the  pushforward of a $G$-bundle along a homomorphism of tropical reductive groups. 

\begin{lemma}
\label{lem:surjective morphism of groups yields surjective morphism of moduli}
Let $F=(f,\phi)\colon G\to H$ be a morphism of tropical linear groups such that both $f$ and $\phi$ are surjective. Then the induced morphism
\[
    F_*\colon \mc M_G\longrightarrow \mc M_H
\]
is surjective. 
\end{lemma}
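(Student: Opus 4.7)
The plan is to use the explicit \v{C}ech description $\mc M_G = \check H^1(\mc U(\Gamma), G_\Gamma)$ provided by the star cover $\mc U(\Gamma)$, and to reduce the surjectivity of $F_\ast$ to surjectivity on sections over the pairwise intersections $U_v \cap U_w$. The morphism $F = (f,\phi)$ induces a morphism of sheaves of groups $G_\Gamma \to H_\Gamma$ which, locally on an open $U \subseteq \Gamma$, sends $(s,w) \in (\Mcheck_1 \otimes_\Z \calH_\Gamma(U)) \rtimes W_1$ to $((f \otimes \id)(s), \phi(w))$ in $(\Mcheck_2 \otimes_\Z \calH_\Gamma(U)) \rtimes W_2$.

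First I would verify that this map is surjective on sections over each non-empty pairwise intersection $U_v \cap U_w$. Surjectivity on the Weyl group factor is immediate from surjectivity of $\phi$, while surjectivity on the lattice factor follows from right exactness of the functor $- \otimes_\Z \calH_\Gamma(U_v \cap U_w)$ applied to the surjection $\Mcheck_1 \twoheadrightarrow \Mcheck_2$. Together these yield surjectivity on sections of the semidirect product sheaves.

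Second, the simple loopless model used to define the star cover guarantees that every triple intersection $U_u \cap U_v \cap U_w$ is empty, so a \v{C}ech $1$-cocycle is nothing more than an arbitrary tuple of sections indexed by pairs of distinct vertices whose stars meet; there is no nontrivial cocycle condition to check. Given any class in $\mc M_H$ represented by a tuple $(h_{vw})$, we lift each $h_{vw}$ individually to $g_{vw} \in G_\Gamma(U_v \cap U_w)$ via the surjectivity from the previous step, and the resulting tuple $(g_{vw})$ represents a class in $\mc M_G$ mapping to $[(h_{vw})]$ under $F_\ast$. No serious obstacle arises; the only point of care is the circle case from Example~\ref{ex:explicit description of torsors on circle}, where $U_{v_1} \cap U_{v_2}$ has two connected components, but the lifting can be carried out independently on each component, leaving the argument unchanged.
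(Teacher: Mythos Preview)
Your proposal is correct and follows essentially the same approach as the paper: both use the star cover, note that triple intersections vanish so the cocycle condition is vacuous, and lift sections over pairwise intersections. Your write-up is more detailed than the paper's (which is a single sentence), explicitly invoking right exactness of $-\otimes_\Z \calH_\Gamma(U_v\cap U_w)$ for the lattice part and surjectivity of $\phi$ for the Weyl part; the paper only mentions ``the surjectivity of $f$'' but clearly intends both. Your remark about the circle case is harmless but unnecessary here, since the star cover is taken with respect to a simple loopless model, so each nonempty $U_v\cap U_w$ is already a single open edge.
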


\begin{proof}
Let $\mc U$ be the star cover of $\Gamma$. Then, since triple intersections of sets in $\mc U$ are empty and the cocycle condition is trivially satisfied, the surjectivity of $f$ implies that the map
\[
    \mc M_G \cong \check H^1 (\mc U,G_\Ga)\xlongrightarrow{\check F} \check H^1(\mc U,H_\Ga)\cong \mc M_H \ ,
\]
which agrees with $F_*$, is surjective as well. 
\end{proof}

Let $G= \Mcheck_\R \rtimes W$ be a tropical reductive group. The quotient morphism $q\colon G\to W$ yields a map
\[
     q_*\colon\mc M_G\longrightarrow \mc M_W \ .
\]
For $\tau\in \mc M_W$, the fiber under this map 
\[
     \mc M_{G,\tau}\coloneqq q_*^{-1}\{\tau\}
\]
is the set of isomorphism classes of $G$-bundles $E$ on $\Ga$ whose associated $W$-torsor $q_*(E) = E_W$ is isomorphic to $\tau$. We denote by $\wmc M_{G,\tau}$ the set of isomorphism classes of pairs $(E,\phi)$, where $E$ is a $G$-bundle and $\phi\colon E_W\to \tau$ is an isomorphism. For a degree $\lambdacheck_G\in \pi_1(G)$, we denote by $\mc M^{\lambdacheck_G}_{G,\tau}$ and $\wmc M^{\lambdacheck_G}_{G,\tau}$ the corresponding moduli spaces of bundles with degree $\lambdacheck_G$. There is a canonical right action of $\Aut(\tau)$ on $\widetilde{\mc M}_{G,\tau}$ coming from postcomposing $\phi$ with an automorphism of $\tau$, and by definition we have 
\[
    \mc M_{G,\tau}=\widetilde{\mc M}_{G,\tau}/\Aut(\tau) \ .
\]

\begin{proposition}
\label{prop:fiber over trivial W-torsor}
Denoting the trivial $W$-torsor on $\Gamma$ by $W_\Gamma$, we have natural bijections 
\begin{align*}
    \widetilde{\mc M}_{G,W_\Gamma}
        &= \Pic(\Gamma)\otimes_\Z \Mcheck, \\[3pt]
    \mc M_{G,W_\Gamma}
        &= (\Pic(\Gamma)\otimes_\Z \Mcheck)/W \ .
\end{align*}

\end{proposition}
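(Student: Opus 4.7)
The plan is to deduce both statements from Corollary~\ref{cor:exact sequence} applied to the canonical split short exact sequence of tropical linear groups
\[
    1 \longrightarrow \Mcheck_\R \longrightarrow G = \Mcheck_\R \rtimes W \longrightarrow W \longrightarrow 1,
\]
which at the level of sheaves on $\Gamma$ reads
\[
    1 \longrightarrow \Mcheck \otimes_\Z \mc H_\Gamma \longrightarrow G_\Gamma \longrightarrow \underline{W} \longrightarrow 1.
\]
The corollary produces an equivalence between the category of $(\Mcheck \otimes_\Z \mc H_\Gamma)$-torsors on $\Gamma$ and the category of pairs $(E,\phi)$ where $E$ is a $G_\Gamma$-torsor and $\phi$ is a trivialization of $E_W$. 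Passing to isomorphism classes, the latter category is precisely $\widetilde{\mc M}_{G,W_\Gamma}$, so we obtain a natural bijection
\[
    \widetilde{\mc M}_{G,W_\Gamma} \;\cong\; H^1(\Gamma,\, \Mcheck \otimes_\Z \mc H_\Gamma).
\]

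Next I would compute the right-hand side. Because $\Mcheck$ is a finitely generated free abelian group, the sheaf $\Mcheck \otimes_\Z \mc H_\Gamma$ is a finite direct sum of copies of $\mc H_\Gamma$. Cohomology commutes with finite direct sums, and $H^1(\Gamma, \mc H_\Gamma) = \Pic(\Gamma)$ by the standard classification of tropical line bundles recalled in \cite{2022GrossUlirschZakharov}. This yields
\[
    H^1(\Gamma,\, \Mcheck \otimes_\Z \mc H_\Gamma) \;\cong\; \Mcheck \otimes_\Z \Pic(\Gamma),
\]
which establishes the first identification.

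For the second identification, the task is to match the right $\Aut(W_\Gamma) = W$-action on $\widetilde{\mc M}_{G,W_\Gamma}$ with the natural $W$-action on $\Mcheck \otimes_\Z \Pic(\Gamma)$ coming from the action of $W$ on $\Mcheck$. I would do this by tracing the equivalence of Corollary~\ref{cor:exact sequence} through its proof: post-composing a trivialization $\phi \colon E_W \to W_\Gamma$ with an automorphism $w \in W$ corresponds, under the correspondence, to modifying the underlying $\Mcheck_\R$-torsor by the conjugation action of $w \in W \subseteq G$ on the normal subgroup $\Mcheck_\R$. That conjugation is, by construction of the semidirect product, precisely the given $W$-action on $\Mcheck_\R$, and hence induces on $H^1$ the natural action of $W$ on the tensor factor $\Mcheck$ of $\Mcheck \otimes_\Z \Pic(\Gamma)$. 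Combining this with the definition $\mc M_{G,W_\Gamma} = \widetilde{\mc M}_{G,W_\Gamma}/\Aut(W_\Gamma)$ yields the second bijection.

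The main obstacle is not the cohomological computation but the bookkeeping in the last step: one must check carefully that the right $W$-action arising from changing trivializations of the trivial $W$-torsor agrees, after transporting through Corollary~\ref{cor:exact sequence}, with the diagonal $W$-action on $\Mcheck \otimes_\Z \Pic(\Gamma)$. Once this is verified via the explicit formulas for the twist by conjugation (so that the lack of a lattice contribution from $W$ ensures that the action factors through $\Mcheck$), both statements follow.
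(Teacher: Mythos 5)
Your proposal is correct and follows essentially the same route as the paper: apply Corollary~\ref{cor:exact sequence} to the split sequence $0\to\Mcheck_\R\to G\to W\to 1$, identify $\mc M_{\Mcheck_\R}(\Gamma)$ with $\Pic(\Gamma)\otimes_\Z\Mcheck$ using freeness of $\Mcheck$ and $\mc M_\R(\Gamma)=\Pic(\Gamma)$, and then quotient by $\Aut(W_\Gamma)=W$. The careful matching of the change-of-trivialization action with the conjugation action on $\Mcheck$, which you flag as the main bookkeeping step, is exactly what the paper isolates as Lemma~\ref{lem:meaning of conjugation}.
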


\begin{proof} By Corollary~\ref{cor:exact sequence}, the short exact sequence
    \[
    0\longrightarrow \Mcheck_\R\longrightarrow G\longrightarrow W\longrightarrow 1
    \]
    yields an equivalence of categories between $\Mcheck_\R$-bundles and pairs $(E,\phi)$ consisting of a $G$-bundle $E$ and an isomorphism $E_W\xrightarrow{\phi}W_\Gamma$. Therefore, there is a natural bijection
    \[
    \mc M_{\Mcheck_\R} \xrightarrow{\cong} \widetilde{\mc M}_{G,W_\Gamma} \ .
    \]
    We note that $\RR=\GL_1(\TT)$ as a tropical reductive group, hence $\mc M_\R(\Gamma)=\Pic(\Gamma)$. Since $\Mcheck$ is free, we have a canonical bijection
    \[
    \mc M_{\Mcheck_\R}\xrightarrow{\cong}\Pic(\Gamma)\otimes_\Z \Mcheck \ ,
    \]
    showing the first isomorphism. For the second, we note that $\Aut(W_\Gamma)=W$ because $\Gamma$ is connected.
\end{proof}

\begin{example}(\textit{$G$-bundles on metric trees})
\label{exa:bundlestrees}
Let $\Gamma$ be a compact and connected metric tree. Recall that up to isomorphism, there is exactly one line bundle $\calH_{\Gamma}(d)$ of degree $d$ on $\Gamma$, i.e., $\Pic(\Gamma) \cong \ZZ.$ Let $G = \Mcheck_\RR \rtimes W_\Phi$ be a tropical reductive group and let $E$ be a $G$-bundle on $\Gamma$. Since the fundamental group of $\Gamma$ is trivial, the associated $W$-torsor $E_W$ is isomorphic to the trivial torsor $W_\Gamma$. Hence, by Proposition \ref{prop:fiber over trivial W-torsor} we obtain a natural bijection 
\[
\calM_G \cong (\Pic(\Gamma) \otimes_\ZZ \Mcheck)/W \cong \Mcheck /W \ .
\]

This is a tropical analogue of a theorem of Grothendieck which states that given a split reductive group $\bfG$ and a maximal split torus $\bfT$, any $\bfG$-bundle on $\PP^1$ has a reduction of structure group to the maximal torus $\bfT$ unique up to the action of the Weyl group $W$ (see \cite{Grothendieck_vectorbundlesonP1},~\cite{1968Harder}, or~\cite[Theorem 0.3]{MartensThaddeus_Grothendieck}). If $G = \GL_n(\T)$, this means that every vector bundle on a metric tree splits as a direct sum of line bundles (see \cite[Example 3.3]{2022GrossUlirschZakharov}).
    
\end{example}

Our next goal is to describe the moduli space of $G$-bundles on a metric graph $\Gamma$ as a rational polyhedral space. We first note that the moduli space $\calM_G$ decomposes as a finite disjoint union by the isomorphism type of the associated $W$-torsor:
\[
\calM_G=\coprod_{\tau\in \calM_{W}}\calM_{G,\tau} \ .
\]
We now describe these moduli spaces.

\begin{theorem}
\label{thm:description of component of tropical moduli space}
    Let $\tau$ be a $W$-torsor on $\Gamma$. Then $\wmc M_{G,\tau}$ is a disjoint union of torsors under tropical abelian varieties. Therefore, $\mc M_{G,\tau}$ is the quotient of a disjoint union of torsors under tropical abelian varieties by the finite group $\Aut(\tau)$.
\end{theorem}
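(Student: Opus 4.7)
The plan is to first identify $\widetilde{\mc M}_{G,\tau}$ with an abelian cohomology group, and then analyze the structure of that group. Since the composite $W\hookrightarrow G\twoheadrightarrow W$ is the identity, the inclusion $W\hookrightarrow G$ yields a canonical $G$-bundle $E_\tau$ equipped with a canonical isomorphism $(E_\tau)_W\cong\tau$, showing in particular that $\widetilde{\mc M}_{G,\tau}$ is nonempty. Following the non-abelian cohomology pattern behind Corollary~\ref{cor:exact sequence}, but now using $\tau$ in place of the trivial $W$-torsor, $\widetilde{\mc M}_{G,\tau}$ acquires the structure of an abelian group with identity $E_\tau$, and is in natural bijection with $H^1(\Gamma, \mc K_\tau)$, where $\mc K_\tau$ is the inner twist of $\Mcheck\otimes_\Z\mc H_\Gamma$ by $\tau$. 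Equivalently, if $f\colon\Gamma'\to\Gamma$ is the free cover corresponding to $\tau$ (so that $\Gamma=\Gamma'/W$ as a $W$-torsor), then $\mc K_\tau=(f_*(\Mcheck\otimes_\Z\mc H_{\Gamma'}))^W$.

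Next, one applies the Cartan--Leray spectral sequence for the free cover $f$, which has the form
\[
E_2^{p,q}=H^p\bigl(W, H^q(\Gamma',\Mcheck\otimes_\Z\mc H_{\Gamma'})\bigr)\Longrightarrow H^{p+q}(\Gamma, \mc K_\tau).
\]
Since $H^0(\Gamma',\Mcheck\otimes_\Z\mc H_{\Gamma'})=\Mcheck_\R^{\pi_0(\Gamma')}$ is a $\Q$-vector space, $H^p(W,\Mcheck_\R^{\pi_0(\Gamma')})$ vanishes for $p\geq 1$ and the spectral sequence collapses, giving
\[
\widetilde{\mc M}_{G,\tau}\cong H^1(\Gamma, \mc K_\tau)\cong\bigl(\Pic(\Gamma')\otimes_\Z\Mcheck\bigr)^W,
\]
with $W$ acting diagonally via deck transformations on $\Pic(\Gamma')$ and via the Weyl-group action on $\Mcheck$. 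The $W$-equivariant short exact sequence
\[
0\longrightarrow\Pic^0(\Gamma')\otimes_\Z\Mcheck\longrightarrow\Pic(\Gamma')\otimes_\Z\Mcheck\longrightarrow\Z^{\pi_0(\Gamma')}\otimes_\Z\Mcheck\longrightarrow 0
\]
then exhibits $(\Pic(\Gamma')\otimes_\Z\Mcheck)^W$ as an extension of a subgroup of the finitely generated abelian group $(\Z^{\pi_0(\Gamma')}\otimes_\Z\Mcheck)^W$ by $(\Pic^0(\Gamma')\otimes_\Z\Mcheck)^W$. The identity component of the latter is the tropical abelian subvariety of $\Pic^0(\Gamma')\otimes\Mcheck$ fixed by $W$, and its group of components is finite; hence $\widetilde{\mc M}_{G,\tau}$ is a disjoint union, indexed by a discrete set, of torsors over this tropical abelian subvariety. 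The second assertion is then immediate, since $\Aut(\tau)$ is a finite group (for instance, on a circle it is the centralizer in $W$ of the monodromy element representing $\tau$, as noted in Example~\ref{ex:explicit description of torsors on circle}), and $\mc M_{G,\tau}=\widetilde{\mc M}_{G,\tau}/\Aut(\tau)$ by the very definition of $\widetilde{\mc M}_{G,\tau}$.

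The main obstacle lies in the precise justification of the identification $\widetilde{\mc M}_{G,\tau}\cong H^1(\Gamma, \mc K_\tau)$: one must extend the \emph{twist} construction of non-abelian cohomology from the trivial basepoint $W_\Gamma$, where it is supplied by Corollary~\ref{cor:exact sequence}, to the arbitrary basepoint $\tau$ in the sheaf-theoretic setting. This can be carried out either by a direct \v{C}ech cocycle computation using the star cover $\mc U(\Gamma)$ recalled in Section~\ref{subsec:moduli}, or by a mild generalization of Proposition~\ref{prop:fiber product of neutral gerbes} allowing for a nontrivial $W$-torsor on one side. Once this is in place, the remaining steps reduce to standard spectral-sequence manipulations together with the basic fact that the fixed point set of a finite group acting on a tropical abelian variety by automorphisms is, up to finitely many components, again a tropical abelian variety.
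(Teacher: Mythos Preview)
Your proof is correct and reaches the same key identification
\[
\widetilde{\mc M}_{G,\tau}\cong\bigl(\Pic(\Gamma')\otimes_\Z\Mcheck\bigr)^W
\]
as the paper (where $\Gamma'$ is the total space of $\tau$, so $\Aut(f)=W$), but the route is genuinely different. The paper proves a somewhat more general statement (Lemma~\ref{lem:description of torsors as equivariant torsors on cover}) for an arbitrary free Galois cover $f\colon\Gamma'\to\Gamma$ trivializing $\tau$, by setting up an explicit equivalence between $G$-bundles with a framing of the $W$-torsor and \emph{twisted $\Aut(f)$-equivariant} $\Mcheck_\R$-bundles on $\Gamma'$; the existence and uniqueness of twisted equivariant structures is then forced by the vanishing of $H^1$ and $H^2$ of $\Aut(f)$ with coefficients in the $\R$-vector space $\Aut(\widetilde S)$, which is Maschke's theorem. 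Only afterwards does the paper specialize to $\Gamma'=\tau$. Your approach instead packages the twisted object as the inner-form sheaf $\mc K_\tau$ and obtains the identification in one stroke via the Cartan--Leray spectral sequence; the vanishing input is the same (group cohomology of a finite group in a rational representation), but it appears as $E_2^{p,0}=0$ for $p\geq 1$ rather than as an obstruction argument.

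What each buys: your argument is more streamlined for this particular theorem, and makes the abelian-group structure on $\widetilde{\mc M}_{G,\tau}$ transparent from the start. The paper's formulation, on the other hand, is stated for \emph{any} trivializing Galois cover, and this extra flexibility is used repeatedly in the remainder of Section~\ref{sec:moduliofbundles} (Example~\ref{ex:computation of MGLn using cover}, Lemmas~\ref{lem:stabilizers in Aut(tau) in An case} and~\ref{lem:determinant is isomorphism if roots are saturated}, Theorem~\ref{thm:tropical semistable bundles are quotients of stable ones}), where one wants a cover of degree $\mathrm{lcm}(n_1,\ldots,n_k)$ rather than $|W|$. Your ``main obstacle'' --- justifying $\widetilde{\mc M}_{G,\tau}\cong H^1(\Gamma,\mc K_\tau)$ --- is precisely what Lemma~\ref{lem:description of torsors as equivariant torsors on cover} supplies; a direct \v{C}ech verification on the star cover, as you suggest, would amount to rederiving that lemma in the special case $\Gamma'=\tau$.
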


We note that the $\wmc M_{G,\tau}$ are rational polyhedral spaces, but the $\mc M_{G,\tau}$, and hence the moduli space $\mc M_G$, are only finite group quotients of rational polyhedral spaces. We first prove several preliminary lemmas.

Let $\tau$ be a $W$-torsor on $\Gamma$. Then its total space $\tau\to \Gamma$ is a free finite covering of graphs, and we can pull back $\tau$ itself and obtain a $W$-torsor on $\tau$. Of course, the total space of $\tau$ has a $W$-action and so it makes sense to talk about $W$-equivariant objects in a category of torsors over $\tau$. Recall that for a tropical reductive group $H$,  a $W$-equivariant $H$-bundle on $\tau$ is an $H$-bundle $E$ together with morphisms $m_w\colon E\to l_w^*E$ of $H$-bundles, one for each $w\in W$, where $l_w$ is multiplication by $w$ on the left on $\tau$ and which satisfy the obvious compatibility.

\begin{lemma}
\label{lem:pull-back of tau by tau}
    There is a $W$-equivariant isomorphism $\tau\times_\Gamma\tau\cong W\times \tau$, where we equip $W\times \tau$ with the $W$-action as follows: 
    \[
    w.(v,t)=(vw^{-1},wt) \ .
    \]
\end{lemma}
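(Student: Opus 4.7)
The plan is to construct an explicit isomorphism $\phi \colon \tau \times_\Gamma \tau \to W \times \tau$. Given $(t_1, t_2) \in \tau \times_\Gamma \tau$, the two points $t_1, t_2$ lie in the same fiber of the covering $\tau \to \Gamma$, and since $\tau$ is a $W$-torsor on $\Gamma$ this fiber is a free, transitive $W$-set. Hence there is a unique element $w \in W$ satisfying $t_1 = w \cdot t_2$, and I would set $\phi(t_1, t_2) = (w, t_2)$. The candidate inverse $\psi \colon W \times \tau \to \tau \times_\Gamma \tau$ is the action map $\psi(v, t) = (v \cdot t, t)$; this lands in the fiber product because $v \cdot t$ and $t$ have the same image in $\Gamma$.

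Both $\phi$ and $\psi$ commute with the second projections to $\tau$, so they are morphisms of covering spaces over $\tau$. They are continuous: the Weyl group element $w$ is locally constant once one restricts to a trivializing neighborhood of $\tau \to \Gamma$. The two maps are visibly mutually inverse, giving an isomorphism of covers of $\tau$.

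The key remaining step is checking $W$-equivariance. The natural $W$-action on the pullback $\tau \times_\Gamma \tau$ is the one coming from the $W$-action on $\tau$ viewed as the base of the covering, namely $w.(t_1, t_2) = (t_1, w \cdot t_2)$. Writing $t_1 = w_0 \cdot t_2$, so that $\phi(t_1, t_2) = (w_0, t_2)$, I would rewrite $t_1 = w_0 \cdot t_2 = (w_0 w^{-1}) \cdot (w \cdot t_2)$ and conclude $\phi(w.(t_1, t_2)) = \phi(t_1, w \cdot t_2) = (w_0 w^{-1}, w \cdot t_2) = w.(w_0, t_2) = w.\phi(t_1, t_2)$, as required. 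The inversion $v \mapsto v w^{-1}$ in the prescribed action on $W \times \tau$ is precisely what is forced by the change of reference point from $t_2$ to $w \cdot t_2$ on the second factor.

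The main (modest) obstacle is disentangling the two distinct $W$-actions floating around on $\tau \times_\Gamma \tau$: one from the torsor structure (acting on the first factor by left multiplication, which does not appear in the statement of the lemma), and one from the equivariance structure of the pulled-back cover (acting on the second factor, which is the action in the lemma). Once these are kept apart, the verification reduces to the short calculation above.
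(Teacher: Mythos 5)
Your proposal is correct and takes essentially the same approach as the paper: the paper's proof simply writes down the map $(t_1,t_2)\mapsto (t_1t_2^{-1},t_2)$ (your $\phi$, with $t_1t_2^{-1}$ denoting the unique $w$ with $t_1=w\cdot t_2$) and declares it clearly a morphism of torsors and $W$-equivariant. You supply the inverse, continuity, and the equivariance computation that the paper leaves implicit, and your identification of which $W$-action is meant is the intended one.
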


\begin{proof}
    The map 
    \[
        \tau\times_\Gamma\tau\to W\times \tau \quad \text{given by}\quad (t_1,t_2)\mapsto (t_1t_2^{-1}, t_2)
    \]
    on total spaces is clearly a morphism of torsors and $W$-equivariant.
\end{proof}

Consider the short exact sequence
\[
0\to
    \Mcheck_\R \to
        G \to 
            W\to 
                1
\]
By Corollary \ref{cor:exact sequence}, there is an induced bijection between $\mc M_{\Mcheck_\R}$ and $\wmc M_{G,W_\Gamma}$. In particular, the $\Aut(W_\Gamma)$-action on $\wmc M_{G,W_\Gamma}$ induces a right $\Aut(W_\Gamma)$-action on $\mc M_{\Mcheck_\R}$. Embedding $W$ into $\Aut(W_\Gamma)$ via right multiplication, we obtain a right $W$-action on $\mc M_{\Mcheck_\R}$. On the other hand, $W$ acts on $\Mcheck_\R$ by conjugation, and thus there exists an induced action
\[
\mc M_{\Mcheck_\R} \times W\to \mc M_{\Mcheck_\R}\ , \;\; (L,w)\mapsto L^w\coloneqq (c_w)_*L \ ,
\]
where $c_w\colon \Mcheck_\R\to \Mcheck_\R$ is given by $c_w(m)=m^w=w^{-1} m w$. We now show that the two $W$-actions on $\mc M_{\Mcheck_\R}$ coincide.

\begin{lemma}
\label{lem:meaning of conjugation}
    Let $w\in W$. Under the bijection $\wmc M_{G,W_\Gamma}\cong \mc M_{\Mcheck_\R}$, if a pair $(E,\phi)$ corresponds to an $\Mcheck_\R$-bundle $L$, then the pair $(E,\phi\circ w^{-1})$ corresponds to $L^w$. Here, we embed $W$ in $\Aut(W_\Gamma)$ via right multiplication.
\end{lemma}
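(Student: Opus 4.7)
The plan is to trace through the equivalence from Corollary~\ref{cor:exact sequence} concretely via local data, then read off what modifying the trivialization of the associated $W$-torsor by $w^{-1}$ does to the $\Mcheck_\R$-torsor. Given $(E,\phi)\in \wmc M_{G,W_\Gamma}$, the associated $\Mcheck_\R$-torsor can be realized as $L=(\phi\circ \pi)^{-1}(e)\subseteq E$, where $\pi\colon E\to E_W$ is the canonical quotient, $e$ is the neutral section of $W_\Gamma$, and the $\Mcheck_\R$-action on $L$ is inherited from the $G$-action on $E$. This is well-defined since $\Mcheck_\R$ is normal in $G$ and acts freely and transitively on the fibers of $\pi$.

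Replacing $\phi$ by $\phi\circ w^{-1}$ shifts which section of $E_W$ is identified with the neutral section of $W_\Gamma$, and the new $\Mcheck_\R$-torsor $L'$ is the preimage in $E$ of this shifted section. Fixing a lift $\tilde w\in G$ of $w$, left multiplication by $\tilde w^{\pm 1}$ (with the sign depending on conventions) provides a bijection $L\xrightarrow{\sim} L'$. This bijection is not $\Mcheck_\R$-equivariant because $\tilde w\notin \Mcheck_\R$, and pulling the $\Mcheck_\R$-action on $L'$ back along it produces a twisted $\Mcheck_\R$-action on $L$.

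The core computation, which takes place entirely inside $G=\Mcheck_\R\rtimes W$, is the identity $\tilde w^{-1}\,m\,\tilde w = c_w(m)$ for every $m\in \Mcheck_\R$. For $l\in L$ this gives
\[
m\cdot(\tilde w\cdot l)=(m\tilde w)\cdot l=\tilde w\cdot(c_w(m)\cdot l),
\]
so the action of $m$ on $L'$ corresponds, via the bijection, to the action of $c_w(m)$ on $L$. Matching this twisted action with the explicit description of the pushforward $(c_w)_*L$ along the automorphism $c_w$ of $\Mcheck_\R$ then identifies $L'$ with $L^w$, as claimed.

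The main technical difficulty is purely bookkeeping: consistently aligning the conventions for the right $W$-action on $\wmc M_{G,W_\Gamma}$, for the embedding $W\hookrightarrow \Aut(W_\Gamma)$ via right multiplication, and for the pushforward of $\Mcheck_\R$-torsors along group automorphisms, so that the conjugation lands on $c_w$ rather than on $c_{w^{-1}}$. Once these conventions are fixed, no conceptual obstacle remains and the argument reduces essentially to the displayed semidirect-product identity.
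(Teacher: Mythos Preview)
Your argument is correct and rests on the same core computation as the paper's proof, namely the semidirect-product identity $\tilde w^{-1}\, m\, \tilde w = c_w(m)$ in $G$. The only difference is packaging: the paper runs the equivalence in the opposite direction, starting from $L$, building $E=L_{G_\Gamma}$ explicitly as the quotient $(G_\Gamma\times L)/\!\sim$, doing the same for $E^w=(L^w)_{G_\Gamma}$, and then writing down the $G$-bundle isomorphism $\psi\colon E\to E^w$, $[(g,x)]\mapsto [(gw,x)]$ and checking that $\phi_{\mathrm{can}}\circ w^{-1}=\psi_W^{-1}\circ\phi^w_{\mathrm{can}}$, whereas you extract $L$ and $L'$ as sub-$\Mcheck_\R$-torsors of a fixed $E$ and compare them directly. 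Your hedge about the sign of $\tilde w^{\pm 1}$ is well placed: the paper is itself inconsistent about whether $\phi$ points $E_W\to W_\Gamma$ (as in the definition of $\wmc M_{G,\tau}$) or $W_\Gamma\to E_W$ (as in the proof of this lemma), and the sign flips accordingly.
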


\begin{proof}
We consider the induced morphisms of sheaves of groups
$\Mcheck_{\R,\Ga}  \xrightarrow{i} G_\Ga$ and $G_\Ga \xrightarrow{\pi} W$. Let $L$ be an $\Mcheck_\R$-bundle and let $E = L_{G_\Ga}$ be the induced $G$-bundle under the bijection $\wmc M_{G,W_\Gamma}\cong \mc M_{\Mcheck_\R}$. We recall that $E$ is the sheaf associated to the presheaf
\[
U\longmapsto  \big( G_\Ga(U) \times L(U) \big) \big/ \sim \ ,
\]
where the equivalence relation is $(g \cdot i(m)^{-1}, m \cdot x) \sim (g, x)$ for $m \in \Mcheck_\R(U)$, $g \in G_\Ga(U)$, and $x \in L(U)$. 
Now, for $w \in W$, the $\Mcheck_\R$-bundle $L^w$ as a sheaf is $L^w = L$ but with left $\Mcheck_{\R,\Ga}$-action $m \cdot_w x = c_w^{-1}(m) \cdot x = wmw^{-1} \cdot x$. The induced $G$-bundle $E^w = (L^w)_{G_\Ga}$ is the sheaf associated to
\[
U\longmapsto  \big( G_\Gamma(U) \times L^w(U) \big) \big/ \sim_w \ ,
\]
but now the equivalence is $(g \cdot i(m)^{-1}, wmw^{-1} \cdot x) \sim_w (g, x)$ for $m \in \Mcheck_\R(U)$.

For both $E$ and $E^w$, the associated $W$-torsors $E_W$ and $E_W^w$ have canonical trivializations $\phi_{\mathrm{can}}: W_\Ga \xrightarrow{\sim} E_W$ and $\phi_{\mathrm{can}}^w: W_\Ga \xrightarrow{\sim} E_W^w$. Explicitly, $\phi_{\mathrm{can}}$ is the unique map such that the composite
\[
E\longrightarrow E/\Mcheck_{\R,\Ga}=E_W\xrightarrow{\phi_{\mathrm{can}}^{-1}} W_\Gamma 
\]
maps the equivalence class $[(g,x)]$ to $\pi(g)$, and similarly for $E^w$.

In the bijection $\wmc M_{G,W_\Gamma}\cong \mc M_{\Mcheck_\R}$, the bundle $L$ corresponds to $(E,\phi_\mathrm{can})$ and $L^w$ corresponds to $(E^w,\phi^w_\mathrm{can})$. The following $G_\Ga$-equivariant map is an isomorphism of $G$-bundles
\[
\psi \colon E \xlongrightarrow{\sim} E^w \quad \text{and}\quad[(g, x)] \mapsto [(g w, x)] \ .
\]
The isomorphism $\psi$ induces an isomorphism of the associated $W$-torsors $\psi_W \colon E_W \xrightarrow{\sim} E^w_W$ such that $\phi_\mathrm{can} \circ w^{-1} = \psi_W^{-1} \circ \phi_\mathrm{can}^w$, i.e., the line bundle $L^w$ corresponds to the pair $(E^w,\phi^w_\mathrm{can}) \cong (E,\phi_\mathrm{can} \circ w^{-1})$. 
\end{proof}

\begin{lemma}
\label{lem:description of torsors as equivariant torsors on cover}
Let $\tau$ be a $W$-torsor and let $f\colon \Gamma'\to \Gamma$ be a free Galois cover, for which there exists an isomorphism $f^*\tau\xrightarrow{\chi} W_{\Gamma'}$. Then there is a natural bijection
\[
    \widetilde{\mc M}_{G,\tau}(\Ga)\cong \mc M_{\Mcheck_\R}(\Gamma')^{\Aut(f)} \ ,
\]
where on the right side we take $\Aut(f)$-invariants under the following action: every automorphism $a:\Gamma'\to \Gamma'$ of the cover $f$ induces an automorphism 
\[
    W_{\Gamma'}\xrightarrow{\chi^{-1}}f^*\tau\cong a^*(f^*\tau)\xrightarrow{a^*\chi}a^*W_{\Gamma'}\cong W_{\Gamma'} \ ,
\]
so that we obtain a morphism $\sigma\colon\Aut(f)\to \Aut(W_{\Gamma'})$. The group $\Aut(W_{\Gamma'})$ in turn acts by conjugation on $\mc M_{\Mcheck_\R}(\Gamma')$. The action on $\Aut(f)$ on $\mc M_{\Mcheck_\R}(\Gamma')$ is given by $(a,E)\mapsto \prescript{\sigma(a)}{}{(a^*E)}$.

Furthermore, there is a map $\delta\colon \Aut(\tau)\to \Aut(W_{\Gamma'})$ induced by the trivialization $\chi$ such that the natural $\Aut(\tau)$-action on $\widetilde{\mc M}_{G,\tau}(\Ga)$ is given by
\begin{equation*}\begin{split}
\mc M_{\Mcheck_\R}(\Gamma')^{\Aut(f)}\times \Aut(\tau)&\longrightarrow \mc M_{\Mcheck_\R}(\Gamma')^{\Aut(f)}\\
(E,t)&\longmapsto {E}^{\delta(t)} \ .
\end{split}\end{equation*}
\end{lemma}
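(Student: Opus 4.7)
The plan is to use Galois descent along the trivializing cover $f\colon \Gamma'\to \Gamma$, combined with the identification from Corollary \ref{cor:exact sequence} between $\Mcheck_\R$-bundles on $\Gamma'$ and $G$-bundles on $\Gamma'$ equipped with a trivialization of the associated $W$-torsor.

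First, I would construct the map $\widetilde{\mc M}_{G,\tau}(\Gamma)\to \mc M_{\Mcheck_\R}(\Gamma')$.  Given a pair $(E,\phi)$ with $\phi\colon E_W\to \tau$, I pull back to $\Gamma'$ and post-compose with $\chi$ to obtain the trivialization $\chi\circ f^*\phi\colon(f^*E)_W\to W_{\Gamma'}$; by Corollary \ref{cor:exact sequence} applied to the short exact sequence $0\to \Mcheck_\R\to G\to W\to 1$ this determines a well-defined $\Mcheck_\R$-bundle $L$ on $\Gamma'$.  The key verification is that $L$ satisfies the twisted $\Aut(f)$-invariance condition in the statement: for $a\in \Aut(f)$, the equality $f\circ a=f$ gives a canonical identification $a^*(f^*E)=f^*E$ under which $a^*(f^*\phi)=f^*\phi$, whereas by the very definition of $\sigma$ we have $a^*\chi=\sigma(a)\circ\chi$.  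Hence the pulled-back trivialized pair on $\Gamma'$ reads $(f^*E,\sigma(a)\circ\chi\circ f^*\phi)$.  By Lemma \ref{lem:meaning of conjugation}, modifying a trivialization by an automorphism of $W_{\Gamma'}$ coming from the right-multiplication embedding $W\hookrightarrow \Aut(W_{\Gamma'})$ corresponds to conjugating the associated $\Mcheck_\R$-bundle, which yields precisely $a^*L\cong \prescript{\sigma(a)}{}{L}$.  Rearranged, this says $L\cong \prescript{\sigma(a)}{}{(a^*L)}$, so $L$ lies in the fixed-point set with respect to the prescribed $\Aut(f)$-action.

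For the inverse map and bijectivity, I would invoke standard Galois descent: a bundle $L\in \mc M_{\Mcheck_\R}(\Gamma')^{\Aut(f)}$ comes equipped with canonical isomorphisms $a^*L\cong \prescript{\sigma(a)}{}{L}$ for each $a\in\Aut(f)$, and translating back through Corollary \ref{cor:exact sequence} these assemble into descent data for the trivialized pair $(L_{G_{\Gamma'}},\text{triv})$ along the Galois cover $f$.  Since triple intersections in the star cover of $\Gamma$ are empty (see Section \ref{subsec:moduli}), the only nontrivial compatibility is the cocycle identity on double intersections, which follows from the definition of $\sigma$.  The resulting descended pair $(E,\phi)$ on $\Gamma$ is easily seen to be inverse to the construction of the previous paragraph.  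Finally, the $\Aut(\tau)$-action is traced through: for $t\in \Aut(\tau)$, the pair $(E,t\circ\phi)$ pulls back to the trivialization $\chi\circ f^*t\circ f^*\phi=\delta(t)\circ\chi\circ f^*\phi$ with $\delta(t)\coloneqq\chi\circ f^*t\circ \chi^{-1}\in \Aut(W_{\Gamma'})$, and a further application of Lemma \ref{lem:meaning of conjugation} yields the claimed formula $(E,t)\mapsto E^{\delta(t)}$.

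The main technical obstacle is keeping the three interacting actions consistent: the pull-back action of $\Aut(f)$ on bundles on $\Gamma'$, the conjugation action of $W$ on $\Mcheck_\R$-bundles (coming from the right-multiplication embedding $W\hookrightarrow \Aut(W_{\Gamma'})$ used in Lemma \ref{lem:meaning of conjugation}), and the $\Aut(\tau)$-action.  In particular, one must verify that $(a,L)\mapsto \prescript{\sigma(a)}{}{(a^*L)}$ genuinely defines a group action — which requires analysing whether $\sigma$ is a homomorphism or an anti-homomorphism and checking that the twist by $\sigma(a)$ and the pull-back along $a$ compose correctly — and that the resulting twisted descent is equivalent to ordinary Galois descent for the sheaf of groups $G_\Gamma$.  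Once these conventions are pinned down, the remainder of the argument is formal.
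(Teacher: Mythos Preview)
Your overall strategy matches the paper's: pull back along $f$, use Corollary~\ref{cor:exact sequence} to pass to $\Mcheck_\R$-bundles on $\Gamma'$, and track the $\Aut(f)$- and $\Aut(\tau)$-actions via Lemma~\ref{lem:meaning of conjugation}. The forward direction and the computation of the $\Aut(\tau)$-action are fine.

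There is, however, a genuine gap in your inverse construction. You write that an $L\in \mc M_{\Mcheck_\R}(\Gamma')^{\Aut(f)}$ ``comes equipped with canonical isomorphisms $a^*L\cong \prescript{\sigma(a)}{}{L}$''. But invariance of the \emph{isomorphism class} only says such isomorphisms \emph{exist}; they are not canonical, since $\Aut(L)=H^0(\Gamma',\Mcheck_\R)$ is a nontrivial real vector space. To descend you need the isomorphisms $m_a\colon L\to \prescript{\sigma(a)}{}(a^*L)$ to satisfy the cocycle condition for all pairs $(a_1,a_2)\in \Aut(f)^2$, and a random choice of the $m_a$ will in general fail this. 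Your appeal to ``triple intersections in the star cover being empty'' is a red herring: that concerns the \v{C}ech cocycle condition for torsors on $\Gamma$, not the $\Aut(f)$-cocycle condition for descent data.

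What the paper does here, and what your argument is missing, is a group-cohomology step. The obstruction to adjusting the $m_a$ so that the cocycle condition holds is a class in $H^2(\Aut(f),\Aut(L))$; since $\Aut(f)$ is finite and $\Aut(L)$ is a real vector space, this group vanishes (Maschke). Likewise $H^1(\Aut(f),\Aut(L))=0$, which gives uniqueness of the twisted equivariant structure up to isomorphism. Without this vanishing argument you have established neither surjectivity nor injectivity of the map from twisted $\Aut(f)$-equivariant $\Mcheck_\R$-bundles to $\mc M_{\Mcheck_\R}(\Gamma')^{\Aut(f)}$.
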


\begin{proof}
Let $\mc B_{G,\tau}(\Gamma)$ be the category of pairs $(E, \phi)$ consisting of a $G$-bundle $E$ on $\Ga$ and an isomorphism $\phi\colon \tau \to E_W$. By definition, $\wmc M_{G,\tau}(\Gamma)$ is the set of isomorphism classes of the objects of $\mc B_{G,\tau}(\Gamma)$.
Because $f$ is a free cover, a function on $\Ga$ is harmonic if and only if its pullback to $\Ga'$ is harmonic. Therefore $f^{*}G_{\Gamma}\cong G_{\Gamma'}$, and hence the pullback $f^{*}E$ of a $G$-bundle $E$ on $\Gamma$ is a $G$-bundle on $\Gamma'$. Moreover, this pull-back is naturally an $\Aut(f)$-equivariant bundle via the canonical morphisms
\[
    f^{*}E \xlongrightarrow{\cong} a^{*}f^{*}E
\]
for $a\in\Aut(f)$. Because $f$ is Galois, the category of $G$-bundles on $\Gamma$ is in fact equivalent to the category of $\Aut(f)$-equivariant $G$-bundles on $\Gamma'$ (one recovers $E$ as $f^{*}E/\Aut(f)$). Arguing similarly for $W$-torsors, we obtain an equivalence of $\mc B_{G,\tau}(\Gamma)$ with the category $\mc B^{\Aut(f)}_{G,f^{*}\tau}(\Gamma')$ of pairs $(E',\phi')$ consisting of an $\Aut(f)$-equivariant $G$-bundle $E'$ on $\Ga'$ and an $\Aut(f)$-equivariant isomorphism $f^{*}\tau \xrightarrow{\phi'} E'_W$, where similarly $f^{*}\tau$ is a $W$-torsor on $\Ga'$. Let $W_{\Gamma'}^\sigma$ denote the trivial $W$-torsor $W_{\Gamma'}$ together with the $\Aut(f)$-equivariant structure induced by $\sigma$. By construction of $\sigma$, the trivialization $\chi$ defines an equivariant isomorphism $f^{*}\tau\xrightarrow{\chi} W_{\Gamma'}^\sigma$, and therefore induces an equivalence of categories
\[
\mc B^{\Aut(f)}_{G,f^{*}\tau}(\Gamma') \longrightarrow \mc B^{\Aut(f)}_{G,W_{\Gamma'}^\sigma}(\Gamma')  \ .
\]
The objects of the target category are objects $(E,\phi)$ of $\mc B_{G,W_{\Gamma'}}(\Gamma')$, together with compatible morphisms 
\[
    (E,\phi)\longrightarrow (a^{*}E, a^{-1}\phi\circ \sigma(a))
\]
for $a\in \Aut(f)$. 

Let $\mc B_{\Mcheck_\R}(\Gamma')$ be the category of $\Mcheck_\R$-bundles on $\Ga'$ (note that the corresponding Weyl group is trivial). By Lemma \ref{lem:meaning of conjugation}, the category $\mc B_{\Mcheck_\R}(\Gamma')$ is equivalent to the category $\mc B_{G,W_{\Gamma'}}(\Gamma')$, in such a way that if $F\in \mc B_{\Mcheck_\R}(\Gamma')$ corresponds to $(E,\phi)$ then $\prescript{\sigma(a)}{}{F}$ corresponds to $(E,\phi\circ \sigma(a))$. Composing all of the equivalences above, we conclude that $\mc B_{G,\tau}(\Gamma)$ is equivalent to the category $\wmc B_{\Mcheck_\R}^{\Aut(f)}(\Gamma')$ of \emph{twisted $\Aut(f)$-equivariant $\Mcheck_\RR$-bundles}, that is to say, $\Aut(f)$-equivariant objects of $\mc B_{\Mcheck_\R}(\Gamma')$ with respect to the $\Aut(f)$-action given by $a.F=\prescript{\sigma(a)}{}{(a^{-1}F)}$. We can also track the action of $\Aut(\tau)$ through this equivalence. The $\Aut(\tau)$-action on $\mc B_{G,\tau}(\Gamma)$ corresponds to the $\Aut(\tau)$-action on $\mc B^{\Aut(f)}_{G, W_{\Gamma'}^\sigma}(\Gamma')$ induced by the morphism $\delta$, which in turn corresponds to the action $F.t= F^{\delta(t)}$ on twisted equivariant $\Aut(f)$-bundles by Lemma \ref{lem:meaning of conjugation}. 

Forgetting the twisted equivariant structure and taking isomorphism classes assigns to every object of $\wmc B^{\Aut(f)}_{\Mcheck_\R}(\Gamma')$ an $\Aut(f)$-invariant element of $\mc M_{\Mcheck_\R}(\Gamma')$. To finish the proof, it suffices to show that every element  $[\widetilde S]\in\mc M_{\Mcheck_\R}(\Gamma')$ that is fixed by $\Aut(f)$ determines a unique isomorphism class of twisted $\Aut(f)$-equivariant $\Mcheck_\R$-bundles on $\Gamma'$. 
 Because the isomorphism class $[\widetilde S]$ of  $\widetilde S$ is $\Aut(f)$-invariant, there are isomorphisms $m^0_a\colon \widetilde S\to \prescript{\sigma(a)}{}(a^{-1}\widetilde S)$ for all $a\in \Aut(f)$. Chosen at random, these will, in general, not define a twisted $\Aut(f)$-equivariant structure. The obstruction for this is the triviality of the automorphisms
\begin{equation}
\label{eq:bar cocycle}
    \psi(a_1,a_2) \colon \widetilde S
    \xrightarrow{m^0_{a_1}} \prescript{\sigma(a_1)}{}(a_1^{-1}\widetilde S)
    \xrightarrow{\prescript{\sigma(a_1)}{}(a_1^{-1}m^0_{a_2})} \prescript{\sigma(a_1a_2)}{}((a_1a_2)^{-1}\widetilde S)
    \xrightarrow{(m^0_{a_1a_2})^{-1}} \widetilde S    \ .
\end{equation}
Note that because $\Mcheck$ is abelian, we have $\Aut(\widetilde S)= H^0(\Gamma', \Mcheck_\R)$, and because $\Gamma'$ is compact this is a finite-dimensional $\R$-vector space. Every other choice of isomorphism $\widetilde S\to \prescript{\sigma(a)}{}(a^{-1}S)$ is of the form $m^0_a\circ \eta(a)$ for some $\eta(a)\in \Aut(\widetilde S)$. Replacing all $m_{a}^0$ by $m_a^0\circ \eta(a)$ in Equation \eqref{eq:bar cocycle}, we see that $(m_a^0\circ \eta(a))_{w\in W}$ defines a twisted $\Aut(f)$-equivariant structure if and only if 
\[
    (d\eta)(a_1,a_2)\coloneqq a_1\eta(a_2)-\eta(a_1 a_2)+\eta(w_1)
\]
is equal to $-\psi(a_1,a_2)$ in $\Aut(\widetilde S)$ for all $(a_1,a_2)$. The notation $d\eta$ is not an accident: $\eta$ defines an inhomogeneous $1$-cochain, which is an element in $C^1(\Aut(f), \Aut(\widetilde S))$, and $d\eta$ is precisely its differential. One also checks that $\psi$ (and hence $-\psi$), is an inhomogeneous $2$-cocycle. Together, this shows that the obstruction for finding a twisted $\Aut(f)$-equivariant structure on $\widetilde S$ is the vanishing of $\psi$ in the second group cohomology $H^2(\Aut(f),\Aut(\widetilde S))$. We already pointed out that $\widetilde S$ is a finite-dimensional $\R$-vector space, so as a consequence of Maschke's theorem we have $H^2(\Aut(f),\Aut(\widetilde S))=0$. We conclude that a twisted $\Aut(f)$-equivariant structure exists. The vanishing of $H^1(\Aut(f),\Aut(\widetilde S))$ tells us that if we are given two twisted $\Aut(f)$-equivariant structures on $\widetilde S$, there exists an $a\in \Aut(f)$ such that $\widetilde S\xrightarrow{l_a} \widetilde S$ is an $\Aut(f)$-equivariant isomorphism (domain and target being equipped with the two given twisted $\Aut(f)$-equivariant structures).
\end{proof}

\begin{example} 
\label{ex:computation of MGLn using cover} Let $\Gamma = \R / l\ZZ$ be a metric circle of length $l$. Let $f: \Gamma' = \R/nl\Z \to \Gamma$ be the connected free Galois cover of degree $n$ that corresponds to $1 \in \Z/n\Z$ under the identification $H^1(\Gamma,\Z/n\Z) \cong \Z / n\Z$.
Let $\tau$ be the $S_n$-torsor on $\Gamma$ that arises as the image of $f$ under the map
\[
H^1(\Gamma,\Z/n\Z) \longrightarrow H^1(\Gamma,S_n) \ .
\]
induced by the morphism $\Z/n\Z\to S_n$ mapping $1$ to $(1 2\cdots n)$.

Note that this determines an isomorphism $\chi: f^* \tau \to (S_n)_{\Gamma'}$ of $S_n$-torsors on $\Ga'$.

In this example, we compute $\wmc M_{\GL_n, \tau}(\Gamma), \wmc M_{\SL_n,\tau}(\Gamma)$ and $\wmc M_{\PGL_n,\tau }(\Gamma)$.

\begin{enumerate}[(a)]
    \item
    
    We show that $\wmc M_{\GL_n, \tau}(\Ga) \cong \Pic(\Gamma')$. Using the standard identification $\Mcheck \cong \Z^n$, by the previous lemma there is a natural bijection
    \[
    \wmc M_{\GL_n, \tau}(\Ga) \cong \calM_{\Mcheck_\R}(\Gamma')^{\Aut(f)} \cong (\Pic(\Gamma') \otimes_\Z \Mcheck)^{\Aut(f)} \cong \left(\bigoplus_{i =1}^n \Pic(\Gamma')\right)^{\Aut(f)}
    \]
    The group $\Aut(f)$ is the cyclic group of order $n$ generated by the automorphism $$g: \Gamma' \to \Gamma' \quad\text{given by}\quad x \to x + l\ . $$ The morphism $\sigma: \Aut(f) \to \Aut((S_n)_{\Gamma'}) \cong S_n$ is given by mapping $g$ to the $n$-cycle $(1 2\cdots n)$. Hence the action of $\Aut(f)$ on $\bigoplus_{i =1}^n \Pic(\Gamma')$ is given as follows: For $(L_1,\dots,L_n) \in \bigoplus_{i =1}^n \Pic(\Gamma')$ we have
    \[
    g \cdot (L_1,\dots,L_n) =  \prescript{\sigma(g)}{}{(g^{*}(L_1,\ldots,L_n))} = \prescript{\sigma(g)}{}{(g^{*}L_1,\ldots,g^{*}L_n)}= (g^{*}L_2,\ldots,g^{*}L_n,g^{*}L_1)
    \]
    Thus, $(L_1,\dots,L_n) \in (\bigoplus_{i =1}^n \Pic(\Gamma'))^{\Aut(f)}$ if and only if $L_i = g^{*}L_{i-1} = (g^{*})^{i+1}L_1$ for $i=2,\ldots,n$. We obtain a bijection
    \[
    \Pic(\Gamma') \longrightarrow \left(\bigoplus_{i =1}^n \Pic(\Gamma')\right)^{\Aut(f)} \quad\text{given by}\quad  L \mapsto (L,g^{*}L,\dots,(g^{*})^{n-1}L) \ .
    \]
    In particular, since $\Aut(\tau) = \Aut(f)$, we obtain 
    \[
    \calM_{\GL_n,\tau} (\Gamma) \cong \Pic(\Gamma')/ \Aut(f).
    \]

    \item  We show that $\wmc M_{\SL_n,\tau}(\Ga)$ consists of the $n$-torsion points of $\Gamma'$. 
    The short exact sequence
    \[
    0 \longrightarrow \Z^n_0 \longrightarrow \Z^n \xlongrightarrow{\det} \Z \longrightarrow 0
    \]
    induces an exact sequence
    \[
    0 \longrightarrow (\Pic(\Gamma') \otimes_\Z \Z^n_0)^{\Aut(f)} \longrightarrow (\Pic(\Gamma') \otimes_\Z \Z^n)^{\Aut(f)} \xlongrightarrow{\det} (\Pic(\Gamma') \otimes_\Z \Z)^{\Aut(f)}
    \]
    We show that the kernel of the induced map $\det$ is isomorphic to the $n$-torsion points of $\Gamma'$.
    Let $(L,g^{*}L,\dots,((g^{*})^{n-1}L))$ be in the kernel of $\det$ for $L \in \Pic(\Gamma')$. Then 
    \begin{equation}
    \label{eq:L}
        L \otimes g^{*}L \otimes \dots \otimes (g^{*})^{n-1}L \cong \calH_{\Gamma'}
    \end{equation}
    which implies that $\deg L = 0$. 
    We observe that for $L \in \Pic^0(\Gamma')$ we have $g^{*}L \cong L$.
    Therefore (\ref{eq:L}) shows that $L^{\otimes n} \cong \calH_{\Gamma'}$. Conversely, if $L \in \Pic^0(\Gamma')$ is an $n$-torsion point, then $(L,g^{*}L,\dots) = (L,L,\dots)$ lies in $\ker(\det)$.

    In summary,
    \[
    \wmc M_{\SL_n,\tau}(\Ga) \cong (\Pic(\Gamma') \otimes_\Z \Z^n_0)^{\Aut(f)} \cong \Pic(\Gamma')[n] \ .
    \]
    Since $\Aut(\tau) = \Aut(f)$ acts trivially, we obtain $\calM_{\SL_n,\tau}(\Ga) = \wmc M_{\SL_n,\tau}(\Ga)$. In particular, we have 
    \[
    |\calM_{\SL_n,\tau}(\Ga)| = |\wmc M_{\SL_n,\tau}(\Ga)|=|\Pic(\Gamma')[n]|=n \ .
    \]
    \item 
    We show that $\wmc M_{\PGL_n,\tau}(\Gamma) \cong \Z / n\Z$. The short exact sequence
    \[
    0 \to \Z \to \Z^n \to \Z^n/\Z(1,\dots,1) \to 0
    \]
    induces a short exact sequence
    \[
    0 \to (\Pic(\Gamma') \otimes_\Z \Z)^{\Aut(f)} \to (\Pic(\Gamma') \otimes_\Z \Z^n)^{\Aut(f)} \to (\Pic(\Gamma') \otimes_\Z \Z^n / \Z(1,\ldots,1))^{\Aut(f)} \to 0 \ .
    \]
    It is right exact because the last map $\wmc M_{\GL_n,\tau}(\Ga) \to \wmc M_{\PGL_n,\tau}(\Ga)$ is surjective. This follows from Lemma \ref{lem:surjective morphism of groups yields surjective morphism of moduli} since the underlying morphism of $\GL_n(\T) \to \PGL_n(\T)$ on the lattices is surjective.
    Hence, $\wmc M_{\PGL_n,\tau}(\Ga)$ is the cokernel of the map $\Pic(\Gamma')^{\Aut(f)} \to \Pic(\Gamma')$. Since $\Pic(\Gamma')^{\Aut(f)} = \Pic^{n\Z}(\Gamma')$, the cokernel is identified with $\Z / n\Z$, and likewise $\calM_{\PGL_n,\tau}(\Ga) = \wmc M_{\PGL_n,\tau}(\Ga)$.
\end{enumerate}
\end{example}

We are now ready to prove our main result.

\begin{proof}[Proof of Theorem \ref{thm:description of component of tropical moduli space}]
\label{proof of thm about components}
Let $f\colon\Gamma'\to \Gamma$ be the éspace étalé of $\tau$, which is a Galois cover because $\tau$ is a torsor over a discrete group. We equip $\Gamma'$ with the induced sheaf of harmonic functions to make $f$ a free cover. By Lemma \ref{lem:pull-back of tau by tau},  the pull-back $f^{*}\tau$ has a canonical trivialization. We can thus apply Lemma \ref{lem:description of torsors as equivariant torsors on cover} and obtain a bijection
\[
    \wmc M_{G,\tau}\cong (\Pic(\Gamma')\otimes_\Z \Mcheck)^{\Aut(f)} \ .
\]
Since $\Aut(f)$ acts by pulling back and conjugation, the group action of $\Aut(f)$ on the components is by translates of morphisms of tropical abelian varieties. Therefore, $(\Pic(\Ga')\otimes_\Z \Mcheck)^{\Aut(f)}$ is a union of torsors over tropical abelian varieties as well.

The statement for $\mc M_{G,\tau}$ follows immediately, as it is the $\Aut(\tau)$-quotient of $\wmc M_{G,\tau}$.
\end{proof}

 We note that the proof shows that each $\wmc M_{G,\tau}$ is in fact a group, not simply a torsor.

\subsection{Stable bundles on tropical elliptic curves}
\label{subsec:stable bundles on tropical elliptic curves}

In this section, we prove a tropical analogue of Theorem~\ref{thm:Fratilla determinant is isomorphism}, which classifies stable $G$-bundles on an elliptic curve. We restrict our attention to tropical reductive groups of type $\prod A_{n_i-1}$, since in the algebraic setting there are no stable $G$-bundles of other types. Our main result is Theorem~\ref{thm:determinant is isomorphism for tropical moduli of stable bundles}, which classifies stable tropical $G$-bundles of type $\prod A_{n_i-1}$ on a metric circle. The analogous tropical statement is not true on the nose, but one has to restrict to tropical bundles whose degree is stable and whose underlying $W$-torsor is indecomposable in the sense defined below.

\begin{definition} Let $W=\prod_i S_{n_i}$ be a Weyl group of type $\prod_{i}A_{n_i-1}$. An element of $W$ is called \emph{indecomposable} if it is a product of $n_i$-cycles. A $W$-torsor on a metric circle is called \emph{indecomposable} if it is defined by an indecomposable element of $W$.
\end{definition}

We note that an $S_n$-torsor on a metric circle $\Ga$ is indecomposable if and only if the associated degree $n$ cover is connected. Furthermore, all indecomposable elements of $W=\prod_i S_{n_i}$ are conjugate to each other, hence there is a unique (up to isomorphism) indecomposable $W$-torsor on $\Gamma$, which we denote by $\ind\in H^1(\Ga,W)$.

Let $\Phi=(M,R,\Mcheck,\Rcheck)$ be a root datum. Its associated \emph{adjoint root datum} is given by $\Phi^\ad= (\langle R\rangle,R, \Mcheck^\ad, \Rcheck)$, where 
\[
\Mcheck^\ad=\{\check m \in \Mcheck_\Q : \langle r, \check m\rangle\in \Z \text{ for all }r\in R \} \ .
\]
The tropical reductive group $G^\ad=G_{\Phi^\ad}$ associated to the adjoint root datum of a tropical reductive group $G=G_\Phi$ is the \emph{adjoint group} of $G$, and the inclusion $\Mcheck \to \Mcheck^\ad$ induces a canonical morphism $G\to G^\ad$.

\begin{lemma}
\label{lem:stabilizers in Aut(tau) in An case}
Let $G=\Mcheck_\R\rtimes S_n$ be a tropical reductive group of type $A_{n+1}$ and suppose that $\langle \Rcheck\rangle$ is saturated in $\Mcheck$. Let $\lambdacheck_G\in \pi_1(G)$ be a stable degree (see Definition~\ref{def:stable_degree}) and let $E\in \wmc M_{G,\ind}^{\lambdacheck_G}$. Then the stabilizer group $\Aut(\ind)_E=1$ is trivial.
\end{lemma}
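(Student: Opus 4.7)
The plan is to reduce to the case $G = \GL_n(\TT)$ and then carry out an explicit calculation using Example \ref{ex:computation of MGLn using cover}(a) together with a direct analysis of the $\Aut(\ind)$-action on $\Pic(\Gamma')$. First I would observe that, since the coroots for type $A_{n-1}$ generate $\Z^n_0 \subseteq \Q^n$, the saturation hypothesis $\Mcheck/\langle \Rcheck\rangle$ torsion-free forces $\Mcheck$ to be either $\Z^n_0$ or a rank-$n$ lattice containing $\Z^n_0$ with cyclic infinite quotient. In the first case, $\pi_1(G) = 0$, and the unique element $0 \in \pi_1(G^\ad) = \Z/n\Z$ fails to be coprime to $n$ (for $n > 1$), so the stability hypothesis makes the statement vacuous; the remaining case reduces, after identifying the cocharacter lattice, to $G = \GL_n(\TT)$ with $\lambdacheck_G = d \in \Z = \pi_1(G)$ satisfying $\gcd(d, n) = 1$.

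Next, with $\Gamma = \R/l\Z$, the éspace étalé of $\ind$ is the connected degree-$n$ cover $f \colon \Gamma' = \R/nl\Z \to \Gamma$, with $\Aut(f) = \langle g\rangle$ generated by translation by $l$ and $\Aut(\ind) = C_{S_n}((1\,2\,\cdots\,n)) = \langle c\rangle$. By Example \ref{ex:computation of MGLn using cover}(a), there is a natural bijection $\wmc M_{\GL_n,\ind}^d(\Gamma) \cong \Pic^d(\Gamma')$, realized by $L \mapsto (L, g^*L, \ldots, (g^*)^{n-1}L) \in (\bigoplus_{i=1}^n \Pic(\Gamma'))^{\Aut(f)}$. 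Using Lemma \ref{lem:description of torsors as equivariant torsors on cover}, the action of $c$ on $\wmc M_{G,\ind}$ is induced by conjugation on $\Mcheck = \Z^n$, which cyclically permutes the coordinates of $\bigoplus_{i=1}^n \Pic(\Gamma')$; matching this against the $\Aut(f)$-equivariant parametrization yields the clean formula $c \cdot L = (g^*)^{-1} L$ on $\Pic^d(\Gamma')$.

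Finally, I would compute the stabilizer: a choice of basepoint $p_0 \in \Gamma'$ identifies $\Pic^0(\Gamma') \cong \R/nl\Z$ via Abel-Jacobi, and as noted in Example \ref{ex:computation of MGLn using cover}(b) the pullback $g^*$ acts trivially on $\Pic^0(\Gamma')$ because $g$ is an orientation-preserving isometry of the circle. A short calculation on the full Picard group then gives $g^*L - L = -d\,[l]$ in $\Pic^0(\Gamma')$ for $L \in \Pic^d(\Gamma')$, so $(g^*)^k L = L$ if and only if $kdl \equiv 0 \pmod{nl}$, i.e., $kd \equiv 0 \pmod n$. Stability of $d$ then forces $k \equiv 0 \pmod n$, so the stabilizer of $E$ in $\Aut(\ind)$ is trivial. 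The main subtlety is the second step, which requires careful bookkeeping to transport the $\Aut(\ind)$-action across the identifications of Lemma \ref{lem:description of torsors as equivariant torsors on cover}; once the formula $c \cdot L = (g^*)^{-1}L$ is established, the rest of the argument is essentially the degree calculation already implicit in Example \ref{ex:computation of MGLn using cover}.
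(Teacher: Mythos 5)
Your core computation---identifying $\wmc M^{d}_{\GL_n,\ind}$ with $\Pic^d(\Gamma')$, transporting the $\Aut(\ind)$-action to a power of $g^*$, and using $g^*L\otimes L^{-1}=-d\,[l]$ in $\Pic^0(\Gamma')\cong \R/nl\Z$ together with $\gcd(d,n)=1$ to conclude freeness---is correct, and it is essentially the computation the paper performs once it has reduced to the case where the center $Z=\Mcheck^{S_n}$ has rank one. The problem is your first step. The hypothesis that $\langle\Rcheck\rangle$ is saturated in $\Mcheck$ does \emph{not} force $\Mcheck$ to be $\Z^n_0$ or a rank-$n$ overlattice of it: the cocharacter lattice of a root datum of type $A_{n-1}$ with Weyl group $S_n$ can have arbitrary rank $\geq n-1$, i.e., the central torus can have any rank. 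For instance $\Mcheck=\Z^n\oplus\Z^k$ with $S_n$ permuting the first factor (the root datum of $\bGL_n\times\G_m^k$) satisfies all hypotheses, has $\pi_1(G)\cong\Z^{k+1}$, and admits stable degrees, yet falls outside your dichotomy. So your claimed reduction to $G=\GL_n(\TT)$ is incomplete, and this is a genuine gap.

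The paper closes this gap with an extra reduction: the map $\phi\colon Z\to\Mcheck/\langle\Rcheck\rangle$ is injective with cyclic cokernel $\pi_1(G^{\ad})\cong\Z/n\Z$, so all but one invariant factor in its Smith normal form equal $1$, and hence there is a corank-one sublattice $K\subseteq Z$ whose image $\phi(K)$ is saturated. Quotienting by $K_\R$ yields an $\Aut(\ind)$-equivariant map $f_*\colon \wmc M^{\lambdacheck_G}_{G,\ind}\to\wmc M^{\lambdacheck_H}_{H,\ind}$ onto a group $H$ that still satisfies all hypotheses and now has rank-one center, to which your explicit computation applies; freeness downstairs then implies freeness upstairs. (Saturation of $\phi(K)$ is what guarantees that $\langle\Rcheck\rangle$ stays saturated in the quotient.) You would need to supply an argument of this kind to handle centers of rank greater than one. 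A minor point: your formula $c\cdot L=(g^*)^{-1}L$ differs from the paper's identity $L.a=a^*L$ only by replacing a generator of the cyclic group with its inverse, which is harmless for the conclusion.
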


\begin{proof}
Let $Z_\R=Z(G)$ be the center, which is the vector space associated to the lattice $Z=Z(\Mcheck\rtimes W)$. Then $G/Z_\R$ is simple of type $A_{n-1}$ and the natural map $G/Z_\R\to G^\ad$ induces an inclusion $\pi_1(G/Z_\R)\to \pi_1(G^\ad)$. The degree $\lambdacheck_G$ can only be stable if its image generates $\pi_1(G^\ad)$, which implies that $G/Z_\R=G^\ad=\PGL_n$. We now reduce to the case where $Z$ has rank $1$. Note that because $\langle \Rcheck\rangle$ is saturated, we cannot have $Z=0$, because that would imply $\Mcheck=\langle \Rcheck\rangle$ and hence $G=\SL_n$, which is a contradiction to $G/Z_\R=\PGL_n$. Consider the morphism $\phi\colon Z\to \Mcheck/\langle\Rcheck\rangle$. It is injective and its cokernel equals $\pi_1(G)=\Z/n\Z$. Because $\phi$ has cyclic cokernel, all but one of the invariant factors in its Smith normal form are $1$, so there is a rank $\rk Z-1$ sublattice $K$ of $Z$ such that $\phi(K)$ is saturated. In particular, the image of $\langle \Rcheck\rangle$ is saturated in the quotient $H=G/K_\R$. If $f\colon G\to H$ denotes the quotient map, then $f_*E$ has degree $\lambdacheck_H=\lambdacheck_G\in \pi_1(H)=\pi_1(G)$. As 
\[
    f_*\colon \wmc M_{G,\ind}^{\lambdacheck_G}\longrightarrow \wmc M_{H,\ind}^{\lambdacheck_H}
\]
is $\Aut(\ind)$-equivariant, it suffices to prove the statement for $f_*E$ and $H$, which satisfy all the hypotheses of the assertion, and $H$ has center equal to $Z/K$, which has rank $1$. 

Now assume $Z$ has rank $1$.  We compute $\wmc M_{G,\ind}$ similarly as in Example \ref{ex:computation of MGLn using cover}. Let $f\colon \Gamma'\to \Gamma$ be a connected cyclic degree $n$ cover with $W$-torsor $\ind$, then by Lemma \ref{lem:description of torsors as equivariant torsors on cover} we have
\[
    \wmc M_{G,\ind}=(\Pic(\Gamma')\otimes_\Z \Mcheck)^{\Aut(f)} \ .
\]
As $\langle \Rcheck\rangle$ is saturated, the degree map factors through the determinant map, which is given by the morphism
\[
    \left(\Pic(\Gamma')\otimes_\Z \Mcheck\right)^{\Aut(f)}
        \to \left(\Pic(\Gamma')\otimes_\Z \Mcheck/\langle \Rcheck\rangle\right)^{\Aut(f)}
            = \Pic(\Gamma)\otimes_\Z \Mcheck/\langle\Rcheck\rangle 
\]
The element $L\in\left(\Pic(\Gamma')\otimes_\Z \Mcheck\right)^{\Aut(f)}$ corresponding to $E$ satisfies $\prescript{\sigma(a)}{}{(a^*L)}=L$ for all $a\in \Aut(f)$, where $\sigma\colon \Aut(f)\to \Aut(W_{\Gamma'})$ as in Lemma \ref{lem:description of torsors as equivariant torsors on cover}. On the other hand, $t\in\Aut(\ind)$ acts by $L.t=L^{\delta(t)}$, where $\delta\colon \Aut(\ind)\to \Aut(W_{\Gamma'})$ is as in Lemma \ref{lem:description of torsors as equivariant torsors on cover}. But in our case, $\Aut(\ind)$ and $\Aut(f)$ are identified because $\Gamma'$ is the total space of $\ind$, hence $\sigma=\delta$. We conclude that for $a\in \Aut(\ind)=\Aut(f)$ we have
\[
    L^{\sigma(a)}= a^*L
\]
As pulling back along $a$ leaves degrees invariant, it follows that the degree of $L$ is invariant, so it is contained in $\Mcheck^{\Aut(\ind)}=Z$, where for the equality we use that $\langle \Rcheck\rangle^{\Aut(\ind)}=0$. So we have
\[
    L\in \Pic^0(\Gamma')\otimes_\Z \Mcheck+ \Pic(\Gamma')\otimes_\Z Z \ .
\]
Let $z\in Z$ and $m\in \Mcheck/\langle \Rcheck\rangle$ be generators. As $Z+\langle\Rcheck\rangle$ has index $n$ in $\Mcheck$, the map $\Z\cong Z\to \Mcheck/\langle \Rcheck\rangle\cong \Z$ maps $z$ to $\pm n m$. So if $\deg(L)=k\cdot z$, then $\deg(\det(E))=k\cdot m$. As the degree factors through the determinant, $\lambdacheck_G$ can only be stable if $\gcd(k,n)=1$. 

It now suffices to prove that for $1\neq a\in \Aut(f)$, we have $a^*L\otimes L^{-1}\neq 0$. Since $a^*\widetilde{L}\otimes \widetilde{L}^{-1} \cong 0$ for all $\widetilde{L}\in \Pic^0(\Gamma')\otimes \Mcheck$, we see that $a^*L\otimes L^{-1}\neq 0$ only depends on $\deg(L)$ and we may replace $L$ by any element of $\Pic(\Gamma')\otimes_\Z Z$. With the chosen isomorphism 
\[
    \Z\to Z,\;\; 1\mapsto z \ ,
\]
we have $\Pic(\Gamma')\otimes_\Z Z\cong \Pic(\Gamma')$ and the set of elements in $\Pic(\Gamma')\otimes_\Z Z$ of degree $k\cdot z$ corresponds to $\Pic^k(\Gamma')$. But because $\gcd(k,n)=1$, the $\Aut(f)$-action on $\Pic^k(\Gamma')$ is free, concluding the proof.
\end{proof}

Recall that by Lemma~\ref{lem:subgroup is normal} there is a natural map $\Mcheck\rtimes W\to \Mcheck/\langle \Rcheck\rangle=\pi_1(G)$. Tensoring with $\RR$, we obtain the \emph{determinant map}
\[
\det:G\longrightarrow \Mcheck_{\RR}/\langle\Rcheck\rangle_{\RR}=\pi_1(G)_{\RR} \ .
\]
By abuse of notation, we also use $\det$ to denote the corresponding map on degrees:
\[
\det:\pi_1(G)\longrightarrow \pi_1(\Mcheck_{\RR}/\langle\Rcheck\rangle_{\RR})=\Mcheck/\langle\Rcheck\rangle^{\sat}=\pi_1(G)^{\mathrm{tf}}.
\]
\begin{lemma}
\label{lem:determinant is isomorphism if roots are saturated}
Let $G=\Mcheck_\R \rtimes W$ be a tropical reductive group of type $\prod_{i=1}^k A_{n_i-1}$ such that $\langle\Rcheck\rangle$ is saturated in $\Mcheck$, and let $\lambdacheck_G\in \pi_1(G)$ be a stable degree. Then the determinant induces a homeomorphism
\[
    \det\colon \mc M^{\lambdacheck_G}_{G,\ind}\longrightarrow \mc M^{\det(\lambdacheck_G)}_{T} \ ,
\]
where $T=\Mcheck_{\RR}/\langle \Rcheck\rangle_{\RR}$.
\end{lemma}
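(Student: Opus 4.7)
The strategy is to apply Lemma \ref{lem:description of torsors as equivariant torsors on cover} and Proposition \ref{prop:fiber over trivial W-torsor} to describe both sides explicitly, use saturation to extract a short exact sequence whose kernel captures the $\Aut(\ind)$-ambiguity, and then verify that the stability hypothesis makes the $\Aut(\ind)$-action transitive on fibers of $\det$. Concretely, let $f\colon \Gamma'\to\Gamma$ be the Galois cover arising from $\ind$, so that $\Aut(f)\cong \Aut(\ind)$. By Lemma \ref{lem:description of torsors as equivariant torsors on cover}, $\wmc M_{G,\ind}\cong (\Pic(\Gamma')\otimes_{\mathbb Z}\Mcheck)^{\Aut(f)}$, with $\Aut(\ind)$ acting via the conjugation action of $W$ on $\Mcheck$; this conjugation is trivial on $\pi_1(G)=\Mcheck/\langle\Rcheck\rangle$ since $W$ is generated by reflections fixing $\pi_1(G)$. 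The target is $\mc M_T=\Pic(\Gamma)\otimes_{\mathbb Z}\pi_1(G)$ by Proposition \ref{prop:fiber over trivial W-torsor}.

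Saturation of $\langle\Rcheck\rangle$ in $\Mcheck$ makes the sequence $0\to\langle\Rcheck\rangle\to\Mcheck\to\pi_1(G)\to 0$ a short exact sequence of free abelian groups, so tensoring with $\Pic(\Gamma')$ preserves exactness. Taking $\Aut(f)$-invariants yields
\[
0\to (\Pic(\Gamma')\otimes\langle\Rcheck\rangle)^{\Aut(f)}\to \wmc M_{G,\ind}\xrightarrow{\det} (\Pic(\Gamma')\otimes\pi_1(G))^{\Aut(f)},
\]
and tropical Galois descent identifies the target with $\mc M_T$ (with the natural degree convention on each side). Restricted to degree $\lambdacheck_G$, the map $\det$ is surjective onto $\mc M^{\det(\lambdacheck_G)}_T$ because its fibers form principal homogeneous spaces under the tropical abelian variety $(\Pic^0(\Gamma')\otimes\langle\Rcheck\rangle)^{\Aut(f)}$.

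The crucial step is to show that $\Aut(\ind)$ acts transitively on each such fiber over the stable locus. Following the reduction strategy in the proof of Lemma \ref{lem:stabilizers in Aut(tau) in An case}, I first reduce to the case where $G$ is of simple type $A_{n-1}$ with center of rank one; Example \ref{ex:computation of MGLn using cover}(a) then identifies the $\Aut(f)$-action on $\Pic^d(\Gamma')$ with translation along the cyclic kernel of the pushforward $\Pic^d(\Gamma')\to\Pic^d(\Gamma)$, and this action is simply transitive on fibers precisely when $\gcd(d,n)=1$, which is the stability criterion for $\lambdacheck_G$. Combined with Lemma \ref{lem:stabilizers in Aut(tau) in An case} (freeness of the $\Aut(\ind)$-action on $\wmc M^{\lambdacheck_G}_{G,\ind}$), this yields a continuous bijection $\mc M^{\lambdacheck_G}_{G,\ind}\to \mc M^{\det(\lambdacheck_G)}_T$; since both sides are quotients of real tori by finite free actions and the map is affine linear in natural coordinates, it is automatically a homeomorphism. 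The main obstacle is the transitivity step, which requires the reduction to rank-one centers in order to translate the general statement into the arithmetic condition $\gcd(d_i,n_i)=1$.
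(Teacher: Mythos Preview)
Your scaffolding is the same as the paper's: describe $\wmc M_{G,\ind}$ via the cover $f\colon\Gamma'\to\Gamma$, tensor the short exact sequence $0\to\langle\Rcheck\rangle\to\Mcheck\to\pi_1(G)\to 0$ with $\Pic(\Gamma')$, take $\Aut(f)$-invariants, and then argue that passing to the $\Aut(\ind)$-quotient matches passing to the $\det$-quotient. However, two of your steps do not go through as written.

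First, your surjectivity argument is circular: you assert that $\det$ is surjective ``because its fibers form principal homogeneous spaces'' under the kernel, but nonemptiness of the fibers \emph{is} surjectivity. Taking $\Aut(f)$-invariants of a short exact sequence only gives left exactness; to get right exactness the paper invokes Lemma~\ref{lem:surjective morphism of groups yields surjective morphism of moduli} applied to the surjection $G\to T\times W$. (Incidentally, the kernel $(\Pic(\Gamma')\otimes\langle\Rcheck\rangle)^{\Aut(f)}=\wmc M_{G^{\simc},\ind}$ is not a tropical abelian variety but a finite group: by Example~\ref{ex:computation of MGLn using cover}(b) it has exactly $\prod_i n_i$ elements.)

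Second, and more seriously, your transitivity argument has a genuine gap. You propose to ``reduce to the case where $G$ is of simple type $A_{n-1}$ with center of rank one'' following the strategy of Lemma~\ref{lem:stabilizers in Aut(tau) in An case}. But the projections $q^i\colon G\to G^i$ used there establish \emph{freeness} (if $\mathbf t$ fixes $E$ then each $t_i$ fixes $q^i_*E$, hence $t_i=1$); they do not establish \emph{transitivity}, because knowing that each $q^i_*E$ and $q^i_*E'$ lie in the same $\Aut(q^i_*\ind)$-orbit does not let you lift to a single $\mathbf t$ sending $E$ to $E'$ without further control on the map $\wmc M_{G,\ind}\to\prod_i\wmc M_{G^i,\ind}$. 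Even in the rank-one case, Example~\ref{ex:computation of MGLn using cover}(a) does not contain the computation you attribute to it; one must separately verify that the pullback action of the generator $g$ on $\Pic^d(\Gamma')$ is translation by $-dl$ and compare orbit sizes to fiber sizes.

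The paper sidesteps this entirely by a counting argument: since $\lvert\wmc M_{G^{\simc},\ind}\rvert=\prod_i n_i=\lvert\Aut(\ind)\rvert$ (Example~\ref{ex:computation of MGLn using cover}(b)) and $\det$ is $\Aut(\ind)$-invariant, freeness alone forces each $\Aut(\ind)$-orbit to fill out an entire $\det$-fiber. I recommend you replace your transitivity step with this cardinality match; then Lemma~\ref{lem:stabilizers in Aut(tau) in An case} (freeness, via the projections $q^i$) completes the proof exactly as you intended.
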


\begin{proof}
Let $\ind$ be an indecomposable $W$-torsor and let $f\colon \Gamma'\to \Gamma$ be a cyclic degree $\mathrm{lcm}(n_1,\ldots, n_k)$ cover. Consider the short exact sequence
\[
    0 \longrightarrow 
        \Pic(\Gamma')\otimes_\Z \langle \Rcheck\rangle \longrightarrow
            \Pic(\Gamma')\otimes_\Z\Mcheck  
                \longrightarrow \Pic(\Gamma')\otimes \Mcheck/\langle\Rcheck\rangle \longrightarrow
                    0\ .
\]
Proposition~\ref{prop:fiber over trivial W-torsor} identifies each term with a moduli space of bundles on $\Ga'$ with trivial $W$-torsor $W_{\Ga'}=f^*(\ind)$. Taking $\Aut(f)$-invariants and applying Lemma \ref{lem:description of torsors as equivariant torsors on cover}, we obtain a left exact sequence of abelian groups
\[
0 \longrightarrow 
    \wmc M_{G^{\simc},\ind}(\Ga) \longrightarrow 
        \wmc M_{G,\ind}(\Ga) \xlongrightarrow{\det} 
            \mc M_{T}(\Ga) \longrightarrow
                0 \ ,
\]
where we denote $G^{\simc}=\prod \SL_{n_i}$. By Lemma~\ref{lem:surjective morphism of groups yields surjective morphism of moduli} applied to the surjective map $G\mapsto T\times W$, the second map is surjective and therefore the sequence is also exact on the right. Because the degree factors through the determinant, it follows that the determinant map induces a bijection
\[
    \wmc M^{\lambdacheck_G}_{G,\ind}(\Ga)/\wmc M_{G^{\simc},\ind}(\Ga) \longrightarrow 
        \mc M^{\det(\lambdacheck_G)}_{T}(\Ga) \ ,
\]
where we do not write the degree in the quotient because $\pi_1(G^{\simc})$ is trivial. The determinant map is also invariant under action of $\Aut(\ind)$ and we need to show that we also have
\[
    \mc M^{\lambdacheck_G}_{G,\ind}(\Ga)=\wmc M^{\lambdacheck_G}_{G,\ind}(\Ga)/\Aut(\ind)\cong 
        \mc M^{\det(\lambdacheck_G)}_{T}(\Ga) \ .
\]
As we have seen in part (2) of Example \ref{ex:computation of MGLn using cover}, we have 
\[
    \big|\wmc M_{G^{\simc},\ind}(\Gamma)\big|=\prod \big|\wmc M_{\SL_{n_i},\ind}(\Gamma)\big| =\prod n_i\ ,
\]
which coincides with $\big|\Aut(\ind)\big|$. Therefore, it suffices to show that $\Aut(\ind)$ acts freely on $\wmc M^{\lambdacheck_G}_{G,\ind}(\Ga)$. To show this, consider for $1\leq i\leq k$ the quotient morphism
\[
    q^i\colon G\to G^i\coloneqq \Mcheck_{i,\R} \rtimes S_{n_i}\quad\text{given by}\quad \Mcheck_i=\Mcheck /\sum_{j\neq i} \langle \Rcheck_j\rangle 
    \ ,
\]
where $\Rcheck_j$ is the set of coroots of the factor $\SL_{n_i}$ of $G^{\simc}$. The induced push-forward
\[
q^i_*\colon \wmc M^{\lambdacheck_G}_{G,\ind}(\Ga)\longrightarrow \wmc M^{q^i_*\lambdacheck_G}_{G^i,q^i_*\ind}(\Ga) 
\]
respects the $\Aut(\ind)$-action. In particular, for 
\[
    \mathbf t=(t_1,\ldots, t_k)\in \prod \Aut(q^i_*\ind)=\Aut(\ind) \ ,
\]
and $E\in \wmc M^{\lambdacheck_G}_{G,\ind}(\Ga)$, we have
\[
    q^i_*(E.\mathbf t)= (q^i_*E).t_i \ .
\]
Because $q^i_*\lambdacheck_G$ is stable, $\Aut(q^i_*\ind)$ acts freely on $\wmc M^{q^i_*\lambdacheck_G}_{G^i,q^i_*\ind}(\Ga)$ by Lemma \ref{lem:stabilizers in Aut(tau) in An case} and it follows that $E.\mathbf t\cong E$ if and only if $\mathbf t=(1,\ldots,1)$.
\end{proof}

We are now ready to prove the tropical counterpart to Theorem \ref{thm:Fratilla determinant is isomorphism}.

\begin{theorem}
\label{thm:determinant is isomorphism for tropical moduli of stable bundles}
Let $G= \Mcheck_\R \rtimes W$ be a tropical reductive group of type $\prod_{i=1}^k A_{n_i-1}$ and let $\lambdacheck_G\in \pi_1(G)$ be a stable degree. Then the determinant induces a homeomorphism
\[
    \det\colon \mc M^{\lambdacheck_G}_{G,\ind}\longrightarrow \mc M^{\det(\lambdacheck_G)}_{T} \ ,
\]
where $T=\Mcheck_{\RR}/\langle \Rcheck\rangle_{\RR}$. 
\end{theorem}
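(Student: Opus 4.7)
The plan is to mirror the proof of Lemma~\ref{lem:determinant is isomorphism if roots are saturated} in the non-saturated setting, replacing the role of the simply-connected group $G^{\simc} = \prod \SL_{n_i}$ by a slightly larger tropical reductive group $G^{\mathrm{der}} := \langle \Rcheck\rangle^{\sat}_\RR \rtimes W$ that plays the role of a ``derived group'' and captures the torsion of $\pi_1(G)$. Let $f\colon \Ga' \to \Ga$ be the connected cyclic cover corresponding to the indecomposable $W$-torsor $\ind$. In place of the sequence $0 \to \langle \Rcheck\rangle \to \Mcheck \to \pi_1(G) \to 0$ used in the proof of the lemma (which has torsion quotient and is thus ill-suited for tensoring with $\Pic(\Ga')$), I would work with
\begin{equation*}
0 \to \langle \Rcheck\rangle^{\sat} \to \Mcheck \to \Mcheck/\langle \Rcheck\rangle^{\sat} \to 0,
\end{equation*}
whose quotient $\pi_1(G)^{\mathrm{tf}}$ is free by definition of saturation. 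Tensoring with $\Pic(\Ga')$ preserves exactness, and after taking $\Aut(f)$-invariants, invoking Lemma~\ref{lem:surjective morphism of groups yields surjective morphism of moduli} for right-exactness yields a short exact sequence
\begin{equation*}
0 \to \wmc M_{G^{\mathrm{der}}, \ind} \to \wmc M_{G, \ind} \xrightarrow{\;\det\;} \mc M_T \to 0.
\end{equation*}

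With this sequence in place, two things remain to be established in order to deduce the theorem: (i) the action of $\Aut(\ind)$ on $\wmc M^{\lambdacheck_G}_{G,\ind}$ is free for stable $\lambdacheck_G$, and (ii) for each torsion class $\tau \in \pi_1(G)_{\mathrm{tor}} = \pi_1(G^{\mathrm{der}})$, the fiber $\wmc M^\tau_{G^{\mathrm{der}},\ind}$ has cardinality exactly $|\Aut(\ind)|$. For (i), I would adapt Lemma~\ref{lem:stabilizers in Aut(tau) in An case}: its core mechanism --- reducing, via the quotient morphisms $q^i\colon G \to G^i$ onto the individual Dynkin factors, to the rank-one-center situation in which stability translates into the primitivity condition $\gcd(k, n_i) = 1$ --- never uses saturation of the coroot lattice, and the ensuing assertion that $\Aut(f)$ acts freely on $\Pic^k(\Ga')$ when $\gcd(k,n)=1$ proceeds identically. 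For (ii), I would compute the size of $\wmc M^\tau_{G^{\mathrm{der}},\ind}$ from the long exact sequence obtained by tensoring the (generally non-split) short exact sequence $0 \to \langle\Rcheck\rangle \to \langle\Rcheck\rangle^{\sat} \to \pi_1(G)_{\mathrm{tor}} \to 0$ with $\Pic(\Ga')$, taking $\Aut(f)$-invariants, and using the computation $|\wmc M^0_{G^{\simc},\ind}| = \prod n_i$ from Example~\ref{ex:computation of MGLn using cover}(b) together with the contribution from $\mathrm{Tor}^1(\Pic(\Ga'),\pi_1(G)_{\mathrm{tor}})$, whose size is pinned down by the structure of $\pi_1(G)_{\mathrm{tor}}$.

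Combining (i) and (ii), the $|\Aut(\ind)|$-to-one continuous map $\wmc M^{\lambdacheck_G}_{G,\ind} \to \mc M^{\det\lambdacheck_G}_T$ factors through the free $\Aut(\ind)$-quotient to give a continuous bijection $\mc M^{\lambdacheck_G}_{G,\ind} \to \mc M^{\det\lambdacheck_G}_T$. Both moduli spaces are quotients of torsors under tropical abelian varieties of the same rank by Theorem~\ref{thm:description of component of tropical moduli space} and Proposition~\ref{prop:fiber over trivial W-torsor}, so this bijection upgrades to a homeomorphism by a standard dimension-and-properness argument. I expect the main obstacle to be the fiber-counting step (ii): unlike the simply-connected case, here the $\mathrm{Tor}^1$-term genuinely contributes to $\wmc M^\tau_{G^{\mathrm{der}},\ind}$, and verifying that the $\Aut(f)$-invariant parts of both the free and the torsion contributions in the Picard group combine to yield exactly $|\Aut(\ind)|$ requires a careful case analysis keyed to the type-$A$ structure of each irreducible factor of $W$.
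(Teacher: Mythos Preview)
Your approach has a genuine and fatal gap: claim (i), that $\Aut(\ind)$ acts freely on $\wmc M^{\lambdacheck_G}_{G,\ind}$ for stable $\lambdacheck_G$, is \emph{false} without the saturation hypothesis. Take $G=\PGL_n(\TT)$, so $\Mcheck=\Z^n/\Z(1,\ldots,1)$ and $\langle\Rcheck\rangle$ is the image of $\Z^n_0$, which is \emph{not} saturated (its saturation is all of $\Mcheck$). Example~\ref{ex:computation of MGLn using cover}(c) computes $\wmc M_{\PGL_n,\ind}(\Gamma)\cong\Z/n\Z$ and shows that $\Aut(\ind)$ (a group of order $n$) acts \emph{trivially}. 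In particular, for any stable degree $d\in(\Z/n\Z)^\times$ the fiber $\wmc M^d_{\PGL_n,\ind}$ is a single point with trivial $\Aut(\ind)$-action, certainly not free. The part of the proof of Lemma~\ref{lem:stabilizers in Aut(tau) in An case} that you say ``never uses saturation'' does use it: saturation is what forces the center $Z$ to be nontrivial, allowing the reduction to the rank-one-center case where the primitivity argument applies. Without it, $Z$ can vanish while $\pi_1(G)$ remains nontrivial, and the whole mechanism collapses. Claim (ii) fails for the same example: with $G=\PGL_n(\TT)$ your $G^{\mathrm{der}}$ equals $G$, so each $\wmc M^\tau_{G^{\mathrm{der}},\ind}$ is a point, not a set of size $n=|\Aut(\ind)|$.

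The paper circumvents this by going in the opposite direction: rather than passing \emph{down} to a derived-type subgroup inside $G$, it constructs (Lemma~\ref{lem:existence of cover used for degree calculation}) a \emph{larger} group $G'=\Lambdacheck_\R\rtimes W$ mapping onto $G$ with central kernel $K_\R$ and with $\langle\Phicheck\rangle$ saturated in $\Lambdacheck$. Then Lemma~\ref{lem:determinant is isomorphism if roots are saturated} applies cleanly to $G'$, and one descends: the identification $\wmc M^{\lambdacheck_{G'}}_{G',\ind}/\mc M^0_{K_\R}\cong \wmc M^{\lambdacheck_G}_{G,\ind}$ combined with surjectivity of $K\to (K+\langle\Phicheck\rangle)^{\sat}/\langle\Phicheck\rangle$ lets one adjust lifts so that the freeness at the $G'$-level (where it genuinely holds) yields injectivity of $\det$ at the $G$-level. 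The upshot is that you cannot avoid introducing an auxiliary group with saturated coroots; your attempt to stay inside $\Mcheck$ via the saturation $\langle\Rcheck\rangle^{\sat}$ loses precisely the structure that makes the free-action argument work.
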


We first prove the following algebraic fact.

\begin{lemma}
    \label{lem:existence of cover used for degree calculation}
    Let $\mathbf{G}$ be a reductive linear algebraic group. Then there is a morphism $p\colon \mathbf{G}'\to \mathbf{G}$ of reductive groups with the properties that
    \begin{enumerate}
        \item $\ker(p)\subseteq Z(\mathbf{G}')$,
        \item $p$ is surjective,
        \item $\ker(p)$ is connected,
        \item $\pi_1(\mathbf{G}')$ is torsion-free.
    \end{enumerate}
\end{lemma}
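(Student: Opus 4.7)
The plan is to construct a $z$-extension of $\mathbf{G}$ in the sense of Kottwitz. The key observation is that property (4) is automatic once the derived subgroup of $\mathbf{G}'$ is simply connected: for a maximal torus $\mathbf{T}'\subseteq \mathbf{G}'$, the intersection $\mathbf{T}'\cap (\mathbf{G}')^{\mathrm{der}}$ is then a maximal torus of $(\mathbf{G}')^{\mathrm{der}}$ whose cocharacter lattice is precisely the coroot lattice of $\mathbf{G}'$, and the short exact sequence of tori
\[
1\longrightarrow \mathbf{T}'\cap(\mathbf{G}')^{\mathrm{der}}\longrightarrow \mathbf{T}'\longrightarrow \mathbf{T}'/\bigl(\mathbf{T}'\cap (\mathbf{G}')^{\mathrm{der}}\bigr)\longrightarrow 1
\]
identifies $\pi_1(\mathbf{G}')$ with the cocharacter lattice of the quotient torus, which is free abelian.

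To engineer $(\mathbf{G}')^{\mathrm{der}} \cong \mathbf{G}^{\mathrm{sc}}$, the simply connected cover $\pi\colon \mathbf{G}^{\mathrm{sc}}\to \mathbf{G}^{\mathrm{der}}$, I would start from the multiplication map
\[
\mu\colon \mathbf{G}^{\mathrm{sc}}\times Z\longrightarrow \mathbf{G}, \qquad (g,z)\longmapsto \pi(g)z,
\]
where $Z = Z(\mathbf{G})^{\circ}$ is the connected centre of $\mathbf{G}$. This $\mu$ is surjective since $\mathbf{G} = \mathbf{G}^{\mathrm{der}}\cdot Z$, but its kernel $K \cong \pi^{-1}(\mathbf{G}^{\mathrm{der}}\cap Z)$ is only finite. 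To upgrade this finite central kernel to a connected one, I would embed $K$ into a torus $T$ via some $\iota\colon K\hookrightarrow T$ (possible since any finite abelian group embeds in a torus) and set
\[
\mathbf{G}' = (\mathbf{G}^{\mathrm{sc}}\times Z\times T)/K,
\]
with $K$ sitting inside $\mathbf{G}^{\mathrm{sc}}\times Z\times T$ anti-diagonally via $k\mapsto (k,\iota(k)^{-1})$. The map $p\colon \mathbf{G}'\to \mathbf{G}$, $[(g,z,t)] \mapsto \pi(g)z$, is then well-defined because $\ker \mu$ has been precisely quotiented out.

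The four properties can then be verified directly. Surjectivity of $p$ is inherited from $\mu$; centrality of $\ker(p)$ follows from centrality of $K$ in $\mathbf{G}^{\mathrm{sc}}\times Z\times T$. For connectedness, the anti-diagonal relation lets me normalize every class in $\ker(p)$ uniquely to one of the form $[(1,1,s)]$, identifying $\ker(p)\cong T$. Finally, $\mathbf{G}^{\mathrm{sc}}$ embeds as a closed, normal, perfect subgroup of $\mathbf{G}'$ (the embedding is injective since $K\cap(\mathbf{G}^{\mathrm{sc}}\times 1\times 1)=1$), while the complementary factor generated by $Z\times T$ is abelian; hence $(\mathbf{G}')^{\mathrm{der}}$ coincides with the image of $\mathbf{G}^{\mathrm{sc}}$ and is simply connected, which by the first paragraph yields (4). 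The main point I expect to be delicate is controlling the kernel of $p$: the naive map $\mathbf{G}^{\mathrm{sc}}\times Z\to \mathbf{G}$ already realizes all the other properties on the nose, and the anti-diagonal embedding into the auxiliary torus $T$ is the standard device that absorbs the finite central kernel into a connected torus without disturbing the structure of the derived subgroup.
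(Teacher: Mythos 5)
Your construction is correct, and it takes a genuinely different route from the paper. You build the group directly as a $z$-extension in the style of Kottwitz: start from $\mathbf{G}^{\mathrm{sc}}\times Z(\mathbf{G})^{\circ}$, observe the multiplication map has finite central kernel $K$, and absorb $K$ into an auxiliary torus $T$ via the anti-diagonal quotient, so that $\ker(p)\cong T$ and $(\mathbf{G}')^{\mathrm{der}}\cong\mathbf{G}^{\mathrm{sc}}$; property (4) then follows because simple connectedness of the derived group is equivalent to saturation of the coroot lattice in the cocharacter lattice. The paper instead works entirely at the level of root data: it sets $\Lambdacheck=\langle\Rcheck\rangle\oplus\Z^{k}$ with a surjection $\pi\colon\Lambdacheck\to\Mcheck$ (the inclusion on $\langle\Rcheck\rangle$, and generators of $\Z^{k}$ mapping to a generating family of $\Mcheck$), so the coroot lattice is a direct summand, hence saturated by fiat, and then invokes the existence and isogeny theorems of Chevalley--Springer to realize $\pi$ as a morphism $p\colon\mathbf{G}'\to\mathbf{G}$ with connected central kernel. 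Your approach is more concrete and avoids the classification theorems, at the price of the small bookkeeping with the anti-diagonal embedding; the paper's is shorter but purely formal. Both versions deliver the additional facts used downstream (surjectivity on cocharacter lattices, identical Weyl group, saturated coroot lattice), so either would serve the later proofs. One minor caveat: in positive characteristic the kernel of $\mathbf{G}^{\mathrm{sc}}\to\mathbf{G}^{\mathrm{der}}$ and the intersection $\mathbf{G}^{\mathrm{der}}\cap Z(\mathbf{G})^{\circ}$ must be handled scheme-theoretically (they may be infinitesimal), but since the paper works in equicharacteristic $0$ this causes no difficulty here.
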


\begin{proof}
    Choose a maximal torus $\mathbf{T}\subseteq \mathbf{G}$ and let $(M,R,\Mcheck, \Rcheck)$ denote the root datum corresponding to $(G,T)$.  Choose a family $(n_i)_{1\leq i\leq k}$ generating $\Mcheck$, let $\Lambdacheck=\langle \Rcheck\rangle \oplus \Z^k$, let $\Lambda=\Lambdacheck^\vee$, and consider the morphism
    \[
    \pi\colon \Lambdacheck \longrightarrow \Mcheck
    \]
    that is the inclusion on $\langle \Rcheck\rangle$ and maps the $i$th generator of $\Z^k$ to $n_i$. Let $\Phicheck=\Rcheck\subseteq \Lambdacheck$ and let $\Phi=\pi^*(R)$. Then $(\Lambda,\Phi,\Lambdacheck,\Phicheck)$ is a root datum and $\pi$ defines a morphism of root data. By the existence theorem for reductive groups \cite[Theorem 10.1.1]{Springer_linalggroups} and the isogeny theorem \cite[Theorem 9.6.5]{Springer_linalggroups}, there is a reductive group $\mathbf{G}'$  with maximal torus $\mathbf{T}'$ corresponding to $(\Lambda,\Phi,\Lambdacheck,\Phicheck)$ and a central isogeny $p\colon \mathbf{G}'\to \mathbf{G}$ mapping $\mathbf{T}'$ to $\mathbf{T}$ and inducing $\pi$ on the level of root data. As $\pi$ is surjective, $p$ is surjective and $\ker(p)$ is connected. Because $\langle\Phicheck\rangle$ is saturated in $\Lambdacheck$, $\pi_1(\mathbf{G}')$ is torsion-free. We observe that $\mathbf{G}'$ has the same Weyl group as $\mathbf{G}$.
\end{proof}

\begin{proof}[Proof of Theorem \ref{thm:determinant is isomorphism for tropical moduli of stable bundles}]
By Lemma \ref{lem:existence of cover used for degree calculation}, there exists a morphism
\[
    \phi= (\pi, \id) \colon G'= \Lambdacheck_\R \rtimes W  \longrightarrow \Mcheck_\R \rtimes W
\]
of tropical reductive groups (having the same Weyl group) such that the map $\pi: \Lambdacheck\to\Mcheck$ is surjective and $\langle \Phicheck\rangle$ is saturated in $\Lambdacheck$, where $\Phicheck$ denotes the set of coroots in $\Lambdacheck$. Denote by $K$ the kernel of the morphism  $\Lambdacheck\to \Mcheck$; we have $K\subseteq Z(\Lambdacheck_\R \rtimes W)$. Let $f\colon \Gamma'\to \Gamma$ be the cyclic and connected degree $\mathrm{lcm}(n_1,\ldots,n_k)$ cover of $\Gamma$ and consider the short exact sequence
\[
    0 \longrightarrow
        \Pic(\Gamma')\otimes_\Z K \longrightarrow  
            \Pic(\Gamma')\otimes_\Z \Lambdacheck \longrightarrow
                \Pic(\Gamma')\otimes_\Z \Mcheck \longrightarrow
                    0 \ .
\]
Because the pullback along $f$ of an indecomposable $W$-torsor trivializes, Lemma \ref{lem:description of torsors as equivariant torsors on cover} implies that taking $\Aut(f)$-invariants in the sequence above yields a left exact sequence
\[
    0 \longrightarrow
        \mc M_{K_\R} \longrightarrow
            \wmc M_{G',\ind} \longrightarrow
                \wmc M_{G,\ind} \longrightarrow
                    0 \ .
\]
The sequence is also right exact by Lemma \ref{lem:surjective morphism of groups yields surjective morphism of moduli} applied to the surjective morphism $\phi: G' \to G$, which is also surjective on the cocharacter lattices. 
We also have a commutative diagram
\[
\begin{tikzcd}
    0 \arrow[r]&
        K \arrow[r]\arrow[d,"="] & 
            \Lambdacheck/\langle\Phicheck\rangle \arrow[r]\arrow[d,"="]&
                \Mcheck/\langle\Rcheck\rangle \arrow[r]\arrow[d,"="] &
                    0 \\
    0 \arrow[r] &
        \pi_1(K_\R) \arrow[r] & 
            \pi_1(G') \arrow[r,"\phi_*"] &
                \pi_1(G) \arrow[r] &
                    0 \ .
\end{tikzcd}
\]
The top row, and hence the bottom one as well, is exact because $\phi(\Phicheck)=\Rcheck$ and $\langle\Phicheck\rangle \cap Z(G')=0$.  It follows that for every $\lambdacheck_{G'}\in \pi_1(G')$ with $\phi_*\lambdacheck_{G'}=\lambdacheck_G$, the map $\phi$ induces a bijection
\[
    \wmc M^{\lambdacheck_{G'}}_{G',\ind}/\mc M^0_{K_\R} \xlongrightarrow{\cong} \wmc M^{\lambdacheck_G}_{G,\ind} \ .
\]
Now consider the commutative diagram
\[
\begin{tikzcd}
    \mc M^{\lambdacheck_{G'}}_{G',\ind} \arrow[r,"\phi_*"] \arrow[d,"\det"]&
        \mc M^{\lambdacheck_G}_{G,\ind} \arrow[d,"\det"]\\
    \mc M^{\det(\lambdacheck_{G'})}_{\Lambdacheck_\RR/\langle\Phicheck\rangle_\RR} \arrow[r,"\phi_*"] &
        \mc M^{\det(\lambdacheck_G)}_{\Mcheck_\RR/\langle\Rcheck\rangle_\RR} \ .
\end{tikzcd}
\]
By Lemma \ref{lem:determinant is isomorphism if roots are saturated}, the determinant map on the left is a homeomorphism. The morphism 
\[
\Lambdacheck/\langle\Phicheck\rangle\longrightarrow \Mcheck/\langle\Rcheck\rangle^\sat
\]
has finite index, and hence the lower morphism of the square is surjective. It follows that the determinant map on the right is surjective. It remains to show that the determinant on the right is injective. Let $E,E'\in \mc M^{\lambdacheck_G}_{G,\ind}$ with $\det(E)\cong\det(E')$. We can lift both $E$ and $E'$ first to $\wmc M^{\lambdacheck_G}_{G,\ind}$ and then to elements $F,F'\in \wmc M^{\lambdacheck_{G'}}_{G',\ind}$ with $\phi_*\det(F)\cong \phi_*\det(F')$. The map
\[
    \mc M_{K_\R}^0 \longrightarrow 
        \mc M^0_{\Lambdacheck_\R/(K+\langle\Phicheck\rangle)^\sat_\R} 
\]
is surjective because $K\to (K+\langle\Phicheck\rangle)^\sat/\langle\Phicheck\rangle$ has finite index, and the sequence 
\[
    0 \longrightarrow
        \mc M^0_{(K+\langle\Phicheck\rangle)^\sat_\R/\langle\Phicheck \rangle_\R} \longrightarrow  
            \mc M^0_{\Lambdacheck_\R/\langle\Phicheck\rangle_\R} \longrightarrow
                \mc M^0_{\Mcheck_\R/\langle\Rcheck\rangle^\sat_\R}\longrightarrow   
                    0
\]
is exact. Therefore, there exists $L\in \mc M^0_{K_\R}$ with $\det(F')=\det(L \otimes F)$. As $\phi_*(L\otimes F)=\phi_*F$ we may replace $F$ by $L\otimes F$ and assume that $\det(F')=\det(F)$.
By Lemma \ref{lem:determinant is isomorphism if roots are saturated}, there exists an automorphism $t$ of the indecomposable $W$-torsor such that $F'=F.t$. It follows that 
\[
    \phi_*(F')=\phi_*(F.t) =\phi_*(F).t
\]
and hence $E\cong E'$, concluding the proof.
\end{proof}

\subsection{Semistable bundles on tropical elliptic curves}

We now give a tropical analogue of Theorem~\ref{thm:Fratilas results} that describes semistable bundles on an elliptic curve.

\begin{lemma}
\label{lem:centralizer contained in normalizer}
Let $W'$ be a Weyl group of a root datum and let $W$ be a parabolic subgroup of type $\prod A_{n_i-1}$. Let $w\in W$ be indecomposable. Then we have
\[
    C_{W'}(w)\subseteq N_{W'}(W) \ .
\]
\end{lemma}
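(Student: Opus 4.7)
The plan is to compute the fixed subspace of $w$ on the reflection representation and compare it with the fixed subspace of $W$. Let $V = M \otimes_\Z \R$ be the real vector space on which $W'$ acts via its reflection representation. Since $W$ is parabolic, it corresponds to a subset $D_P$ of simple roots, and the hypothesis that $W$ is of type $\prod A_{n_i-1}$ means that $W = \prod_i W_i$, with $W_i \cong S_{n_i}$ generated by reflections in the simple roots of the $i$-th connected component. Set $U = \mathrm{span}_\R(D_P) = \bigoplus_i U_i$. Since $W$ is generated by the reflections $s_\alpha$ for $\alpha \in D_P$, and each such reflection fixes exactly $\alpha^\perp$, I would first observe that $V^W = \bigcap_{\alpha\in D_P}\alpha^\perp = U^\perp$, and that the decomposition $V = V^W \oplus \bigoplus_i U_i$ is $W$-stable with each factor $W_i$ acting trivially on $V^W$ and on $U_j$ for $j \neq i$, and by its standard reflection representation on $U_i$.

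The key computation is that $V^w = V^W$ for an indecomposable $w \in W$. Write $w = (w_1, \ldots, w_k) \in \prod_i W_i$ with each $w_i$ an $n_i$-cycle, i.e., a Coxeter element of $S_{n_i}$. In the $(n_i - 1)$-dimensional standard reflection representation of $S_{n_i}$, such a Coxeter element has eigenvalues equal to the nontrivial $n_i$-th roots of unity, so it has no nonzero fixed vectors on $U_i$. Combining this across all factors and using the description of the $W$-action above gives $V^w = V^W$.

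Next I would invoke the standard fact (see e.g.\ Bourbaki, \emph{Lie} IV--VI, Ch.~V, or Steinberg's fixed-point theorem) that for a parabolic subgroup $W$ of a Weyl group $W'$, the parabolic equals the pointwise stabilizer of its fixed subspace:
\begin{equation*}
W = \{g \in W' : g|_{V^W} = \mathrm{id}\}.
\end{equation*}
This rests on two inputs: the pointwise stabilizer of any subspace is generated by the reflections $s_\alpha$ for $\alpha$ perpendicular to it, and for a parabolic, $R \cap U$ coincides with the root subsystem spanned by $D_P$, whose Weyl group is $W$.

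Now the conclusion is immediate: if $g \in C_{W'}(w)$, then $g$ preserves the fixed space $V^w = V^W$. Since $W$ is characterized as the pointwise stabilizer of $V^W$, the conjugate $gWg^{-1}$ is the pointwise stabilizer of $g(V^W) = V^W$, which is again $W$. Hence $g \in N_{W'}(W)$. The main obstacle, should one arise, is verifying the precise form of the pointwise-stabilizer description of the parabolic, which is essentially a form of Steinberg's theorem; the rest is bookkeeping with the reflection representation and the elementary eigenvalue computation for Coxeter elements of $S_{n_i}$.
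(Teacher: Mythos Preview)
Your argument is correct and complete. The route, however, differs from the paper's. The paper argues purely in Coxeter-theoretic terms: for $g\in C_{W'}(w)$ one has $w\in W\cap g^{-1}Wg$, and by a standard result on Coxeter groups (e.g.\ \cite[Lemma 2.25]{Buildings}) this intersection is a parabolic subgroup of $W$; since no \emph{proper} parabolic of $\prod_i S_{n_i}$ can contain an element that is a product of full $n_i$-cycles, one concludes $W\cap g^{-1}Wg=W$, and hence $g\in N_{W'}(W)$ by a cardinality count. Your approach instead passes through the reflection representation: you show $V^w=V^W$ by the eigenvalue computation for Coxeter elements of $S_{n_i}$, and then invoke Steinberg's theorem that a parabolic is the pointwise stabilizer of its fixed subspace. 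The two arguments are secretly the same geometry viewed from different sides: the fact that $w$ lies in no proper parabolic of $W$ is equivalent (via Carter's formula $\dim(V/V^w)=$ reflection length) to your statement $V^w=V^W$, and the ``intersection of conjugate parabolics is parabolic'' lemma is essentially the fixed-subspace characterization you cite. Your version is a bit more self-contained for readers comfortable with linear algebra, while the paper's keeps everything inside the combinatorics of Coxeter groups and avoids the representation-theoretic detour.
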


\begin{proof}
Let $g\in C_{W'}(w)$, then we have $w\in W\cap g^{-1}Wg$. By \cite[Lemma 2.25]{Buildings}, the intersection $W\cap g^{-1}Wg$ is a parabolic subgroup of $W$.  Now observe that no proper standard parabolic subgroup of $\prod_{i=1}^k S_{n_i}$ contains an indecomposable element. As all indecomposable elements are conjugate, it follows that no proper parabolic subgroup of $\prod_{i=}^k S_{n_i}$, and hence of $W$, contains an indecomposable element. We conclude that $W\cap g^{-1}Wg=W$, that is $g^{-1}Wg\subseteq W$, which we needed to show.
\end{proof}

For the next lemma, we note that an element $w\in W$ of a Weyl group $W$ of type $\prod A_{n_i-1}$ is indecomposable if and only if it has maximal \emph{reflection length}, which is the minimum number of reflections (not necessary simple) in a representation of $w$. 

\begin{lemma}
\label{lem:quotient of centralizers is relative Weyl group}
Let $W$ be a parabolic subgroup of type $\prod A_{n_i-1}$ of a Weyl group $W'$ of a root datum, and let $w\in W$ be indecomposable. Then the natural homomorphism
\[
    C_{W'}(w)/C_W(w)\longrightarrow N_{W'}(W)/W \ ,
\]
which exists by Lemma \ref{lem:centralizer contained in normalizer}, is an isomorphism.
\end{lemma}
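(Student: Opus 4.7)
The plan is to analyze the natural map $\Psi\colon C_{W'}(w)/C_W(w)\to N_{W'}(W)/W$ arising from the inclusion $C_{W'}(w)\hookrightarrow N_{W'}(W)$ of Lemma \ref{lem:centralizer contained in normalizer} composed with the quotient $N_{W'}(W)\to N_{W'}(W)/W$. Injectivity is immediate: the kernel of the composed map $C_{W'}(w)\to N_{W'}(W)/W$ is $C_{W'}(w)\cap W = C_W(w)$. All the work thus lies in verifying surjectivity, i.e., in showing that every coset $gW$ with $g\in N_{W'}(W)$ admits a representative in $C_{W'}(w)$.

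To this end, fix $g\in N_{W'}(W)$ and set $w' := gwg^{-1}$, which lies in $W$ by normality. Granting for the moment that $w'$ is again indecomposable in $W$, the conclusion follows quickly: since the indecomposable elements of $W=\prod_{i=1}^k S_{n_i}$ form a single conjugacy class in $W$ (any two $k$-tuples of $n_i$-cycles can be matched by conjugating each factor separately), there exists $u\in W$ with $uwu^{-1}=w'=gwg^{-1}$, which forces $u^{-1}g\in C_{W'}(w)$. Writing $g=u\cdot(u^{-1}g)$ then gives $gW=(u^{-1}g)W$, producing the desired representative in $C_{W'}(w)$.

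The main step is therefore to show that $w'=gwg^{-1}$ is indecomposable in $W$. The approach is to characterize indecomposability intrinsically via Coxeter reflection length: in a Weyl group of type $\prod A_{n_i-1}$, an element is indecomposable precisely when its reflection length attains the maximum $\sum(n_i-1)$. Since $W$ is a standard parabolic subgroup of $W'$, the reflections of $W$ are exactly the reflections of $W'$ lying in $W$ (a standard fact about parabolic subgroups of Coxeter groups). The set of reflections of $W'$ is stable under $W'$-conjugation, and since $g$ also normalizes $W$, conjugation by $g$ restricts to a bijection on the reflections of $W$. Consequently reflection length in $W$ is invariant under conjugation by $g$, so $w'$ and $w$ have the same reflection length and are simultaneously indecomposable, completing the argument. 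The most delicate point of the whole proof is thus this intrinsic characterization of reflections of a parabolic as those ambient reflections lying inside it, which is what underwrites the preservation of indecomposability under conjugation by elements of the normalizer.
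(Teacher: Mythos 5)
Your proposal is correct and follows essentially the same route as the paper: injectivity from $C_W(w)=C_{W'}(w)\cap W$, and surjectivity by showing conjugation by $g\in N_{W'}(W)$ preserves reflection length in $W$ (using that the reflections of the parabolic $W$ are exactly the reflections of $W'$ contained in $W$), hence preserves indecomposability, and then using that all indecomposable elements of $W$ are $W$-conjugate. The only cosmetic difference is that the paper cites a specific reference for the characterization of reflections in a parabolic subgroup, which you invoke as a standard fact.
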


\begin{proof}
Injectivity is clear from the fact that 
\[
    C_W(w)=C_{W'}(w)\cap W \ .
\]
For surjectivity, let $g\in N_{W'}(W)$. The reflections in $W$ are the reflections of $W'$ that are contained in $W$ (by \cite[Lemma 2.25]{Buildings} applied with $|K|=1$), so conjugation by $g\in N_{W'}(W)$ preserves reflection length of elements of $W$. As the indecomposable elements of $W$ are precisely those of maximal reflection length, $g^{-1}wg$ is indecomposable as well. But all indecomposable elements of $W$ are conjugate in $W$, that is $g^{-1}wg=h^{-1}wh$ for some $h\in W$. It follows that $gh^{-1}\in C_{W'}(w)$ and thus that $gW=(gh^{-1})W$ is in the image of $C_{W'}(W)\to N_{W'}(W)/W$. 
\end{proof}

Given a parabolic subgroup $W$ of type $\prod A_{n_i-1}$ of a Weyl group $W'$ and a metric circle $\Gamma$, we denote by $\ind_{W'}$ the $W'$-torsor induced by the indecomposable $W$-torsor $\ind$ on $\Gamma$ via the inclusion $W\to W'$.

\begin{corollary} Let $\Gamma$ be a metric circle. Let $W$ be a parabolic subgroup of type $\prod A_{n_i-1}$ of a Weyl group $W'$ of a root datum $\Phi$. Moreover, let $G'=\Mcheck_\R \rtimes W'$ be the tropical reductive group corresponding to $\Phi$ and let $G=\Mcheck_\R \rtimes W$.  Then the action of $N_{W'}(W)/W$ on $\mc M_{G'}(\Gamma)$ induces an action on $\mc M_{G',\ind_{W'}}(\Ga)$.
\end{corollary}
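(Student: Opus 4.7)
The plan is to build the action by restricting conjugation in $G'$ to $G$, to show it factors through $N_{W'}(W)/W$ via the standard inner-automorphism argument for non-abelian cohomology, and finally to check that it preserves the indecomposable stratum; the claimed action on $\mc M_{G',\ind_{W'}}(\Gamma)$ then arises by transport along the pushforward.

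For each $n \in N_{W'}(W)$, the conjugation $c_n(g) = ngn^{-1}$ restricts to an automorphism of $G = \Mcheck_\R \rtimes W$: the element $n$ normalizes $W$ by hypothesis and acts on $\Mcheck$ by a $\Z$-linear automorphism compatible with the $W$-action, so the pair $(c_n|_{\Mcheck}, c_n|_W)$ satisfies the compatibility of Definition \ref{def:homomorphism} and defines an automorphism of $G$ as a tropical reductive group. Functoriality of pullback of torsors then yields an action of $N_{W'}(W)$ on $\mc M_G(\Gamma) = H^1(\Gamma, G_\Gamma)$. Because $W \subseteq N_{W'}(W)$ acts on $G$ by inner automorphisms, and inner automorphisms of a sheaf of groups act trivially on non-abelian $H^1$ (the twist of a torsor $E$ by $c_g$ being canonically isomorphic to $E$ via right multiplication by the global section $g$), this action descends to an action of $N_{W'}(W)/W$.

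The main obstacle will be to show that this action preserves $\mc M_{G,\ind}(\Gamma)$. Under the bijection of Example \ref{ex:explicit description of torsors on circle} between isomorphism classes of $W$-torsors on a metric circle and conjugacy classes in $W$, the associated $W$-torsor of $c_n \cdot E$ corresponds to $[nwn^{-1}]$ whenever $E$ corresponds to $[w]$, so the problem reduces to the purely Weyl-group statement that $N_{W'}(W)$-conjugation preserves the $W$-conjugacy class of indecomposable elements. I would argue this via the reflection-length observation already used in the proof of Lemma \ref{lem:quotient of centralizers is relative Weyl group}: the reflections of $W$ are precisely the reflections of $W'$ contained in $W$ by \cite[Lemma 2.25]{Buildings}, so $N_{W'}(W)$-conjugation preserves reflection length on $W$; in a Weyl group of type $\prod A_{n_i-1}$ the indecomposable elements are exactly those of maximal reflection length, and all of them are $W$-conjugate. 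Finally, the corresponding action on $\mc M_{G',\ind_{W'}}(\Gamma)$ arises by transport along the pushforward $\phi_* \colon \mc M_{G,\ind} \to \mc M_{G',\ind_{W'}}$: each $c_n$ extends to an inner automorphism of $G'$ and hence acts trivially on $\mc M_{G'}(\Gamma)$, so $\phi_*$ is constant on $N_{W'}(W)/W$-orbits, and the action descends to a well-defined action on the image in $\mc M_{G',\ind_{W'}}(\Gamma)$.
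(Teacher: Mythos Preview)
Your steps 1--4 correctly construct an action of $N_{W'}(W)/W$ on $\mc M_G(\Gamma)$ and show it preserves $\mc M_{G,\ind}(\Gamma)$, and this is exactly what the paper's proof does. The only difference is cosmetic: you reprove the key Weyl-group fact (that $N_{W'}(W)$-conjugation preserves the $W$-conjugacy class of indecomposable elements) by rehearsing the reflection-length argument, whereas the paper simply cites Lemma~\ref{lem:quotient of centralizers is relative Weyl group}, whose surjectivity part is precisely that statement.

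The problem is your final paragraph. You correctly observe that $c_n$ extends to an inner automorphism of $G'$, so that the induced action on $\mc M_{G'}(\Gamma)$ is trivial; but then ``the action descends to a well-defined action on the image in $\mc M_{G',\ind_{W'}}(\Gamma)$'' says nothing --- what descends is the trivial action. There is no nontrivial $N_{W'}(W)/W$-action on $\mc M_{G'}(\Gamma)$ to begin with, for the reason you yourself give. The statement of the corollary as printed carries a typo: both from the paper's own proof (which analyzes the associated \emph{$W$-torsor}, not the $W'$-torsor) and from how the result is used in Theorem~\ref{thm:tropical semistable bundles are quotients of stable ones} (and again in the proof of Theorem~\ref{thm:tropical semistable bundles skeleton of semistable bundles}, where one needs $\mc M_{\bfL^\trop,\ind}^{\lambdacheck_\bfL}/W_{\bfL,\bfG}\cong \mc M_{\bfG^\trop,\ind}^{\lambdacheck}$), it is clear that the intended assertion is that the action on $\mc M_G(\Gamma)$ restricts to $\mc M_{G,\ind}(\Gamma)$. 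Your argument already establishes that; the attempt to ``transport along $\phi_*$'' should simply be dropped.
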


\begin{proof}
We have computed in Example \ref{ex:explicit description of torsors on circle} that isomorphism classes of $W$-torsors on $\Gamma$ are in bijection with conjugacy classes of elements of $W$. The (unique) isomorphism class of indecomposable covers corresponds to the conjugacy class of indecomposable elements of $W$. By Lemma \ref{lem:quotient of centralizers is relative Weyl group}, this conjugacy class is fixed by conjugation by elements in $N_{W'}(W)/W$,  which implies that the action of $N_{W'}(W)/W$ leaves the isomorphism class of the associated $W$-torsor of an element in $\mc M_{G',\ind_{W'}}(\Ga)$ invariant. 
\end{proof}

\begin{theorem}
\label{thm:tropical semistable bundles are quotients of stable ones}
Let $\Gamma$ be a metric circle. Let $W$ be a parabolic subgroup of type $\prod_{i=1}^k A_{n_i-1}$ of a Weyl group $W'$ of a root datum $\Phi$. Moreover, let $G'= \Mcheck_\R \rtimes W'$ be the tropical reductive group associated to $\Phi$ and let $G= \Mcheck_\R \rtimes W$. Then the natural map
\[
    \mc M_{G',\ind_{W'}}(\Ga)/(N_{W'}(W)/W)\longrightarrow  \mc M_{G,\ind}(\Ga)
\]
is a bijection.
\end{theorem}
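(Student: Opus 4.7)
The approach is to realize both $\mc M_{G,\ind}(\Gamma)$ and $\mc M_{G',\ind_{W'}}(\Gamma)$ as quotients of a single pre-moduli space $\wmc M$ by the centralizers of a chosen indecomposable element in $W$ and $W'$ respectively, and then to use Lemma \ref{lem:quotient of centralizers is relative Weyl group} to identify the quotient $C_{W'}(w)/C_W(w)$ with the relative Weyl group $N_{W'}(W)/W$ appearing in the statement. Fix an indecomposable $w\in W$; since $W\subseteq W'$, its order $N$ is the same in both groups, and the connected cyclic cover $f\colon\Gamma'\to\Gamma$ of degree $N$ simultaneously trivializes $\ind$ as a $W$-torsor and $\ind_{W'}$ as a $W'$-torsor. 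Picking a trivialization $\chi\colon f^*\ind\xrightarrow{\cong}W_{\Gamma'}$ and extending along $W\hookrightarrow W'$ to a trivialization $\chi'\colon f^*\ind_{W'}\xrightarrow{\cong}W'_{\Gamma'}$, two applications of Lemma \ref{lem:description of torsors as equivariant torsors on cover} yield
\[
    \wmc M_{G,\ind}(\Gamma)\cong \mc M_{\Mcheck_\R}(\Gamma')^{\Aut(f)}\cong \wmc M_{G',\ind_{W'}}(\Gamma).
\]
The two $\Aut(f)$-actions on the middle space coincide, since the associated morphisms $\sigma\colon\Aut(f)\to W$ and $\sigma'\colon\Aut(f)\to W'$ both send a chosen generator of $\Aut(f)=\Z/N$ to $w$, and $w$ acts on $\Mcheck$ identically whether viewed inside $W$ or $W'$. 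Denote this common space by $\wmc M$.

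By the analysis of the cocycle data on a metric circle in Example \ref{ex:explicit description of torsors on circle}, the automorphism groups of the torsors are $\Aut(\ind)=C_W(w)$ and $\Aut(\ind_{W'})=C_{W'}(w)$, so
\[
    \mc M_{G,\ind}(\Gamma)=\wmc M/C_W(w)\qquad\text{and}\qquad \mc M_{G',\ind_{W'}}(\Gamma)=\wmc M/C_{W'}(w).
\]
Lemma \ref{lem:centralizer contained in normalizer} gives the inclusion $C_W(w)\subseteq C_{W'}(w)$, and Lemma \ref{lem:quotient of centralizers is relative Weyl group} identifies the quotient $C_{W'}(w)/C_W(w)$ with $N_{W'}(W)/W$. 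The canonical iterated quotient
\[
\wmc M/C_{W'}(w) \;=\; \big(\wmc M/C_W(w)\big)\big/\big(C_{W'}(w)/C_W(w)\big)
\]
therefore yields a natural bijection between $\mc M_{G',\ind_{W'}}(\Gamma)$ and the $(N_{W'}(W)/W)$-quotient of $\mc M_{G,\ind}(\Gamma)$. Inverting this bijection produces the natural map in the statement of the theorem and establishes that it is bijective.

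The main technical obstacle lies in verifying that the action of $N_{W'}(W)/W$ on $\mc M_{G',\ind_{W'}}(\Gamma)$ constructed in the corollary preceding the theorem matches the action of $C_{W'}(w)/C_W(w)$ on $\wmc M/C_W(w)=\mc M_{G,\ind}(\Gamma)$ transported via the identifications above. Concretely, a representative of a class in $N_{W'}(W)/W$ must be shown to implement the expected action on the triple $(m,\alpha,w)$ in the explicit \v{C}ech description of Example \ref{ex:explicit description of torsors on circle}, and this action must be compatible, under Lemma \ref{lem:description of torsors as equivariant torsors on cover}, with the conjugation action of $C_{W'}(w)$ on the $\Aut(f)$-equivariant line bundles on $\Gamma'$. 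Once this compatibility is checked, the theorem follows directly from the group-theoretic identification above.
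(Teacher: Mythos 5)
Your proposal is correct and follows essentially the same route as the paper: pass to the connected cyclic cover of degree $\mathrm{lcm}(n_1,\ldots,n_k)$ trivializing both torsors, use Lemma \ref{lem:description of torsors as equivariant torsors on cover} and Example \ref{ex:explicit description of torsors on circle} to write both moduli spaces as quotients of $(\Pic(\Gamma')\otimes_\Z\Mcheck)^{\Aut(f)}$ by the conjugation actions of $C_W(w)$ and $C_{W'}(w)$, and conclude via Lemma \ref{lem:quotient of centralizers is relative Weyl group}. The compatibility you defer as the ``main technical obstacle'' is exactly what the choice of the trivialization of $f^*(\ind_{W'})$ induced by that of $f^*\ind$ (together with the conjugation description of $\Aut(\ind)$ and $\Aut(\ind_{W'})$) already supplies, so it is not a genuine gap; only note that the inclusion $C_W(w)\subseteq C_{W'}(w)$ is immediate and is not the content of Lemma \ref{lem:centralizer contained in normalizer}.
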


\begin{proof}
Let $f\colon \Gamma'\to \Gamma$ be a cyclic cover of degree $\mathrm{lcm}(n_1,\ldots,n_k)$. Let $\ind$ be an indecomposable $W$-torsor. Then $f^*\ind$ is trivial, and so is $f^*(\ind_{W'})$. We can thus apply Lemma \ref{lem:description of torsors as equivariant torsors on cover} and conclude that
\begin{equation}\begin{split}
\label{eq:two quotients by automorphism groups}
    \mc M_{G,\ind}(\Gamma)&= (\Pic(\Gamma')\otimes_\Z \Mcheck)^{\Aut(f)}/\Aut(\ind) \ ,\\
    \mc M_{G',\ind_{W'}}(\Gamma)&= (\Pic(\Gamma')\otimes_\Z \Mcheck)^{\Aut(f)}/\Aut(\ind_{W'}) \ .
\end{split}\end{equation}
In this description, the automorphism group $\Aut(\ind)$ (resp.\ $\Aut(\ind_{W'})$) acts by conjugation by elements in the image of a morphism 
\[
    \delta\colon\Aut(\ind)\to \Aut(f^*\ind)\cong \Aut(W_{\Gamma'})=W \ ,
\]
where the last morphism is induced by the trivialization of $f^*\ind$ (and similarly, there is a morphism $\delta'\colon \Aut(\ind_{W'})\to W'$). We have seen in Example \ref{ex:explicit description of torsors on circle} that there is a trivialization of $\ind$ on an open subset on which $\Aut(\ind)$ can be identified with the right action of the centralizer $C_W(\widetilde w)$ for some $\widetilde w\in W$ whose conjugacy class determines the isomorphism class of $\ind$. Because $\ind$ is indecomposable, the element $w$ is indecomposable as well. The morphism $\delta$ is not necessarily defined using the same trivialization, so the image of $\delta$ equals $C_W(w)$ for some conjugate $w$ of $\widetilde w$, which is again indecomposable. Since we can choose the trivialization of $f^*(\ind_{W'})=(f^*\ind)_{W'}$ to be induced by the trivialization of $f^*\ind$, we may assume that the image of $\delta'$ is given by $C_{W'}(w)$. Together with \eqref{eq:two quotients by automorphism groups}, we see that 
\begin{align*}
    \mc M_{G,\ind}(\Gamma)&= (\Pic(\Gamma')\otimes_\Z \Mcheck)^{\Aut(f)}/C_{W}(w) \ ,\\
    \mc M_{G',\ind_{W'}}(\Gamma)&= (\Pic(\Gamma')\otimes_\Z \Mcheck)^{\Aut(f)}/C_{W'}(w) \ ,
\end{align*}
where $C_W(w)$ and $C_{W'}(w)$ act by conjugation. By Lemma \ref{lem:quotient of centralizers is relative Weyl group}, $C_W(w)$ is normal in $C_{W'}(w)$ so that there exists an induced $C_{W'}(w)/C_W(w)$-conjugation action on the quotient $\mc M_{G,\ind}(\Gamma)$, and 
\[
    \mc M_{G',\ind_{W'}}(\Gamma)=\mc M_{G,\ind}(\Gamma)/(C_{W'}(w)/C_W(w)) \ .
\]
Also by Lemma \ref{lem:quotient of centralizers is relative Weyl group}, we have $C_{W'}(w)/C_W(w)=N_{W'}(W)/W$, concluding the proof.
\end{proof}

\section{Tropicalization of principal bundles}\label{sec_tropicalization}

Let $K$ be an algebraically closed field that is complete with respect to a nontrivial non-Archimedean absolute value $|\cdot|$ of equicharacteristic $0$, let $\mathbf{G}$ be a reductive group over $K$ and let $X$ be a Mumford curve over $K$. In this section, we consider the process of tropicalization for $\mathbf{G}$-bundles on $X$ together with a reduction of structure group to the normalizer of a given maximal torus $\mathbf{T}$ in $\mathbf{G}$. When $X$ is a Tate curve, we show that every semistable $\mathbf{G}$-bundle on $X$ is equivalent to one that admits such a reduction. This allows us to establish our main result, Theorem \ref{thm:tropical stable bundles skeleton of stable bundles}, which identifies the essential skeleton of the moduli space of semistable principal $\mathbf{G}$-bundles on $X$ with a moduli space of tropical semistable principal $\mathbf{G}^{\trop}$-bundles on the minimal skeleton $\Gamma_X$ of $X$.

Let $\mathbf{T} \subseteq \mathbf{G}$ be a maximal torus with character lattice $M = \mathbb{X}^*(\mathbf{T})$ and cocharacter lattice $\Mcheck = \mathbb{X}_*(\mathbf{T})$. We denote the associated root datum by $\Phi = (M,R,\Mcheck,\Rcheck)$. Let $N_\mathbf{G}(\mathbf{T})$ be the normalizer of $\bfT$ in $\bfG$ and let $W=N_\bfG(\bfT)/\bfT$ be the Weyl group. The Weyl group acts on $\bfT$ by conjugation and therefore there is an induced action of $W$ on $\Mcheck$. We define the \emph{tropical reductive group} $\bfG^\trop$ associated to $\bfG$ as 
\[
\bfG^\trop= \Mcheck_\R \rtimes W \ .
\]
Note that the isomorphism type of $\bfG^\trop$ does not depend on the choice of $\bfT$ as all maximal tori in $\bfG$ are conjugate. 

\subsection{Tropicalizing $\bfT$-bundles over a Mumford curve}

Let $\Gamma_X$ be the minimal skeleton of the Berkovich analytic space $X^{\an}$. Given a $\bfT$-bundle $E$ on $X$, we obtain for every character $(\bfT\xrightarrow{m}\G_m)\in M$ an induced $\G_m$-bundle $m_*(E)$ on $X$, which we can tropicalize to a tropical line bundle $\Trop(m_*(E))$ on $\Gamma_X$. Since tropicalization respects tensor products of line bundles, we obtain, for every non-Archimedean field extension $L/K$, a bilinear map
\[
\mc M_\bfT(X)(L)\times M\longrightarrow \mc M_{\R}(\Ga_X)  \quad\text{given by}\quad (E,m) \longmapsto \Trop(m_*(E)) \ .
\]

Equivalently, by the tensor-hom adjunction, there is a linear map
\[
\mc M_\bfT(X)(L)\longrightarrow  \Hom(M,\mc M_\R(\Ga_X)) \cong \mc M_\R(\Ga_X)\otimes_\Z  \Mcheck.
\]
If we choose a basis $(m_1,\dots,m_n)$ of $M$, the map is given by $E \mapsto \sum_i \Trop((m_{i})_*E) \otimes m_i^\vee$.
As we have defined this for an arbitrary non-Archimedean field extension of $K$, we have defined a map
\begin{equation}
\label{eq:line bundles over tori}
(\mc M_\bfT(X))^\an\longrightarrow \mc M_\R(\Ga_X)\otimes_\Z \Mcheck \ .
\end{equation}
Similarly, there is a canonical bilinear map
\[
 \mc M_{\Mcheck_\R}(\Ga_X) \times M\longrightarrow \mc M_\R(\Ga_X) \quad\text{given by}\quad (F,m) \longmapsto m_*(F)
\]
which induces an isomorphism 
\[
\mc M_{\Mcheck_\R}(\Ga_X)\xlongrightarrow{\cong} \mc M_\R(\Ga_X)\otimes_\Z \Mcheck \ .
\]
Composing the map in \eqref{eq:line bundles over tori} with (the inverse of) this isomorphism yields a tropicalization map
\[
\Trop \colon (\mc M_\bfT(X))^\an\longrightarrow \mc M_{\Mcheck_\R}(\Ga_X).
\]

\begin{proposition}
\label{prop:tropicalization is retraction for torus bundles}
Let $\mathbf{T}$ be an algebraic torus with cocharacter lattice $\Mcheck$. Then for every $\lambdacheck \in \pi_1(\bfT)$ there exists a homeomorphism 
\[
    \tau \colon \Sigma(\mc M^{\lambdacheck}_{\bfT}(X)) \xlongrightarrow{\sim} \mc M_{\Mcheck_\R}^{\lambdacheck}(\Gamma_X) \ ,  
\]
where $\Sigma(\mc M^{\lambdacheck}_{\bfT}(X))$ is the essential skeleton of $(\mathcal{M}_\bfT^{\lambdacheck}(X))^{\an}$, that fits into a commutative diagram
\[
\begin{tikzcd}
    & 
        \Sigma(\mc M^{\lambdacheck}_{\bfT}(X))\arrow[dd,"\cong"] \\
    (\mathcal{M}_\bfT^{\lambdacheck}(X))^{\an} \arrow[ur,"\rho"]\arrow[dr,"\Trop"]&
        & \\
    &
        \mc M_{\Mcheck_\R}^{\lambdacheck}(\Gamma_X)    \ ,\\
\end{tikzcd}
\]
where $\rho$ is the retraction map.
\end{proposition}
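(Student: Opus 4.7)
The plan is to reduce to the well-known case $\bfT = \G_m$ and then assemble the resulting identifications by choosing a basis of the cocharacter lattice $\Mcheck$. Choose a $\Z$-basis $m_1,\ldots,m_n$ of $M$, with dual basis $m_1^\vee,\ldots,m_n^\vee$ of $\Mcheck$, which determines an isomorphism $\bfT \cong \G_m^n$. Since $H^1(X, \bfT) \cong \Hom(M, \Pic(X)) \cong \Pic(X) \otimes_\Z \Mcheck$, we obtain identifications
\[
\mc M_\bfT(X) \cong \prod_{i=1}^n \Pic(X), \qquad \mc M_\bfT^{\lambdacheck}(X) \cong \prod_{i=1}^n \Pic^{d_i}(X),
\]
where $\lambdacheck = \sum_i d_i\, m_i^\vee$. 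On the tropical side, Proposition~\ref{prop:fiber over trivial W-torsor} together with the description of the degree map yields $\mc M_{\Mcheck_\R}(\Gamma_X) \cong \Pic(\Gamma_X) \otimes_\Z \Mcheck$ and correspondingly $\mc M_{\Mcheck_\R}^{\lambdacheck}(\Gamma_X) \cong \prod_i \Pic^{d_i}(\Gamma_X)$.

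The $\G_m$-case is precisely the Baker--Rabinoff comparison theorem~\cite{BakerRabinoff}: for each $d \in \Z$, there is a canonical homeomorphism $\Sigma(\Pic^d(X)) \xrightarrow{\cong} \Pic^d(\Gamma_X)$ that intertwines the non-Archimedean retraction $\rho$ and the tropicalization map for line bundles. Taking products and using that the essential skeleton of a finite product of non-Archimedean uniformized abelian varieties (and their torsors) is the product of the essential skeletons, we obtain
\[
\Sigma\bigl(\mc M_\bfT^{\lambdacheck}(X)\bigr) \cong \Sigma\bigl(\textstyle\prod_i \Pic^{d_i}(X)\bigr) \cong \prod_i \Sigma(\Pic^{d_i}(X)) \xlongrightarrow{\cong} \prod_i \Pic^{d_i}(\Gamma_X) \cong \mc M_{\Mcheck_\R}^{\lambdacheck}(\Gamma_X),
\]
and we define $\tau$ as this composition. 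Commutativity of the triangle is then immediate from the construction, since the tropicalization map $\Trop$ was defined precisely by applying the line-bundle tropicalization to the pushforwards $m_{i,*}(E)$, which in the chosen basis is the componentwise Baker--Rabinoff tropicalization, and the latter is compatible with $\rho$ on each factor.

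To complete the argument one must verify that $\tau$ does not depend on the choice of basis of $M$. This will follow from the naturality of the Baker--Rabinoff homeomorphism under pushforward along homomorphisms of algebraic tori, together with the basis-independent identification $\Hom(M,\Pic(X)) \cong \Pic(X)\otimes_\Z \Mcheck$. The main nontrivial input, and thus the principal obstacle if one were to prove it from scratch, is the Baker--Rabinoff identification of $\Sigma(\Pic^d(X))$ with $\Pic^d(\Gamma_X)$ and its compatibility with tropicalization; the secondary point, namely that the essential skeleton commutes with finite products of uniformized abelian torsors, is standard but should be invoked explicitly.
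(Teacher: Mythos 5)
Your proposal is correct and follows essentially the same route as the paper: reduce to the one-dimensional case via the identification $\mc M_\bfT(X)\cong \Pic(X)\otimes_\Z\Mcheck$ and invoke the Baker--Rabinoff comparison on each factor. The only point worth making explicit is that \cite[Theorem 1.3]{BakerRabinoff} covers degree zero, and the extension to $\Pic^d$ for $d\neq 0$ is obtained by twisting by a base point and its tropicalization (the paper cites \cite[Theorem 6.2]{2022GrossUlirschZakharov} for this), which you correctly identify as the main input but attribute slightly too directly to Baker--Rabinoff.
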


\begin{proof} 
When $\dim \bfT=1$ and the degree is equal to zero, this is \cite[Theorem 1.3]{BakerRabinoff}. This is generalized to an arbitrary degree in a special case of \cite[Theorem 6.2]{2022GrossUlirschZakharov}, by twisting by a base point and its tropicalization.  When $\dim \bfT\geq 1$, we observe that $\mc M_\bfT(X) \cong \Pic(X) \otimes_\Z \Mcheck$ in order to deduce the general case from the one-dimensional situation. We also note that $\mc M_\bfT(X)$ is a Calabi--Yau variety and hence has an essential skeleton. 
\end{proof}

\subsection{Tropicalizing $N_\bfG(\bfT)$-bundles over a Mumford curve}
\label{sec:tropicalizing NG-bundles}
Let $\rho\colon X^{\an}\to \Gamma_X$ denote the retraction map to the skeleton. Then pulling back along $\rho$ defines a fully faithful functor from the category of $W$-torsors on $\Gamma_X$ to the category of $W$-torsors on $X^\an$. The \'{e}space \'{e}tal\'{e} of any $W$-torsor over $X^\an$ is a covering space of $X^\an$, which induces an analytic structure on the latter.  In this way, we obtain a fully faithful functor from the category of $W$-torsors on $X^\an$ to the category of principal homogeneous spaces for $W$ over $X^\an$ in the category of $K$-analytic spaces. Since $W$ is finite, by the GAGA-principle, this category is in turn equivalent to the category of principal homogeneous spaces for $W$ over $X$ in the category of schemes. These in turn are all \'{e}tale over $X$, and the category of principal homogeneous spaces for $W$ over $X$ is thus equivalent to the category of \'{e}tale $W$-torsors on $X$ \cite[III, Theorem 4.3]{MilneEtale}. Taking the composition of these embeddings and equivalences, we obtain a fully faithful functor from the category of $W$-torsors on $\Gamma_X$ to the category of \'{e}tale $W$-torsors on $X$. For a $W$-torsor $\tau$ over $\Gamma_X$, we denote the associated \'{e}tale $W$-torsor on $X$ by $\rho^*\tau$. For a $W$-torsor $\psi$ on $X$, we denote by $\Bun_{N_\bfG(\bfT),\psi}(X)$ the stack of $N_\bfG(\bfT)$-torsors on $X$ whose associated $W$-torsor is $\psi$. 

We can intrinsically characterize those $W$-torsors on $X$ that are of the form $\rho^*\tau$ as follows. Consider the analytification of the total space of a $W$-torsor on $X$. This space comes with a $W$-action. If this action is free, then the quotient is a $W$-torsor $\tau$ on $X^{\an}$, and the original torsor on $X$ is $\rho^*\tau$.

Let $\tau$ be a $W$-torsor on $\Gamma_X$, let $L/K$ be a non-Archimedean extension, and let $E\in \Bun_{N_\bfG(\bfT),\rho^*\tau}(X)(L)$ be a bundle defined over $L$. To tropicalize $E$, we view $\rho^*\tau$ as a principal homogeneous space for $W$ over $X$ in the category of schemes. Let $\pi\colon\rho^*\tau\to X$ denote the structure map. Then, exactly as in Lemma \ref{lem:pull-back of tau by tau}, the $W$-torsor $\pi^*(\rho^*\tau)$ on $\rho^*\tau$ is canonically trivial. Therefore, exactly as in Lemma \ref{lem:description of torsors as equivariant torsors on cover}, the pullback $\pi^*E$ is induced by a $\bfT$-bundle on $\rho^*\tau$ that is invariant under the $\Aut(\pi)$-action, and this $\bfT$-bundle is well-defined up to the $\Aut(\tau)$-action. 
More concisely, we obtain a map
\[
\Bun_{N_\bfG(\bfT),\rho^*\tau}(X)(L)\longrightarrow \mc M_{\bfT}(\rho^*\tau)(L)^{\Aut(\pi)}/\Aut(\tau) \ .
\]
The skeleton of $(\rho^*\tau)^{\an}$ is the total space of $\tau$. Because the tropicalization of $\bfT$-bundles on $\rho^*\tau$ respects the actions of both $\Aut(\pi)$ and $\Aut(\tau)$, we obtain a tropicalization map
\[
\mc M_{\bfT}(\rho^*\tau)(L)^{\Aut(\pi)}/\Aut(\tau)\longrightarrow \mc M_{\Mcheck_\R}(\tau)^{\Aut(\pi)}/\Aut(\tau) \ .
\]
As $\Aut(\pi)=\Aut(\tau/\Gamma)$, Lemma \ref{lem:description of torsors as equivariant torsors on cover} yields an isomorphism
\[
\mc M_{\Mcheck_\R}(\tau)^{\Aut(\pi)}/\Aut(\tau)\xlongrightarrow{\cong} \mc M_{\bfG^\trop,\tau}(\Gamma_X) \ .
\]
Composing the three maps defines a tropicalization map for $N_\bfG(\bfT)$-bundles defined over the field extension $L$. 
Since the extension was arbitrary, we have in fact defined a tropicalization map
\[
\Trop\colon |\Bun_{N_\bfG(\bfT),\rho^*\tau}(X)^\an|\longrightarrow \mc M_{\bfG^\trop,\tau}(\Gamma_X)\, ,
\]
where $|\Bun_{N_\bfG(\bfT),\rho^*\tau}(X)^\an|$ denotes the points of the stack $\Bun_{N_\bfG(\bfT),\rho^*\tau}(X)^\an$. 

\begin{example}
In the case of $\mathbf G= \mathbf{GL}_n$, our construction of $\Trop$ differs from the one given in \cite{2022GrossUlirschZakharov}. To compare the two constructions, suppose we are given an $\G_m^n\rtimes S_n$-torsor $E$ on $X$ as a line bundle $L$ on $\rho^*\Gamma'$, where $\Gamma'\xrightarrow{f} \Gamma_X$ is a free degree $n$ cover of $\Gamma_X$. The tropicalization $\Trop_{\mathrm{GUZ}}(E)$ of $E$ in the sense of \cite{2022GrossUlirschZakharov} is given by the $\GL_n(\T)$-bundle represented by the tropical line bundle $\Trop(L)$ on the domain $\Gamma'$ of the cover $f$. 

Let $\tau$ be the $W$-torsor on $\Gamma_X$ that corresponds to the cover $f$. Since the pull-back of $\tau$ to its \'espace \'etal\'e (which we also denote by $\tau$) is canonically trivial, the cover 
\[
    \tau\times_{\Gamma_X}\Gamma' \to \tau
\]
is canonically trivial as well. The canonical trivialization determines $n$ sections of this cover, or equivalently $n$ morphisms $s_i\colon \tau\to \Gamma'$ over $\Gamma_X$. The twisted $\Aut(\tau)$-equivariant $\R^n$-bundle on $\tau$ corresponding to $\Trop_{\mathrm{GUZ}}(E)$ is given by 
\[
    \bigoplus_{i=1}^n s_i^*\Trop(L) \ .
\]

Pulling the $n$ sections back via the retraction $\rho$, we obtain $n$ morphisms 
\[
    t_i\coloneqq \rho^*s_i\colon\rho^*\tau\longrightarrow \rho^*\Gamma' \ ,
\]
which induce the canonical trivialization of the cover 
\[
    \rho^*\tau\times_X \rho^*\Gamma'\longrightarrow \rho^*\tau \ . 
\]
The twisted $\Aut(\tau)$-equivariant $\G_m^n$-bundle on $\rho^*\tau$ corresponding to $E$ is given by $\bigoplus_{i=1}^n t_i^*L$. Therefore, the tropicalization $\Trop(E)$ in the sense of the present paper is the $\GL_n(\T)$-bundle on $\Gamma_X$ corresponding to the twisted $\Aut(\tau)$-equivalent $\R^n$-bundle 
\[
    \bigoplus_{i=1}^n \Trop(t_i^*L) \ .
\]
As this equals $\bigoplus_{i=1}^n s_i^*\Trop(L)$, we conclude that
\[
    \Trop(E)=\Trop_{\mathrm{GUZ}}(E) \ .
\]
\end{example}

Let us now return to the general situation. We have defined the tropicalization map using the techniques from Lemma \ref{lem:description of torsors as equivariant torsors on cover} applied to the cover $\rho^*\tau\to X$, which can be canonically obtained from the $N_\bfG(\bfT)$-bundle we are tropicalizing. It is useful to also allow other covers. 
Let $f\colon \Gamma'\to \Gamma_X$ be a free Galois  cover such that there exists a trivialization $f^*\tau\xrightarrow{\chi}W_{\Gamma'}$.
Let $\rho^*f\colon X'\to X$ be the induced étale cover.
Then, exactly as in our definition of tropicalization above, we obtain, for every field extension $L/K$, a sequence of maps
\[
\Bun_{N_\bfG(\bfT),\rho^*\tau}(X)(L) \longrightarrow \Bun_{\bfT}(X')(L)^{\Aut(f)}/\Aut(\tau) \longrightarrow \mc M_{\Mcheck_\R}(\Gamma')^{\Aut(f)}/\Aut(\tau)\xlongrightarrow{\cong} \mc M_{\bfG^\trop,\tau}(\Gamma_X) \ ,
\]
and we denote by
\[
\Trop_{f,\chi}\colon \big|\Bun_{N_\bfG(\bfT),\rho^*\tau}(X)^\an\big|\longrightarrow \mc M_{\bfG^\trop,\tau}(\Gamma_X)
\]
the map induced by the composition. We note that, for $\tau$ a $W$-torsor on $\Ga_X$, we have $\Trop=\Trop_{\pi_0,\chi_0}$, where $\pi_0:\tau\to \Ga_X$ is the total space and $\chi_0:\pi_0^*\tau\to W_{\tau}$ is the canonical trivialization. 

\begin{lemma}
\label{lem:tropicalization wrt a cover}
With notation as above, we have 
\[
    \Trop_{f,\chi}(E)=\Trop(E)
\]
for every $E\in \big|\Bun_{N_\bfG(\bfT),\rho^*\tau}(X)^\an\big|$.
\end{lemma}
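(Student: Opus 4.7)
The plan is to show that any cover-with-trivialization $(f,\chi)$ produces the same tropical bundle as the canonical choice $(\pi_0,\chi_0)$ used in defining $\Trop$, by constructing a morphism of covers that relates the two sets of data. The key observation is that a trivialization $\chi\colon f^*\tau\to W_{\Gamma'}$ is the same as a section of the $W$-torsor $f^*\tau\to \Gamma'$, which (by the universal property of the fiber product $f^*\tau=\Gamma'\times_{\Gamma_X}\tau$) corresponds to a morphism of covers $g\colon \Gamma'\to \tau$ over $\Gamma_X$.

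First, I would pull $g$ back along the retraction $\rho\colon X^{\an}\to \Gamma_X$ to obtain an étale morphism $\rho^*g\colon X'\to \rho^*\tau$ of covers of $X$. For a given $E\in |\Bun_{N_\bfG(\bfT),\rho^*\tau}(X)^\an|$, the $\bfT$-bundle on $X'$ used to compute $\Trop_{f,\chi}(E)$ is then obtained from the $\bfT$-bundle on $\rho^*\tau$ used to compute $\Trop(E)$ by further pullback along $\rho^*g$. Next, I would argue that tropicalization of $\bfT$-bundles respects this pullback, in the sense that the induced $\Mcheck_\R$-bundle on $\Gamma'$ is the pullback along $g$ of the induced $\Mcheck_\R$-bundle on $\tau$. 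By the tensor-hom adjunction and the decomposition via characters $m\in M$, this reduces to the fact that tropicalization of classical line bundles commutes with pullback along free finite étale morphisms relating skeletons of Mumford curves, which is standard.

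Finally, the morphism $g$ is equivariant with respect to a homomorphism $\Aut(f)\to \Aut(\pi_0)$ induced via $\chi$ by the composition $\Aut(f)\xrightarrow{\sigma}\Aut(W_{\Gamma'})\cong W$ from Lemma \ref{lem:description of torsors as equivariant torsors on cover}. Therefore the twisted $\Aut(f)$-equivariant structure on the $\Mcheck_\R$-bundle on $\Gamma'$ is obtained, via pullback along $g$, from the twisted $\Aut(\pi_0)$-equivariant structure on $\tau$; the corresponding statement for the $\Aut(\tau)$-action (used to divide out in both constructions) likewise follows by construction of $g$. The naturality of the equivalence of Lemma \ref{lem:description of torsors as equivariant torsors on cover} with respect to such a morphism between trivializing covers then identifies the two resulting $\bfG^\trop$-bundles on $\Gamma_X$, giving $\Trop_{f,\chi}(E)=\Trop(E)$.

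The main obstacle I anticipate is the explicit verification that the twisted equivariant structures match up under pullback along $g$. This requires carefully unwinding the definitions of the maps $\sigma$ and $\delta$ in Lemma \ref{lem:description of torsors as equivariant torsors on cover} for both covers, comparing the canonical trivialization of $\pi_0^*\tau$ with $\chi$ via $g$, and checking that the ambiguity in the twisted equivariant structures is absorbed by the $\Aut(\tau)$-quotient that appears on both sides. Once this bookkeeping is done, the coincidence of the two $\bfG^\trop$-bundles on $\Gamma_X$ is essentially formal.
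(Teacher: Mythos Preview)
Your approach is correct and in fact somewhat more economical than the paper's. The paper proves the stronger statement $\Trop_{f,\chi}=\Trop_{g,\psi}$ for \emph{any} two pairs, in two steps: first it fixes the cover and varies the trivialization (using Lemma~\ref{lem:meaning of conjugation} to identify the effect of changing $\chi$ to $\psi$ as conjugation by $w=\psi\circ\chi^{-1}$, which tropicalizes to the same conjugation), and then it varies the cover by passing to a common refinement $g=f\circ h$ and choosing $\psi=h^*\chi$, so that the two $\bfT$-bundles differ by pullback along $\rho^*h$. You instead go directly from an arbitrary $(f,\chi)$ to the canonical $(\pi_0,\chi_0)$ via the single map $g\colon\Gamma'\to\tau$ extracted from $\chi$; because $g^*\chi_0=\chi$ (this is the point you use implicitly when asserting the $\bfT$-bundle on $X'$ is $(\rho^*g)^*$ of the one on $\rho^*\tau$, and it should be stated), the paper's first step becomes unnecessary and only the pullback step remains. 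What you pay for this directness is the need to invoke naturality of the bijection in Lemma~\ref{lem:description of torsors as equivariant torsors on cover} under morphisms of trivializing covers, which the paper never states; it is routine to check but is exactly the bookkeeping you flag as the main obstacle, and is comparable in effort to the paper's two separate steps.
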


\begin{proof}
It suffices to show that $\Trop_{f,\chi}=\Trop_{g,\psi}$ for two covers $f$ and $g$ and trivializations $\chi$ and $\psi$. First we treat the case where $f=g$ are the same map $\Gamma'\to \Gamma_X$, but $\chi$ and $\psi$ are allowed to differ. The two trivializations $\chi$ and $\psi$ differ by 
\[
\psi \circ \chi^{-1} \colon W_{\Gamma'}\longrightarrow W_{\Gamma'}\ ,
\]
which is given by right multiplication by some $w\in H^0(\Gamma',W_{\Gamma'})$. By the algebraic analogue of Lemma \ref{lem:meaning of conjugation} (with analogous proof), if $E\in |\Bun_{N_\bfG(\bfT),\rho^*\tau}(X)^\an|$ is represented by $L_\chi$ (resp.\ $L_\psi$) in the invariants of $\mc M_{\bfT}(\rho^*f)$ with respect to the $\Aut(f)$-action induced by $\chi$ (resp.\ $\psi$), then $L_\psi$ is obtained from $L_\chi$ by conjugating with $w$, up to the $\Aut(\tau)$-action. Therefore, the $\Mcheck_\R$-bundle $\Trop(L_\psi)$ on $\Gamma'$ is obtained from $\Trop(L_\chi)$ by conjugating with $w$, up to the $\Aut(\tau)$-action. Now using Lemma \ref{lem:meaning of conjugation} on the tropical side shows that $\Trop_{f,\chi}(E)=\Trop_{f,\psi}(E)$.

Now we treat the case where $f$ and $g$ differ. Any two covers can be dominated by a common cover, so we may assume that $g$ factors through $f$, that is $g=f\circ h$ for some free Galois cover $h$. We already showed that $\Trop_{g,\psi}$ does not depend on the choice of $\psi$, so we may assume that $\psi=h^*\chi$. Using the same notation as above, the $\bfT$-bundle $L_\psi$ on the domain of $\rho^*g$ agrees with $(\rho^*h)^*L_\pi$ up to the $\Aut(\tau)$-action. Therefore, the $\Mcheck_\R$-bundle $\Trop(L_\psi)$ agrees with $h^*\Trop(L_\chi)$. Applying the algebraic argument backwards on the tropical side then shows the desired equality
\[
    \Trop_{f,\chi}(E)=\Trop_{g,\psi}(E) \ . \qedhere
\]
\end{proof}

Let $(M, R,  \Mcheck, \Rcheck)$ and $(\Lambda, \Phi, \Lambdacheck, \Phicheck)$ be root data and let $(\bfG,\bfT)$ and $(\bfG',\bfT')$ be the corresponding reductive groups with maximal tori $T$ and $T'$, respectively. Recall that a surjective morphism
\[
\phi\colon \bfG\longrightarrow \bfG'
\]
with $\phi(\bfT)\subseteq \bfT'$ and $\ker(\phi)\subseteq Z(\bfG)$

induces a morphism
\[
f\colon \Mcheck \longrightarrow \Lambdacheck \ ,
\]
whose image has finite index and that defines bijections $\Rcheck\to \Phicheck$ and, dually,  $\Phi\to R$ (we note that the morphism $p:\mathbf{G}'\to \mathbf{G}$ in Lemma~\ref{lem:existence of cover used for degree calculation} is of this type). Conversely, given $f$, the morphism $\phi$ can be reconstructed up to conjugation by elements in $\mathbf T$ by \cite{Steinberg}. 

The pair $(f,\id)$ induces a morphism of the tropical reductive groups $\mathbf{G}^{\trop}$ and $(\mathbf{G}')^{\trop}$ corresponding to our root data, and we denote this morphism by
\[
    \phi^{\trop}=(f, \id) \colon \mathbf{G}^\trop\longrightarrow (\mathbf{G}')^\trop.
\]

\begin{lemma}
\label{lem:tropicalization commutes with central isogenies}
Let $\phi \colon(\mathbf{G},\mathbf{T})\to (\mathbf{G}',\mathbf{T}')$ and $\phi^{\trop}: \mathbf{G}^\trop \to (\mathbf{G}')^{\trop}$ be as above. Let $E$ be a tropicalizable principal $N_\bfG(\bfT)$-bundle on $X$, in other words, assume that the associated $W$-torsor of $E$ is of the form $\rho^*\tau$. Then we have
\[
    \Trop(\phi_*E)\cong \phi^\trop_*\Trop(E) \ .
\]
\end{lemma}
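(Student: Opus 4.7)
The plan is to compute both $\Trop(\phi_*E)$ and $\phi^\trop_*\Trop(E)$ via a common free Galois cover of $\Gamma_X$ and reduce to the functoriality of tropicalization for principal torus bundles. Since $\phi(\bfT)\subseteq \bfT'$ and $\ker(\phi)\subseteq Z(\bfG)$, the restriction $\phi|_\bfT\colon \bfT\to \bfT'$ is a morphism of tori inducing the map $f\colon\Mcheck\to\Lambdacheck$ on cocharacter lattices. Moreover, $\phi$ restricts to a morphism $N_\bfG(\bfT)\to N_{\bfG'}(\bfT')$ which induces the identity on Weyl groups (both equal to $W$); in particular, the associated $W$-torsor of $\phi_*E$ is again $\rho^*\tau$, so $\phi_*E$ is tropicalizable in the sense of Section \ref{sec:tropicalizing NG-bundles}.

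I would choose a free Galois cover $f\colon \Gamma'\to \Gamma_X$ together with a trivialization $\chi\colon f^*\tau\xrightarrow{\cong} W_{\Gamma'}$; for instance, one may take $\Gamma'$ to be the total space of $\tau$ equipped with the canonical trivialization from Lemma \ref{lem:pull-back of tau by tau}. By Lemma \ref{lem:tropicalization wrt a cover}, both $\Trop(E)$ and $\Trop(\phi_*E)$ can be computed using this cover. Let $\rho^*f\colon X'\to X$ denote the induced étale cover. Under the identification from Section \ref{sec:tropicalizing NG-bundles} (the algebraic analogue of Lemma \ref{lem:description of torsors as equivariant torsors on cover}), the bundle $E$ is represented by an $\Aut(f)$-invariant (modulo $\Aut(\tau)$) $\bfT$-bundle $L$ on $X'$; since pushforward of principal bundles is compatible with pullback along $\rho^*f$, the bundle $\phi_*E$ is represented by $(\phi|_\bfT)_*L$ under the same identification.

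The crux is then the commutativity of the functoriality square
\[
\begin{tikzcd}
    \mc M_{\bfT}(X')^\an \arrow[r,"(\phi|_\bfT)_*"]\arrow[d,"\Trop"] & \mc M_{\bfT'}(X')^\an\arrow[d,"\Trop"]\\
    \mc M_{\Mcheck_\R}(\Gamma') \arrow[r,"f_*"]& \mc M_{\Lambdacheck_\R}(\Gamma'),
\end{tikzcd}
\]
which I would verify by unwinding the character-based definition of $\Trop$ for torus bundles given before Proposition \ref{prop:tropicalization is retraction for torus bundles}: for every character $\lambda$ of $\bfT'$, one has $\lambda_*\bigl((\phi|_\bfT)_*L\bigr)=(\lambda\circ \phi|_\bfT)_*L$, and $\lambda\circ\phi|_\bfT$ is precisely the composite dual to $f$, so the equality follows from the compatibility of line bundle tropicalization with isomorphisms of $\G_m$-bundles. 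Consequently, $\Trop\bigl((\phi|_\bfT)_*L\bigr)=f_*\Trop(L)$ as $\Aut(f)$-invariant $\Lambdacheck_\R$-bundles on $\Gamma'$ modulo $\Aut(\tau)$.

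Finally, I would observe that the equivalence of Lemma \ref{lem:description of torsors as equivariant torsors on cover} is natural in the tropical reductive group: since $\phi^\trop=(f,\id)$ is the identity on the Weyl group $W$, the induced pushforward $\phi^\trop_*\colon \mc M_{\bfG^\trop,\tau}(\Gamma_X)\to \mc M_{(\bfG')^\trop,\tau}(\Gamma_X)$ corresponds under the equivalence of that lemma to the map $f_*$ on $(\Pic(\Gamma')\otimes_\Z\Mcheck)^{\Aut(f)}/\Aut(\tau)$. Chaining together the three identifications gives $\Trop(\phi_*E)\cong \phi^\trop_*\Trop(E)$. The main technical point will be the verification of the naturality of Lemma \ref{lem:description of torsors as equivariant torsors on cover} along morphisms of tropical reductive groups that preserve the Weyl group; this amounts to a direct inspection of the construction of the twisted equivariant $\Mcheck_\R$-bundle in its proof, where the only nontrivial input is that the cohomological obstructions $H^1$ and $H^2$ of Maschke's theorem vanish functorially in the coefficient lattice.
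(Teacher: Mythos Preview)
Your proposal is correct and follows essentially the same approach as the paper: reduce to the torus case on a trivializing cover, verify the commutative square for torus bundles via the character-wise definition of $\Trop$, and then use that the identification of Lemma~\ref{lem:description of torsors as equivariant torsors on cover} is natural in the cocharacter lattice. The paper works directly with the canonical cover $\pi\colon \rho^*\tau\to X$ rather than invoking Lemma~\ref{lem:tropicalization wrt a cover}, but this is only a cosmetic difference. Your final remark about Maschke's theorem is unnecessary: the naturality of Lemma~\ref{lem:description of torsors as equivariant torsors on cover} along $(f,\id)$ is immediate from functoriality of $(\cdot)\otimes_\Z f$ and does not require revisiting the cohomological obstruction argument.
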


\begin{proof}
We first treat the case where $\mathbf{G}=\mathbf{T}$ and $\mathbf{G}'=\mathbf{T}'$ are tori. Here, the statement follows from the commutativity, for every extension $L/K$ of non-Archimedean fields, of the diagram
\[
\begin{tikzcd}
\mc M_{\bfT}(X)(L) \arrow[r,"\cong"]\arrow[d,"\phi_*"]
    & \Pic(X)(L)\otimes_\Z \Mcheck    \arrow[r, "\Trop\otimes\id"]\arrow[d,"\id\otimes f"]
        &[1cm] \Pic(\Gamma_X)\otimes_\Z \Mcheck \arrow[r,"\cong"]\arrow[d,"\id\otimes f"] 
            & \mc M_{\Mcheck_\R}(\Gamma_X) \arrow[d, "\phi^\trop_*"]\\
\mc M_{\bfT'}(X)(L)   \arrow[r,"\cong"]
    & \Pic(X)(L)\otimes_\Z \Lambdacheck \arrow[r,"\Trop\otimes\id"]
        & \Pic(\Gamma_X)\otimes_\Z \Lambdacheck \arrow[r,"\cong"]
            & \mc M_{\Lambdacheck_\R}(\Gamma_X) \ .
\end{tikzcd}
\]

For general $\mathbf{G}$ and $\mathbf{G}'$ we denote by $\pi\colon \rho^*\tau \to X$ the projection from the total space. For an extension $L/K$ of non-Archimedean fields, consider the diagram
\[
\begin{tikzcd}
\Bun_{N_\bfG(\bfT),\rho^*\tau}(X)(L) \arrow[d,"\phi_*"]\arrow[r]
    &[-1em]\mc M_{\bfT}(\rho^*\tau)^{\Aut(\pi)}/\Aut(\tau) \arrow[d,"(\phi|_T)_*"]\arrow[r,"\Trop"]
        &\mc M_{\Mcheck_\R}(\tau)^{\Aut(\pi)}/\Aut(\tau)  \arrow[d,"(\phi^\trop|_{\Lambdacheck_\R})_*"] \arrow[r,"\cong"]
            &\mc M_{\Mcheck_\R}(\Gamma_X)  \arrow[d,"\phi^\trop_*"] \\
\Bun_{N_{\bfG'}(\bfT'),\rho^*\tau}(X)(L)\arrow[r]
    &\mc M_{\bfT'}(\rho^*\tau)^{\Aut(\pi)}/\Aut(\tau) \arrow[r,"\Trop"]
        &\mc M_{\Lambdacheck_\R}(\tau)^{\Aut(\pi)}/\Aut(\tau)\arrow[r,"\cong"]
            &\mc M_{\Lambdacheck_\R}(\Gamma_X) \ .
\end{tikzcd}
\]
The compositions of the arrows in the two rows are the tropicalizations maps for $N_\bfG(\bfT)$- and $N_{\mathbf{G}'}(\mathbf{T}')$-bundles, respectively. To complete the proof, it  suffices to show that the diagram commutes. For the left and right square, commutativity follows from the compatibility of push-forward with pull-backs along covers. The square in the middle commutes by the case of tori that we treated first. 
\end{proof}

\begin{lemma}
\label{lem:tropicalization commutes with determinants}
Let $E$ be a tropicalizable principal $N_\bfG(\bfT)$-bundle. Then we have
\[
    \Trop(\det(E))\cong {\det} (\Trop(E)) \ .
\]
\end{lemma}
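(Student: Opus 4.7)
The plan is to reduce both sides to the torus case already handled in the proof of Lemma \ref{lem:tropicalization commutes with central isogenies}, by trivializing the $W$-torsor of $E$ via pullback along $\pi\colon \rho^{*}\tau \to X$, where $\tau$ is the $W$-torsor on $\Gamma_X$ with $E_W\cong \rho^{*}\tau$.

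First, I would record a key compatibility on the level of groups. Because $Z^c(\bfG)$ is abelian, the composition $N_\bfG(\bfT) \hookrightarrow \bfG \to Z^c(\bfG)$ annihilates all commutators $[w,t]$ with $w\in W$, $t\in \bfT$, so the induced $W$-action on $Z^c(\bfG)$ is trivial. In particular, this composition restricts to the torus determinant $\det_\bfT\colon \bfT \to Z^c(\bfG)$ and extends trivially over $W$. The tropical determinant $\det^{\trop}\colon \bfG^{\trop} \to \Mcheck_\R/\langle\Rcheck\rangle_\R$, which by Lemma \ref{lem:subgroup is normal} is the map $(m,w)\mapsto \bar m$, has the analogous feature: its restriction to $\Mcheck_\R$ is the natural projection $\det_\bfT^{\trop}$, and the Weyl factor is killed.

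Next, I would represent $E$ via a $\bfT$-bundle $L$ on $\rho^{*}\tau$ using the algebraic analogue of Corollary \ref{cor:exact sequence}, so that $\pi^{*}E$ is induced from $L$. The previous paragraph then gives $\pi^{*}\det(E)\cong \det_\bfT(L)$, and since $\det(E)$ itself descends along $\pi$ (as the $\Aut(\pi)$-invariants, which here reduces to ordinary descent because $W$ acts trivially on $Z^c(\bfG)$), compatibility of tropicalization with pullback for torus bundles identifies $\Trop(\det(E))$ on $\Gamma_X$ with the descent of $\Trop(\det_\bfT(L))$ along the free cover $\tau \to \Gamma_X$. On the other side, Lemma \ref{lem:description of torsors as equivariant torsors on cover} represents $\Trop(E)$ by the twisted $\Aut(\pi)$-equivariant $\Mcheck_\R$-bundle $\Trop(L)$ on $\tau$, and the same lemma applied in the case of trivial Weyl group identifies $\det^{\trop}(\Trop(E))$ with the descent of $\det_\bfT^{\trop}(\Trop(L))$ to $\Gamma_X$.

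Finally, the torus case of Lemma \ref{lem:tropicalization commutes with central isogenies} applied to $\det_\bfT\colon \bfT \to Z^c(\bfG)$ yields the equality $\Trop(\det_\bfT(L)) \cong \det_\bfT^{\trop}(\Trop(L))$ as $(\Mcheck/\langle\Rcheck\rangle)_\R$-bundles on $\tau$, and this isomorphism descends to the required $\Trop(\det(E))\cong \det^{\trop}(\Trop(E))$ on $\Gamma_X$. The main technical point to be careful about is that all of these identifications are compatible with the $\Aut(\pi)$- and $\Aut(\tau)$-actions; however, since the Weyl group acts trivially on the target $\Mcheck_\R/\langle\Rcheck\rangle_\R$, the twisted equivariant structures collapse to ordinary descent data, and the verification amounts to chasing a diagram essentially identical in shape to the one appearing in the proof of Lemma \ref{lem:tropicalization commutes with central isogenies}.
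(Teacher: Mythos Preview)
Your proposal is correct and follows essentially the same route as the paper: pull back along $\pi\colon \rho^{*}\tau\to X$ to reduce to $\bfT$-bundles, apply the torus case from Lemma~\ref{lem:tropicalization commutes with central isogenies} to $\det_\bfT$, and descend using that $W$ acts trivially on the cocenter. The only packaging difference is that the paper isolates your ``compatibility of tropicalization with pullback for torus bundles'' step as an appeal to Lemma~\ref{lem:tropicalization wrt a cover} (specialized to the trivial Weyl group), whereas you argue it directly via descent; both amount to the same diagram chase.
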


\begin{proof}
Let $\tau$ be $W$ torsor on $\Gamma_X$ with $E\in \Bun_{N_\bfG(\bfT),\rho^*\tau}(X)$, let $\pi_0\colon \tau\to \Gamma_X$ be the projection from the total space and let $\chi_0 \colon\pi_0^*\tau\to W_\tau$ be the canonical trivialization. Then exactly as in Lemma \ref{lem:tropicalization commutes with central isogenies}, we see that 
\[
    \det(\Trop(E))=\Trop_{\pi_0}(\det(E)) \ ,
\]
where $\Trop_{\pi_0}$ is missing the datum of the trivialization of the torsor because $\det(E)$ is a torus bundle and tori have trivial Weyl groups (and hence all torsors over those Weyl groups are canonically trivial). Applying Lemma \ref{lem:tropicalization wrt a cover} finishes the proof.
\end{proof}

Recall that for a reductive group $\mathbf{G}$ with fixed maximal torus $\mathbf{T}$, there is a natural identification $\pi_0(\Bun_{\mathbf{G}}(X))\cong \pi_1(\mathbf{G})$ defined as follows: every principal $\mathbf{G}$-bundle can be degenerated to a principal $\bfT$-bundle, so the natural map
\[
\mathbb{X}_*(\mathbf{T})\cong\pi_0(\Bun_{\mathbf{T}}(X))\longrightarrow \pi_0(\Bun_{\mathbf{G}}(X))
\]
is a surjection. Moreover, two elements of $\mathbb{X}_*(\mathbf{T})$ have the same image precisely if they agree modulo the sublattice generated by the coroots. We refer to \cite{Hoffmann} for details.

Recall that for a principal $\mathbf{G}$-bundle $E$ we denote by $\deg(E)$ the element of $\pi_1(\mathbf{G})$ corresponding to the component of $\Bun_{\mathbf{G}}(X)$ containing $E$  and call it the \emph{degree} of $E$.

\begin{proposition}
\label{prop:tropicalization respects degree} 
    Let $E_\bfG$ be a $\mathbf{G}$-bundle induced by a tropicalizable principal $N_\bfG(\bfT)$-bundle $E$. Then
    \[
    \deg(E_{\mathbf{G}})=\deg(\Trop(E)) \ .
    \]
\end{proposition}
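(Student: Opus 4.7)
The plan is to reduce stepwise to the case of a torus bundle. If $\mathbf{G} = \mathbf{T}$ is a torus, then $N_\bfG(\bfT) = \bfT$ and degree preservation follows immediately from Proposition \ref{prop:tropicalization is retraction for torus bundles}, since the homeomorphism there between the essential skeleton and the tropical moduli space is indexed component by component by $\lambdacheck \in \pi_1(\mathbf{T})$.

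Next I would handle the case where $\pi_1(\mathbf{G})$ is torsion-free. The determinant induces a quotient $\det_*\colon \pi_1(\mathbf{G}) = \Mcheck/\langle\Rcheck\rangle \to \Mcheck/\langle\Rcheck\rangle^\sat = \pi_1(Z^c(\mathbf{G}))$ whose kernel $\langle\Rcheck\rangle^\sat/\langle\Rcheck\rangle$ is the torsion subgroup of $\pi_1(\mathbf{G})$, so under the torsion-free hypothesis it is an isomorphism; the same conclusion holds on the tropical side for $\pi_1(\mathbf{G}^\trop) \to \pi_1(Z^c(\mathbf{G})^\trop)$. Hence the degree of $E_\bfG$ (resp.\ $\Trop(E)$) is determined by the degree of its determinant, which is a bundle for the torus $Z^c(\mathbf{G})$, and Lemma \ref{lem:tropicalization commutes with determinants} together with the torus case concludes the argument in this situation.

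For general $\mathbf{G}$, I would invoke Lemma \ref{lem:existence of cover used for degree calculation} to obtain a surjective morphism $p\colon \mathbf{G}' \to \mathbf{G}$ with connected central kernel and torsion-free $\pi_1(\mathbf{G}')$. Since $\ker(p) \subseteq Z(\mathbf{G}') \subseteq \mathbf{T}'$, one verifies that $N_{\mathbf{G}'}(\mathbf{T}') = p^{-1}(N_\bfG(\bfT))$, yielding a central extension $1 \to \ker(p) \to N_{\mathbf{G}'}(\mathbf{T}') \to N_\bfG(\bfT) \to 1$ of \'etale sheaves of groups on $X$. Its non-abelian cohomology long exact sequence terminates in $H^2_\et(X, \ker(p))$, which vanishes by Tsen's theorem since $\ker(p)$ is a torus and $X$ is a smooth curve over the algebraically closed field $K$; hence $E$ lifts to an $N_{\mathbf{G}'}(\mathbf{T}')$-bundle $E'$ with $p_* E' \cong E$. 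Tropicalizability of $E'$ is automatic because $\mathbf{G}$ and $\mathbf{G}'$ share canonically identified Weyl groups, so the $W$-torsor of $E'$ is the same $\rho^*\tau$ as for $E$. Applying the torsion-free case to $\mathbf{G}'$ yields $\deg(E'_{\mathbf{G}'}) = \deg(\Trop(E'))$ in $\pi_1(\mathbf{G}')$, and pushing forward via $p_*$, using Lemma \ref{lem:tropicalization commutes with central isogenies} and the fact that $p_*$ and $p^\trop_*$ agree on fundamental groups (both being induced by the lattice map $f\colon \Lambdacheck \to \Mcheck$), gives
\[
\deg(E_\mathbf{G}) = p_* \deg(E'_{\mathbf{G}'}) = p^\trop_* \deg(\Trop(E')) = \deg(p^\trop_* \Trop(E')) = \deg(\Trop(p_* E')) = \deg(\Trop(E)).
\]
The main obstacle is the lifting step: simultaneously producing a lift $E'$ and ensuring that it remains tropicalizable. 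Tropicalizability is immediate from the identification of Weyl groups, while the existence of $E'$ reduces to the vanishing of $H^2_\et(X, \ker(p))$, guaranteed by Tsen's theorem.
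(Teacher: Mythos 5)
Your proof is correct and follows the same three-step reduction as the paper: first tori, then groups with torsion-free $\pi_1$ via the determinant and Lemma \ref{lem:tropicalization commutes with determinants}, then the general case by lifting $E$ along the cover $p\colon \mathbf{G}'\to\mathbf{G}$ of Lemma \ref{lem:existence of cover used for degree calculation} and applying Lemma \ref{lem:tropicalization commutes with central isogenies}. The only difference is that you justify the existence of the lift $E'$ directly via the central extension $1\to\ker(p)\to N_{\mathbf{G}'}(\mathbf{T}')\to N_\bfG(\bfT)\to 1$ and the vanishing of $H^2_{\et}(X,\ker(p))$ by Tsen's theorem, whereas the paper cites the surjectivity of $\Bun_{N_{\mathbf{G}'}(\mathbf{T}')}\to\Bun_{N_\bfG(\bfT)}$ from Hoffmann; these amount to the same thing.
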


\begin{proof}
    For line bundles, that is if $\mathbf{G}=\G_m$, the statement is well-known. This implies it is also true for products of $\G_m$, that is for algebraic tori (note that all tori are split because the base field is algebraically closed). 
    
    Now assume that $\pi_1(\mathbf{G})$ is torsion-free. Then $\langle \Rcheck\rangle$ is saturated and the determinant induces an isomorphism of $\pi_1(\mathbf{G})$ with the cocharacter lattice of the cocenter $Z^c(\bfG)$ (see \S\ref{subsec:stability for bundles on elliptic curves}). As tropicalization commutes with determinants by Lemma \ref{lem:tropicalization commutes with determinants} and taking degrees of tropical principal bundles is functorial, the assertion is reduced to the case $\mathbf{G}=Z^c(\mathbf{G})$, which we have already established because $Z^c(\bfG)$ is an algebraic torus.

    For $\mathbf{G}$ arbitrary, let $p\colon \mathbf{G}'\to \mathbf{G}$ be as in Lemma \ref{lem:existence of cover used for degree calculation} and let $\mathbf{T}'=p^{-1}(\mathbf{T})$. Since $\ker(p)$ is connected and central in $\mathbf{G}'$, it is an algebraic torus, and so is $\mathbf{T}'$. So by \cite[Proposition 3.1, Remark 3.3 ii)]{Hoffmann}, the morphism $\Bun_{N_{\mathbf{G}'}(\mathbf{T}')}\to \Bun_{N_\bfG(\bfT)}$ is surjective and we can lift $E$ to a principal $N_{\mathbf{G}'}(\mathbf{T}')$-bundle $E'$. Of course, $E'$ is tropicalizable as well. As $\pi_1(\mathbf{G}')$ is torsion-free, we have already seen that $\deg(E'_{\mathbf{G}'})=\deg(\Trop(E'))$. Using Lemma \ref{lem:tropicalization commutes with central isogenies}, we conclude that
    \[
    \deg(\Trop(E))=
    \deg(\Trop(p_*E'))
    =\deg(\pi_*\Trop(E'))
    =\pi_*\deg(\Trop(E'))
    =\pi_*\deg(E'_{\mathbf{G}'})
    =\deg(E_\mathbf{G}) \ .
    \]
\end{proof}

\subsection{Tropicalizing stable $\mathbf{G}$-bundles over a Tate curve}

We now assume the Mumford curve $X$ is a \emph{Tate curve} over $K$, i.e.~a smooth projective curve of genus one, whose analytification is given by $X^{\an} = \G_m^{\an} / q^\Z$  with $\val(q) > 0$. Then the minimal skeleton $\Gamma_X$ is isometric to a metric circle of circumference $\val(q)$. Theorems~\ref{thm:Fratilla determinant is isomorphism} and~\ref{thm:Fratilas results} describe the moduli spaces $\mc M_\mathbf{G}^{\lambdacheck_\mathbf{G},\mathrm{st}}(X)$ and $\mc M_\mathbf{G}^{\lambdacheck_\mathbf{G},\mathrm{ss}}(X)$ of stable and semistable $\mathbf{G}$-bundles on $X$, respectively. In this section and the next, we explain how to tropicalize these moduli spaces: stable bundles are tropicalized by reducing them to $N_\bfG(\bfT)$-bundles, and semistable bundles are reduced to stable bundles by passing to a Levi subgroup.

First, let $\lambdacheck_\mathbf{G}\in \pi_1(\mathbf{G})$ be a degree for which the moduli space $\mc M_\mathbf{G}^{\lambdacheck_\mathbf{G},\mathrm{st}}(X)$ is nonempty. By Theorem~\ref{thm:Fratilla determinant is isomorphism}, this only happens if $\mathbf{G}^\ad= \prod_i \bPGL_{n_i}$, so that the Weyl group is $W=\prod_i S_{n_i}$. We recall from \S\ref{subsec:stable bundles on tropical elliptic curves} that since $\Gamma_X$ is a circle, up to isomorphism there is a unique indecomposable $W$-torsor $\ind$ on $\Gamma_X$, which induces a $W$-torsor on $X$ that we also denote $\ind$ by abuse of notation.

\begin{proposition}
    Let $E$ be a stable principal $\mathbf{G}$-bundle on $X$. Then $E$ can only be in the image of 
    \[
    \Bun_{N_\bfG(\bfT),\rho^*\tau}(X)\longrightarrow \Bun_\mathbf{G}(X)
    \]
    if $\tau$ is indecomposable.
\end{proposition}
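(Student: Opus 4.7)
The plan is to argue by contrapositive: assuming $\tau$ is decomposable, I will construct a reduction of structure group of $E$ to a proper Levi subgroup $\bfL \subsetneq \bfG$ and derive a contradiction with the simplicity of stable $\bfG$-bundles. Specifically, $E$ will acquire automorphisms from $Z(\bfL)$ that strictly exceed $Z(\bfG)$.

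First, I would translate decomposability into a group-theoretic statement using Example~\ref{ex:explicit description of torsors on circle}: $\tau$ corresponds to a conjugacy class $[w]$ in $W = \prod_i S_{n_i}$, and decomposability means some component $w_i$ is not an $n_i$-cycle. After conjugating, I may assume $w$ lies in a proper standard parabolic $W_P \subsetneq W$ of type $\prod_{i,j} A_{m_{i,j}-1}$, where the $m_{i,j}$ are the cycle lengths of the $w_i$. Consequently, $\tau$ is induced from a $W_P$-torsor $\sigma$ on $\Gamma_X$, and hence $\rho^*\tau$ is induced from the $W_P$-torsor $\rho^*\sigma$ on $X$ via the inclusion $W_P \hookrightarrow W$.

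Next, let $\bfP \supseteq \bfT$ be the standard parabolic with Weyl group $W_P$ and let $\bfL \subseteq \bfP$ be its Levi subgroup. A key structural identity is $N_\bfG(\bfT) \cap \bfP = N_\bfL(\bfT)$, which I would verify by noting that $N_\bfG(\bfT) \cap U(\bfP) = \{e\}$ since $\bfT$ acts with nontrivial weights on the unipotent radical. This identity, combined with the commutative diagram of short exact sequences
\[
\begin{CD}
1 @>>> \bfT @>>> N_\bfL(\bfT) @>>> W_P @>>> 1 \\
@. @| @VVV @VVV @. \\
1 @>>> \bfT @>>> N_\bfG(\bfT) @>>> W @>>> 1,
\end{CD}
\]
allows me to apply standard Galois descent for torsors: since the associated $W$-torsor $\rho^*\tau$ of $E'$ factors through $W_P$, the bundle $E'$ reduces to an $N_\bfL(\bfT)$-bundle $E''$ on $X$. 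Inducing $E''$ along $N_\bfL(\bfT) \hookrightarrow \bfL$ produces an $\bfL$-bundle $E_\bfL$, and by the compatibility of the two compositions $N_\bfL(\bfT) \to \bfL \hookrightarrow \bfG$ and $N_\bfL(\bfT) \hookrightarrow N_\bfG(\bfT) \to \bfG$, the further induction of $E_\bfL$ to $\bfG$ recovers $E$. Thus $E$ admits a reduction of structure group to the proper Levi $\bfL$.

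To conclude, since $\bfP$ is proper, $\bfL$ is a proper Levi subgroup, so standard structure theory gives the strict inclusion $Z(\bfG) \subsetneq Z(\bfL)$. The left-multiplication action of $Z(\bfL)$ on $E_\bfL$ extends to automorphisms of $E$ via induction, yielding an injection $Z(\bfL) \hookrightarrow \Aut(E)$. However, a classical theorem of Ramanathan asserts that any stable $\bfG$-bundle on a smooth projective curve (in characteristic zero) is simple, i.e., $\Aut(E) = Z(\bfG)$. This forces $Z(\bfL) \subseteq Z(\bfG)$, contradicting $Z(\bfG) \subsetneq Z(\bfL)$. The main obstacle is bookkeeping the various inductions and reductions: namely, verifying the equality $N_\bfG(\bfT) \cap \bfP = N_\bfL(\bfT)$ and checking that the $\bfL$-bundle $E_\bfL$ really induces $E$ back; the simplicity result is invoked as a black box.
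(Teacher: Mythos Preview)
Your reduction of $E$ to a proper Levi $\bfL$ is correct and parallels the paper's argument; the identity $N_\bfG(\bfT)\cap \bfP = N_\bfL(\bfT)$ and the torsor reduction (which the paper obtains via \cite[Lemma 2.2.1]{BiswasHoffmann}) are fine. The gap is in the final step.

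Ramanathan's simplicity theorem is proved for curves of genus $\geq 2$, and it genuinely fails on elliptic curves, which is precisely the setting here ($X$ is a Tate curve). For instance, take $\bfG=\bPGL_2$ and let $E$ be the stable $\bPGL_2$-bundle of degree $1\in\Z/2\Z$. Choosing a lift $\widetilde E$ to a stable rank $2$, degree $1$ vector bundle, one computes
\[
\Aut(E)\;\cong\;\{L\in\Pic(X)\,:\,\widetilde E\otimes L\cong \widetilde E\}\;=\;\Pic^0(X)[2]\;\cong\;(\Z/2\Z)^2,
\]
which is strictly larger than $Z(\bPGL_2)=\{1\}$. So the contradiction $Z(\bfL)\hookrightarrow \Aut(E)=Z(\bfG)$ does not follow.

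The paper sidesteps this by first passing to $\bfG^{\ad}=\prod_i\bPGL_{n_i}$, reducing to a single $\bPGL_n$, and then \emph{lifting} to $\bGL_n$ via \cite[Corollary 3.4]{Hoffmann}. The lifted bundle is a stable vector bundle, and stable vector bundles are simple (hence indecomposable) in every genus; the Levi reduction then yields a direct sum decomposition, which is absurd.

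Your argument can be repaired by working with identity components: for a proper Levi one has $\dim Z(\bfL)^\circ>\dim Z(\bfG)^\circ$, and one can show that for a stable $\bfG$-bundle on an elliptic curve $H^0(X,\mathrm{ad}(E))=Z(\mathfrak g)$, hence $\Aut(E)^\circ=Z(\bfG)^\circ$. But this last equality requires its own justification (e.g.\ via the decomposition $\mathfrak g=Z(\mathfrak g)\oplus[\mathfrak g,\mathfrak g]$ and simplicity of the underlying vector bundles after passing to the adjoint) and is not a consequence of Ramanathan's theorem as stated.
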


\begin{proof}
    For every stable $\mathbf{G}$-bundle the induced $\mathbf{G}^\ad$-bundle is stable as well. This reduces to the case where $\mathbf{G}=\prod_i \bPGL_{n_i}$. Treating each factor individually, we further reduce to the case $\mathbf{G}=\bPGL_n$. Denote by $\mathbf D_n\subseteq \bGL_n$ the diagonal torus and $\mathbf D_n/\G_m\subseteq \bPGL_n$ the corresponding torus in $\bPGL_n$. Let $E\in \Bun_{N_{\bPGL_n}(\mathbf D_n/\G_m),\rho^*\tau}(X)$ be such that the associated $\bPGL_n$-torsor $E_{\bPGL_n}$ is stable, and assume that $\tau$ is decomposable. Then $\tau$ is induced by an $S_{k_1}\times \ldots \times S_{k_d}$-torsor for some nontrivial partition $\sum_{i=1}^d k_i=n$.  The preimage of $S_{k_1}\times\ldots \times S_{k_d}$ in $N_{\bPGL_n}(\bfD_n/\G_m)$ is the normalizer $N$ of the maximal torus $\mathbf D_n/\G_m$ in the Levi subgroup $\left(\prod_{i=1}^d \bGL_{k_i}\right)/\G_m$ of $\bPGL_n$. By \cite[Lemma 2.2.1]{BiswasHoffmann} (see also \cite{BiswasHoffmannError}), the structure group of $E$ can be reduced to $N$. In particular, the structure group of $E_{\bPGL_n}$ can be reduced to $\left(\prod_{i=1}^d \bGL_{k_i}\right)/\G_m$. Lifting $E_{\bPGL_n}$ to a $\bGL_n$-bundle, which is possible by \cite[Corollary 3.4]{Hoffmann}, we obtain a stable $\bGL_n$-bundle whose structure group can be reduced to $\prod_{i=1}^d \bGL_{k_i}$. But this is absurd, because stable vector bundles are indecomposable.
\end{proof}

We now study the morphism $\Bun_{N_\bfG(\bfT),\ind}(X)\to \Bun_\mathbf{G}(X)$.

\begin{example}
    Let $\mathbf{G}=\bGL_n$. Then $N_\bfG(\bfT)\cong \G_m^n \rtimes S_n$, $W=S_n$, and the stack $\Bun_{N_\bfG(\bfT)}(X)$ is equivalent to the stack of pairs $(X'\xrightarrow{f} X, \mc L)$, where $f$ is a finite étale cover of degree $n$ and $\mc L$ is a line bundle on $X'$. The associated $W$-torsor of a pair $(f,\mc L)$ is represented by $f$; the stack of $S_n$-torsors is equivalent to the stack of finite étale covers of degree $n$. We can describe the cover $f$ corresponding to the $S_n$-torsor $\ind$ on $X$ defined by the unique indecomposable $S_n$-torsor $\ind$ on $\Gamma_X$ explicitly by using a uniformization of $X^\an$. Indeed, if $X^\an=\G_m^\an/q^\Z$ then $\rho^*\ind$ is represented by the quotient map
    \[
    \pi_n\colon \G_m^\an/q^{n\Z}\longrightarrow \G_m^\an/q^\Z \ .
    \]
    If $X_n$ is the unique elliptic curve with $X_n^\an=\G_m^\an/q^{n\Z}$, we conclude that 
    \[
    \Bun_{N_{\mathbf{G}}(\bfT),\ind}(X) \cong \mPic(X_n)/\Aut(\pi_n) = \coprod_{d\in \Z} \mPic^d(X_n)/\Aut(\pi_n) \ .
    \]
    Note that $\pi_n$ is a Galois cover and $\Aut(\pi_n)$ is cyclic of degree $n$. We denote the connected component $\mPic^d(X_n)/\Aut(\pi_n)$ of $\Bun_{N_\bfG(\bfT),\ind}(X)$ by $\Bun_{N_\bfG(\bfT),\ind}^{d}(X)$.

    The morphism 
    \[
    \Bun_{N_\bfG(\bfT),\ind}(X)\longrightarrow \Bun_\mathbf{G}(X)
    \]
    maps the $N_\bfG(\bfT)$-torsor corresponding to $(\pi_n,\mc L)$ to the $\bGL_n$-torsor corresponding to the vector bundle $(\pi_n)_*\mc L$. This vector bundle has degree $d=\deg(\mc L)$. If $d$ and $n$ are coprime, then the argument in \cite[Appendix A]{Tu_semistablebundles} shows that $(\pi_n)_*\mc L$ is stable. In particular, for $d$ coprime to $n$ we obtain an induced map
    \[
        \Bun_{N_{\bGL_n}(\bfT),\ind}^{d}(X)\longrightarrow \Bun_{\bGL_n}^{d,\st}(X) \ ,
    \]
    where $\Bun_{\bGL_n}^{d,\st}(X)$ denotes the open substack of $\Bun_{\bGL_n}(X)$ consisting of stable bundles of degree $d$. 
\end{example}

\begin{lemma}
\label{lem:comparison of bun-spaces for GL_n}
    Let $d$ and $n$ be coprime. Then the morphism 
    \[
        \Bun_{N_{\bGL_n}(D_n),\ind}^{d}(X)\longrightarrow \Bun_{\bGL_n}^{d,\st}(X) 
    \]
    is an isomorphism of algebraic stacks.
\end{lemma}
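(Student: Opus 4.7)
The plan is to verify that the pushforward morphism
\[
\Phi \colon \Bun_{N_{\bGL_n}(D_n),\ind}^{d}(X)
    = \big[\mPic^d(X_n)/\Aut(\pi_n)\big]
        \longrightarrow \Bun_{\bGL_n}^{d,\st}(X),
        \quad \mc L\longmapsto (\pi_n)_*\mc L,
\]
is bijective on geometric points and induces isomorphisms on automorphism groups, and then to promote this to an isomorphism of stacks by a smoothness/dimension count. All four steps rest on a single classical input, namely Atiyah--Tu's classification of stable vector bundles on an elliptic curve.

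First I would invoke Atiyah--Tu (\cite{Atiyah_ellipticvectorbundles,Tu_semistablebundles}), which asserts that, for $\gcd(d,n)=1$, every stable vector bundle of rank $n$ and degree $d$ on the elliptic curve $X$ is isomorphic to a pushforward $(\pi_n)_*\mc L$ for some line bundle $\mc L$ on the isogenous elliptic curve $X_n$. This yields essential surjectivity of $\Phi$ on geometric points. For injectivity modulo the $\Aut(\pi_n)$-action, assume $(\pi_n)_*\mc L\cong (\pi_n)_*\mc L'$ and pull back along the \'etale Galois cover $\pi_n$; the projection formula gives
\[
\pi_n^*(\pi_n)_*\mc L \;\cong\; \bigoplus_{\sigma\in\Aut(\pi_n)} \sigma^*\mc L,
\]
and likewise for $\mc L'$. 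The Krull--Schmidt theorem applied to the two resulting decompositions into indecomposable summands on $X_n$ then forces $\mc L'\cong \sigma^*\mc L$ for some $\sigma\in\Aut(\pi_n)$.

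Next I would check automorphisms. A stable vector bundle has automorphism group $\G_m$ (scalars), and so does any line bundle on $X_n$. The $\Aut(\pi_n)$-action on the isomorphism classes of $\mc L$ producing a given stable pushforward is free: if $\sigma^*\mc L\cong \mc L$ for some nontrivial $\sigma$, then $\pi_n^*(\pi_n)_*\mc L$ would contain a repeated summand, contradicting the distinctness of the line bundles $\sigma^*\mc L$ that one obtains when $(\pi_n)_*\mc L$ is stable. Consequently, the automorphism group of a point in the stack quotient $[\mPic^d(X_n)/\Aut(\pi_n)]$ is just $\Aut(\mc L)=\G_m$, matching $\Aut((\pi_n)_*\mc L)=\G_m$ via the natural map induced by $\Phi$.

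Finally, both source and target are smooth algebraic stacks of the same dimension: $\Bun_{\bGL_n}^{d,\st}(X)$ has dimension $n^2(g-1)=0$, and $[\mPic^d(X_n)/\Aut(\pi_n)]$ likewise has dimension $g(X_n)-1=0$. The expected main obstacle is not either of Steps 1--3 but rather promoting this pointwise bijection plus matching automorphisms to an honest isomorphism of algebraic stacks. I would handle this by a deformation-theoretic argument: since $\pi_n$ is \'etale, the trace map identifies $H^1(X_n,\mc O_{X_n})$ with the tangent space to $\Bun_{\bGL_n}$ at $(\pi_n)_*\mc L$ (via $\mc End((\pi_n)_*\mc L)\cong (\pi_n)_*\pi_n^*\mc O_X$ and stability cutting out the scalar factor), showing that $\Phi$ is \'etale. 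A smooth (even \'etale) representable morphism of algebraic stacks of finite type that is bijective on geometric points and an isomorphism on automorphism groups is an isomorphism, which completes the proof.
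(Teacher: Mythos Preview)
Your first three steps (essential surjectivity via Atiyah--Tu, injectivity modulo $\Aut(\pi_n)$ via Krull--Schmidt on the pullback, and matching of automorphism groups via simplicity of stable bundles) are correct and closely parallel the paper's argument, which packages the same ingredients as the statement that
\[
\Aut(\pi_n)\times \mPic^d(X_n)\longrightarrow \mPic^d(X_n)\times_{\Bun_{\bGL_n}^{d,\st}(X)}\mPic^d(X_n)
\]
is an equivalence. Your freeness argument is also fine once unpacked: stability gives $h^0(\mc End((\pi_n)_*\mc L))=1$, and a nontrivial fixed point would contribute an extra global section via the summand $\mc L\otimes(\sigma^*\mc L)^{-1}\cong\mc O_{X_n}$.

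The genuine gap is in your deformation step. The sheaf isomorphism you invoke,
\[
\mc End((\pi_n)_*\mc L)\cong (\pi_n)_*\pi_n^*\mc O_X,
\]
is false: the left side has rank $n^2$ while the right side has rank $n$. The correct identification (via adjunction for the finite \'etale map $\pi_n$) is
\[
\mc End((\pi_n)_*\mc L)\cong (\pi_n)_*\Big(\bigoplus_{\sigma\in\Aut(\pi_n)}\mc L\otimes(\sigma^*\mc L)^{-1}\Big),
\]
and from here one can salvage your argument: for $\sigma\neq\id$ the summand is a nontrivial degree-$0$ line bundle on $X_n$ (by the freeness you already established), hence has vanishing $H^1$, so only the $\sigma=\id$ term survives and the differential $H^1(X_n,\mc O_{X_n})\to H^1(X,\mc End((\pi_n)_*\mc L))$ is an isomorphism. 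That said, the paper sidesteps this computation entirely: having shown that the fibers of $\mPic^d(X_n)\to\Bun_{\bGL_n}^{d,\st}(X)$ are finite and the map is surjective, it concludes flatness directly from miracle flatness (both source and target being smooth of the same dimension). This is cleaner and avoids the tangent-space bookkeeping.
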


\begin{proof}
    Consider the composition
    \begin{equation}
    \label{eq:Aut(X_N)-torsor}
    \mPic^d(X_n)\longrightarrow \Bun_{N_{\bGL_n}(D_n),\ind}^{d}(X)\longrightarrow \Bun_{\bGL_n}^{d,\st}(X) \ , 
    \end{equation}
    which we denote by $\phi$. Since $\mPic^d(X_n)$ is an $\Aut(\pi_n)$-torsor over $\Bun_{N_{\bGL_n}(\bfD_n)}(X)$, it suffices to show that $\phi$ is an $\Aut(\pi_n)$-torsor. Given line bundles $\mc L$ and $\mc L'$ on $(X_n)_S=X_n\times_K S$ for some test $K$-scheme $S$ and an isomorphism $(\pi_n)_*\mc L\xrightarrow{\psi} (\pi_n)_*\mc L'$ on $X_S$, there is an induced isomorphism 
    \begin{equation}
    \label{eq:induced map on split bundle}
    \bigoplus_{\sigma\in \Aut(\pi_n)} \sigma^*\mc L\cong \pi_n^*(\pi_n)_*\mc L\xrightarrow{\pi_n^*\psi} \pi_n^*(\pi_n)_*\mc L'\cong \bigoplus_{\sigma\in \Aut(\pi_n)} \sigma^*\mc L'    
    \end{equation}
    on $(X_n)_S$.  Note that because $\Aut(\pi_n)$ acts freely on $\Pic^d(X_n)$, it follows that there is a unique $\sigma\in \Aut(\pi_n)(S)$ such that the morphism $\mc L\to \sigma^*\mc L'$ induced from \eqref{eq:induced map on split bundle} is an isomorphism. This shows that the morphism
    \begin{equation*}\begin{split}
    \Aut(\pi_n) \times _K \mPic^d(X_n)(S) &\longrightarrow \mPic^d(X_n)(S)\times_{\Bun_{\bGL_n}^{d,\st}(X)(S)} \mPic^d(X_n)(S) \\ (\sigma, \mc L)&\longmapsto (\mc L, \sigma^*\mc L)
    \end{split}\end{equation*}
    is essentially surjective. It is also fully faithful, because $\phi_S$ is already fully faithful: as stable bundles are simple, we have, for $\mc L$ a degree $d$ line bundle on $X_n$, that 
    \[
    \Aut((\pi_n)_*\mc L)=\G_m(S) =\Aut(\mc L) \ .
    \]

    To conclude the proof, it suffices to show that $\phi$ is faithfully flat. By \cite[Theorem 7.1]{2022GrossUlirschZakharov}, $\phi$ is surjective. Both factors of $\phi$ in  \eqref{eq:Aut(X_N)-torsor} are representable and locally of finite type by \cite[Fact 2.3]{Hoffmann}. Moreover, we have already shown that the fibers of $\phi$ are finite (they are $\Aut(\pi_n)$-torsors), and both the target $\Bun^{d,\st}_{\bGL_n}(X)$ and the source are smooth \cite[Proposition 4.1]{Hoffmann}, so we are done by miracle flatness. 
    \end{proof}

\begin{definition}
    We define $\Bun_{N_\bfG(\bfT)}^{\st}(X)$ as the preimage of $\Bun_\mathbf{G}^{\st}(X)$ in $\Bun_{N_\bfG(\bfT),\ind}(X)$ under the map $\Bun_{N_\bfG(\bfT)}(X)\to \Bun_\mathbf{G}(X)$.
\end{definition}

\begin{theorem}
    The morphism
    \[
    \Bun^{\st}_{N_\bfG(\bfT)}(X)\longrightarrow \Bun_\mathbf{G}^{\st}(X) 
    \]
    is an isomorphism.
    \label{thm:stableisomorphism}
\end{theorem}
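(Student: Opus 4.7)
The plan is to extend the template of Lemma \ref{lem:comparison of bun-spaces for GL_n} from $\bfG = \bGL_n$ to general $\bfG$ of adjoint type $\prod_i A_{n_i-1}$. By Theorem \ref{thm:Fratilla determinant is isomorphism}, stable bundles only exist for such $\bfG$, and the preceding proposition already shows that any reduction of a stable $\bfG$-bundle to $N_\bfG(\bfT)$ must lie over an indecomposable $W$-torsor, so the map in question is well-defined on the specified domain and there is no need to restrict the right-hand side further.

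First I would establish essential surjectivity on geometric points. The case $\bfG = \bGL_n$ is exactly Lemma \ref{lem:comparison of bun-spaces for GL_n}, and products of $\bGL_{n_i}$'s follow immediately by working factorwise. For general $\bfG$, I would apply Lemma \ref{lem:existence of cover used for degree calculation} to obtain a central isogeny $p\colon \bfG' \to \bfG$ with $\pi_1(\bfG')$ torsion-free and $\ker(p)$ a connected central torus. Since $\pi_1(\bfG')$ is torsion-free and $\bfG'^{\ad} = \prod_i \bPGL_{n_i}$, the derived subgroup $\bfG'^{\mathrm{der}}$ is simply connected, hence equal to $\prod_i \mathbf{SL}_{n_i}$, and $\bfG'$ is isogenous to $\prod_i \bGL_{n_i}$ times a central torus. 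Essential surjectivity on the $\bfG'$-side then reduces to the product-of-$\bGL_{n_i}$'s case. To descend from $\bfG'$ to $\bfG$, one uses that every $\bfG$-bundle lifts to a $\bfG'$-bundle because $\ker(p)$ is a connected torus \cite[Proposition 3.1, Remark 3.3 (ii)]{Hoffmann}, and one must verify that the indecomposable $W$-torsor structure is preserved under $p_*$.

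Next I would compare automorphism groups. A stable $\bfG$-bundle $E$ has $\Aut(E) = Z(\bfG)$. On the $N_\bfG(\bfT)$-side, an $N_\bfG(\bfT)$-bundle $E'$ over the indecomposable $W$-torsor can be described, after pullback to the cyclic cover $X_n \to X$, as a $\bfT$-bundle $L$ equipped with an $\Aut(X_n/X)$-equivariance twisted by an indecomposable cycle $c \in W$. Since $X_n$ is proper and connected, the $\bfT$-automorphisms of $L$ are $\bfT(K)$, and the equivariance condition cuts this down to the $c$-invariants $\bfT^c$. A direct computation (using that an indecomposable cycle in $W = \prod_i S_{n_i}$ has the same orbit structure on the character lattice as the full Weyl group) shows $\bfT^c = \bfT^W = Z(\bfG)$, yielding the required matching of automorphism groups.

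Finally, to conclude the isomorphism I would argue as in the last paragraph of the proof of Lemma \ref{lem:comparison of bun-spaces for GL_n}: both stacks are smooth algebraic stacks of finite type over $K$ by \cite[Proposition 4.1]{Hoffmann}, the morphism is representable and of finite type by \cite[Fact 2.3]{Hoffmann}, and the above steps show it is bijective on geometric points with trivial relative automorphism groups. Miracle flatness then forces the morphism to be flat, and combined with the bijection on points this yields the isomorphism. The main obstacle is the first step: the essential surjectivity through the central isogeny $p$ requires carefully tracking the indecomposability of the $W$-torsor and checking that a stable $\bfG$-bundle admits a stable lift; the remaining steps are relatively standard adaptations of the $\bGL_n$ argument.
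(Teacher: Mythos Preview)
Your overall strategy (essential surjectivity, full faithfulness via automorphism matching, then miracle flatness) is reasonable, but two of the three steps have genuine gaps, and the paper takes a cleaner route that avoids them.

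\textbf{The reduction step.} The claim that $\bfG'$ from Lemma~\ref{lem:existence of cover used for degree calculation} is ``isogenous to $\prod_i \bGL_{n_i}$ times a central torus'' is not justified. Having $\bfG'^{\mathrm{der}}=\prod_i \mathbf{SL}_{n_i}$ and $\pi_1(\bfG')$ torsion-free does \emph{not} force $\bfG'$ to split as such a product (already $\bGL_n\neq \mathbf{SL}_n\times\G_m$), and even ``isogenous'' would land you back in the same situation, requiring another pass through the argument. You would also need to verify that a stable $\bfG$-bundle admits a \emph{stable} lift to $\bfG'$, which is not automatic.

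\textbf{The automorphism step.} The identity $\bfT^c=Z(\bfG)$ is false in general. For $\bfG=\bPGL_2$ with $c$ the nontrivial Weyl element acting by inversion on $\bfT\cong\G_m$, one has $\bfT^c=\mu_2$ while $Z(\bPGL_2)=1$. In fact the automorphism group of the unique stable $\bPGL_2$-bundle of degree~$1$ on an elliptic curve is $\Pic(X)[2]\cong(\ZZ/2)^2$, not $Z(\bPGL_2)$: for a stable rank-$2$ bundle $V$ of odd degree one has $V\otimes L\cong V$ for every $L\in\Pic(X)[2]$, and each such $L$ gives a nontrivial $\bPGL_2$-automorphism. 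So neither side of your proposed identity is correct for elliptic curves, and the step you call ``relatively standard'' is where the argument actually breaks.

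\textbf{What the paper does instead.} The paper bypasses both issues via a $2$-cartesian diagram coming from $N_\bfG(\bfT)=N_{\bfG^{\ad}}(\bfT^{\ad})\times_{\bfG^{\ad}}\bfG$ (\cite[Lemma~2.2.1]{BiswasHoffmann}). This immediately reduces the statement to the adjoint case $\bfG^{\ad}=\prod_i\bPGL_{n_i}$, hence to a single $\bPGL_n$. Then the same cartesian square is run with $\bGL_n\to\bPGL_n$: the left vertical arrow is an isomorphism by Lemma~\ref{lem:comparison of bun-spaces for GL_n}, the bottom arrow $\Bun_{\bGL_n}^{\st}\to\Bun_{\bPGL_n}^{\st}$ is smooth and surjective, and since being an isomorphism is smooth-local on the target for representable morphisms, the right vertical arrow is an isomorphism. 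No automorphism groups are ever computed directly.
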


\begin{proof}
    If there are stable $\mathbf{G}$-bundles, then $\mathbf{G}^\ad\cong \prod_{i}\bPGL_{n_i}$ by Theorem \ref{thm:Fratilla determinant is isomorphism}. Moreover, if $\lambdacheck\in \pi_1(\mathbf{G})$ is such that $\Bun^{\lambdacheck,\st}_\mathbf{G}(X)$ is nonempty, then $\lambdacheck^{\ad}\in \prod_i (\Z/n_i\Z)^*$. The component $\Bun^{\lambdacheck^\ad}_{\mathbf{G}^\ad}(X)$ consists of a single point corresponding to a stable $\mathbf{G}^\ad$-bundle. In particular, the map
    \[
    \Bun_\mathbf{G}(X) \longrightarrow \Bun_{\mathbf{G}^\ad}(X)
    \]
    maps $\Bun_\mathbf{G}^{\st}(X)$ to $\Bun^{\st}_{\mathbf{G}^\ad}(X)$. Let $\mathbf{T}^\ad=\mathbf{T}/Z(\mathbf{G})$ denote the maximal torus of $\mathbf{G}^\ad$ induced by $\mathbf{T}$. Then $N_\bfG(\bfT)= N_{\mathbf{G}^\ad}(\mathbf{T}^\ad)\times_{\mathbf{G}^\ad} \mathbf{G}$. Applying \cite[Lemma 2.2.1]{BiswasHoffmann} (see also \cite{BiswasHoffmannError}), we obtain a $2$-cartesian diagram
    \begin{equation}
    \label{eq:cartesian diagram of bundle spaces}
    \begin{tikzcd}
        \Bun^{\st}_{N_\bfG(\bfT)}(X) \arrow[d]\arrow[r] &
            \Bun^{\st}_{N_{G^\ad}(T^\ad)}(X) \arrow[d]\\
        \Bun^{\st}_G(X) \arrow[r] &
            \Bun^{\st}_{G^\ad}(X) \ . \\    
    \end{tikzcd}
    \end{equation}
    Therefore, we reduce to the case where $\mathbf{G}$ is a product of $\mathbf{PGL}$'s. This in turn can be directly reduced to the case $\mathbf{G}=\mathbf{PGL_n}$. In that case, we again use the cartesian diagram \eqref{eq:cartesian diagram of bundle spaces}, but with reversed roles: we set $\mathbf{G}=\mathbf{GL_n}$ in which case $\mathbf{G}^\ad=\mathbf{PGL_n}$. Then the left vertical morphism is an isomorphism by Lemma \ref{lem:comparison of bun-spaces for GL_n}. Moreover, the lower horizontal morphism is smooth by \cite[Corollary 4.2]{Hoffmann} and surjective. Since being an isomorphism is local on the target in the smooth topology and the vertical morphisms are representable by \cite[Fact 2.3]{Hoffmann}, we are done.
\end{proof}

\begin{definition}
Let $E$ be a stable $\mathbf{G}$-bundle. Then we define the \emph{tropicalization of $E$} by $\Trop(E')$ (see Section~\ref{sec:tropicalizing NG-bundles}), where $E'$ is the unique indecomposable principal $N_\bfG(\bfT)$-bundle corresponding to $E$ under the isomorphism of Theorem~\ref{thm:stableisomorphism}.
\label{def:stabletropicalization}
\end{definition}

\begin{corollary}
    Let $E\in \Bun_{\mathbf{G}}^{\st}(X)$. Then we have
    \[
    \deg(E)=\deg(\Trop(E)) \ .
    \]
\end{corollary}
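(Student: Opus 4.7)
The plan is to reduce this to Proposition \ref{prop:tropicalization respects degree}, which asserts precisely the desired equality for tropicalizable principal $N_\bfG(\bfT)$-bundles. The definition of $\Trop(E)$ for a stable $\mathbf{G}$-bundle (Definition \ref{def:stabletropicalization}) is tailor-made for this: by Theorem \ref{thm:stableisomorphism}, the map $\Bun_{N_\bfG(\bfT)}^{\st}(X)\to \Bun_{\mathbf{G}}^{\st}(X)$ is an isomorphism, so there is a unique indecomposable principal $N_\bfG(\bfT)$-bundle $E'$ on $X$ with $E'_\mathbf{G}\cong E$, and $\Trop(E)$ is defined to be $\Trop(E')$.

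First, I would verify that $E'$ is tropicalizable in the sense of Section~\ref{sec:tropicalizing NG-bundles}, i.e.~that its associated $W$-torsor on $X$ is of the form $\rho^*\tau$ for some $W$-torsor $\tau$ on $\Gamma_X$. Since $E'$ lives in $\Bun_{N_\bfG(\bfT),\ind}(X)$ and, by construction in \S\ref{subsec:stable bundles on tropical elliptic curves}, the indecomposable $W$-torsor on $X$ used to define this stack is obtained by pulling back along $\rho$ the unique (up to isomorphism) indecomposable $W$-torsor on the metric circle $\Gamma_X$, tropicalizability is immediate from the definition. Hence Proposition \ref{prop:tropicalization respects degree} applies to $E'$, yielding
\[
    \deg(E)=\deg(E'_\mathbf{G})=\deg(\Trop(E'))=\deg(\Trop(E)),
\]
which is the claim.

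The only step that is not a pure bookkeeping is the tropicalizability of $E'$, but this is built into the setup: the entire construction of $\Trop$ on stable $\mathbf{G}$-bundles is routed through $\Bun_{N_\bfG(\bfT),\ind}(X)=\Bun_{N_\bfG(\bfT),\rho^*\ind}(X)$ precisely so that the hypothesis of Proposition \ref{prop:tropicalization respects degree} is automatically satisfied. So there is no real obstacle; the corollary is essentially a repackaging of Theorem \ref{thm:stableisomorphism}, Definition \ref{def:stabletropicalization}, and Proposition \ref{prop:tropicalization respects degree}.
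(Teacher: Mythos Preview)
Your proposal is correct and takes essentially the same approach as the paper's one-line proof, which simply invokes Proposition \ref{prop:tropicalization respects degree}. You have just made explicit the bookkeeping that the paper leaves implicit: that $E'\in \Bun_{N_\bfG(\bfT),\ind}(X)=\Bun_{N_\bfG(\bfT),\rho^*\ind}(X)$ is tropicalizable by construction, so the proposition applies.
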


\begin{proof}
    This follows directly from Proposition \ref{prop:tropicalization respects degree}.
\end{proof}

\begin{theorem}
\label{thm:tropical stable bundles skeleton of stable bundles}
Let $\mathbf{G}$ be a reductive group of type $\prod A_{n_i}$ and let $\lambdacheck\in \pi_1(\mathbf{G})$ be a stable degree. Then there exists a commutative diagram
\[
\begin{tikzcd}
    & 
        \Sigma(\mc M_{\bfG}^{\lambdacheck,\st}(X))\arrow[dd,"\cong"] \\
    (\mc M_{\bfG}^{\lambdacheck,\st}(X))^\an \arrow[ur,"\rho"]\arrow[dr,"\Trop"']&
        & \\
    &
        \mc M^{\lambdacheck}_{\bfG^\trop,\ind}(\Gamma_X) \ ,\\
\end{tikzcd}
\]
where $\rho$ denotes the retraction map to the essential skeleton $\Sigma(\mc M_{\mathbf{G}}^{\lambdacheck,\st}(X))$ of $(\mc M_{\mathbf{G}}^{\lambdacheck,\st}(X))^\an$. 
\end{theorem}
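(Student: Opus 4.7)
The overall strategy is to reduce this theorem to the torus case already handled in Proposition~\ref{prop:tropicalization is retraction for torus bundles}, by exploiting the determinant map that appears in Fratila's classification of stable bundles on an elliptic curve. The key observation is that both the algebraic moduli space $\mc M_\bfG^{\lambdacheck,\st}(X)$ and its tropical counterpart $\mc M^{\lambdacheck}_{\bfG^\trop,\ind}(\Ga_X)$ admit determinant maps to the cocenter torus $Z^c(\bfG)$ (resp.~its tropicalization $T = \Mcheck_\R/\langle \Rcheck\rangle_\R$) that are isomorphisms of rational polyhedral/analytic spaces.

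Concretely, I would proceed as follows. First, Fratila's Theorem~\ref{thm:Fratilla determinant is isomorphism} yields an isomorphism of varieties
\[
\det_{\mathrm{alg}}\colon \mc M_\bfG^{\lambdacheck,\st}(X) \xlongrightarrow{\cong} \mc M_{Z^c(\bfG)}^{\det(\lambdacheck),\semist}(X),
\]
which analytifies and restricts to a homeomorphism of essential skeletons. On the tropical side, Theorem~\ref{thm:determinant is isomorphism for tropical moduli of stable bundles} provides a homeomorphism
\[
\det_{\mathrm{trop}}\colon \mc M^{\lambdacheck}_{\bfG^\trop,\ind}(\Gamma_X) \xlongrightarrow{\cong} \mc M^{\det(\lambdacheck)}_{T}(\Gamma_X).
\]
Since $Z^c(\bfG)$ is an algebraic torus with cocharacter lattice $\Mcheck/\langle \Rcheck\rangle^{\sat}$ and tropicalization $T$, Proposition~\ref{prop:tropicalization is retraction for torus bundles} supplies a commutative triangle identifying $\Sigma(\mc M_{Z^c(\bfG)}^{\det(\lambdacheck),\semist}(X))$ with $\mc M^{\det(\lambdacheck)}_{T}(\Gamma_X)$ compatibly with retraction and tropicalization for tori. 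Composing everything, I would define the homeomorphism in the statement as the unique map making the diagram
\[
\begin{tikzcd}
\Sigma(\mc M_\bfG^{\lambdacheck,\st}(X))\arrow[r,"\Sigma(\det_{\mathrm{alg}})","\cong"'] \arrow[d,dashed]
 & \Sigma(\mc M_{Z^c(\bfG)}^{\det(\lambdacheck),\semist}(X))\arrow[d,"\cong"] \\
\mc M^{\lambdacheck}_{\bfG^\trop,\ind}(\Gamma_X)\arrow[r,"\det_{\mathrm{trop}}","\cong"']
 & \mc M^{\det(\lambdacheck)}_{T}(\Gamma_X)
\end{tikzcd}
\]
commute.

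The commutativity of the full triangle in the statement then amounts to verifying that $\det_{\mathrm{trop}}\circ \Trop = \Trop_{Z^c(\bfG)}\circ \det_{\mathrm{alg}}^{\an}$ as maps from $(\mc M_\bfG^{\lambdacheck,\st}(X))^{\an}$ to $\mc M^{\det(\lambdacheck)}_T(\Ga_X)$, and likewise for the retractions. For tropicalization this is exactly Lemma~\ref{lem:tropicalization commutes with determinants}, applied after using Theorem~\ref{thm:stableisomorphism} to replace a stable $\bfG$-bundle $E$ by the unique indecomposable $N_\bfG(\bfT)$-bundle mapping to it (which is precisely how $\Trop(E)$ is defined, cf.~Definition~\ref{def:stabletropicalization}). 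For retractions it follows from functoriality of essential skeletons with respect to isomorphisms of smooth Calabi--Yau analytic spaces: any isomorphism of algebraic varieties analytifies to an isomorphism of Berkovich spaces that identifies essential skeletons and intertwines the corresponding retraction maps.

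The main technical obstacle will be carefully aligning the identifications of degrees along the determinant: one must check that the element $\det(\lambdacheck) \in \pi_1(Z^c(\bfG))$ indexing the right-hand moduli space is the same as the tropical degree obtained by applying Proposition~\ref{prop:tropicalization respects degree} to any $N_\bfG(\bfT)$-lift, and that the target of Fratila's determinant is (semi)stable of this degree while the target of the tropical determinant lands in the corresponding component. A secondary subtlety is that Fratila's theorem produces a moduli space of semistable line bundles on $X$ (via the cocenter torus $Z^c(\bfG)$), whereas the tropical side directly produces $\mc M^{\det(\lambdacheck)}_T(\Ga_X)$; the matching of these two target spaces via Proposition~\ref{prop:tropicalization is retraction for torus bundles} should be straightforward but requires bookkeeping with the torsion part of $\pi_1(\bfG)$. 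Once these compatibilities are verified, the dashed arrow is forced to be a homeomorphism and the triangle commutes by construction.
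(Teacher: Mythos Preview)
Your proposal is correct and follows essentially the same route as the paper's proof: reduce to the torus case via the determinant, using Theorem~\ref{thm:Fratilla determinant is isomorphism} and Theorem~\ref{thm:determinant is isomorphism for tropical moduli of stable bundles} for the two determinant isomorphisms, Lemma~\ref{lem:tropicalization commutes with determinants} for compatibility of $\Trop$ with $\det$, and Proposition~\ref{prop:tropicalization is retraction for torus bundles} to close the triangle on the cocenter side. The paper's argument is organized around precisely the same commutative diagram you describe, and your remarks about degree bookkeeping are handled implicitly there.
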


\begin{proof} Consider the diagram
\[
\begin{tikzcd}
    (\mc M_{\mathbf{G}}^{\lambdacheck,\st}(X))^\an \arrow[d,"\Trop"] \arrow[r,"\det"]&
        (\mc M_{Z^c(\mathbf{G})}^{\det(\lambdacheck)}(X))^\an \arrow[dr,"\rho"] \arrow[d,"\Trop"] &
        \\
    \mc M_{\bfG^\trop,\ind}^{\lambdacheck}(\Ga_X) \arrow[r,"\det"] &
        \mc M_{Z^c(\bfG^\trop)}^{\det(\lambdacheck)}(\Ga_X)&
            \Sigma(\mc M_{Z^c(\mathbf{G})}^{\det(\lambdacheck)}(X)) \ .\arrow[l,"\cong"]\\
\end{tikzcd}
\]
The square on the left is commutative by Lemma \ref{lem:tropicalization commutes with determinants}. By Theorem \ref{thm:Fratilla determinant is isomorphism}, the algebraic determinant map on the top of the square is an isomorphism. By Theorem \ref{thm:determinant is isomorphism for tropical moduli of stable bundles}, the tropical determinant map on the bottom of the square is an isomorphism. It thus suffices to show the existence of the lower right isomorphism such that the triangle on the right commutes. But this is Proposition \ref{prop:tropicalization is retraction for torus bundles}.
\end{proof}

\subsection{Tropicalizing semistable $\mathbf{G}$-bundles over a Tate curve} In this section, we continue our study of the tropicalization of principal bundles on a Tate curve $X$ by reducing the semistable case to the case of stable bundles from the previous section.

Given a semistable bundle $F\in\mc M_{\mathbf{G}}^{\lambdacheck_G,\mathrm{ss}}(X)$, let $\mathbf{L}$ be the Levi subgroup determined by Theorem~\ref{thm:Fratilas results}, corresponding to the degree $\lambdacheck_{\mathbf{G}}=\deg (F)\in \pi_1(\mathbf{G})$ and chosen such that $\mathbf{T}\subseteq \mathbf{L}$. Reducing the structure group, we obtain an $\mathbf{L}$-bundle $F_{\mathbf{L}}$ on $X$ of degree $\lambdacheck_{\mathbf{L}}$, unique up to the $W_{\mathbf{L},\mathbf{G}}$-action and stable by Theorem~\ref{thm:Fratilas results}. We then use Definition~\ref{def:stabletropicalization} to tropicalize $F_{\mathbf{L}}$ to an $\mathbf{L}^\trop$-bundle on $\Gamma_X$ of degree $\lambdacheck_{\mathbf{L}}$. The inclusion $\mathbf{L}\to \mathbf{G}$ induces a morphism $\mathbf{L}^\trop \to \mathbf{G}^\trop$ of tropical reductive groups and the induced map $\mc M_{\mathbf{L}^\trop}^{\lambdacheck_\mathbf{L}}(\Gamma_X)\to \mc M_{\mathbf{G}^\trop}^{\lambdacheck_\mathbf{G}}(\Gamma_X)$ is $W_{\mathbf{L},\mathbf{G}}$-equivariant. We then extend scalars to $\mathbf{G}^\trop$ to obtain the tropicalization of $F$.

\begin{example}
   Let $F \in \mc M^{d,\semist}_{\bGL_n}(X)$ be a semistable $\bGL_n$-bundle of degree $d \in \Z$. Note that in this case the Levi subgroup $\bfL \subseteq \bGL_n$ from Theorem \ref{thm:Fratilas results} is given by $\bfL = (\bGL_{\frac{n}{h}})^h$ and $W_{\mathbf{L},\mathbf{G}} = S_h$, where $h =\gcd(n,d)$. Then one can show that $F$ is equivalent to a direct sum $\oplus^h_{i=1} F_i$ of stable vector bundles of the same slope  (see \cite[\S 7]{2022GrossUlirschZakharov}) which is unique up to the $S_h$-action. In the more general framework, this is the same as a stable $\bfL$-bundle $F_\bfL$ on $X$ of degree $(\frac{d}{h},\ldots,\frac{d}{h})\in \Z^h$. Tropicalizing the stable $\bfL$-bundle, as explained in the previous section, corresponds to tropicalizing each summand $F_i$ individually, which is precisely what is done in \cite{2022GrossUlirschZakharov}. In this sense, this section generalizes the tropicalization construction of semistable $\bGL_n$-bundles on $X$ of \cite{2022GrossUlirschZakharov}.
\end{example}

First, we prove a lemma that bridges the gap between the algebraic structure group $\mathbf{L}$ and its tropical counterpart $\mathbf{L}^{\trop}$. 

\begin{lemma}
\label{lem:tropicalization of stable bundles is equivariant}
Let $\mathbf{G}$ be a reductive group, and let $\mathbf{L}$ be a Levi subgroup containing the (fixed) maximal torus $\mathbf{T}$ with Weyl group $W_\mathbf{L}\subseteq W_\mathbf{G}$. Then there exists a natural isomorphism 
\[
    W_{\mathbf{L},\mathbf{G}}=N_\mathbf{G}(\mathbf{L})/\mathbf{L}\xlongrightarrow{\cong} N_{W_\mathbf{G}}(W_\mathbf{L})/W_\mathbf{L} \ .
\]
Moreover, if $\bfL$ and $\lambdacheck_\bfL\in \pi_1(\bfL)$ are as in Theorem \ref{thm:Fratilas results}, then $N_{W_\bfG}(W_\bfL)/W_\bfL$ acts on $\mc M_{\bfL^\trop,\ind}^{ \lambdacheck_\bfL}(\Gamma_X)$ and the tropicalization map
\[
    \Trop\colon \mc M_{\bfL}^{\lambdacheck_\bfL,\st}(X) \longrightarrow \mc M_{\bfL^\trop,\ind}^{\lambdacheck_\bfL}(\Gamma_X)
\]
is $N_\bfG(\bfL)/\bfL$-equivariant.
\end{lemma}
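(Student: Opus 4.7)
The plan is to address the three assertions of the lemma in turn. For the first assertion, the natural isomorphism $N_\bfG(\bfL)/\bfL \cong N_{W_\bfG}(W_\bfL)/W_\bfL$ is a classical statement. The map is constructed as follows: given $g \in N_\bfG(\bfL)$, both $\bfT$ and $g\bfT g^{-1}$ are maximal tori of $\bfL$, so $lg \in N_\bfG(\bfT)$ for some $l \in \bfL$. Since $lg$ also normalizes $\bfL$, it normalizes $N_\bfL(\bfT) = \bfL \cap N_\bfG(\bfT)$, and hence its image in $W_\bfG$ normalizes $W_\bfL$. A routine verification shows that the assignment $g \mapsto [lg]$ descends to a well-defined isomorphism with kernel $\bfL$.

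For the second assertion, the action on the tropical side arises from conjugation by $N_{W_\bfG}(W_\bfL)$ on $\bfL^\trop$ viewed as a subgroup of $\bfG^\trop$. Explicitly, any $n \in N_{W_\bfG}(W_\bfL) \subseteq W_\bfG \subseteq \bfG^\trop$ acts on $\bfL^\trop = \Mcheck_\R \rtimes W_\bfL$ by $(m, v) \mapsto (n \cdot m, nvn^{-1})$, which lies in $\bfL^\trop$ since $n$ normalizes $W_\bfL$. Inner conjugation by $W_\bfL$ acts trivially on isomorphism classes of $\bfL^\trop$-bundles, yielding a well-defined action of $N_{W_\bfG}(W_\bfL)/W_\bfL$ on $\mc M_{\bfL^\trop}(\Gamma_X)$. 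The conjugacy class of indecomposable elements of $W_\bfL$ is preserved by the same maximal-reflection-length argument as in the proof of Lemma \ref{lem:quotient of centralizers is relative Weyl group}, and the invariance of $\lambdacheck_\bfL \in \pi_1(\bfL)$ will be read off from the equivariance of $\Trop$ in Step~3 combined with Theorem \ref{thm:Fratilas results} and Proposition \ref{prop:tropicalization respects degree}.

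For the equivariance of $\Trop$, I would combine Theorem \ref{thm:stableisomorphism} with Lemma \ref{lem:tropicalization commutes with central isogenies}. The $W_{\bfL,\bfG}$-action on $\mc M_\bfL^{\lambdacheck_\bfL,\st}(X)$ provided by Theorem \ref{thm:Fratilas results} is induced by inner conjugation: for $g \in N_\bfG(\bfL)$, one pushes forward along $\mathrm{Ad}(g)\colon \bfL \to \bfL$. After replacing $g$ by a representative $n = lg \in N_\bfG(\bfT) \cap N_\bfG(\bfL)$, the automorphism $\mathrm{Ad}(n)$ restricts to an automorphism of $N_\bfL(\bfT)$, and the isomorphism of Theorem \ref{thm:stableisomorphism} identifies the $n$-action on stable $\bfL$-bundles with the pushforward along $\mathrm{Ad}(n)\colon N_\bfL(\bfT) \to N_\bfL(\bfT)$ at the level of $N_\bfL(\bfT)$-bundles. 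The automorphism $\mathrm{Ad}(n)\colon \bfL \to \bfL$ satisfies all the hypotheses of Lemma \ref{lem:tropicalization commutes with central isogenies}, being surjective with trivial central kernel, preserving $\bfT$, and inducing a bijection $\Rcheck_\bfL \to \Rcheck_\bfL$ via $w = [n]$ acting on $\Mcheck$. The lemma then yields $\Trop(\mathrm{Ad}(n)_* E) \cong \mathrm{Ad}(n)^\trop_* \Trop(E)$, and the induced tropical automorphism $\mathrm{Ad}(n)^\trop$ of $\bfL^\trop$ is precisely the conjugation by $w$ constructed in Step~2.

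The main obstacle I anticipate is the careful identification in Step~3 of the geometric $N_\bfG(\bfL)$-action on $\mc M_\bfL^{\lambdacheck_\bfL,\st}(X)$ from \cite{fratila2016stack} with the pushforward action of $\mathrm{Ad}(n)$ on principal $N_\bfL(\bfT)$-bundles under the isomorphism of Theorem \ref{thm:stableisomorphism}. Distinct representatives $n, n' \in N_\bfG(\bfT) \cap N_\bfG(\bfL)$ of the same class in $W_{\bfL,\bfG}$ differ by an element of $N_\bfL(\bfT)$, and one must verify that this ambiguity acts trivially at the moduli level; this requires a careful unpacking of the tropicalization map from Section \ref{sec:tropicalizing NG-bundles} and of the compatibility of conjugation with the extension-of-structure-group functor appearing in Theorem \ref{thm:stableisomorphism}.
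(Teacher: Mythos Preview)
Your approach is essentially the same as the paper's, but your treatment of the first assertion has a gap. You construct the map $N_\bfG(\bfL)/\bfL \to N_{W_\bfG}(W_\bfL)/W_\bfL$ by replacing $g \in N_\bfG(\bfL)$ with $lg \in N_\bfG(\bfL) \cap N_\bfG(\bfT)$, exactly as the paper does, and you note that the kernel is $\bfL$. But you dismiss the rest as ``routine verification'', whereas the substantive content is \emph{surjectivity}: given $n \in N_\bfG(\bfT)$ whose image in $W_\bfG$ normalizes $W_\bfL$, one must show that $n$ normalizes $\bfL$ itself. The paper spends most of its argument here, proving the equality $N_\bfG(\bfL) \cap N_\bfG(\bfT) = N_{N_\bfG(\bfT)}(N_\bfL(\bfT))$. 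For the nontrivial inclusion it invokes the characterization $\bfL = C_\bfG(Z)$ from \cite[\S30.2]{Humphreys}, where $Z$ is the identity component of $\bigcap_{\alpha \in \Psi} \ker(\alpha)$, and then identifies $Z$ as the connected center of $N_\bfL(\bfT)$, so that any $n$ normalizing $N_\bfL(\bfT)$ normalizes $Z$ and hence its centralizer $\bfL$. (A shorter route would be to argue that such $n$ permutes the reflections in $W_\bfL$, hence permutes the roots of $\bfL$, hence permutes the root subgroups generating $\bfL$ over $\bfT$; but some argument is required.)

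For the second and third assertions your outline matches the paper exactly: the action of $N_{W_\bfG}(W_\bfL)/W_\bfL$ on $\mc M_{\bfL^\trop,\ind}^{\lambdacheck_\bfL}(\Gamma_X)$ comes from conjugation inside $\bfG^\trop$, inner automorphisms of $\bfL^\trop$ act trivially on isomorphism classes, and equivariance of $\Trop$ is deduced from Lemma~\ref{lem:tropicalization commutes with central isogenies} applied to $\mathrm{Ad}(n)$ for a representative $n \in N_\bfG(\bfL) \cap N_\bfG(\bfT)$. The ``main obstacle'' you flag --- that different representatives $n, n'$ differ by an element of $N_\bfL(\bfT)$ and one must check this acts trivially on moduli --- is handled by the paper in one line: conjugation by $N_\bfL(\bfT)$ is an inner automorphism of $\bfL^\trop$, hence leaves $\bfL^\trop$-bundles unchanged.
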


\begin{proof}
We first show that the quotient map 
\[
N_\bfG(\bfL)\cap N_\bfG(\bfT)\longrightarrow N_\bfG(\bfL)/\bfL
\]
is surjective. If $nL\in N_\bfG(\bfL)/\bfL$, then $n^{-1} \bfT n$ is a maximal torus of $\bfL$. As $\bfL$ is reductive, all maximal tori in $\bfL$ are conjugate, that is there exists $l\in \bfL$ with $(nl)^{-1} \bfT(nl)=\bfT$. Then we have $nl\in N_\bfG(\bfL)\cap N_\bfG(\bfT)$ and $nl\bfL=n\bfL$. Since $N_\bfG(\bfL)\cap N_\bfG(\bfT)\cap\bfL= N_\bfL(\bfT)$ we obtain a natural isomorphism
\[
N_\bfG(\bfL)\cap N_\bfG(\bfT)/N_\bfL(\bfT) \xlongrightarrow{\cong}N_\bfG(\bfL)/\bfL \ .
\]
As $W_\bfL=N_\bfL(\bfT)/\bfT$ and $N_{W_\bfG}(W_\bfL)=N_{N_\bfG(\bfT)}(N_\bfL(\bfT))/\bfT$ we have
\[
N_{W_\bfG}(W_\bfL)/W_\bfL \cong N_{N_\bfG(\bfT)}(N_\bfL(\bfT))/N_\bfL(\bfT)
\]
by the third isomorphism theorem. Therefore, it suffices to show that 
\[
N_\bfG(\bfL)\cap N_\bfG(\bfT)= N_{N_\bfG(\bfT)}(N_\bfL(\bfT)) \ .
\]
The inclusion from left to right is clear, as any automorphism of $\bfL$ that fixes $\bfT$ also fixes the normalizer of $\bfT$. For the reverse inclusion let $n\in  N_{N_\bfG(\bfT)}(N_\bfL(\bfT))$. We need to show that $n$ normalizes $\bfL$. Let $Z$ be the connected component of the identity of $\bigcap_{\alpha\in \Psi}\ker(\alpha)$, where $\Psi$ is the set of roots of $\bfL$. By \cite[Section 30.2]{Humphreys}, we have 
\[
\bfL=C_\bfG(Z) \ , 
\]
 so it suffices to show that $n$ normalizes $Z$. As $n$ normalizes $N_\bfL(\bfT)$, it suffices to show that $Z$ is the connected center of $N_\bfL(\bfT)$. Because $\bfG$ is reductive, we have $C_\bfG(\bfT)=\bfT$ and hence $Z(N_\bfL(\bfT))\subseteq \bfT$. So $Z(N_\bfL(\bfT))$ is precisely the subset of $\bfT$ fixed by all reflections in $W$, which is precisely $\bigcap_{\alpha\in \Psi}\ker(\alpha)$, the identity component of which is $Z$.

The normalizer $N_{W_\bfG}(W_\bfL)$ is contained in $N_{\bfG^\trop}(\bfL^\trop)$ and acts on $\mc M_{\bfL^\trop,\ind}^{\lambdacheck_L}(\Gamma_X)$ by conjugation. Conjugation with inner automorphisms of $\bfL^\trop$ leaves $\bfL^\trop$-bundles unchanged, so $W_\bfL$ is in the kernel of the action and we obtain an action of $N_{W_\bfG}(W_\bfL)/W_\bfL$. Moreover, conjugation by an element in $N_{N_\bfG(\bfT)}(N_\bfL(\bfT))$ tropicalizes to the conjugation by its image in $W_\bfG$. Therefore, the equivariance of $\Trop$ follows from Lemma \ref{lem:tropicalization commutes with central isogenies}.
\end{proof}

Let $G= \Mcheck_\R \rtimes W$ be a tropical reductive group and let $\lambdacheck \in \pi_1(G)$. In general, it is not yet clear what it means for a principal $G$-bundle to have an indecomposable degree. But, by Theorem \ref{thm:Fratila existence of minimal parabolic} there exists a parabolic subgroup $P=\Mcheck_\R \rtimes W'$ in $G$ such that there exists $\lambdacheck_P\in \pi_1(P)$ with $\phi_P(\lambdacheck_P)=\phi_G(\lambdacheck)$ and which is minimal with respect to that property. The parabolic $P$ is unique up to conjugation and of type $\prod_i A_{n_i-1}$ by \cite[Cor. 4.2]{fratila2016stack}. Let $\tau$ be an indecomposable $W'$-torsor. Then we denote 
\[
    \mc M_{G,\ind}^{\lambdacheck}(\Ga_X)\coloneqq\mc M_{G,\tau_W}^{ \lambdacheck}(\Ga_X) \ .
\]

\begin{definition}
Let $\mathbf{G}$ be a reductive group and let $E$ be a semi-stable principal $\mathbf{G}$-bundle of degree $\lambdacheck \in \pi_1(\bfG)$. By Theorem \ref{thm:Fratilas results}, there exists a Levi subgroup $\mathbf{L} \subseteq \mathbf{G}$, uniquely determined up to conjugation, and a degree $\lambdacheck_\mathbf{L}\in \pi_1(\mathbf{L})$, such that $\mathbf{L}$ is of type $\prod_i A_{n_i-1}$, the degree $\lambdacheck_\mathbf{L}$ is stable, and $E$ can be reduced to a stable $\mathbf{L}$-bundle $E_\mathbf{L}$ of degree $\lambdacheck_\mathbf{L}$, uniquely up to the action of $N_\mathbf{G}(\mathbf{L})/\mathbf{L}$. By Lemma \ref{lem:tropicalization of stable bundles is equivariant}, tropicalizing yields an object $\Trop(E_\mathbf{L})\in \mc M_{\mathbf{L}^\trop,\ind}^{\lambdacheck_\mathbf{L}}(\Ga_X)$, well-defined up to the $N_{W_\mathbf{G}}(W_\mathbf{L})$-action. Pushing forward to $\mc M_{\mathbf{G}^\trop,\ind}(\Ga_X)$ then yields a uniquely determined element $\Trop(E)\in \mc M_{\mathbf{G}^\trop, \ind}^{\lambdacheck}(\Ga_X)$.
\end{definition}

\begin{theorem} 
\label{thm:tropical semistable bundles skeleton of semistable bundles}
Let $X$ be a Tate elliptic curve over an algebraically closed complete nontrivially valued non-Archimedean field $K$ of equicharacteristic $0$ with minimal skeleton $\Gamma_X$, let $\mathbf{G}$ be a reductive group and let $\lambdacheck\in \pi_1(\mathbf{G})$. Denote by $\rho$ the retraction map to the essential skeleton $\Sigma(\mc M_{\mathbf{G}}^{\lambdacheck,\semist}(X))$ of $(\mc M_{\mathbf{\mathbf{G}}}^{\lambdacheck,\semist}(X))^\an$. Then there exists a homeomorphism $\Sigma(\mc M_{\mathbf{G}}^{\lambdacheck,\semist}(X))\xrightarrow{\cong}\mc M_{\mathbf{G}^\trop,\ind}^{\lambdacheck}(\Ga_X)$ that makes the diagram
\[
\begin{tikzcd}
    & 
        \Sigma(\mc M_{\mathbf{G}}^{\lambdacheck,\semist}(X))\arrow[dd,"\cong"] \\
    (\mc M_{\mathbf{G}}^{\lambdacheck,\semist}(X))^\an \arrow[ur,"\rho"]\arrow[dr,"\Trop"]&
        & \\
    &
        \mc M_{\mathbf{G}^\trop,\ind}^{\lambdacheck}(\Ga_X)\\
\end{tikzcd}
\]
commute.
\end{theorem}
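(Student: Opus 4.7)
The plan is to reduce the semistable case to the stable case established in Theorem \ref{thm:tropical stable bundles skeleton of stable bundles}, by exploiting the parallel finite-group-quotient descriptions of semistable moduli provided on the algebraic side by \Fratila's Theorem \ref{thm:Fratilas results} and on the tropical side by Theorem \ref{thm:tropical semistable bundles are quotients of stable ones}. First I would apply Theorem \ref{thm:Fratila existence of minimal parabolic} to choose a Levi subgroup $\bfL\subseteq \bfG$ containing $\bfT$, of type $\prod_i A_{n_i-1}$, together with a stable degree $\lambdacheck_\bfL\in \pi_1(\bfL)$ lifting $\lambdacheck$. Theorem \ref{thm:Fratilas results} then yields an isomorphism $\mc M_\bfL^{\lambdacheck_\bfL,\st}(X)/W_{\bfL,\bfG}\xrightarrow{\sim} \mc M_\bfG^{\lambdacheck,\semist}(X)$ where $W_{\bfL,\bfG}=N_\bfG(\bfL)/\bfL$ acts through automorphisms with finite stabilizers.

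Next, Theorem \ref{thm:tropical stable bundles skeleton of stable bundles} applied to $\bfL$ (which is of type $\prod_i A_{n_i-1}$) and the stable degree $\lambdacheck_\bfL$ provides a homeomorphism $\Sigma(\mc M_\bfL^{\lambdacheck_\bfL,\st}(X))\cong \mc M_{\bfL^\trop,\ind}^{\lambdacheck_\bfL}(\Ga_X)$ compatible with both $\rho$ and $\Trop$. By Lemma \ref{lem:tropicalization of stable bundles is equivariant}, $W_{\bfL,\bfG}\cong N_{W_\bfG}(W_\bfL)/W_\bfL$ and the stable tropicalization map is equivariant under this action, so quotienting both sides gives a bijection of sets
\[
\Sigma(\mc M_\bfL^{\lambdacheck_\bfL,\st}(X))/W_{\bfL,\bfG}\;\longrightarrow\; \mc M_{\bfL^\trop,\ind}^{\lambdacheck_\bfL}(\Ga_X)/(N_{W_\bfG}(W_\bfL)/W_\bfL).
\]
Theorem \ref{thm:tropical semistable bundles are quotients of stable ones} identifies the right-hand side with $\mc M_{\bfG^\trop,\ind}^{\lambdacheck}(\Ga_X)$, producing a candidate homeomorphism onto the target of the diagram.

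The central technical step is then to identify the left-hand side with $\Sigma(\mc M_\bfG^{\lambdacheck,\semist}(X))$, that is, to verify that taking the essential skeleton commutes with the finite quotient $\mc M_\bfL^{\lambdacheck_\bfL,\st}(X)\to \mc M_\bfG^{\lambdacheck,\semist}(X)$. Here I would invoke the results of \cite{BrownMazzon} on essential skeletons under finite group actions: one checks that both moduli spaces are Calabi--Yau, that $W_{\bfL,\bfG}$ preserves the distinguished volume form up to a multiplicative constant (which descends from its action on the Picard variety of the auxiliary isogeny cover of $X$), and that the retractions $\rho$ are compatible with the quotient. Commutativity of the final diagram then follows by assembling: the commutative diagram of Theorem \ref{thm:tropical stable bundles skeleton of stable bundles} for $\bfL$, the equivariance from Lemma \ref{lem:tropicalization of stable bundles is equivariant}, and the very definition of $\Trop$ on semistable $\bfG$-bundles as the pushforward along $\bfL^\trop\to\bfG^\trop$ of the tropicalization of a stable $\bfL$-reduction.

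The hard part will be this last identification of essential skeletons with finite quotients: carefully matching the Calabi--Yau structures on $\mc M_\bfL^{\lambdacheck_\bfL,\st}(X)$ and $\mc M_\bfG^{\lambdacheck,\semist}(X)$, showing that $W_{\bfL,\bfG}$ acts by volume-preserving automorphisms in a manner compatible with their divisorially log terminal models, and verifying the hypotheses of \cite{BrownMazzon} in our specific setting. Once this is in place, the rest of the argument is essentially a diagram chase combining the stable case with the two parallel quotient presentations.
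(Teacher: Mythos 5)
Your proposal is correct and follows essentially the same route as the paper's proof: reduce to the stable case for the Levi $\bfL$ via Theorem \ref{thm:Fratilas results}, use Lemma \ref{lem:tropicalization of stable bundles is equivariant} and Theorem \ref{thm:tropical semistable bundles are quotients of stable ones} to match the two $W_{\bfL,\bfG}$-quotients, and invoke \cite{BrownMazzon} to identify the essential skeleton of the quotient. The step you flag as hard is handled in the paper exactly as you sketch: since $\mc M_{\bfL}^{\lambdacheck_\bfL,\st}(X)$ is a product of elliptic curves, $W_{\bfL,\bfG}$ acts on a pluricanonical form through a character, so a suitable tensor power is invariant and \cite[Proposition 6.1.9]{BrownMazzon} applies.
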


\begin{proof}
Let $\mathbf{L}$ and $\lambdacheck_{\mathbf{L}}$ be as in Theorem \ref{thm:Fratilas results}. Consider the diagram of solid arrows
\[
\begin{tikzcd}
    \left(\mc M_{\mathbf{L}}^{\lambdacheck_{\mathbf{L}},\st}(X)\right)^\an \arrow[dr,"\rho_{\mathbf{L}}"]\arrow[dd,"\Trop"]\arrow[rrr]&
        &
            &
                \left(\mc M_{\mathbf{G}}^{\lambdacheck,\semist}(X)\right)^\an\arrow[dl,"\rho"]\arrow[dd,"\Trop"]\\    
    &
        \Sigma(\mc M_{\mathbf{L}}^{\lambdacheck_{\mathbf{L}},\st}(X))\arrow[r]\arrow[dl,"\cong"] &
            \Sigma(\mc M_{\mathbf{G}}^{\lambdacheck,\semist}(X)) \arrow[dr,dashed] &
                \\
    \mc M_{\mathbf{L}^\trop,\ind}^{\lambdacheck_{\mathbf{L}}}(\Ga_X) \arrow[rrr]&
        &
            &
                \mc M_{\mathbf{G}^\trop,\ind}^{\lambdacheck}(\Ga_X) \ ,\\
\end{tikzcd}
\]
where $\rho_{\mathbf{L}}$ is the retraction to the essential skeleton of $\left(\mc M_{\mathbf{L}}^{\lambdacheck_\mathbf{L},\st}(X)\right)^\an $. The triangle to the left exists by Theorem \ref{thm:tropical stable bundles skeleton of stable bundles}. The retraction $\rho_{\mathbf{L}}$ is equivariant with respect to the action of $W_{\bfL,\bfG}\coloneqq N_\bfG(\bfL)/\bfL=N_{W_\bfG}(W_\bfL)/W_\bfL$  by functoriality of the essential skeleton, and the map $\Trop$ in that triangle is $W_{\mathbf{L},\mathbf{G}}$-equivariant by Lemma \ref{lem:tropicalization of stable bundles is equivariant}.

To show that the solid trapezoid on top exists, we first note that $\mc M_{\mathbf L}^{\lambdacheck_{\mathbf L},\st}$ is isomorphic to a product of elliptic curves by Theorem \ref{thm:Fratilla determinant is isomorphism}, and hence its canonical bundle is trivial. Therefore, all pluricanonical forms on $\mc M_{\mathbf L}^{\lambdacheck_{\mathbf L},\st}(X)$ define the same skeleton \cite[Proposition 4.4.5 (5)]{MustataNicaise}, namely the essential skeleton $\Sigma(\mc M_{\mathbf L}^{\lambdacheck_{\mathbf L},\st}(X))$. 
Let $\omega$ be a pluricanonical form on $\mc M_{\mathbf L}^{\lambdacheck_{\mathbf L},\st}(X)$. The group $W_{\bfL,\bfG}$ acts on $\omega$ by 
\[
w.\omega= \chi(w)\cdot\omega 
\]
for some character $\chi$. If $k$ is the order of $\chi$, then $\omega^{\otimes k}$ is $W_{\bfL,\bfG}$-invariant, and hence there exist $W_{\bfL,\bfG}$-invariant pluricanonical forms. By \cite[Proposition 6.1.9]{BrownMazzon}, every$W_{\bfL,\bfG}$-invariant pluricanonical form induces the same skeleton of $\left(M_{\mathbf{G}}^{\lambdacheck,\semist}(X)\right)^\an$, namely the essential skeleton $\Sigma(M_{\mathbf{G}}^{\lambdacheck,\semist}(X))$, and we have 
\[
    \Sigma(\mc M_{\mathbf{L}}^{\lambdacheck_\mathbf{L},\st}(X))/W_{\mathbf{L},\mathbf{G}}= \Sigma(\mc M_{\mathbf{G}}^{\lambdacheck,\semist}(X)) \ .
\]
The outer square is commutative by the construction of the tropicalization map for semistable bundles. Since we also have
\[
    \mc M_{\mathbf{L}^\trop,\ind}^{\lambdacheck_\mathbf{L}}(\Ga_X) /W_{\mathbf{L},\mathbf{G}} = \mc \mc M_{\mathbf{G}^\trop,\ind}^{\lambdacheck}(\Ga_X).
\]
by Theorem \ref{thm:tropical semistable bundles are quotients of stable ones}, it follows that the dashed arrow can be filled in uniquely by a homeomorphism that makes the whole diagram commutative.
\end{proof}

\bibliographystyle{amsalpha}
\bibliography{biblio}{}

@article{schieder2015harder,
  title={The {H}arder--{N}arasimhan stratification of the moduli stack of {$G$}-bundles via {D}rinfeld’s compactifications},
  author={Schieder, Simon},
  journal={Selecta Mathematica},
  volume={21},
  number={3},
  pages={763--831},
  year={2015},
  publisher={Springer}
}

@misc{MartensThaddeus_Grothendieck,
 author = {Martens, Johan and Thaddeus, Michael},
 title = {Variations on a theme of {Grothendieck}},
 year = {2012},
 howpublished = {Preprint, {arXiv}:1210.8161 [math.{AG}]},
 keywords = {14H60,14D20,14D23,20G15},
 url = {https://arxiv.org/abs/1210.8161},
 arXiv = {arXiv:1210.8161}
}

@article {ACP,
    AUTHOR = {Abramovich, Dan and Caporaso, Lucia and Payne, Sam},
     TITLE = {The tropicalization of the moduli space of curves},
   JOURNAL = {Ann. Sci. \'{E}c. Norm. Sup\'{e}r. (4)},
  FJOURNAL = {Annales Scientifiques de l'\'{E}cole Normale Sup\'{e}rieure. Quatri\`eme
              S\'{e}rie},
    VOLUME = {48},
      YEAR = {2015},
    NUMBER = {4},
     PAGES = {765--809},
      ISSN = {0012-9593},
   MRCLASS = {14T05 (14D20 14G22 14H10)},
  MRNUMBER = {},
MRREVIEWER = {Simon Hampe},
       DOI = {10.24033/asens.2258},
       URL = {https://doi.org/10.24033/asens.2258},
}

@article {Atiyah_ellipticvectorbundles,
    AUTHOR = {Atiyah, M. F.},
     TITLE = {Vector bundles over an elliptic curve},
   JOURNAL = {Proc. London Math. Soc. (3)},
  FJOURNAL = {Proceedings of the London Mathematical Society. Third Series},
    VOLUME = {7},
      YEAR = {1957},
     PAGES = {414--452},
      ISSN = {0024-6115},
   MRCLASS = {14.55 (14.20)},
  MRNUMBER = {},
MRREVIEWER = {F. Hirzebruch},
       DOI = {10.1112/plms/s3-7.1.414},
       URL = {https://doi.org/10.1112/plms/s3-7.1.414},
}

@incollection {BakerJensen,
    AUTHOR = {Baker, Matthew and Jensen, David},
     TITLE = {Degeneration of linear series from the tropical point of view
              and applications},
 BOOKTITLE = {Nonarchimedean and tropical geometry},
    SERIES = {Simons Symp.},
     PAGES = {365--433},
 PUBLISHER = {Springer, [Cham]},
      YEAR = {2016},
   MRCLASS = {14T05 (14H51)},
  MRNUMBER = {},
MRREVIEWER = {Andrea Bruno},
}

@article {BakerRabinoff,
    AUTHOR = {Baker, Matthew and Rabinoff, Joseph},
     TITLE = {The skeleton of the {J}acobian, the {J}acobian of the
              skeleton, and lifting meromorphic functions from tropical to
              algebraic curves},
   JOURNAL = {Int. Math. Res. Not. IMRN},
  FJOURNAL = {International Mathematics Research Notices. IMRN},
      YEAR = {2015},
    NUMBER = {16},
     PAGES = {7436--7472},
      ISSN = {1073-7928},
   MRCLASS = {14T05 (14G22)},
  MRNUMBER = {},
MRREVIEWER = {Alessandra Bertapelle},
       DOI = {10.1093/imrn/rnu168},
       URL = {https://doi.org/10.1093/imrn/rnu168},
}

@article {BrownMazzon,
    AUTHOR = {Brown, Morgan V. and Mazzon, Enrica},
     TITLE = {The essential skeleton of a product of degenerations},
   JOURNAL = {Compos. Math.},
  FJOURNAL = {Compositio Mathematica},
    VOLUME = {155},
      YEAR = {2019},
    NUMBER = {7},
     PAGES = {1259--1300},
      ISSN = {0010-437X},
   MRCLASS = {14G22 (14E30)},
  MRNUMBER = {},
MRREVIEWER = {Paul A. Hacking},
       DOI = {10.1112/s0010437x19007346},
       URL = {https://doi.org/10.1112/s0010437x19007346},
}

@article{fratila2016stack,
  title={On the stack of semistable {$G$}-bundles over an elliptic curve},
  author={Fr{\u{a}}{\c{t}}il{\u{a}}, Drago\c{s}},
  journal={Mathematische Annalen},
  volume={365},
  number={1},
  pages={401--421},
  year={2016},
  publisher={Springer}
}

@article {Fratila_revisitingelliptic,
    AUTHOR = {Fr{\u{a}}{\c{t}}il{\u{a}}, Drago\c{s}},
     TITLE = {Revisiting the moduli space of semistable {$G$}-bundles over
              elliptic curves},
   JOURNAL = {Ann. Inst. Fourier (Grenoble)},
  FJOURNAL = {Universit\'{e} de Grenoble. Annales de l'Institut Fourier},
    VOLUME = {71},
      YEAR = {2021},
    NUMBER = {2},
     PAGES = {615--641},
      ISSN = {0373-0956},
   MRCLASS = {14D20 (14D23)},
  MRNUMBER = {},
       URL = {http://aif.cedram.org/item?id=AIF_2021__71_2_615_0},
}

@misc{GKUZ,
 author = {Gross, Andreas and Kaur, Inder and Ulirsch, Martin and Werner, Annette},
 title = {Semi-homogeneous vector bundles on abelian varieties: moduli spaces and their tropicalization},
 year = {2023},
 howpublished = {Preprint, {arXiv}:2312.12980 [math.{AG}]},
 url = {https://arxiv.org/abs/2312.12980},
 arXiv = {arXiv:2312.12980}
}

@book{Humphreys,
 author = {Humphreys, James E.},
 title = {Linear algebraic groups. {Corr}. 2nd printing},
 fseries = {Graduate Texts in Mathematics},
 series = {Grad. Texts Math.},
 issn = {0072-5285},
 volume = {21},
 year = {1981},
 publisher = {Springer, Cham},
 keywords = {20G15,20-02,20G40,20G20,20G25,14L30,14M17,17B45},
 zbMATH = {3737912},
 Zbl = {0471.20029}
}

@article {Grothendieck_vectorbundlesonP1,
    AUTHOR = {Grothendieck, Alexandre },
     TITLE = {Sur la classification des fibr\'{e}s holomorphes sur la sph\`ere de
              {R}iemann},
   JOURNAL = {Amer. J. Math.},
  FJOURNAL = {American Journal of Mathematics},
    VOLUME = {79},
      YEAR = {1957},
     PAGES = {121--138},
      ISSN = {0002-9327},
   MRCLASS = {53.3X},
  MRNUMBER = {},
MRREVIEWER = {C. Chevalley},
       DOI = {10.2307/2372388},
       URL = {https://doi.org/10.2307/2372388},
}

@article{IJK_tropicalmatrixgroups,
 author = {Izhakian, Zur and Johnson, Marianne and Kambites, Mark},
 title = {Tropical matrix groups},
 fjournal = {Semigroup Forum},
 journal = {Semigroup Forum},
 issn = {0037-1912},
 volume = {96},
 number = {1},
 pages = {178--196},
 year = {2018},
 doi = {10.1007/s00233-017-9894-1},
 keywords = {15A80,20H25,14T05,20M10},
 url = {pure.manchester.ac.uk/ws/files/36945141/IJK_groups_final_3_.pdf},
 zbMATH = {6888062},
 Zbl = {1395.15019}
}

@misc{KhanMaclagan_tropicaltoricvectorbundles,
 author = {Khan, Bivas and Maclagan, Diane},
 title = {Tropical {V}ector {B}undles},
 year = {2024},
 howpublished = {Preprint, {arXiv}:2405.03505 [math.{AG}]},
 url = {https://arxiv.org/abs/2405.03505},
 arXiv = {arXiv:2405.03505}
}

@misc{KavehManon_tropicaltoricvectorbundles,
 author = {Kaveh, Kiumars and Manon, Christopher},
 title = {Tropical vector bundles and matroids},
 year = {2024},
 howpublished = {Preprint, {arXiv}:2405.03576 [math.{AG}]},
 keywords = {14M25,52B40,14T90},
 url = {https://arxiv.org/abs/2405.03576},
 arXiv = {arXiv:2405.03576}
}

@article{KavehManon_toricprincipalbundles,
 author = {Kaveh, Kiumars and Manon, Christopher},
 title = {Toric principal bundles, piecewise linear maps and {Tits} buildings},
 fjournal = {Mathematische Zeitschrift},
 journal = {Math. Z.},
 issn = {0025-5874},
 volume = {302},
 number = {3},
 pages = {1367--1392},
 year = {2022},
 language = {English},
 doi = {10.1007/s00209-022-03094-5},
 keywords = {14M25,14J60,20E42,51E24},
 zbMATH = {7600571},
 Zbl = {1510.14036}
}

@incollection {KontsevichSoibelman,
    AUTHOR = {Kontsevich, Maxim and Soibelman, Yan},
     TITLE = {Affine structures and non-{A}rchimedean analytic spaces},
 BOOKTITLE = {The unity of mathematics},
    SERIES = {Progr. Math.},
    VOLUME = {244},
     PAGES = {321--385},
 PUBLISHER = {Birkh\"{a}user Boston, Boston, MA},
      YEAR = {2006},
   MRCLASS = {14J32 (14G22 32Q25)},
  MRNUMBER = {},
MRREVIEWER = {Mark Gross},
       DOI = {10.1007/0-8176-4467-9\_9},
       URL = {https://doi.org/10.1007/0-8176-4467-9_9},
}

@misc{KennedyHuntRanganathan_logQuot,
 author = {Kennedy-Hunt, Patrick},
 title = {The {L}ogarithmic {Q}uot space: foundations and tropicalisation},
 year = {2025},
 howpublished = {Preprint, {arXiv}:2308.14470 [math.{AG}]},
 url = {https://arxiv.org/abs/2308.14470},
 arXiv = {arXiv:2308.14470}
}

@article{Lorscheid_blueprintsI,
 author = {Lorscheid, Oliver},
 title = {The geometry of blueprints. {I}: {Algebraic} background and scheme theory},
 fjournal = {Advances in Mathematics},
 journal = {Adv. Math.},
 issn = {0001-8708},
 volume = {229},
 number = {3},
 pages = {1804--1846},
 year = {2012},
 doi = {10.1016/j.aim.2011.12.018},
 keywords = {14A15,11G05,16Y60,20M32},
 zbMATH = {6011372},
 Zbl = {1259.14001}
}

@article {Lorscheid_blueprintsII,
    AUTHOR = {Lorscheid, Oliver},
     TITLE = {The geometry of blueprints: {P}art {II}: {T}its--{W}eyl models
              of algebraic groups},
   JOURNAL = {Forum Math. Sigma},
  FJOURNAL = {Forum of Mathematics. Sigma},
    VOLUME = {6},
      YEAR = {2018},
     PAGES = {Paper No. e20, 90},
   MRCLASS = {14A20 (14A15 14L15 14L35 16Y60 20M14)},
  MRNUMBER = {},
MRREVIEWER = {Robin de Jong},
       DOI = {10.1017/fms.2018.17},
       URL = {https://doi.org/10.1017/fms.2018.17},
}

@article {Mukai,
    AUTHOR = {Mukai, Shigeru},
     TITLE = {Semi-homogeneous vector bundles on an {A}belian variety},
   JOURNAL = {J. Math. Kyoto Univ.},
  FJOURNAL = {Journal of Mathematics of Kyoto University},
    VOLUME = {18},
      YEAR = {1978},
    NUMBER = {2},
     PAGES = {239--272},
      ISSN = {0023-608X},
   MRCLASS = {14F05},
  MRNUMBER = {},
MRREVIEWER = {P. E. Newstead},
       DOI = {10.1215/kjm/1250522574},
       URL = {https://doi.org/10.1215/kjm/1250522574},
}

@article {MustataNicaise,
    AUTHOR = {Musta\c{t}\u{a}, Mircea and Nicaise, Johannes},
     TITLE = {Weight functions on non-{A}rchimedean analytic spaces and the
              {K}ontsevich-{S}oibelman skeleton},
   JOURNAL = {Algebr. Geom.},
  FJOURNAL = {Algebraic Geometry},
    VOLUME = {2},
      YEAR = {2015},
    NUMBER = {3},
     PAGES = {365--404},
      ISSN = {2313-1691},
   MRCLASS = {14G22 (13A18 14F17)},
  MRNUMBER = {},
MRREVIEWER = {Alessandra Bertapelle},
       DOI = {10.14231/AG-2015-016},
       URL = {https://doi.org/10.14231/AG-2015-016},
}

@article{MolchoWise,
 author = {Molcho, Samouil and Wise, Jonathan},
 title = {The logarithmic {Picard} group and its tropicalization},
 fjournal = {Compositio Mathematica},
 journal = {Compos. Math.},
 issn = {0010-437X},
 volume = {158},
 number = {7},
 pages = {1477--1562},
 year = {2022},
 doi = {10.1112/S0010437X22007527},
 keywords = {14A21,14C22,14H40,14T10,14C20,14D20,14D23,14H10,14K30},
 zbMATH = {7588435},
 Zbl = {1502.14008}
}

@article {NicaiseXuYue,
    AUTHOR = {Nicaise, Johannes and Xu, Chenyang and Yu, Tony Yue},
     TITLE = {The non-archimedean {SYZ} fibration},
   JOURNAL = {Compos. Math.},
  FJOURNAL = {Compositio Mathematica},
    VOLUME = {155},
      YEAR = {2019},
    NUMBER = {5},
     PAGES = {953--972},
      ISSN = {0010-437X},
   MRCLASS = {14J33 (14E30 14G22 32Q25)},
  MRNUMBER = {},
MRREVIEWER = {Anatoly Libgober},
       DOI = {10.1112/s0010437x19007152},
       URL = {https://doi.org/10.1112/s0010437x19007152},
}

@article{NicaiseXu,
 author = {Nicaise, Johannes and Xu, Chenyang},
 title = {The essential skeleton of a degeneration of algebraic varieties},
 fjournal = {American Journal of Mathematics},
 journal = {Am. J. Math.},
 issn = {0002-9327},
 volume = {138},
 number = {6},
 pages = {1645--1667},
 year = {2016},
 doi = {10.1353/ajm.2016.0049},
 keywords = {14G22,14E30,14J32},
 zbMATH = {6707111},
 Zbl = {1375.14092}
}

@book {Springer_linalggroups,
    AUTHOR = {Springer, T. A.},
     TITLE = {Linear algebraic groups},
    SERIES = {Progress in Mathematics},
    VOLUME = {9},
   EDITION = {Second},
 PUBLISHER = {Birkh\"{a}user Boston, Inc., Boston, MA},
      YEAR = {1998},
     PAGES = {xiv+334},
      ISBN = {0-8176-4021-5},
   MRCLASS = {20G15 (14L10)},
  MRNUMBER = {},
MRREVIEWER = {Andy R. Magid},
       DOI = {10.1007/978-0-8176-4840-4},
       URL = {https://doi.org/10.1007/978-0-8176-4840-4},
}

@article {Tu_semistablebundles,
    AUTHOR = {Tu, Loring W.},
     TITLE = {Semistable bundles over an elliptic curve},
   JOURNAL = {Adv. Math.},
  FJOURNAL = {Advances in Mathematics},
    VOLUME = {98},
      YEAR = {1993},
    NUMBER = {1},
     PAGES = {1--26},
      ISSN = {0001-8708},
   MRCLASS = {14D20 (14H42 14H60)},
  MRNUMBER = {},
MRREVIEWER = {Elham Izadi},
       DOI = {10.1006/aima.1993.1011},
       URL = {https://doi.org/10.1006/aima.1993.1011},
}

@article {Laszlo_elliptic,
    AUTHOR = {Laszlo, Yves},
     TITLE = {About {$G$}-bundles over elliptic curves},
   JOURNAL = {Ann. Inst. Fourier (Grenoble)},
  FJOURNAL = {Universit\'{e} de Grenoble. Annales de l'Institut Fourier},
    VOLUME = {48},
      YEAR = {1998},
    NUMBER = {2},
     PAGES = {413--424},
      ISSN = {0373-0956},
   MRCLASS = {14D20 (14F05 14H52)},
  MRNUMBER = {},
MRREVIEWER = {P. E. Newstead},
       URL = {http://www.numdam.org/item?id=AIF_1998__48_2_413_0},
}

@article {TropChern,
    AUTHOR = {Allermann, Lars},
     TITLE = {Chern classes of tropical vector bundles},
   JOURNAL = {Ark. Mat.},
  FJOURNAL = {Arkiv f\"{o}r Matematik},
    VOLUME = {50},
      YEAR = {2012},
    NUMBER = {2},
     PAGES = {237--258},
      ISSN = {0004-2080},
   MRCLASS = {14T05 (14C15)},
  MRNUMBER = {},
MRREVIEWER = {Joaquim Ro\'{e}},
       DOI = {10.1007/s11512-011-0147-3},
       URL = {https://doi.org/10.1007/s11512-011-0147-3},
}

@article{2022GrossUlirschZakharov,
  title={Principal bundles on metric graphs: The {$\mathrm{GL}_n$}-case},
  author={Gross, Andreas and Ulirsch, Martin and Zakharov, Dmitry},
  journal={Advances in Mathematics},
  volume={411},
  pages={108775},
  year={2022},
  publisher={Elsevier}
}

@Article{BiswasHoffmannError,
 Author = {Biswas, Indranil and Hoffmann, Norbert},
 Title = {Erratum to: ``{The} line bundles on moduli stacks of principal bundles on a curve''},
 FJournal = {Documenta Mathematica},
 Journal = {Doc. Math.},
 ISSN = {1431-0635},
 Volume = {28},
 Number = {2},
 Pages = {513},
 Year = {2023},
 DOI = {10.4171/DM/921},
 Keywords = {14C22,14D20,14H10},
 zbMATH = {7732796},
 Zbl = {1518.14008}
}

@Article{BiswasHoffmann,
 Author = {Biswas, Indranil and Hoffmann, Norbert},
 Title = {The line bundles on moduli stacks of principal bundles on a curve},
 FJournal = {Documenta Mathematica},
 Journal = {Doc. Math.},
 ISSN = {1431-0635},
 Volume = {15},
 Pages = {35--72},
 Year = {2010},
 Keywords = {14C22,14D20,14H10},
 zbMATH = {5703143},
 Zbl = {1193.14009}
}

@Book{MilneEtale,
 Author = {Milne, J. S.},
 Title = {{\'E}tale cohomology},
 FSeries = {Princeton Mathematical Series},
 Series = {Princeton Math. Ser.},
 Volume = {33},
 Year = {1980},
 Publisher = {Princeton University Press, Princeton, NJ},
 Keywords = {14F20,14-02,14-01,14F05,14G10,14G99,14B25},
 zbMATH = {3674235},
 Zbl = {0433.14012}
}

@InCollection{Hoffmann,
 Author = {Hoffmann, Norbert},
 Title = {On moduli stacks of {{\(G\)}}-bundles over a curve},
 BookTitle = {Affine flag manifolds and principal bundles},
 ISBN = {978-3-0346-0287-7},
 Pages = {155--163},
 Year = {2010},
 Publisher = {Basel: Birkh{\"a}user},
 DOI = {10.1007/978-3-0346-0288-4_5},
 Keywords = {14D23,14F20,14H60,20G15},
}

@Book{Buildings,
 Author = {Abramenko, Peter and Brown, Kenneth S.},
 Title = {Buildings. {Theory} and applications.},
 FSeries = {Graduate Texts in Mathematics},
 Series = {Grad. Texts Math.},
 ISSN = {0072-5285},
 Volume = {248},
 ISBN = {978-0-387-78834-0},
 Year = {2008},
 Publisher = {Berlin: Springer},
 DOI = {10.1007/978-0-387-78835-7},
 Keywords = {20E42,51E24,20-02,51-02,20F55,51F15},
}

@book{maclagan2015introduction,
  title={Introduction to tropical geometry},
  author={Maclagan, Diane and Sturmfels, Bernd},
  volume={161},
  year={2015},
  publisher={American Mathematical Soc.}
}

@article{1988CohenHelminck,
  title={Trilinear alternating forms on a vector space of dimension 7},
  author={Cohen, Arjeh M and Helminck, Aloysiu G},
  journal={Communications in algebra},
  volume={16},
  number={1},
  pages={1--25},
  year={1988},
  publisher={Taylor \& Francis}
}

@article{1968Harder,
  title={Halbeinfache {G}ruppenschemata {\"u}ber vollst{\"a}ndigen {K}urven},
  author={Harder, G{\"u}nter},
  journal={Inventiones mathematicae},
  volume={6},
  number={2},
  pages={107--149},
  year={1968},
  publisher={Springer}
}

@article{Steinberg,
 author = {Steinberg, Robert},
 title = {The isomorphism and isogeny theorems for reductive algebraic groups},
 fjournal = {Journal of Algebra},
 journal = {J. Algebra},
 issn = {0021-8693},
 volume = {216},
 number = {1},
 pages = {366--383},
 year = {1999},
 doi = {10.1006/jabr.1998.7776},
 keywords = {20G15,20E36,14L17,14L35,14L40},
 zbMATH = {1309261},
 Zbl = {0940.20050}
}

@misc{Tits_analogy,
 author = {Tits, Jacques},
 title = {Sur les analogues alg{\'e}briques des groupes semi-simples complexes},
 year = {1957},
 howpublished = {Colloque d'alg{\`e}bre sup{\'e}rieure, {Bruxelles} du 19 au 22 d{\'e}c. 1956, {Centre} {Belge} {Rech}. {Math}., 261-289 (1957).},
 keywords = {51-XX,22-XX},
 zbMATH = {3137862},
 Zbl = {0084.15902}
}

\end{document}